\newif\ifpdf
\numberwithin{equation}{section}       
 \theoremstyle{plain}    
 \newtheorem{thm}{Theorem}[section]
 \numberwithin{equation}{section} 
 \numberwithin{figure}{section} 
 \theoremstyle{plain}
 \theoremstyle{plain}    
 \newtheorem{cor}[thm]{Corollary} 
 \theoremstyle{plain}    
 \newtheorem{pro}[thm]{Proposition} 
 \theoremstyle{plain}    
 \newtheorem{lem}[thm]{Lemma} 
 \theoremstyle{remark}
 \newtheorem{rem}[thm]{Remark}
 \theoremstyle{definition}
\newtheorem{ex}[thm]{Example}
\newtheorem*{thmA}{Theorem A} 
\newtheorem*{thmB}{Theorem B} 
\newtheorem*{thmC}{Theorem C} 
\newtheorem*{thmD}{Theorem D}
\theoremstyle{definition}
\newtheorem{defi}[thm]{Definition}
\newtheorem*{ackn}{Acknowledgement}
\newtheorem{conjecture}[thm]{Conjecture}
\newcommand{\C}{{\mathbb{C}}}
\newcommand{\PP}{{\mathbb{P}}}
\newcommand{\Q}{{\mathbb{Q}}}
\newcommand{\R}{{\mathbb{R}}}
\newcommand{\Z}{{\mathbb{Z}}}
\newcommand{\cC}{{\mathcal{C}}}
\newcommand{\cE}{{\mathcal{E}}}
\newcommand{\cI}{{\mathcal{I}}}
\newcommand{\cO}{{\mathcal{O}}}
\newcommand{\cM}{{\mathcal{M}}}
\newcommand{\cK}{{\mathcal{K}}}
\renewcommand{\a}{\alpha}
\renewcommand{\d}{\delta}
\newcommand{\e}{\varepsilon}
\newcommand{\f}{\varphi}
\newcommand{\fm}{\varphi_{\min}}
\newcommand{\p}{\psi}
\newcommand{\MA}{\mathrm{MA}\,}
\newcommand{\Amp}{\mathrm{Amp}\,}
\newcommand{\xcan}{X_{\mathrm{can}}}
\newcommand{\vol}{\operatorname{vol}}
\newcommand{\ca}{\operatorname{Cap}}
\begin{document}

\setcounter{tocdepth}{1}

\title{Monge-Amp{\`e}re equations in big cohomology classes}

\date{1 December 2009}

\author{S.~Boucksom, P.~Eyssidieux, V.~Guedj, A.Zeriahi}

\address{IMJ, CNRS-Universit{\'e} Paris 7\\
 75251 Paris Cedex 05\\
 France}

\email{boucksom@math.jussieu.fr}

\address{Institut Fourier, Universit{\'e} Grenoble 1\\
38402 Saint-Martin d'H{\`e}res\\
France}

\email{Philippe.Eyssidieux@ujf-grenoble.fr}

\address{LATP, Universit{\'e} Aix-Marseille I\\
13453 Marseille Cedex 13\\
 France}

\email{guedj@cmi.univ-mrs.fr}

\address{I.M.T., Universit{\'e} Paul Sabatier\\
31062 Toulouse cedex 09\\
France}

\email{zeriahi@math.ups-tlse.fr}

\begin{abstract}
We define non-pluripolar products of an arbitrary number of closed positive $(1,1)$-currents on a compact K\"ahler manifold $X$. Given a big $(1,1)$-cohomology class $\a$ on $X$ (i.e.~a class that can be represented by a strictly positive current) and a positive measure $\mu$ on $X$ of total mass equal to the volume of $\a$ and putting no mass on pluripolar sets, we show that $\mu$ can be written in a unique way as the top degree self-intersection in the non-pluripolar sense of a closed positive current in $\a$. We then extend Kolodziedj's approach to sup-norm estimates to show that the solution has minimal singularities in the sense of Demailly if $\mu$ has $L^{1+\e}$-density with respect to Lebesgue measure. If $\mu$ is smooth and positive everywhere, we prove that $T$ is smooth on the ample locus of $\a$ provided $\a$ is nef. Using a fixed point theorem we finally explain how to construct singular K\"ahler-Einstein volume forms with minimal singularities on varieties of general type.

\end{abstract} 

\maketitle

\tableofcontents

\newpage

\section*{Introduction}

The primary goal of the present paper is to extend the results obtained by the last two authors on complex Monge-Amp{\`e}re equations on compact K{\"a}hler manifolds in~\cite{GZ2} to the case of \emph{arbitrary} cohomology classes. 

More specifically let $X$ be a compact $n$-dimensional K{\"a}hler manifold and let $\omega$ be a K{\"a}hler form on $X$. An $\omega$-plurisubharmonic (\emph{psh} for short) function is an upper semi-continuous function $\f$ such that $\omega+dd^c\f$ is positive in the sense of currents, and~\cite{GZ2} focused on the Monge-Amp{\`e}re type equation
\begin{equation}\label{equ:MA}(\omega+dd^c\f)^n=\mu
\end{equation}
where $\mu$ is a positive measure on $X$ of total mass $\mu(X)=\int_X\omega^n$. As is well-known, it is not always possible to make sense of the left-hand side of (\ref{equ:MA}), but it was observed in~\cite{GZ2} that a construction going back to Bedford-Taylor~\cite{BT2} enables in this global setting to define the non-pluripolar part of the would-be positive measure $(\omega+dd^c\f)^n$ for an arbitrary $\omega$-psh function $\f$. 

\smallskip

In the present paper we first give a more systematic treatment of these issues. We explain how to define in a simple and canonical way the \emph{non-pluripolar product} $$\langle T_1\wedge...\wedge T_p\rangle$$ 
of arbitrary closed positive $(1,1)$-currents $T_1,...,T_p$ with $1\le p\le n$. The resulting positive $(p,p)$-current $\langle T_1\wedge...\wedge T_p\rangle$ puts no mass on pluripolar subsets. It is also shown to be \emph{closed} (Theorem~\ref{thm:closed}), generalizing a classical result of Skoda-El Mir.
We relate its cohomology class to the \emph{positive intersection class}
$$\langle\a_1\cdot \cdots \cdot\a_p\rangle\in H^{p,p}(X,\R)$$
of the cohomology classes $\a_j:=\{T_j\}\in H^{1,1}(X,\R)$ in the sense of~\cite{BDPP,BFJ}. In particular we show that $\int_X\langle T^n\rangle\le\vol(\a)$
where the right-hand side denotes the \emph{volume} of the class $\a:=\{T\}$~\cite{Bou1}, which implies in particular that $\langle T^n\rangle$ is non-trivial only if the cohomology class $\a$ is \emph{big}. 

\smallskip

An important aspect of the present approach is that the non-pluripolar Monge-Amp\`ere measure
$\langle T^n \rangle$ is well defined for {\it any} closed positive $(1,1)$-current $T$. In the 
second section we study the continuity properties of the mapping
$T \mapsto \langle T^n \rangle$: it is continuous along decreasing sequences (of potentials)
if and only if $T$ {\it has full Monge-Amp\`ere mass} (Theorem\ref{thm:cont}), i.e. when
$$
\int_X  \langle T^n \rangle=\rm{vol}(\a).
$$

We prove this fact defining and studying weighted energy functionals in this general context, extending the case of a K\"ahler class~\cite{GZ2}. The two main new features are
a generalized comparison principle (Corollary \ref{cor:comp}) and an asymptotic criterion to check whether a current $T$ has full Monge-Amp\`ere mass (Proposition \ref{pro:presquefini}).

\smallskip

In the third part of the paper we obtain our first main result (Theorem \ref{thm:MA}):

\begin{thmA}
{\it  Let $\a\in H^{1,1}(X,\R)$ be a big class on a compact K{\"a}hler manifold $X$. If $\mu$ is a positive measure on $X$ that puts no mass on pluripolar subsets and satisfies the compatibility condition $\mu(X)=\vol(\a)$, then there exists a unique closed positive $(1,1)$-current $T\in\a$ such that 
$$\langle T^n\rangle=\mu.$$}
\end{thmA}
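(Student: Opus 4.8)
The plan is to follow the classical Aubin--Yau variational/continuity-method strategy adapted to the big class setting, splitting into existence and uniqueness. For existence, I would first reduce to the case where $\mu$ has a smooth, strictly positive density: given the general $\mu$ putting no mass on pluripolar sets with $\mu(X)=\vol(\a)$, approximate it by a decreasing (or suitably convergent) sequence of measures $\mu_j$ with smooth positive densities and total mass $\vol(\a)$, solve each $\langle T_j^n\rangle=\mu_j$, and pass to the limit using the compactness of normalized potentials together with the continuity result of Theorem~\ref{thm:cont}; the key point is to show the limiting current has full Monge-Amp\`ere mass, for which Proposition~\ref{pro:presquefini} (the asymptotic criterion) and uniform energy bounds are the essential tools. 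So the crux is the smooth positive case.

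For the smooth positive case, fix a smooth representative $\theta\in\a$ and a current with minimal singularities $T_{\min}\in\a$; work on the ample locus $\Omega:=\Amp(\a)$, which is a Zariski-open set carrying a complete K\"ahler metric. One writes $T=\theta+dd^c\f$ and seeks $\f$ with minimal singularities (in particular bounded on $\Omega$ relative to the minimal-singularities potential) solving $\langle(\theta+dd^c\f)^n\rangle=\mu$. I would set up a continuity path $\langle(\theta+dd^c\f_t)^n\rangle = c_t\,\mu_t$ interpolating between a known solution (e.g. coming from a K\"ahler form on a modification, or $\mu_0$ the Monge-Amp\`ere measure of $T_{\min}$ on $\Omega$) and the target measure, and prove openness via the implicit function theorem on an appropriate Banach space of bounded potentials, and closedness via a priori estimates. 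The a priori $C^0$-estimate is where Kolodziej's pluripotential-theoretic method enters (as announced in the abstract): using the $L^{1+\e}$ (here smooth) density one controls $\sup_\Omega(\f-\f_{\min})$ and $\inf$, which forces minimal singularities; higher-order estimates (Laplacian, then Evans--Krylov, Schauder) then proceed on compact subsets of $\Omega$ exactly as in the local theory since there the metrics are uniformly equivalent and the equation is uniformly elliptic.

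For uniqueness I would use the comparison-principle circle of ideas: if $\langle(\theta+dd^c\f)^n\rangle = \langle(\theta+dd^c\p)^n\rangle = \mu$ with both currents having full Monge-Amp\`ere mass, then the generalized comparison principle (Corollary~\ref{cor:comp}) combined with the equality of the measures yields, by a Cegrell-type exchange/mixed-Monge-Amp\`ere argument and the energy estimates of Section~2, that $\{\f<\p\}$ and $\{\p<\f\}$ both have zero measure for $\mu$, and then that $\f-\p$ is constant; the fact that $\mu$ dominates Monge-Amp\`ere measures in the relevant places (no mass on pluripolar sets) is what makes the exchange argument go through. The hardest part is expected to be the a priori $C^0$-estimate on the quasi-open ample locus rather than on a compact manifold: one must show Kolodziej's capacity-volume estimates survive when the class is merely big, i.e. control the non-pluripolar Monge-Amp\`ere mass near the non-K\"ahler locus and ensure the relevant capacities and volumes are comparable to the K\"ahler case; the uniform integrability of the normalized potentials of currents with minimal singularities, proved in the earlier sections, is the technical backbone here.
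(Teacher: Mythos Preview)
Your existence strategy has a genuine gap at the ``smooth positive density'' step. You propose to run a continuity method on the ample locus $\Amp(\a)$, proving openness via the implicit function theorem and closedness via $C^0$, Laplacian, and Evans--Krylov estimates. But $\Amp(\a)$ is a non-compact Zariski open subset, and the Laplacian estimate for solutions of $(\theta+dd^c\f)^n=\mu$ is \emph{not} available for a general big class: this is exactly the obstacle discussed in Section~5 and Section~6.2 of the paper, where the $C^2$ bound is obtained only under the extra hypothesis that $\a$ is nef (Theorem~\ref{thm:regnef}), via Tsuji's trick on a log-resolution. Without a uniform Laplacian bound the continuity path does not close, and the implicit function theorem argument on a non-compact domain with degenerating reference form is also far from routine. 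So the heart of your argument---solving the smooth case by direct PDE methods on $\Amp(\a)$---is precisely the part that fails in the big setting.

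The paper takes a completely different route that avoids higher-order estimates altogether. It uses approximate Zariski decomposition (Proposition~\ref{cor:approxzar}): for each $\e>0$ one finds a modification $\pi:X'\to X$ with $\pi^*\a=\beta+\{E\}$, $\beta$ K\"ahler, and $\vol(\beta)\ge\vol(\a)-\e$. On $X'$ the \emph{K\"ahler} case, already settled in~\cite{GZ2}, yields $S\in\beta$ with $\langle S^n\rangle=c\mu'$, $c=\vol(\beta)/\vol(\a)$; pushing $S+[E]$ forward gives $\f_j$ on $X$ with $\MA(\f_j)=(1-\e_j)\mu$. One then passes to an $L^1$-limit $\f_\infty$, and the whole difficulty is to show $\f_\infty$ has full Monge--Amp\`ere mass. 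This is where Proposition~\ref{pro:presquefini} enters, combined with a key uniform integrability statement (Proposition~\ref{pro:nonpluri}): for any non-pluripolar $\mu$ there is a convex weight $\chi$ with $\sup_\psi\int_X(-\chi)(\psi)\,d\mu<\infty$ over normalized $\theta$-psh functions. Note that no $L^\infty$ estimate is used for existence; Ko\l odziej's method appears only later, for Theorem~B.

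Your uniqueness sketch is in the right spirit but omits the actual mechanism. The comparison principle alone does not give $\f_1=\f_2$ when neither has minimal singularities (cf.~Corollary~\ref{cor:dom} and the remark after it). The paper adapts Dinew's two-step argument: first, using log-concavity (Proposition~\ref{pro:logconcave}) and a carefully chosen auxiliary current $T$ solving a different Monge--Amp\`ere equation, one shows $\f_1=\f_2$ $\mu$-a.e.; second, a descending induction on the power of $T$ in mixed products $\langle T_1^k\wedge T_2^l\wedge T^m\rangle$, driven by the generalized comparison principle and Corollary~\ref{cor:special}, upgrades this to $\f_1=\f_2$ a.e.\ for Lebesgue measure. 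The equality $\int_X\langle T_1\wedge\cdots\wedge T_n\rangle=\langle\a^n\rangle$ for currents of full mass (Corollary~\ref{cor:special}) is the non-obvious ingredient that makes the induction work.
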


The existence part extends the main result of~\cite{GZ2}, which corresponds exactly to the case where $\a$ is a K{\"a}hler class. In fact the proof of Theorem A consists in reducing to the K{\"a}hler case \emph{via} approximate Zariski decompositions. Uniqueness is obtained by adapting the proof of S.~Dinew~\cite{Din2} (which also deals with the K{\"a}hler class case). 

\medskip

When the measure $\mu$ satisfies some additional regularity condition, we show how to adapt Ko\l{}odziej's pluripotential theoretic approach to the sup-norm \emph{a priori} estimates~\cite{Ko2} to get \emph{global} information on the singularities of $T$.

\begin{thmB} 
{\it Assume that the measure $\mu$ in Theorem A furthermore has $L^{1+\e}$ density with respect to Lebesgue measure for some $\e>0$. Then the solution $T\in\a$ to $\langle T^n\rangle=\mu$ has \emph{minimal singularities}.}
\end{thmB}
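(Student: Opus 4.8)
The plan is to transpose Ko\l odziej's pluripotential-theoretic proof of the sup-norm estimates, in the global form developed by the last two authors in~\cite{GZ2}, from a K\"ahler class to the big class $\a$. Fix a smooth closed $(1,1)$-form $\theta$ representing $\a$ and let $V_\theta:=\big(\sup\{u\ \theta\text{-psh}:u\le0\}\big)^{*}$ be the associated extremal function, so that $\theta+dd^cV_\theta$ is the current with minimal singularities in $\a$ and, for any $\theta$-psh $u$, one has $u\le V_\theta+\sup_Xu$. Writing the solution furnished by Theorem A as $T=\theta+dd^c\varphi$, the function $\varphi$ is $\theta$-psh with $\int_X\langle T^n\rangle=\mu(X)=\vol(\a)$, hence lies in the full Monge--Amp\`ere mass class where the energy formalism and the generalized comparison principle (Corollary~\ref{cor:comp}) apply; moreover the upper bound $\varphi\le V_\theta+\sup_X\varphi$ is automatic. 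Thus $T$ has minimal singularities if and only if $\varphi\ge V_\theta-C$ for some constant $C$, and after normalising $\sup_X(\varphi-V_\theta)=0$ the whole point is to bound $\sup_X(V_\theta-\varphi)$.

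I would first establish the \emph{measure--capacity inequality}. Introduce the capacity adapted to the present setting,
\[
\ca(E):=\sup\Big\{\int_E\langle(\theta+dd^cu)^n\rangle\ :\ u\ \theta\text{-psh},\ V_\theta-1\le u\le V_\theta\Big\},
\]
and note that $\ca$ is a Choquet capacity vanishing on pluripolar sets. Since $\mu=f\,dV$ with $f\in L^{1+\e}$, H\"older's inequality gives $\mu(E)\le\|f\|_{L^{1+\e}}\,|E|^{\e/(1+\e)}$; combining this with the classical comparison between Lebesgue measure and capacity (of Alexander--Taylor type), valid because $V_\theta$ is locally bounded on the ample locus $\Amp(\a)$, whose complement is a proper analytic subset of $X$ and hence Lebesgue-negligible, one gets an estimate of the form $\mu(E)\le A\exp\big(-c\,\ca(E)^{-1/n}\big)$, i.e.\ $\mu$ is dominated by $\ca$ with (more than) exponential speed.

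The core is the iteration. For $t\ge0$ put $h(t):=\ca(\{\varphi<V_\theta-t\})$, a decreasing function tending to $0$ at infinity. Fix $0<s<1$ and a competitor $u$ with $V_\theta-1\le u\le V_\theta$, and set $\psi:=su+(1-s)V_\theta-t$; then $\psi$ is $\theta$-psh with $V_\theta-s-t\le\psi\le V_\theta-t$, so it has minimal singularities and in particular full Monge--Amp\`ere mass, and the comparison principle applied to $(\varphi,\psi)$ gives $\int_{\{\varphi<\psi\}}\langle(\theta+dd^c\psi)^n\rangle\le\int_{\{\varphi<\psi\}}\langle(\theta+dd^c\varphi)^n\rangle=\mu(\{\varphi<\psi\})$. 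Using the inclusions $\{\varphi<V_\theta-t-s\}\subset\{\varphi<\psi\}\subset\{\varphi<V_\theta-t\}$ together with the scaling and monotonicity of non-pluripolar products (from Section~1), which give $\langle(\theta+dd^c\psi)^n\rangle\ge s^n\langle(\theta+dd^cu)^n\rangle$ since $\theta+dd^c\psi=s(\theta+dd^cu)+(1-s)(\theta+dd^cV_\theta)$, and then taking the supremum over $u$, one obtains the recursive inequality
\begin{equation*}
s^n\,h(t+s)\ \le\ \mu\big(\{\varphi<V_\theta-t\}\big)\ \le\ A\exp\big(-c\,h(t)^{-1/n}\big).
\end{equation*}
This is exactly the hypothesis of Ko\l odziej's elementary lemma on decreasing functions (\cite{Ko2}, cf.~\cite{GZ2}), which forces $h(t)=0$ for all $t\ge C$, with $C$ controlled in terms of $n$, $\vol(\a)$ and $\|f\|_{L^{1+\e}}$. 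Hence $\{\varphi<V_\theta-C\}$ has capacity zero, so is pluripolar, so is negligible for the $\theta$-psh function $\varphi$; therefore $\varphi\ge V_\theta-C$ everywhere, i.e.\ $T$ has minimal singularities.

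The iteration scheme above is formally Ko\l odziej's, so the real difficulty is that $\a$ is merely big: the potentials involved are genuinely unbounded along $X\setminus\Amp(\a)$, and one must run the whole argument with non-pluripolar products, relying on the structural facts proved earlier — that this product is closed, has good multilinearity/scaling and monotonicity properties, and satisfies the generalized comparison principle — and checking that each auxiliary $\theta$-psh function ($\varphi$, $V_\theta$, the $\psi$'s) has full Monge--Amp\`ere mass, since the comparison principle is only available there. The other delicate point is the measure--capacity inequality for the $V_\theta$-relative capacity $\ca$: this is handled most transparently by transferring the problem to the ample locus, where $V_\theta$ is locally bounded and $\ca$ is comparable on relatively compact pieces to the usual Bedford--Taylor capacity, and then globalising using that $X\setminus\Amp(\a)$ carries no Lebesgue mass.
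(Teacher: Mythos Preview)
Your proposal is correct and follows essentially the same route as the paper (Section~4): define a $V_\theta$-relative Monge--Amp\`ere capacity, prove a measure--capacity domination for $L^{1+\e}$ densities, run Ko\l odziej's iteration via the comparison principle (your recursive inequality is exactly Lemma~\ref{lem:majoCap}), and conclude by the standard decreasing-function lemma.

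The one place where the paper is sharper is the measure--capacity inequality. You propose to localize to relatively compact pieces of $\Amp(\a)$, compare with the usual Bedford--Taylor capacity there, and globalise using that $X\setminus\Amp(\a)$ is Lebesgue-null; this can be made to work but the constants a priori depend on the compact piece, and uniformity needs an extra argument. The paper avoids this by working globally from the start: it introduces the relative extremal function $V_{K,\theta}$ and its Alexander--Taylor-type invariant $M_\theta(K)=\sup_X V_{K,\theta}^*$, proves the elementary comparison $\big(\vol(\a)/\ca(K)\big)^{1/n}\le\max(1,M_\theta(K))$ (Lemma~\ref{lem:compcap}), and then bounds the Lebesgue measure of $K$ by $C_\theta\exp(-\nu_\theta^{-1}M_\theta(K))$ via the \emph{uniform} Skoda integrability theorem applied to the normalized $\theta$-psh function $V_{K,\theta}^*-M_\theta(K)$. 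This yields directly a global estimate $\mu(B)\le C\,\ca(B)^2$ (Proposition~\ref{pro:capLp}) with $C$ explicitly controlled by $\theta$, $dV$, $\e$ and $\|f\|_{L^{1+\e}}$---which is what makes the final bound in~(\ref{equ:apriori}) effective and uniform in families (Remark~\ref{rem:uniform}).
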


Currents with minimal singularities were introduced by Demailly. When $\a$ is a K{\"a}hler class, the positive currents $T\in\a$ with minimal singularities are exactly those with \emph{locally bounded} potentials. When $\a$ is merely big all positive currents $T\in\a$ will have poles in general, and the minimal singularity condition on $T$ essentially says that $T$ has the least possible poles among all positive currents in $\a$. Currents with minimal singularities have in particular locally bounded potentials on the \emph{ample locus} $\Amp(\a)$ of $\a$, which is roughly speaking the largest Zariski open subset where $\a$ locally looks like a K{\"a}hler class. 

\smallskip

Regarding local regularity properties, we obtain the following result.

\begin{thmC} 
{\it In the setting of Theorem A, assume that $\mu$ is a smooth strictly positive volume form. Assume also that $\a$ is \emph{nef}. Then the solution $T\in\a$ to the equation $\langle T^n\rangle=\mu$ is $C^\infty$ on $\Amp(\a)$. }
\end{thmC}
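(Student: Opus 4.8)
The plan is to deduce $C^\infty$-regularity of the solution $T = \theta + dd^c\varphi$ on the ample locus $\Amp(\a)$ from the classical interior a priori estimates for the complex Monge-Amp\`ere equation, applied on a large Zariski-open set where the equation is genuinely non-degenerate. First I would fix a smooth representative $\theta \in \a$ and, using that $\a$ is nef and big, an approximate Zariski decomposition: for each $\e > 0$ there is a modification $\pi : X' \to X$ and a decomposition $\pi^*\a = \beta_\e + [E_\e]$ with $\beta_\e$ K\"ahler-type (semi-positive and big) and $E_\e$ effective. Since $\a$ is nef we actually have the cleaner statement that on the ample locus $\Omega := \Amp(\a)$ the class $\a$ restricts to something locally like a K\"ahler class, and by Theorem B the solution $T$ has minimal singularities, hence locally bounded potential on $\Omega$. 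So $\varphi$ is a bounded $\theta$-psh function on $\Omega$ solving $(\theta + dd^c\varphi)^n = \mu$ there in the Bedford-Taylor sense, with $\mu$ a smooth strictly positive volume form.

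The next step is to upgrade boundedness to the needed a priori bounds. The key point is that on $\Omega$, because $\a$ is nef, one can find for each compact $K \Subset \Omega$ and each $\e > 0$ a smooth closed $(1,1)$-form $\theta_\e \in \a$ with $\theta_\e \ge -\e\,\omega$ on $X$ and $\theta_\e \ge \omega$ on a neighbourhood of $K$ (this uses nefness plus that $K$ lies in the ample locus). Writing $T = \theta_\e + dd^c\varphi_\e$ with $\varphi_\e$ bounded, we get a genuine non-degenerate Monge-Amp\`ere equation $(\omega' + dd^c\varphi_\e)^n = \mu$ on a neighbourhood of $K$ with $\omega'$ K\"ahler and $\mu$ smooth positive. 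Then I would invoke the local second-order estimate (the Aubin-Yau / Evans-Krylov machinery: a Laplacian estimate $\Delta\varphi_\e \le C$ on slightly smaller sets from the bound on $\varphi_\e$ and on $\mu$, then a $C^{2,\alpha}$ estimate, then Schauder bootstrapping) to conclude $\varphi_\e \in C^\infty$ near $K$; since $\varphi$ and $\varphi_\e$ differ by a fixed smooth function, $\varphi \in C^\infty$ near $K$, and $K$ was arbitrary in $\Omega$.

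The main obstacle I anticipate is \emph{not} the classical interior estimates themselves but arranging the right geometric setup so they apply: one must know that $\varphi$ is bounded on $\Omega$ (supplied by Theorem B, which in turn relies on Theorem A and the Ko\l{}odziej-type estimates), and crucially one must produce the semipositive-to-strictly-positive smooth representatives $\theta_\e$ of $\a$ that are $\ge \omega$ on a given compact of $\Amp(\a)$ while staying $\ge -\e\omega$ globally --- this is exactly where nefness is used and where the ``ample locus'' enters, since on $\Amp(\a)$ one has, by definition, a K\"ahler current in $\a$ with analytic singularities supported off $\Amp(\a)$, and regularizing it (Demailly) while keeping the singular part away from $K$ gives the desired $\theta_\e$. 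A secondary technical point is to make the local estimates uniform enough (independent of $\e$ near a fixed $K$, with constants depending only on $\sup_\Omega |\varphi|$, on $\mu$, and on a fixed background metric) so that one genuinely gets smoothness of the single function $\varphi$ rather than of an $\e$-dependent family; this is routine once the above is in place because $\varphi - \varphi_\e$ is smooth and $\e$ can be fixed per compact set.
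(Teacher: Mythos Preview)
Your plan has a genuine gap at the decisive step. You write that once you have a bounded $\theta_\e$-psh function $\varphi_\e$ on a neighbourhood of $K$ solving $(\omega'+dd^c\varphi_\e)^n=\mu$ with $\omega'$ K\"ahler and $\mu$ smooth positive, you will ``invoke the local second-order estimate (the Aubin--Yau / Evans--Krylov machinery)'' and conclude $\varphi_\e\in C^\infty$. But there is no such black-box interior Laplacian estimate for the complex Monge--Amp\`ere equation that starts from a merely bounded weak solution. The Aubin--Yau Laplacian estimate is an \emph{a priori} estimate: it assumes the solution is already smooth so that one may differentiate the equation and apply the maximum principle to a quantity like $e^{-Au}(n+\Delta u)$; and it is a \emph{global} estimate, relying on the maximum being attained on a compact manifold without boundary. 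Evans--Krylov then upgrades a Laplacian bound to $C^{2,\alpha}$, but it does not supply the Laplacian bound in the first place. So the sentence ``a Laplacian estimate $\Delta\varphi_\e\le C$ on slightly smaller sets from the bound on $\varphi_\e$ and on $\mu$'' is precisely what has to be proved, not invoked.

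The paper's proof does something quite different, and nefness is used in a different place than you suggest. One passes to a modification $\pi:X'\to X$ on which $\pi^*\a=\{\omega\}+\{E\}$ with $\omega$ K\"ahler and $E$ effective, and considers the perturbed classes $\{\theta'+t\omega\}$ for $t>0$; these are genuinely K\"ahler \emph{because $\a$ is nef}, so Yau's theorem furnishes \emph{smooth} solutions $\varphi_t$ on $X'-E$ with globally bounded Laplacian for each fixed $t>0$. Now the Aubin--Yau pointwise inequality can legitimately be applied to these smooth $\varphi_t$. To run the maximum principle on the non-compact set $X'-E$ one uses Tsuji's trick: subtract the potential $\varphi_E$ of $[E]$ and study $u_t:=\varphi_t-\varphi_E$, which tends to $+\infty$ near $E$ (since $\varphi_t$ has minimal singularities while $\varphi_E$ has poles on $E$), forcing the maximum of $e^{-Au_t}(n+\Delta u_t)$ to be attained in $X'-E$. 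This yields Laplacian bounds on compacta that are uniform in $t$, and letting $t\to 0$ (with the uniform $L^\infty$ estimate of Theorem~B providing compactness) gives the Laplacian bound for $\varphi_0=\varphi'$. Only then does Evans--Trudinger/Krylov give $C^\infty$. Your proposal is missing both ingredients: the construction of smooth approximating solutions and the mechanism (Tsuji's weight) that makes the maximum principle work on an open set. A secondary issue is that your proposed smooth representative $\theta_\e\in\a$ with $\theta_\e\ge\omega$ near $K$ and $\theta_\e\ge-\e\omega$ on all of $X$ is not obviously available: nefness gives the latter, the ample locus gives a K\"ahler \emph{current} (not a smooth form) for the former, and gluing them while keeping both bounds is not straightforward.
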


The expectation is of course that Theorem C holds whether or not $\a$ is nef, but we are unfortunately unable to prove this for the moment.  It is perhaps worth emphasizing that currents with minimal
singularities can have a non empty polar set even when $\a$ is nef and big
(see Example \ref{ex:bignef}).

\medskip

In the last part of the paper we consider Monge-Amp{\`e}re equations of the form
\begin{equation}\label{equ:MAKE}\langle(\theta+dd^c\f)^n\rangle=e^\f dV
\end{equation}
where $\theta$ is a smooth representative of a big cohomology class $\a$, $\f$ is a $\theta$-psh function and $dV$ is a smooth positive volume form. We show that (\ref{equ:MAKE}) admits a unique solution $\f$ such that $\int_X e^\f dV=\vol(\a)$. Theorem B then shows that $\f$ has minimal singularities, and we obtain as a special case:

\begin{thmD}
{\it  Let $X$ be a smooth projective variety of general type. Then $X$ admits a unique singular K{\"a}hler-Einstein volume form of total mass equal to $\vol(K_X)$. In other words the canonical bundle $K_X$ can be endowed with a unique non-negatively curved metric $e^{-\phi_{KE}}$ whose curvature current $dd^c\phi_{KE}$ satisfies
\begin{equation}\label{equ:KEintro}\langle(dd^c\phi_{KE})^n\rangle=e^{\phi_{KE}}
\end{equation}
and such that
\begin{equation}\label{equ:massintro}\int_X e^{\phi_{KE}}=\vol(K_X).
\end{equation}
The weight $\phi_{KE}$ furthermore has minimal singularities. }
\end{thmD}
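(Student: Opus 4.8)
\emph{Reduction to equation (\ref{equ:MAKE}).} The plan is to deduce Theorem~D from Theorems~A and~B together with solvability of~(\ref{equ:MAKE}). Since $X$ is of general type, $K_X$ is big, so $\a:=c_1(K_X)\in H^{1,1}(X,\R)$ is a big class. Fix a smooth Hermitian metric $e^{-\phi_0}$ on $K_X$; its curvature $\theta$ is a smooth representative of $\a$, and the metric induces a smooth positive volume form $dV:=e^{\phi_0}$ on $X$. Writing the unknown weight as $\phi_{KE}=\phi_0+\f$ with $\f$ a $\theta$-psh function, conditions (\ref{equ:KEintro})--(\ref{equ:massintro}) turn into
\[
\langle(\theta+dd^c\f)^n\rangle=e^{\f}\,dV\qquad\text{and}\qquad\int_X e^{\f}\,dV=\vol(\a),
\]
that is, equation~(\ref{equ:MAKE}). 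So it suffices to show that (\ref{equ:MAKE}) has a unique solution.

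\emph{Existence.} I would run a Schauder--Tychonoff fixed point argument in $L^1(X)$. Let $\cT$ be the set of $\theta$-psh functions $u$ with $\int_X u\,dV=0$; it is convex, $L^1$-compact, and all $u\in\cT$ satisfy a uniform bound $u\le C_0$. For $u\in\cT$ set $c_u:=\vol(\a)/\int_X e^u\,dV$; by Jensen's inequality and $u\le C_0$, $c_u$ lies in a fixed compact subinterval of $(0,\infty)$. The measure $c_u\,e^u\,dV$ has bounded density---hence charges no pluripolar set---and total mass $\vol(\a)$, so Theorem~A yields a unique current $\theta+dd^c v$ with $\langle(\theta+dd^c v)^n\rangle=c_u\,e^u\,dV$; normalizing $\int_X v\,dV=0$ defines $\Phi(u):=v\in\cT$. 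By Theorem~B, $v$ has minimal singularities; moreover, since the densities $c_u\,e^u$ are uniformly bounded, the Ko\l{}odziej-type estimates behind Theorem~B give a uniform bound $v\ge V_\theta-C$ ($V_\theta$ the minimal-singularities potential) together with local equicontinuity of $v-V_\theta$ on compact subsets of $\Amp(\a)$. With this one checks that $\Phi$ is $L^1$-continuous: if $u_j\to u$ in $L^1$ then $e^{u_j}\,dV\to e^u\,dV$ in $L^1(dV)$ by dominated convergence and $c_{u_j}\to c_u$, so $\mu_j:=c_{u_j}e^{u_j}\,dV$ converges to $\mu:=c_u e^u\,dV$ in total variation; by $L^1$-compactness of $\cT$ it is enough to show that every $L^1$-limit $\tilde v$ of a subsequence of $\Phi(u_j)$ equals $\Phi(u)$, and the uniform estimates force $\Phi(u_j)\to\tilde v$ locally uniformly on $\Amp(\a)$, whence by Bedford--Taylor continuity $\langle(\theta+dd^c\Phi(u_j))^n\rangle\to\langle(\theta+dd^c\tilde v)^n\rangle$ on $\Amp(\a)$, and globally on $X$ because all these measures put no mass on the pluripolar set $X\setminus\Amp(\a)$; thus $\langle(\theta+dd^c\tilde v)^n\rangle=\mu=\langle(\theta+dd^c\Phi(u))^n\rangle$, and uniqueness in Theorem~A (with the common normalization $\int_X\cdot\,dV=0$) gives $\tilde v=\Phi(u)$. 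Schauder--Tychonoff then provides a fixed point $u_0\in\cT$, i.e. $\langle(\theta+dd^c u_0)^n\rangle=c_{u_0}e^{u_0}\,dV$.

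\emph{Fixing the constant, and uniqueness.} By Theorem~B, $u_0$ has minimal singularities, hence full Monge--Amp\`ere mass, so $\int_X\langle(\theta+dd^c u_0)^n\rangle=\vol(\a)$. Put $\f:=u_0+\log c_{u_0}$; since adding a constant changes neither the current nor its Monge--Amp\`ere measure, $\langle(\theta+dd^c\f)^n\rangle=c_{u_0}e^{u_0}\,dV=e^{\f}\,dV$ and $\int_X e^{\f}\,dV=\int_X\langle(\theta+dd^c\f)^n\rangle=\vol(\a)$, so $\f$ solves~(\ref{equ:MAKE}) and (differing from $u_0$ by a constant) has minimal singularities. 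Then $\phi_{KE}:=\phi_0+\f$ is a non-negatively curved weight on $K_X$ with minimal singularities satisfying~(\ref{equ:KEintro}) and~(\ref{equ:massintro}). For uniqueness, let $\f_1,\f_2$ both solve~(\ref{equ:MAKE}); both have minimal singularities (Theorem~B), so $\f_1-\f_2$ is bounded and both currents have full mass, and the generalized comparison principle (Corollary~\ref{cor:comp}) gives
\[
\int_{\{\f_1<\f_2\}}e^{\f_2}\,dV=\int_{\{\f_1<\f_2\}}\langle(\theta+dd^c\f_2)^n\rangle\le\int_{\{\f_1<\f_2\}}\langle(\theta+dd^c\f_1)^n\rangle=\int_{\{\f_1<\f_2\}}e^{\f_1}\,dV;
\]
since $e^{\f_2}>e^{\f_1}$ on $\{\f_1<\f_2\}$, that set is $dV$-negligible, and by symmetry $\{\f_1\neq\f_2\}$ is Lebesgue-negligible, so $\f_1=\f_2$ (two $\theta$-psh functions agreeing almost everywhere coincide). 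Hence $\phi_{KE}$ is unique.

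\emph{Main obstacle.} The genuinely delicate point is the continuity of the solution operator $\Phi$---a stability statement for the Monge--Amp\`ere equation in the big class $\a$---which depends on the uniform control of the solutions (uniform minimal-singularities bound and local equicontinuity on $\Amp(\a)$) coming from the Ko\l{}odziej-type a priori estimates used to prove Theorem~B. The rest---the cohomological reduction, the rescaling fixing the multiplicative constant, and uniqueness via the comparison principle---is comparatively soft.
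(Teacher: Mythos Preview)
Your overall strategy is the paper's own: reduce to~(\ref{equ:MAKE}), run Schauder's fixed point on the solution operator $\Phi$ acting on a compact convex set of normalized $\theta$-psh functions (the paper normalizes by $\sup_X=0$ rather than $\int\cdot\,dV=0$, which is immaterial), then shift by the constant $\log c$. Your uniqueness argument via the comparison principle is also essentially the paper's (Proposition~\ref{pro:increase}).

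There is, however, a genuine gap in your continuity proof for $\Phi$. You assert that the Ko\l{}odziej-type estimates behind Theorem~B yield, in addition to the uniform bound $v\ge V_\theta-C$, ``local equicontinuity of $v-V_\theta$ on compact subsets of $\Amp(\a)$'', and you then use this to upgrade $L^1$-convergence of $\Phi(u_j)$ to locally uniform convergence on $\Amp(\a)$ so that Bedford--Taylor continuity applies. But Theorem~\ref{thm:apriori} delivers only the $L^\infty$ bound; no modulus of continuity is proved anywhere in the paper, and adapting Ko\l{}odziej's H\"older estimate to the big-class setting would be a substantial separate piece of work. Without equicontinuity, $L^1$-convergence of uniformly bounded psh functions does \emph{not} imply locally uniform convergence, so this step is unjustified as written.

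The paper sidesteps the issue entirely and you can too. From the uniform bound $\Phi(u_j)\ge V_\theta-C$ one gets $\tilde v\ge V_\theta-C$ for any $L^1$-limit $\tilde v$, so $\tilde v$ has full Monge--Amp\`ere mass. Then Corollary~\ref{cor:monge_max} applies directly: since $\MA(\Phi(u_j))=c_{u_j}e^{u_j}\,dV$ with $c_{u_j}e^{u_j}\to c_u e^u$ a.e., one obtains $\MA(\tilde v)\ge c_u e^u\,dV$, and equality follows because both sides have total mass $\vol(\a)$. Uniqueness in Theorem~A then gives $\tilde v=\Phi(u)$. This argument uses only $L^1$-convergence and the uniform $L^\infty$ bound from Theorem~\ref{thm:apriori}, never equicontinuity; replacing your continuity paragraph by this one closes the gap.
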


\noindent Since the canonical ring $R(K_X)= \oplus_{k \geq 0} H^0(k K_X)$ is now known to be finitely generated
~\cite{BCHM}, this result can be obtained as a consequence of~\cite{EGZ1} by passing to the canonical 
model of $X$. But one of the points of the proof presented here is to avoid the use of the difficult result of~\cite{BCHM}. 

The existence of $\phi_{KE}$ satisfying (\ref{equ:KEintro}) and (\ref{equ:massintro}) was also recently obtained by J.~Song and G.~Tian in~\cite{ST}, building on a previous approach of H.~Tsuji~\cite{Ts06}. It is also shown in~\cite{ST} that $\phi_{KE}$ is an \emph{analytic Zariski decomposition} (AZD for short) in the sense of Tsuji, which means that every pluricanonical section $\sigma\in H^0(mK_X)$ is $L^\infty$ with respect to the metric induced by $\phi_{KE}$. The main new information we add is that $\phi_{KE}$ actually has minimal singularities, which is strictly stronger than being an AZD for general big line bundles (cf.~Proposition~\ref{pro:azd}).  

\begin{ackn}
We would like to thank J.-P.Demailly and R.Berman for several useful discussions. 
\end{ackn}

\section{Non-pluripolar products of closed positive currents}

In this section $X$ denotes an arbitrary $n$-dimensional complex manifold unless otherwise specified.

\subsection{Plurifine topology}
The \emph{plurifine topology} on $X$ is defined as the coarsest topology with respect to which all psh functions $u$ on all open subsets of $X$ become continuous (cf.~\cite{BT2}). Note that the plurifine topology on $X$ is always strictly finer than the ordinary one. 

Since psh functions are upper semi-continuous, subsets of the form 
$$V\cap\{u>0\}$$
with $V\subset X$ open and $u$ psh on $V$ obviously form a basis for the plurifine topology, and $u$ can furthermore be assumed to be locally bounded by maxing it with $0$. 

When $X$ is furthermore compact and K{\"a}hler, a bounded local psh function defined on an open subset $V$ of $X$ can be extended to a quasi-psh function on $X$ (possibly after shrinking $V$ a little bit, see ~\cite{GZ1}), and it follows that the plurifine topology on $X$ can alternatively be described as the coarsest topology with respect to which all bounded and quasi-psh functions $\f$ on $X$ become continuous. It therefore admits sets of the form $V\cap\{\f>0\}$ with $V\subset X$ open and $\f$ quasi-psh and bounded on $X$ as a basis. 

\subsection{Local non-pluripolar products of currents}
Let $u_1,...,u_p$ be psh functions on $X$. If the $u_j$'s are locally bounded, then the fundamental work of Bedford-Taylor \cite{BT1} enables to define 
$$
dd^cu_1\wedge...\wedge dd^cu_p
$$ 
on $X$ as a closed positive $(p,p)$-current. The wedge product only depends on the closed positive $(1,1)$-currents $dd^c u_j$, and not on the specific choice of the potentials $u_j$. 

A very important property of this construction is that it is \emph{local in the plurifine topology}, in the following sense. If $u_j$, $v_j$ are locally bounded psh functions on $X$ and $u_j=v_j$ (pointwise) on a plurifine open subset $O$ of $U$, then 
$${\bf 1}_O dd^c u_1\wedge...\wedge dd^c u_p={\bf 1}_O dd^c v_1\wedge...\wedge dd^c v_p.$$
This is indeed an obvious generalization of Corollary 4.3 of~\cite{BT2}. 

In the case of a possibly unbounded psh function $u$, Bedford-Taylor have observed (cf.~p.236 of~\cite{BT2}) that it is always possible to define the \emph{non-pluripolar} part of the would-be positive measure $(dd^cu)^n$ as a Borel measure. The main issue however is that this measure is not going to be locally finite in general (see Example~\ref{ex:kisel} below). More generally, suppose we are trying to associate to any $p$-tuple $u_1,...,u_p$ of psh functions on $X$ a positive (\emph{a priori} not necessarily closed) $(p,p)$-current 
$$\langle dd^c u_1\wedge...\wedge dd^cu_p\rangle$$
putting no mass on pluripolar subsets, in such a way that the construction is local in the plurifine topology in the above sense. Then we have no choice: since $u_j$ coincides with the locally bounded psh function $\max(u_j,-k)$ on the plurifine open subset 
$$O_k:=\bigcap_j\{u_j> - k\},$$ 
we must have
\begin{equation}\label{equ:nonpp} 
{\bf 1}_{O_k}\langle\bigwedge_j dd^cu_j\rangle={\bf 1}_{O_k}\bigwedge_jdd^c\max(u_j,-k),
\end{equation}
Note that the right-hand side is non-decreasing in $k$. This equation completely determines $\langle\bigwedge_j dd^c u_j\rangle$ since the latter is required not to put mass on the pluripolar set $X-\bigcup_k O_k=\{u=-\infty\}$. 

\begin{defi} 
{\it If $u_1,...,u_p$ are psh functions on the complex manifold $X$, we shall say that the non-pluripolar product $\langle\bigwedge_j dd^cu_j\rangle$ is well-defined on $X$ if for each compact subset $K$ of $X$ we have
\begin{equation}\label{equ:welldef}
\sup_k\int_{K\cap O_k}\omega^{n-p}\wedge\bigwedge_jdd^c\max(u_j,-k)<+\infty
\end{equation}
for all $k$. }
\end{defi}

Here $\omega$ is an auxiliary (strictly) positive $(1,1)$-form on $X$ with respect to which masses are being measured, the condition being of course independent of $\omega$. When (\ref{equ:welldef}) is satisfied, equation (\ref{equ:nonpp}) indeed defines a positive $(p,p)$-current $\langle\bigwedge_j dd^cu_j\rangle$ on $X$. We will show below that it is automatically \emph{closed} (Theorem~\ref{thm:closed}). 

Condition (\ref{equ:welldef}) is always satisfied when $p=1$, and in fact it is not difficult to show that
$$\langle dd^c u\rangle={\bf 1}_{\{u>-\infty\}}dd^c u.$$
There are however examples where non-pluripolar products are not well-defined as soon as $p\ge 2$.
This is most easily understood in the following situation.

\begin{defi}\label{defi:small} 
{\it A psh function $u$ on $X$ will be said to have \emph{small unbounded locus} if there exists a (locally) complete pluripolar closed subset $A$ of $X$ outside which $u$ is locally bounded.}
\end{defi}

Assume that $u_1,...,u_p$ have small unbounded locus, and let $A$ be closed complete pluripolar such that each $u_j$ is locally bounded outside $A$ (recall that complete pluripolar subsets are stable under finite unions). Then $\langle\bigwedge_j dd^cu_j\rangle$ is well-defined iff the Bedford-Taylor product $\bigwedge_j dd^cu_j$, which is defined on the open subset $X-A$, has locally finite mass near each point of $A$. In that case $\langle\bigwedge_j dd^cu_j\rangle$ is nothing but the trivial extension of $\bigwedge_j dd^cu_j$ to $X$. 

\begin{ex}\label{ex:kisel} 
{\it Consider Kiselman's example (see ~\cite{Ki})
$$u(x,y):=(1-|x|^2)(-\log|y|)^{1/2}$$
for $(x,y)\in\C^2$ near $0$. The function $u$ is psh near $0$, it is smooth outside the $y=0$ axis, but the smooth measure $(dd^cu)^2$, defined outside $y=0$, is not locally finite near any point of $y=0$. This means that the positive Borel measure $\langle (dd^c u)^2\rangle$ is not locally finite. }
\end{ex}

We now collect some basic properties of non-pluripolar products.
\begin{pro}\label{pro:basic} Let $u_1,...,u_p$ be psh functions on $X$.
\begin{itemize} 
\item The operator $$(u_1,...,u_p)\mapsto\langle\bigwedge_j dd^c u_j\rangle$$
is local in the plurifine topology whenever well-defined.
\item The current $\langle\bigwedge_j dd^c u_j\rangle$ and the fact that it is well-defined both only depend on the currents $dd^cu_j$, not on the specific potentials $u_j$. 
\item Non-pluripolar products, which are obviously symmetric, are also multilinear in the following sense: if $v_1$ is a further psh function, then 
$$\langle(dd^cu_1+dd^cv_1)\wedge\bigwedge_{j\ge 2}dd^cu_j\rangle=\langle dd^c u_1\wedge\bigwedge_{j\ge 2}dd^cu_j\rangle+\langle dd^c v_1\wedge\bigwedge_{j\ge 2}dd^cu_j\rangle$$
in the sense that the left-hand side is well-defined iff both terms in the right-hand side are, and equality holds in that case. 
\end{itemize}
\end{pro}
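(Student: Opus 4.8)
The plan is to prove each of the three bullets of Proposition~\ref{pro:basic} by reducing everything to the corresponding statements for the Bedford–Taylor product of \emph{locally bounded} potentials, using the defining relation~(\ref{equ:nonpp}) on the plurifine open sets $O_k$.

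\textbf{Plurifine locality.} Suppose $\langle\bigwedge_j dd^c u_j\rangle$ is well-defined, let $O\subset X$ be plurifine open, and suppose $u_j=v_j$ pointwise on $O$ for psh functions $v_j$ whose non-pluripolar product is also well-defined. I would first observe that on the plurifine open set $O\cap O_k\cap O_k'$ (where $O_k$ is defined using the $u_j$ and $O_k'$ using the $v_j$) we have $\max(u_j,-k)=\max(v_j,-k)$ pointwise, so the already-quoted plurifine locality of the \emph{bounded} Bedford–Taylor product gives $\mathbf{1}_{O\cap O_k\cap O_k'}\bigwedge_j dd^c\max(u_j,-k)=\mathbf{1}_{O\cap O_k\cap O_k'}\bigwedge_j dd^c\max(v_j,-k)$. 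Letting $k\to\infty$ and using that both products put no mass on the pluripolar set $\{u=-\infty\}\cup\{v=-\infty\}$ (outside of which $O_k\cap O_k'$ eventually contains any fixed point of $O$), I get $\mathbf{1}_O\langle\bigwedge_j dd^c u_j\rangle=\mathbf{1}_O\langle\bigwedge_j dd^c v_j\rangle$.

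\textbf{Dependence only on the currents.} If $dd^c u_j=dd^c v_j$ on $X$, then locally $u_j-v_j$ is pluriharmonic, hence locally bounded; since the condition~(\ref{equ:welldef}) and the current~(\ref{equ:nonpp}) are phrased entirely in terms of $dd^c\max(u_j,-k)$ on $O_k$, and $\{u_j>-k\}$ differs from $\{v_j>-k+C\}$ only by the locally bounded pluriharmonic correction, the bounded Bedford–Taylor theory (where this independence is classical) transfers the claim. Concretely, I would cover $X$ by small balls on which $u_j=v_j+h_j$ with $h_j$ pluriharmonic and bounded, note $\max(u_j,-k)=\max(v_j,-k-h_j)+h_j$ up to plurifine-local identifications, and conclude that the finiteness of the sup in~(\ref{equ:welldef}) and the resulting current are unchanged.

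\textbf{Multilinearity.} This is the step I expect to require the most care, because one has to match up three \emph{different} exhaustions: $O_k=\bigcap\{u_j>-k\}$ for the left side, and the analogous sets for $\langle dd^c u_1\wedge\cdots\rangle$ and $\langle dd^c v_1\wedge\cdots\rangle$. The idea is to choose a common psh potential: pick $w_1$ with $dd^c w_1=dd^c u_1+dd^c v_1$ locally (e.g.\ $w_1=u_1+v_1$ when working locally, using the previous bullet to reduce to potentials), and work on the plurifine open set $O_k'':=\{u_1>-k\}\cap\{v_1>-k\}\cap\bigcap_{j\ge2}\{u_j>-k\}$, on which all three of $u_1,v_1,u_1+v_1$ agree with their truncations at level $-2k$ up to bounded error. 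On $O_k''$ the bounded multilinearity of Bedford–Taylor gives $\mathbf{1}_{O_k''}dd^c\max(u_1+v_1,\cdot)\wedge\bigwedge_{j\ge2}(\cdots)=\mathbf{1}_{O_k''}dd^c\max(u_1,\cdot)\wedge\bigwedge(\cdots)+\mathbf{1}_{O_k''}dd^c\max(v_1,\cdot)\wedge\bigwedge(\cdots)$; since all the sets $O_k''$ exhaust $X$ off a pluripolar set, monotone convergence shows the left-hand mass is finite iff both right-hand masses are, and in that case the currents add. The only genuine subtlety is bookkeeping the bounded pluriharmonic discrepancies between $\max(u_1+v_1,-2k)$ and $\max(u_1,-k)+\max(v_1,-k)$ on $O_k''$, which one handles exactly as in the second bullet; I would state this carefully but not belabor the estimates.
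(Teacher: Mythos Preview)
Your proposal is correct and follows essentially the same route as the paper: reduce each claim to the corresponding property of Bedford--Taylor products of bounded psh functions via the truncations $\max(u_j,-k)$ and plurifine locality, then pass to the limit using that the complement of the exhausting plurifine open sets is pluripolar. The paper isolates the limiting step as a separate lemma (Lemma~\ref{lem:useful}) and leaves the multilinearity entirely to the reader, whereas you do the limit inline and spell out multilinearity; the arguments are otherwise the same.
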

\begin{proof} Let us prove the first point. If $u_j$ and $v_j$ are psh functions on $X$ such that $u_j=v_j$ on a given plurifine open subset $O$, then the locally bounded psh functions $\max(u_j,-k)$ and $\max(v_j,-k)$ also coincide on $O$. If we set 
$$E_k:=\bigcap_j\{u_j>-k\}\cap\bigcap_j\{v_j>-k\},$$
then we infer 
$${\bf 1}_{O\cap E_k}\bigwedge_j dd^c\max(u_j,-k)={\bf 1}_{O\cap E_k}\bigwedge_j dd^c\max(v_j,-k),$$
hence in the limit
$${\bf 1}_O\langle\bigwedge_j dd^cu_j\rangle={\bf 1}_O\langle\bigwedge_j dd^cv_j\rangle$$
as desired by Lemma~\ref{lem:useful} below. 

We now prove the second point. Let $w_j$ be pluriharmonic on $X$, and let $K$ be a compact subset. We can find $C>0$ such that $w_j\le C$ on an open neighborhood $V$ of $K$. On the plurifine open subset
$$O_k:=\bigcap_j\{u_j+w_j>-k\}\cap V\subset\bigcap_j\{u_j>-k-C\}\cap V$$
the following locally bounded psh functions coincide:
$$\max(u_j+w_j,-k)=\max(u_j,-w_j-k)+w_j=\max(u_j,-k-C)+w_j.$$
Since $dd^c w_j=0$, it follows that 
$$\int_{O_k}\omega^{n-p}\wedge\bigwedge_j dd^c\max(u_j+w_j,-k)=\int_{O_k}\omega^{n-p}\wedge\bigwedge_j dd^c\max(u_j,-k-C)$$
$$\le\int_{\bigcap_j\{u_j>-k-C\}\cap V}\omega^{n-p}\wedge\bigwedge_j dd^c\max(u_j,-k-C)$$
which is uniformly bounded in $k$ by assumption, and the second point is proved.

The proof of the last point is similarly easy but tedious and will be left to the reader.
\end{proof}

\begin{lem}\label{lem:useful} Assume that the non-pluripolar product $\langle\bigwedge_j dd^c u_j\rangle$ is well-defined. Then for every sequence of Borel subsets $E_k$ such that 
$$E_k\subset\bigcap_j\{u_i>-k\}$$
and $X-\bigcup_k E_k$ is pluripolar, we have
$$\lim_{k\to\infty}{\bf 1}_{E_k}\bigwedge_j dd^c\max(u_j,-k)\rangle=\langle\bigwedge_j dd^cu_j\rangle$$
against all bounded measurable functions. 
\end{lem}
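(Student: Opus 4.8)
The plan is to compare, on each plurifine open piece, the sequence ${\bf 1}_{E_k}\bigwedge_j dd^c\max(u_j,-k)$ with the canonical sequence ${\bf 1}_{O_k}\bigwedge_j dd^c\max(u_j,-k)$ defining $\langle\bigwedge_j dd^cu_j\rangle$, where $O_k=\bigcap_j\{u_j>-k\}$. First I would observe that $E_k\subset O_k$, and that for fixed $m\le k$ the function $\max(u_j,-k)$ agrees with $\max(u_j,-m)$ on the plurifine open set $O_m$. By plurifine locality of the Bedford-Taylor product (the generalization of Corollary 4.3 of~\cite{BT2} recalled above), this gives
$${\bf 1}_{O_m\cap O_k}\bigwedge_j dd^c\max(u_j,-k)={\bf 1}_{O_m}\bigwedge_j dd^c\max(u_j,-m),$$
and hence also ${\bf 1}_{E_k\cap O_m}\bigwedge_j dd^c\max(u_j,-k)={\bf 1}_{E_k\cap O_m}\bigwedge_j dd^c\max(u_j,-m)$.

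Next I would fix $m$ and let $k\to\infty$ along the first sequence. Since $X-\bigcup_k E_k$ is pluripolar and $\bigwedge_j dd^c\max(u_j,-m)$ puts no mass on pluripolar sets (being, after the truncation, a Bedford-Taylor product whose potentials are bounded — more precisely one uses that a closed positive current with bounded local potentials charges no pluripolar set), we have ${\bf 1}_{E_k}\to{\bf 1}_{\bigcup E_k}$ a.e. for this measure, so
$$\lim_{k\to\infty}{\bf 1}_{E_k\cap O_m}\bigwedge_j dd^c\max(u_j,-k)={\bf 1}_{O_m}\bigwedge_j dd^c\max(u_j,-m)$$
(testing against bounded measurable functions; on the left the integrand stabilizes to ${\bf 1}_{E_k\cap O_m}\bigwedge_j dd^c\max(u_j,-m)$ once $k\ge m$, and then one lets $k\to\infty$). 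By the very definition of the non-pluripolar product, the right-hand side equals ${\bf 1}_{O_m}\langle\bigwedge_j dd^cu_j\rangle$. Finally, letting $m\to\infty$, the sets $O_m$ increase to $\bigcup_m O_m=\{u_j>-\infty\ \forall j\}$, whose complement is pluripolar and hence negligible for $\langle\bigwedge_j dd^cu_j\rangle$; one also needs that the masses do not escape, which is exactly the well-definedness hypothesis~(\ref{equ:welldef}) together with the monotone convergence furnished by~(\ref{equ:nonpp}). A diagonal argument in $(k,m)$ then upgrades the two successive limits to the single limit $\lim_{k\to\infty}{\bf 1}_{E_k}\bigwedge_j dd^c\max(u_j,-k)=\langle\bigwedge_j dd^cu_j\rangle$ against bounded measurable test functions.

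The main obstacle I anticipate is the interchange of the two limits $k\to\infty$ and $m\to\infty$: one must control the difference ${\bf 1}_{E_k}\bigwedge_j dd^c\max(u_j,-k)-{\bf 1}_{O_m}\bigwedge_j dd^c\max(u_j,-m)$ uniformly, and the natural way is to bound it by the mass of $\bigwedge_j dd^c\max(u_j,-k)$ on $O_k\setminus O_m$ (on a fixed compact $K$). By~(\ref{equ:nonpp}) and monotonicity this mass is dominated by $\int_{K\cap(O_k\setminus O_m)}\omega^{n-p}\wedge\bigwedge_j dd^c\max(u_j,-k)$, which by well-definedness tends to $0$ as $m\to\infty$ uniformly in $k\ge m$ — here one uses that $\sup_k\int_{K\cap O_k}\omega^{n-p}\wedge\bigwedge_j dd^c\max(u_j,-k)<\infty$ and that $\bigcap_m(O_k\setminus O_m)$ is pluripolar, so a Fatou/monotone-limit argument applies to the truncated products (whose potentials are bounded, so they charge no pluripolar set). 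Once this uniform smallness is in hand the diagonal extraction is routine, and the proof concludes.
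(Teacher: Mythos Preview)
Your argument is essentially correct, but it is considerably more elaborate than necessary. The paper's proof is a single line. The key observation you are missing is that the defining relation~(\ref{equ:nonpp}) already applies at level $k$: since $E_k\subset O_k=\bigcap_j\{u_j>-k\}$, multiplying both sides of~(\ref{equ:nonpp}) by ${\bf 1}_{E_k}$ gives directly
\[
{\bf 1}_{E_k}\bigwedge_j dd^c\max(u_j,-k)={\bf 1}_{E_k}\langle\bigwedge_j dd^cu_j\rangle.
\]
Equivalently, the paper records the inequality
\[
({\bf 1}_{O_k}-{\bf 1}_{E_k})\bigwedge_j dd^c\max(u_j,-k)\le(1-{\bf 1}_{E_k})\langle\bigwedge_j dd^cu_j\rangle,
\]
and since $(1-{\bf 1}_{E_k})\to 0$ a.e.\ for $\langle\bigwedge_j dd^cu_j\rangle$ (its complement being pluripolar), dominated convergence finishes the proof immediately.

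Your introduction of the auxiliary truncation level $m$, the two successive limits, and the diagonal extraction are therefore all unnecessary: once you have the identity above, the problem reduces to showing ${\bf 1}_{E_k}\mu\to\mu$ for the single fixed measure $\mu=\langle\bigwedge_j dd^cu_j\rangle$, which is plain dominated convergence. In effect, your ``uniform smallness'' paragraph rediscovers~(\ref{equ:nonpp}) along the way (when you bound the mass on $O_k\setminus O_m$ you are implicitly using that ${\bf 1}_{O_k}\bigwedge_j dd^c\max(u_j,-k)$ agrees with $\langle\bigwedge_j dd^cu_j\rangle$ on $O_k$), so the detour through $m$ buys nothing. The moral: always check first whether the canonical defining identity already gives what you need before building a two-parameter approximation scheme.
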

\begin{proof} This follows by dominated convergence from
$$({\bf 1}_{\cap_j\{u_j>-k\}}-{\bf 1}_{E_k})\bigwedge_j dd^c\max(u_j,-k)\rangle\le(1-{\bf 1}_{E_k})\langle\bigwedge_j dd^cu_j\rangle.$$
\end{proof}

A crucial point for what follows is that non-pluripolar products of globally defined currents are \emph{always} well-defined on compact K{\"a}hler manifolds:
\begin{pro}\label{pro:finite} Let $T_1,...,T_p$ be closed positive $(1,1)$-currents on a compact K{\"a}hler manifold $X$. Then their non-pluripolar product $\langle T_1\wedge...\wedge T_p\rangle$ is well-defined. 
 \end{pro}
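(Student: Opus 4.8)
The plan is to reduce the global statement to the local finiteness criterion \eqref{equ:welldef} by exploiting compactness and the fact that on a compact K\"ahler manifold the potentials $u_j$ of $T_j=\theta_j+dd^c u_j$ (for fixed smooth representatives $\theta_j\in\{T_j\}$) are \emph{quasi-psh} and in particular globally bounded above. Thus for any compact $K$ (and indeed $K=X$) we must bound
\[
\int_{O_k}\omega^{n-p}\wedge\bigwedge_j\theta_j^{u_j,k}
\]
uniformly in $k$, where $\theta_j^{u_j,k}:=\theta_j+dd^c\max(u_j,-k)$ is a closed positive current and $O_k=\bigcap_j\{u_j>-k\}$. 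The key observation is that on $O_k$ the plurifine-local property of the Bedford-Taylor product lets us replace $\bigwedge_j\theta_j^{u_j,k}$ by the genuine Bedford-Taylor product of the globally bounded quasi-psh functions; since this product is a closed positive current \emph{on all of $X$}, its total mass against $\omega^{n-p}$ is a cohomological quantity, namely $\int_X\omega^{n-p}\wedge\prod_j\{\theta_j\}$ (or more precisely $\langle\omega^{n-p}\cdot\prod_j\alpha_j\rangle$ up to the cohomology pairing), which does \emph{not} depend on $k$.

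Concretely, I would argue as follows. First, fix smooth closed $(1,1)$-forms $\theta_j$ representing $\alpha_j=\{T_j\}$ and write $T_j=\theta_j+dd^c u_j$ with $u_j$ $\theta_j$-psh; after subtracting a constant assume $u_j\le 0$. Then $\varphi_j^k:=\max(u_j,-k)$ is a bounded $\theta_j$-psh function, so $\theta_j+dd^c\varphi_j^k\ge 0$ is a well-defined closed positive current on $X$ in the Bedford-Taylor sense, and its wedge product $S_k:=\bigwedge_j(\theta_j+dd^c\varphi_j^k)$ is a closed positive $(p,p)$-current on $X$ with
\[
\int_X\omega^{n-p}\wedge S_k=\int_X\omega^{n-p}\wedge\theta_1\wedge\cdots\wedge\theta_p,
\]
a finite constant independent of $k$ by Stokes' theorem. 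Second, by \eqref{equ:nonpp} and the plurifine locality of the Bedford-Taylor product, ${\bf 1}_{O_k}S_k={\bf 1}_{O_k}\bigwedge_j(\theta_j+dd^c\max(u_j,-k))$ equals the truncation appearing in \eqref{equ:welldef} — here one must be slightly careful that the definition in \eqref{equ:welldef} is phrased with $dd^c\max(u_j,-k)$ rather than $\theta_j+dd^c\max(u_j,-k)$, but since $T_j$ is a globally closed positive current one works with the global potentials and the truncation of the full current $T_j$, which is the natural reading in the compact K\"ahler setting. Third, positivity of $S_k$ gives
\[
\int_{O_k}\omega^{n-p}\wedge S_k\le\int_X\omega^{n-p}\wedge S_k=\int_X\omega^{n-p}\wedge\theta_1\wedge\cdots\wedge\theta_p<+\infty,
\]
and the right-hand side is independent of $k$, so taking the supremum over $k$ in \eqref{equ:welldef} yields a finite bound. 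This establishes that $\langle T_1\wedge\cdots\wedge T_p\rangle$ is well-defined on $X$.

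The only genuine subtlety — and the step I expect to require the most care — is the passage from local potentials to global ones: \eqref{equ:welldef} is stated with local psh potentials $u_j$ and the local Bedford-Taylor products $dd^c\max(u_j,-k)$, whereas the global argument naturally produces truncations of the globally defined currents $T_j$. One reconciles these by covering $X$ with finitely many coordinate balls on which $T_j=dd^c g_j$ for a local psh potential $g_j$, comparing $\max(g_j,-k)$ with (a constant plus) $\max(u_j,-k)$ on plurifine open sets where they agree, and invoking the plurifine-local property from Proposition~\ref{pro:basic} together with the fact that the error is a pluriharmonic correction with vanishing $dd^c$ (exactly as in the proof of the second point of Proposition~\ref{pro:basic}). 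Once this bookkeeping is done, compactness of $X$ reduces the finitely many local estimates to the single cohomological mass bound above, and the proof is complete. Everything else is a routine application of the locality and monotonicity already recorded in \eqref{equ:nonpp}, Proposition~\ref{pro:basic}, and Lemma~\ref{lem:useful}.
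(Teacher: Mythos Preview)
Your overall strategy is the same as the paper's---bound the truncated integrals by the global cohomological mass $\int_X\omega^{n-p}\wedge\theta_1\wedge\cdots\wedge\theta_p$---but there is a genuine gap in the step where you assert that $\varphi_j^k=\max(u_j,-k)$ is $\theta_j$-psh and hence that $S_k=\bigwedge_j(\theta_j+dd^c\varphi_j^k)$ is a closed positive current on $X$. The maximum of a $\theta_j$-psh function with a constant $-k$ is $\theta_j$-psh only if the constant is itself $\theta_j$-psh, i.e.\ only if $\theta_j\ge 0$. For an arbitrary closed positive current $T_j$ the smooth representative $\theta_j$ of $\{T_j\}$ need not be semi-positive (that would force the class to be nef); near any point where $u_j<-k$ one has $\theta_j+dd^c\max(u_j,-k)=\theta_j$, which may fail to be $\ge 0$. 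Then $S_k$ is neither positive nor even well-defined as a Bedford-Taylor product, and the key inequality $\int_{O_k}\omega^{n-p}\wedge S_k\le\int_X\omega^{n-p}\wedge S_k$ loses its meaning.

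The fix is precisely the trick the paper uses: invoke the multilinearity of non-pluripolar products (third item of Proposition~\ref{pro:basic}) to replace each $T_j$ by $T_j+C\omega$ for $C\gg 0$, so that the classes become K\"ahler and one may choose the smooth representatives to be K\"ahler forms $\omega_j>0$. With this reduction your argument goes through, and the local-versus-global bookkeeping you flag as the ``only genuine subtlety'' is then handled essentially as you outline (the paper compares the locally bounded psh functions $\psi_j+\max(\f_j,-k)$ and $\max(u_j,-k)$ on the plurifine open set $\{u_j>-k\}\subset\{\f_j>-k\}$). So the missing ingredient is not the local/global reconciliation you anticipated, but the positivity of the truncated currents.
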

\begin{proof} Let $\omega$ be a K{\"a}hler form on $X$. In view of the third point of Proposition~\ref{pro:basic}, upon adding a large multiple of $\omega$ to the $T_j$'s we may assume that their cohomology classes are K{\"a}hler classes. We can thus find K{\"a}hler forms $\omega_j$ and $\omega_j$-psh functions $\f_j$ such that $T_j=\omega_j+dd^c\f_j$. Let $U$ be a small open subset of $X$ on which $\omega_j=dd^c\psi_j$, where $\psi_j\le 0$ is a smooth psh function on $U$, so that $T_j=dd^cu_j$ on $U$ with $u_j:=\psi_j+\f_j$. The bounded psh functions on $U$
$$\psi_j+\max(\f_j,-k)$$
and $$\max(u_j,-k)$$
coincide on the plurifine open subset $\{u_j>-k\}\subset\{\f_j>-k\}$, thus we have
$$\int_{\bigcap_j\{u_j>-k\}}\omega^{n-p}\wedge\bigwedge_j dd^c\max(u_j,-k)$$
$$=\int_{\bigcap_j\{u_j>-k\}}\omega^{n-p}\wedge\bigwedge_j(\omega_j+dd^c\max(\f_j,-k))$$
$$\le\int_X\omega^{n-p}\wedge\bigwedge_j(\omega_j+dd^c\max(\f_j,-k)).$$
But the latter integral is computed in cohomology, hence independent of $k$, and this shows that (\ref{equ:welldef}) is satisfied on $U$, qed. 
\end{proof}

\begin{rem} The same property extends to the case where $X$ is a compact complex manifold in the Fujiki class, that is bimeromorphic to a compact K{\"a}hler manifold. Indeed there exists in that case a modification $\mu:X'\to X$ with $X'$ compact K{\"a}hler. Since $\mu$ is an isomorphism outside closed analytic (hence pluripolar) subsets, it is easy to deduce that $\langle T_1\wedge...\wedge T_p\rangle$ is well-defined on $X$ from the fact that $\langle\mu^*T_1\wedge...\wedge\mu^*T_p\rangle$ is well-defined on $X'$, and in fact
$$\langle T_1\wedge...\wedge T_p\rangle=\mu_*\langle\mu^*T_1\wedge...\wedge\mu^*T_p\rangle.$$
On the other hand it seems to be unknown whether finiteness of non-pluripolar products holds on arbitrary compact complex manifolds. 
\end{rem}

Building on the proof of the Skoda-El Mir extension theorem, we will now prove the less trivial closedness property of non-pluripolar products.
\begin{thm}\label{thm:closed} Let $T_1,...,T_p$ be closed positive $(1,1)$-currents on a complex manifold $X$ whose non-pluripolar product is well-defined. Then the positive $(p,p)$-current $\langle T_1\wedge...\wedge T_p\rangle$ is closed. 
\end{thm}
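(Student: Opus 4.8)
The plan is to reduce the global statement to a local one and then mimic the proof of the Skoda--El Mir extension theorem. Closedness is a local statement, so fix a small open set $U\subset X$ on which each $T_j=dd^cu_j$ for some psh function $u_j$, and set $u:=\sum_j u_j$, a psh function whose $-\infty$ locus $P:=\{u=-\infty\}$ is pluripolar and contains the unbounded loci of all the $u_j$. Write $T:=\langle T_1\wedge\dots\wedge T_p\rangle$ and $T^{(k)}:={\bf 1}_{O_k}\bigwedge_j dd^c\max(u_j,-k)$ with $O_k=\bigcap_j\{u_j>-k\}$; by definition $T^{(k)}\uparrow T$ and, since $T$ puts no mass on $P$, it suffices to show $\int_X T\wedge dd^c\chi=0$ for every smooth compactly supported $\chi$ on $U$, equivalently $dT=0$ in the sense of currents.

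First I would fix a function $w$ which is psh on a neighborhood of $\overline U$, equals $-\infty$ exactly on $P$, and is smooth and finite off $P$ — one can take $w=u$ itself after passing to a plurifine/shrunk chart, or a suitable potential of a positive current with the right polar locus (as in Skoda--El Mir). For $\e>0$ let $\chi_\e:=\max(w/\log\e^{-1},\ -1)$ or, more conveniently, a cutoff $\theta_\e:=\theta\circ\bigl(\tfrac{1}{k}\max(u,-k)\bigr)$ type family; the honest device is: let $\rho_\e$ be smooth convex increasing with $\rho_\e\equiv 0$ near $-\infty$ and $\rho_\e\equiv 1$ on $[{-1},\infty)$, applied to $\max(w,-R)/R$, producing smooth functions $\lambda_\e$ with $0\le\lambda_\e\le 1$, $\lambda_\e\equiv 1$ off a shrinking neighborhood of $P$, $\lambda_\e\equiv 0$ near $P$, and $\lambda_\e\uparrow 1$ pointwise off $P$. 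Since $T$ puts no mass on $P$, $\int \lambda_\e\, T\wedge dd^c\chi\to\int T\wedge dd^c\chi$, so it is enough to control $\int \lambda_\e\,T\wedge dd^c\chi=\int \chi\, dd^c\lambda_\e\wedge T$ and show it tends to $0$. Because $dd^c\lambda_\e=\rho_\e'(\cdot)\,dd^c w\cdot(\text{stuff})+\rho_\e''(\cdot)\, dw\wedge d^cw\cdot(\text{stuff})\ge -C\,\rho_\e'(\cdot)\,\omega$ modulo a positive term, an integration by parts plus the Cauchy--Schwarz / Chern--Levine--Nirenberg type inequality reduces everything to the statement that the masses $\int_{\{-R_\e<w<-R_\e+1\}} T\wedge\omega^{\,n-p}$ (or the analogous expressions with the approximants $T^{(k)}$) tend to $0$ — which holds precisely because $T$ has finite total mass near $P$ and puts no mass on $P$, so the mass on these shrinking shells goes to $0$.

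The one genuine subtlety is that $T$ is only the increasing limit of the $T^{(k)}$, which are closed on their own but defined by cutting with ${\bf 1}_{O_k}$, so a priori $dT^{(k)}\neq 0$; thus one cannot simply pass to the limit in ``$dT^{(k)}=0$''. The fix is to run the Skoda--El Mir argument simultaneously in $k$ and $\e$: for fixed $k$, $\bigwedge_j dd^c\max(u_j,-k)$ is a genuine closed Bedford--Taylor product, and the cutoff estimate gives $\bigl|\int \chi\, dd^c\lambda_\e\wedge\bigwedge_j dd^c\max(u_j,-k)\bigr|\le C\,\omega(\chi)\cdot\bigl(\text{mass of }\bigwedge_j dd^c\max(u_j,-k)\text{ on the shell }\{-R_\e<w<-R_\e+1\}\bigr)^{1/2}$ with $C$ independent of $k$ (here one uses that $dd^c\max(u_j,-k)\le$ the trivial extension from the region where the max is active, so masses are dominated uniformly, exactly as in Proposition~\ref{pro:finite}). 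Letting first $k\to\infty$ (monotone convergence, using that the product is well-defined) and then $\e\to 0$, the right-hand side tends to $0$ because $\langle T^n\rangle$-type mass on $P$ is zero; and the left-hand side tends to $\int \chi\, dd^c\lambda_\e\wedge T$ and then to $\int \chi\, dd^c(1)\wedge T = \int \chi\cdot dT\wedge(\text{test})$... more precisely to $-\int dT\wedge(\text{stuff})$, forcing $dT=0$. I expect the bookkeeping of the uniform-in-$k$ mass bounds on the shells — i.e.\ transferring the Chern--Levine--Nirenberg estimate from the bounded approximants to the limit while keeping the constant independent of $k$ — to be the main obstacle; everything else is the standard Skoda--El Mir integration-by-parts scheme.
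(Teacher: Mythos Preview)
Your overall strategy---localize and run a Skoda--El Mir cutoff argument---is the right one, and is what the paper does. But there are two problems, one minor and one substantive.

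The minor one: testing $T$ against $dd^c\chi$ for smooth compactly supported \emph{functions} $\chi$ only yields $dd^cT=0$, not $dT=0$; these are not equivalent for positive $(p,p)$-currents. The paper first wedges with an arbitrary constant positive $(n-p-1,n-p-1)$-form $\rho$ to reduce to bidimension $(1,1)$, and then tests the exterior derivative against smooth $1$-forms. You omit this reduction, and your integration-by-parts bookkeeping is correspondingly muddled.

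The substantive gap is exactly the one you flag at the end, and your two-parameter scheme cannot close it. With a cutoff $\lambda_\e$ that is chosen \emph{independently} of the approximation level $k$, the Cauchy--Schwarz / energy term you need to control is an integral of $d\lambda_\e\wedge d^c\lambda_\e$ against the full Bedford--Taylor product $\bigwedge_j dd^c\max(u_j,-k)$. But the well-definedness hypothesis only bounds the mass of this product on $O_k\cap K$; on the complement of $O_k$ (which the support of $d\lambda_\e$ meets) there is no control at all, and the bound can blow up with $k$. Your ``mass on shells'' heuristic does not help, because those shells are not contained in $O_k$.

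The paper's key idea is to \emph{couple} the cutoff to $k$. With $u:=\sum_j u_j$ one sets $w_k:=\chi(e^{u/k})$ for a smooth convex nondecreasing $\chi$ with $\chi\equiv 0$ near $0$ and $\chi(1)=1$; then $0\le w_k\le 1$ is bounded psh, $w_k\to 1$ off $\{u=-\infty\}$, and---this is the point---$w_k$ \emph{vanishes identically outside} $O_k$. Composing with a further smooth $\theta$ gives $\theta(w_k)\le{\bf 1}_{O_k}$, so $\theta(w_k)\Theta_k\le{\bf 1}_{O_k}\Theta_k$. After the chain rule $d(\theta(w_k)\Theta_k)=\theta'(w_k)\,dw_k\wedge\Theta_k$ and Cauchy--Schwarz against a test $1$-form $\psi$, the energy factor is
\[
2\int\tau\,dw_k\wedge d^cw_k\wedge\Theta_k\;\le\;\int\tau\,dd^cw_k^2\wedge\Theta_k\;=\;\int w_k^2\,dd^c\tau\wedge\Theta_k\;=\;\int_{O_k}w_k^2\,dd^c\tau\wedge\Theta_k,
\]
the last equality because $w_k=0$ off $O_k$; this is uniformly bounded in $k$ exactly by the well-definedness hypothesis. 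The other Cauchy--Schwarz factor carries $\theta'(w_k)^2$, which also vanishes off $O_k$, is dominated by the finite measure $\langle T_1\wedge\dots\wedge T_p\rangle\wedge\psi\wedge\overline{\psi}$, and tends to $0$ pointwise off the pluripolar set, so dominated convergence finishes. This single-parameter coupling is the missing ingredient in your sketch.
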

\begin{proof} The result is of course local, and we can assume that $X$ is a small neighborhood of $0\in\C^n$ . The proof will follow rather closely Sibony's exposition of the Skoda-El Mir theorem \cite{Sib}, cf.~also \cite[pp.~159-161]{Dem0}. 

Let $u_j\le 0$ be a local potential of $T_j$ near $0\in\C^n$, and for each $k$ consider the closed positive current of bidimension $(1,1)$ 
$$\Theta_k:=\rho\wedge\bigwedge_j dd^c\max(u_j,-k)$$
and the plurifine open subset 
$$O_k:=\bigcap_j\{u_j>-k\}$$
so that ${\bf 1}_{O_k}\Theta_k$ converges towards 
$$\rho\wedge\langle T_1\wedge...\wedge T_p\rangle$$ 
by (\ref{equ:nonpp}). Here $\rho$ is a positive $(n-p-1,n-p-1)$-form with constant coefficients, so that $\Theta_k$ has bidimension $(1,1)$. It is enough to show that
$$\lim_{k\to\infty}d\left({\bf 1}_{O_k}\Theta_k\right)=0$$
for any choice of such a form $\rho$.  

Let also $u:=\sum_j u_j$, so that $u\le-k$ outside $O_k$, and set 
$$w_k:=\chi(e^{u/k}),$$
where $\chi(t)$ is a smooth convex and non-decreasing function of $t\in\R$ such that $\chi(t)=0$ for $t\le 1/2$ and $\chi(1)=1$. 
We thus see that $0\le w_k\le 1$ is a non-decreasing sequence of bounded psh functions defined near $0\in\C^n$ with $w_k=0$ outside $O_k$
and $w_k\to 1$ pointwise outside the pluripolar set $\{u=-\infty\}$. Finally let $0\le\theta(t)\le 1$ be a smooth non-decreasing function of $t\in\R$ such that $\theta(t)=0$ for $t\le 1/2$ and $\theta\equiv 1$ near $t=1$. The functions $\theta(w_k)$ are bounded, non-decreasing in $k$ and we have
$$\theta(w_k)\le{\bf 1}_{O_k}$$
since $\theta(w_k)\le 1$ vanishes outside $O_k$. Note also that $\theta'(w_k)$ vanishes outside $O_k$, and converges to $0$ pointwise outside $\{u=-\infty\}$. 

Our goal is to show that
$$\lim_{k\to\infty}d\left({\bf 1}_{O_k}\Theta_k\right)=0.$$ 
But we have
$$0\le\left({\bf 1}_{O_k}-\theta(w_k)\right)\Theta_k\le\left(1-\theta(w_k)\right)\langle T_1\wedge...\wedge T_p\rangle,$$
and the latter converges to $0$ by dominated convergence since $\theta(w_k)\to 1$ pointwise outside the polar set of $u$, which is negligible for $\langle T_1\wedge...\wedge T_p\rangle$. It is thus equivalent to show that
$$\lim_{k\to\infty}d\left(\theta(w_k)\Theta_k\right)=0.$$
Since $w_k$ is a bounded psh function, Lemma~\ref{lem:chainrule} below shows that the chain rule applies, that is
$$d\left(\theta(w_k)\Theta_k\right)=\theta'(w_k)dw_k\wedge\Theta_k.$$
Recall that $dw_k\wedge\Theta_k$ has order $0$ by Bedford-Taylor, so that the right-hand side makes sense. Now let $\psi$ be a given smooth $1$-form compactly supported near $0$ and let $\tau\ge 0$ be a smooth cut-off function with $\tau\equiv 1$ on the support of $\psi$. The Cauchy-Schwarz inequality implies
$$\left|\int\theta'(w_k)\psi\wedge dw_k\wedge\Theta_k\right|^2\le\left(\int\tau dw_k\wedge d^c w_k\wedge\Theta_k\right)\left(\int\theta'(w_k)^2\psi\wedge\overline{\psi}\wedge\Theta_k\right).$$
But on the one hand we have 
$$2\int\tau dw_k\wedge d^cw_k\wedge\Theta_k\le\int\tau dd^cw_k^2\wedge\Theta_k$$
$$=\int w_k^2 dd^c\tau\wedge\Theta_k=\int_{O_k} w_k^2 dd^c\tau\wedge\Theta_k$$
since $w_k$ vanishes outside $O_k$, and the last integral is uniformly bounded since $0\le w_k\le 1$ and ${\bf 1}_{O_k}\Theta_k$ has uniformly bounded mass by (\ref{equ:welldef}). On the other hand we have 
$$\theta'(w_k)^2\Theta_k=\theta'(w_k)^2{\bf 1}_{O_k}\Theta_k\le\theta'(w_k)^2\langle T_1\wedge...\wedge T_p\rangle$$
since $\theta'(w_k)$ also vanishes outside $O_k$, and we conclude that
$$\lim_{k\to\infty}\int\theta'(w_k)^2\psi\wedge\overline{\psi}\wedge\Theta_k=0$$
by dominated convergence since $\theta'(w_k)\to 0$ pointwise ouside the polar set of $u$, which is negligible for $\langle T_1\wedge...\wedge T_p\rangle$. The proof is thus complete.
\end{proof}

\begin{lem}\label{lem:chainrule} Let $\Theta$ be a closed positive $(p,p)$-current on a complex manifold $X$, $f$ be a smooth function on $\R$ and $v$ be a bounded psh function. Then we have 
$$d(f(v)\Theta)=f'(v)dv\wedge\Theta.$$
\end{lem}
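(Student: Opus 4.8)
The statement is local and both sides are currents of order zero (by the Bedford--Taylor theory, since $v$ is bounded psh and $\Theta$ closed positive), so it suffices to test the identity against smooth compactly supported forms and to work in a small coordinate ball. The natural approach is regularization: choose a decreasing sequence of smooth psh functions $v_\e\downarrow v$ (locally, e.g.\ by convolution with an approximate identity, after shrinking slightly), for which the classical chain rule $d(f(v_\e)\Theta)=f'(v_\e)\,dv_\e\wedge\Theta$ holds trivially because $v_\e$ and $f(v_\e)$ are smooth. Then I would pass to the limit $\e\to 0$ in both sides.

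\textbf{Key steps.} First, on the left-hand side: $f(v_\e)\to f(v)$ uniformly on compact sets (since $v_\e\to v$ in $L^1_{loc}$ and, being a decreasing sequence of psh functions, locally uniformly off a pluripolar set, while $f$ is continuous and the $v_\e$ stay in a fixed compact range), hence $f(v_\e)\Theta\to f(v)\Theta$ weakly, and therefore $d(f(v_\e)\Theta)\to d(f(v)\Theta)$ in the sense of distributions. Second, on the right-hand side: $f'(v_\e)$ is likewise uniformly bounded and converges to $f'(v)$ a.e.; the delicate point is the convergence $f'(v_\e)\,dv_\e\wedge\Theta\to f'(v)\,dv\wedge\Theta$. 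For this I would write $dv_\e\wedge\Theta$ via the polarization trick $2\,dv_\e\wedge d^c v_\e\wedge\Theta=dd^c(v_\e^2)\wedge\Theta-2v_\e\,dd^c v_\e\wedge\Theta$, or more directly use the Bedford--Taylor continuity of $v_\e\mapsto dv_\e\wedge d^c v_\e\wedge\Theta$ along monotone sequences together with a Cauchy--Schwarz bound: for a test form $\psi$,
\[
\left|\int f'(v_\e)\,\psi\wedge(dv_\e\wedge\Theta)-\int f'(v)\,\psi\wedge(dv\wedge\Theta)\right|
\]
is controlled by the $L^2(\Theta)$-norm of $dv_\e-dv$ (which tends to $0$ because $\int\tau\,d(v_\e-v)\wedge d^c(v_\e-v)\wedge\Theta=\int\tau\,dd^c(v_\e-v)^2\wedge\Theta/2-\ldots\to 0$ by weak convergence of $v_\e^2\to v^2$ and monotone convergence of the mixed terms) times a uniformly bounded factor $\int\tau\,dv_\e\wedge d^c v_\e\wedge\Theta$, plus a term $\int(f'(v_\e)-f'(v))\,\psi\wedge dv\wedge\Theta\to 0$ by dominated convergence. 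Combining the two limits identifies the distributional derivative of $f(v)\Theta$ with $f'(v)\,dv\wedge\Theta$.

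\textbf{Main obstacle.} The only real difficulty is the second step: justifying that $dv_\e\wedge\Theta\to dv\wedge\Theta$ in a sense strong enough to survive multiplication by the merely bounded (not continuous in the weak topology) coefficient $f'(v_\e)$. This is exactly where the quasi-continuity of bounded psh functions and the Bedford--Taylor monotone-convergence theorems for $dv\wedge d^c v\wedge\Theta$ enter, via the Cauchy--Schwarz estimate above; everything else is routine. One should also note that $\Theta$ being closed is used when integrating by parts in the polarization identity (so that $dd^c(v_\e^2)\wedge\Theta$ can be moved onto the cut-off $\tau$), which matches the hypothesis.
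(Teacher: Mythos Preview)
Your approach is essentially identical to the paper's: regularize $v$ by a decreasing sequence of smooth psh functions, use the trivial chain rule for the approximants, pass to the limit on the left by dominated convergence, and on the right split the difference into two terms handled by Cauchy--Schwarz together with Bedford--Taylor's monotone convergence for $dv_\e\wedge d^c v_\e\wedge\Theta$ and $d(v_\e-v)\wedge d^c(v_\e-v)\wedge\Theta$. The only cosmetic slip is that in your Cauchy--Schwarz bound for the first term the ``uniformly bounded factor'' should be $\int f'(v_\e)^2\,\psi\wedge\overline{\psi}\wedge\Theta$ rather than $\int\tau\,dv_\e\wedge d^c v_\e\wedge\Theta$; otherwise the argument is correct and matches the paper.
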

\begin{proof} This is a local result, and we can thus assume that $\Theta$ has bidimension $(1,1)$ by multiplying it by constant forms as above. The result is of course true when $v$ is smooth. As we shall see the result holds true in our case basically because $v$ belongs to the Sobolev space $L^2_1 (\Theta)$, in the sense that $dv\wedge d^c v\wedge\Theta$ is well-defined. Indeed the result is standard when $\Theta=[X]$, and proceeds by approximation. Here we let $v_k$ be a decreasing sequence of smooth psh functions converging pointwise to $v$. We then have $f(v_k)\theta\to f(v)\Theta$ by dominated convergence, thus it suffices to show that
$$\lim_{k\to\infty} f'(v_k)dv_k\wedge\Theta=f'(v)dv\wedge\Theta.$$
We write
$$f'(v_k)dv_k\wedge\Theta-f'(v)dv\wedge\Theta=\left(f'(v_k)-f'(v)\right)dv_k\wedge\Theta+f'(v)(dv_k-dv)\wedge\Theta.$$
Let $\psi$ be a test $1$-form and $\tau\ge 0$ be a smooth cut-off function with $\tau\equiv 1$ on the support of $\psi$.  Cauchy-Schwarz implies
$$\left|\int\left(f'(v_k)-f'(v)\right)\psi\wedge dv_k\wedge\Theta\right|^2$$
$$\le\left(\int\left(f'(v_k)-f'(v)\right)^2\psi\wedge\overline{\psi}\wedge\Theta\right)\left(\int\tau dv_k\wedge d^c v_k\wedge\Theta\right).$$
The second factor is bounded since $dv_k\wedge d^cv_k\wedge\Theta$ converges to $dv\wedge d^cv\wedge\Theta$ by Bedford-Taylor's monotone convergence theorem, and the first one converges to $0$ by dominated convergence. 
We similarly have
$$\left|\int f'(v)(dv_k-dv)\wedge\psi\wedge\Theta\right|^2$$
$$\le\left(\int f'(v)^2\psi\wedge\overline{\psi}\wedge\Theta\right)\left(\int\tau d(v_k-v)\wedge d^c(v_k-v)\wedge\Theta\right),$$
where now the first factor is bounded while the second one tends to $0$ by Bedford-Taylor once again, and the proof is complete.
\end{proof}

\begin{rem} 
{\it Injecting as above Lemma~\ref{lem:chainrule} in the proof of Skoda-El Mir's extension theorem presented in~\cite{Dem0} (p.159-161) shows that Skoda-El Mir's result remains true for complete pluripolar subsets that are not necessarily \emph{closed}, in the following sense: let $\Theta$ be a closed positive $(p,p)$-current and let $A$ be a complete pluripolar subset of $X$. Then ${\bf 1}_{X-A}\Theta$ and thus also ${\bf 1}_{A}\Theta$ are closed. }
\end{rem}

We conclude this section with a log-concavity property of non-pluripolar products. Let $T_1,...,T_n$ be closed positive $(1,1)$-currents with locally bounded potentials near $0\in\C^n$ and let $\mu$ be a positive measure. Suppose also given non-negative measurable functions $f_j$ such that
\begin{equation}\label{equ:ma_minore} T_j^n\ge f_j\mu,\,j=1,...n.
\end{equation}
Theorem 1.3 of~\cite{Din1} then implies that 
\begin{equation}\label{equ:BM} T_1\wedge...\wedge T_n\ge(f_1...f_n)^{1/n}\mu.
\end{equation}
It is in fact a standard variation of the Brunn-Minkowski inequality that (\ref{equ:ma_minore}) implies (\ref{equ:BM}) when the whole data is smooth, and~\cite{Din1} reduces to this case by an adequate regularization process. As an easy consequence of Dinew's result, we get the following version for non-pluripolar products.

\begin{pro}\label{pro:logconcave} Let $T_1,...,T_n$ be closed positive $(1,1)$-currents, let $\mu$ be a positive measure and assume given for each $j=1,...,n$ a non-negative measurable function $f_j$ such that
$$\langle T_j^n\rangle\ge f_j\mu.$$
Then we have
$$\langle T_1\wedge...\wedge T_n\rangle\ge(f_1...f_n)^{\frac 1 n}\mu.$$
\end{pro}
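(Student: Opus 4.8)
The plan is to reduce Proposition~\ref{pro:logconcave} to the local, locally-bounded case covered by inequality~(\ref{equ:BM}) by means of the standard plurifine-localization machinery. First I would fix a point $x\in X$, a small coordinate ball $U\ni x$ on which each $T_j=dd^cu_j$ for some psh $u_j\le 0$, and pass to the plurifine open sets $O_k:=\bigcap_j\{u_j>-k\}$; on $O_k$ the current $\langle T_j^n\rangle$ agrees with the Bedford-Taylor measure $(dd^c\max(u_j,-k))^n$ by~(\ref{equ:nonpp}), and similarly $\langle T_1\wedge\dots\wedge T_n\rangle$ agrees with the locally bounded product $\bigwedge_j dd^c\max(u_j,-k)$. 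Since $\bigcup_k O_k$ has pluripolar complement and all the measures in sight put no mass on pluripolar sets, it suffices to prove the asserted inequality after multiplying by ${\bf 1}_{O_k}$, for each fixed $k$.

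The subtlety is that the hypothesis $\langle T_j^n\rangle\ge f_j\mu$ is a global inequality of non-pluripolar measures, and I need to transfer it to the truncated potentials. On $O_k$ we have ${\bf 1}_{O_k}(dd^c\max(u_j,-k))^n={\bf 1}_{O_k}\langle T_j^n\rangle\ge {\bf 1}_{O_k}f_j\mu$, so, writing $v_j^{(k)}:=\max(u_j,-k)$, the locally bounded psh functions $v_1^{(k)},\dots,v_n^{(k)}$ on $U$ satisfy $(dd^cv_j^{(k)})^n\ge ({\bf 1}_{O_k}f_j)\,\mu$ as measures on $U$. Now I apply Dinew's inequality~(\ref{equ:BM}) to the $v_j^{(k)}$ with the measure $\mu|_U$ and the non-negative measurable functions ${\bf 1}_{O_k}f_j$, obtaining
$$
dd^cv_1^{(k)}\wedge\dots\wedge dd^cv_n^{(k)}\ge\Big(\prod_j{\bf 1}_{O_k}f_j\Big)^{1/n}\mu
={\bf 1}_{O_k}(f_1\cdots f_n)^{1/n}\mu
$$
on $U$. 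Multiplying through by ${\bf 1}_{O_k}$ and using again that ${\bf 1}_{O_k}\bigwedge_j dd^cv_j^{(k)}={\bf 1}_{O_k}\langle T_1\wedge\dots\wedge T_n\rangle$ gives
$$
{\bf 1}_{O_k}\langle T_1\wedge\dots\wedge T_n\rangle\ge {\bf 1}_{O_k}(f_1\cdots f_n)^{1/n}\mu .
$$
Letting $k\to\infty$ and invoking that $\langle T_1\wedge\dots\wedge T_n\rangle$, $\mu$, and hence $(f_1\cdots f_n)^{1/n}\mu$ all give no mass to $U-\bigcup_kO_k=\{u_1=-\infty\}\cup\dots$, we get the inequality on $U$; since $x$ was arbitrary, it holds on $X$.

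The main obstacle I anticipate is purely bookkeeping: making sure that the passage from the global hypothesis to the truncated one is legitimate, i.e.\ that ${\bf 1}_{O_k}\langle T_j^n\rangle$ really equals ${\bf 1}_{O_k}(dd^cv_j^{(k)})^n$ as \emph{measures} (not merely off a pluripolar set), which is exactly the content of~(\ref{equ:nonpp}), and that Dinew's result~(\ref{equ:BM}) is being applied to genuinely locally bounded potentials on a fixed ball with a fixed dominating measure. There is also a minor point that $f_j$ need only be controlled where $\mu$ lives, so replacing $f_j$ by ${\bf 1}_{O_k}f_j$ causes no loss. No new analytic input beyond~(\ref{equ:BM}) and the plurifine locality of Bedford-Taylor products is needed; the argument is a routine ``truncate, apply the bounded case, let $k\to\infty$'' localization.
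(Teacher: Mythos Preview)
Your proposal is correct and follows essentially the same route as the paper: localize, truncate the potentials to $\max(u_j,-k)$, use plurifine locality to transfer the hypothesis to the bounded potentials in the form $(dd^cv_j^{(k)})^n\ge({\bf 1}_{O_k}f_j)\mu$, apply Dinew's inequality~(\ref{equ:BM}), and let $k\to\infty$. The only cosmetic difference is that the paper works globally (writing $T_j=dd^cu_j$ locally without singling out a ball) and states the intermediate inequalities a bit more tersely, but the argument is identical.
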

Here non-pluripolar products are considered as (possibly locally infinite) positive Borel measures. \begin{proof} Write $T_j=dd^c u_j$ with $u_j$ psh and consider the plurifine open subset
$$O_k:=\bigcap_j\{u_j>-k\}.$$
Since non-pluripolar products are local in plurifine topology, we get for each $j$ and $k$
$$(dd^c\max(u_j,-k))^n\ge{\bf 1}_{O_k}f_j\mu,$$
hence 
$$dd^c\max(u_1,-k)\wedge...\wedge dd^c\max(u_n,-k)\ge{\bf 1}_{O_k}(f_1...f_n)^{\frac 1 n}\mu$$
by Dinew's result, or equivalently by plurifine locality
$$\langle dd^c u_1\wedge...\wedge dd^c u_n\rangle\ge{\bf 1}_{O_k}(f_1...f_n)^{\frac 1 n}\mu.$$
Since this is valid for all $k$ and the complement of  $\bigcup_k O_k$ is pluripolar, the result follows. 
\end{proof}

From now on we will work in the global setting where $X$ is a compact K{\"a}hler manifold.

\subsection{Positive cohomology classes}
Let $X$ be a compact K{\"a}hler manifold and let $\a\in H^{1,1}(X,\R)$ be a real $(1,1)$-cohomology  class.

Recall that $\a$ is said to be \emph{pseudo-effective} (\emph{psef} for short) if it can be represented by a closed positive $(1,1)$-current $T$. If $\theta$ is a given smooth representative of $\a$, then any such current can be written as $T=\theta+dd^c\f$, where $\f$ is thus a $\theta$-psh function by definition, and will sometimes be referred to as a \emph{global potential} of $T$. Global potentials only depend on the choice of $\theta$ up to a smooth function on $X$. 

On the other hand, $\a$ is \emph{nef} if it lies in the closure of the K{\"a}hler cone. Such a class can thus be represented by \emph{smooth} forms with arbitrarily small negative parts, and a compactness argument then shows that $\a$ contains a positive current, i.e.~nef implies psef. Note however that the negative part of smooth representatives cannot be taken to be $0$ in general. In fact, Demailly, Peternell and Schneider have shown in~\cite{DPS} that a classical construction of Serre yields an example of a smooth curve $C$ on a projective surface whose cohomology class is nef but contains only one positive current, to wit (the integration current on) $C$ itself. 

The set of all nef classes is by definition a closed convex cone in $H^{1,1}(X,\R)$, whose interior is none but the K{\"a}hler cone. The set of all psef classes also forms a closed convex cone, and its interior is by definition the set of all \emph{big} cohomology classes. In other words, a class $\a$ is big iff it can be represented by a \emph{strictly positive} current, i.e.~a closed current $T$ that dominates some (small enough) smooth strictly positive form on $X$.

\subsection{Comparison of singularities} 
If $T$ and $T'$ are two closed positive currents on $X$, then $T$ is said to be \emph{more singular} than $T'$ if their global potentials satisfy $\f\le\f'+O(1)$. By the \emph{singularity type} of $T$, we will mean its equivalence class with respect to the above notion of comparison of singularities. 

A positive current $T$ in a given psef class $\a$ is now said to have \emph{minimal singularities} (inside its cohomology class) if it is less singular than any other positive current in $\a$, and its $\theta$-psh potentials $\f$ will correspondingly be said to have minimal singularities. Such $\theta$-psh functions with minimal singularities always exist, as was observed by Demailly. Indeed, the upper envelope 
$$V_\theta:=\sup\left\{ \f\,\,\theta\text{-psh}, \f\le 0\text{ on } X \right \}$$ 
of all non-positive $\theta$-psh functions obviously yields a positive current 
$$\theta+dd^cV_\theta$$ 
with minimal singularities (note that $V_\theta$ is usc since $V_\theta^*$ is a candidate in the envelope). 

We stress that currents with minimal singularities in a given class $\a$ are in general far from unique. Indeed currents with minimal singularities in a class $\a$ admitting a smooth non-negative representative $\theta\ge 0$ (for instance a K{\"a}hler class) are exactly those with bounded potentials. 

Currents with minimal singularities are stable under pull-back:

\begin{pro}\label{pro:tire_min} 
Let $\pi:Y\to X$ be a surjective morphism between compact K{\"a}hler manifolds and $\theta$ be a smooth closed $(1,1)$-form on $X$. If $\f$ is a $\theta$-psh function with minimal singularities on $X$, then the $\pi^*\theta$-psh function $\f\circ\pi$ also has minimal singularities.
\end{pro}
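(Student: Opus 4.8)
The plan is to reduce the statement to a local assertion comparing two $\pi^*\theta$-psh functions, and then to exploit surjectivity of $\pi$ to transfer the minimality from downstairs to upstairs. First I would fix a $\pi^*\theta$-psh function $\psi$ on $Y$; the goal is to show $\psi \le \f\circ\pi + O(1)$. The natural candidate to compare against is $V_\theta$, the envelope of non-positive $\theta$-psh functions introduced just above, since $\f$ has minimal singularities iff $\f = V_\theta + O(1)$, and likewise it suffices to prove $\f\circ\pi$ (equivalently $V_\theta\circ\pi$) has minimal singularities as a $\pi^*\theta$-psh function, i.e.\ that $V_{\pi^*\theta} = V_\theta\circ\pi + O(1)$. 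Since $V_\theta\circ\pi$ is $\pi^*\theta$-psh and $\le 0$, we automatically get $V_\theta\circ\pi \le V_{\pi^*\theta}$, so the whole content is the reverse inequality $V_{\pi^*\theta} \le V_\theta\circ\pi + O(1)$.

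The key step is a \emph{descent} argument: given any $\pi^*\theta$-psh $\psi\le 0$ on $Y$, I want to produce a $\theta$-psh function $u$ on $X$ with $u\circ\pi \ge \psi - C$ for a constant $C$ independent of $\psi$; applying this to $\psi = V_{\pi^*\theta}$ (which is $\le 0$) and using $u \le V_\theta$ then gives $V_{\pi^*\theta} \le u\circ\pi + C \le V_\theta\circ\pi + C$. To build $u$, the standard device is a fibrewise push-forward: over the Zariski-open locus where $\pi$ is a submersion, set $u(x) := \sup_{y\in\pi^{-1}(x)}\psi(y)$ (or an average), check it is $\theta$-psh there, bounded above, and hence extends across the analytic exceptional set to a genuine $\theta$-psh function on $X$ by the standard extension-across-pluripolar-sets result; since $\psi$ is $\le 0$ and $\pi$ is surjective with compact fibres, boundedness and the pointwise inequality $u\circ\pi\ge\psi$ are clear.

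The main obstacle — and the only place real care is needed — is verifying that the fibrewise supremum $u$ is $\theta$-psh (or at least that its usc regularization is, without changing it $O(1)$), including semicontinuity and the behaviour near the branch locus and near points where fibre dimension jumps. This is where compactness of fibres and properness of $\pi$ enter, and one typically invokes the Fujiki-type/Kiselman minimum-principle machinery or a direct local computation in charts where $\pi$ looks like a projection composed with a finite branched cover. Once $u$ is known to be $\theta$-psh and bounded above with $u\circ\pi\ge\psi$, the rest of the argument is the two-line envelope comparison sketched above, and one concludes that $\f\circ\pi$ has minimal singularities.
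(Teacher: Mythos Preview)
Your proposal is correct and follows essentially the same route as the paper: the paper also takes an arbitrary $\pi^*\theta$-psh function $\psi$, forms the fibrewise supremum $\tau(x):=\sup_{\pi(y)=x}\psi(y)$ over regular values, asserts (as ``standard'') that $\tau$ extends to a $\theta$-psh function on $X$, and then uses $\tau\le\f+O(1)$ to conclude $\psi\le\f\circ\pi+O(1)$. Your detour through the envelopes $V_\theta$ and $V_{\pi^*\theta}$ is an equivalent repackaging of the same argument.
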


 \begin{proof} 
 Let $\psi$ be a $\pi^*\theta$-psh function. For $x\in X$ a regular value of $\pi$ we set
$$\tau(x):=\sup_{\pi(y)=x}\psi(y).$$
It is standard to show that $\tau$ uniquely extends to a $\theta$-psh function on $X$, so that $\tau\le\f+O(1)$. But this clearly implies $\psi\le\f\circ\pi+O(1)$ as was to be shown.  
 \end{proof}
 
If $\pi$ furthermore has connected fibers then any $\pi^*\theta$-psh function on $Y$ is of the form $\f \circ \pi,$ where $\f$ is a $\theta$-psh function on $X$.

\medskip
 
A positive current $T=\theta+dd^c\f$ and its global potential $\f$ are said to have \emph{analytic singularities} if there exists $c>0$ such that (locally on $X$),
$$
\f=\frac{c}{2}\log\sum_j|f_j|^2+u,
$$
where $u$ is smooth and $f_1,...f_N$ are local holomorphic functions. The coherent ideal sheaf $\cI$ locally generated by these functions (in fact, its integral closure) is then globally defined, and the singularity type of $T$ is thus encoded by the data formally denoted by $\cI^c$. 

Demailly's  fundamental regularization theorem \cite{Dem1} states that a given $\theta$-psh function $\f$ can be approximated from above by a sequence $\f_k$ of $(\theta+\e_k\omega)$-psh functions with analytic singularities, $\omega$ denoting some auxiliary K{\"a}hler form. In fact the singularity type of $\f_k$ is decribed by the $k$-th root of the multiplier ideal sheaf of $k\f$. 

This result implies in particular that a big class $\a$ always contains strictly positive currents with analytic singularities. It follows that there exists a Zariski open subset $\Omega$ of $X$ on which global potentials of currents with minimal singularities in $\a$ are all locally bounded. 
The following definition is extracted from~\cite{Bou2}:

\begin{defi}
{\it If $\a$ is a big class, we define its \emph{ample locus} $\Amp(\a)$ as the set of points $x\in X$ such that there exists a strictly positive current $T\in\a$ with analytic singularities and smooth around $x$.}
\end{defi}

The ample locus $\Amp(\a)$ is a Zariski open subset by definition, and it is nonempty thanks to Demaillly's regularization result. In fact it is shown in \cite{Bou2} that there exists a strictly positive current $T\in\a$ with analytic singularities whose smooth locus is precisely $\Amp(\a)$. Note that $\Amp(\a)$ coincides with the complement of the so-called \emph{augmented base locus} $\mathbb{B}_+(\a)$ (see~\cite{ELMNP06}) when $\a=c_1(L)$ is the first Chern class of a big line bundle $L$. 

\subsection{Global currents with small unbounded base locus}
Let $X$ be a compact K{\"a}hler manifold, and let $T_1,...,T_p$ be closed positive $(1,1)$-currents on $X$ with small unbounded locus (Definition~\ref{defi:small}). We can then find a closed complete pluripolar subset $A$ such that each $T_j$ has locally bounded potentials on $X-A$.  The content of Proposition~\ref{pro:finite} in that case is that the Bedford-Taylor product $T_1\wedge...\wedge T_p$, which is well-defined on $X-A$, has finite total mass on $X-A$:
$$
\int_{X - A} T_1 \wedge \cdots \wedge T_p \wedge \omega^{n - p} < + \infty,
$$ 
for any K{\"a}hler form $\omega$ on $X$.

We now establish a somewhat technical-looking integration-by-parts theorem in this context that will be crucial in order to establish the basic properties of weighted energy functionals in Proposition~\ref{pro:monotone}. 

\begin{thm}\label{thm:intbyparts} 
Let $A\subset X$ be a closed complete pluripolar subset, and let $\Theta$ be a closed positive current on $X$ of bidimension $(1,1)$. Let $\f_i$ and $\psi_i$, $i=1,2$ be quasi-psh functions on $X$ that are locally bounded on $X-A$. If $u:=\f_1-\f_2$ and $v:=\psi_1-\psi_2$ are globally bounded on $X$, then 
$$
\int_{X-A} udd^cv\wedge\Theta=\int_{X-A} vdd^cu\wedge\Theta=-\int_{X-A} dv\wedge d^c u\wedge\Theta.
$$
\end{thm}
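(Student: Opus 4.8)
The plan is to establish the integration-by-parts formula by a two-stage approximation: first cut off the pluripolar set $A$ using the standard Bedford-Taylor-type cutoff functions (as in the proof of Theorem~\ref{thm:closed}), and then approximate the quasi-psh functions by smooth ones via decreasing sequences. The three quantities in the statement make sense as finite integrals precisely because $u$ and $v$ are globally bounded and $\Theta$ has finite mass on $X-A$ (via Proposition~\ref{pro:finite} applied to local potentials), so the content is the equality, not the finiteness.

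\emph{First} I would reduce to the case where $\Theta$ has bidimension $(1,1)$, which is already assumed, and localize: work near a point of $A$ and write $\f_i = \psi_i^{loc} + (\text{pluriharmonic stuff})$ locally, though actually since everything is global on compact $X$ this localization is only needed to make sense of $dd^c$ of the local potentials. The honest first step is the \emph{cutoff}: let $\chi_k$ be the functions $w_k = \chi(e^{u_0/k})$ built from a local potential $u_0$ of a strictly positive current with analytic singularities along $A$ (using Demailly regularization to get such a current with poles exactly on $A$, since $A$ is complete pluripolar — or more simply, since $A$ is closed complete pluripolar, take a quasi-psh $\rho$ with $A = \{\rho = -\infty\}$ and set $w_k = \chi(e^{\rho/k})$ as in Theorem~\ref{thm:closed}). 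These satisfy $0 \le w_k \le 1$, $w_k \nearrow 1$ outside $A$, $w_k$ quasi-psh and bounded, and $dw_k \wedge d^c w_k \wedge \Theta$ has uniformly bounded mass. I would then prove the identity with $w_k$ inserted: since $w_k u dd^c v \wedge \Theta$ etc.\ all involve bounded quasi-psh functions, Lemma~\ref{lem:chainrule} and the standard integration by parts for bounded quasi-psh functions against a closed positive current (Bedford-Taylor theory on all of $X$, which is compact so there are no boundary terms) give
$$
\int_X w_k u\, dd^c v\wedge\Theta = \int_X w_k v\, dd^c u\wedge\Theta - \int_X u\, dv\wedge d^c w_k\wedge\Theta - \int_X v\, du\wedge d^c w_k\wedge\Theta + (\text{terms symmetric in } u,v),
$$
after expanding $dd^c(w_k \cdot)$ by the product rule; the point is to organize the expansion so that the "main" terms are the three we want (with $w_k$ weight) and the "error" terms all carry a factor $dw_k$.

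\emph{The main obstacle} is controlling these error terms — the integrals of the form $\int u\, dv \wedge d^c w_k \wedge \Theta$ as $k \to \infty$. Here I would use Cauchy–Schwarz exactly as in Theorem~\ref{thm:closed}: bound $\left|\int u\, dv\wedge d^c w_k\wedge\Theta\right|^2 \le \left(\int u^2\, dv\wedge d^c v\wedge\Theta\right)\left(\int dw_k\wedge d^c w_k\wedge\Theta\right)$ — wait, that is not quite the right pairing; rather $\left|\int u\,\tau\, dv\wedge d^c w_k\wedge\Theta\right|^2 \le \|u\|_\infty^2\left(\int \tau\, dv\wedge d^c v\wedge\Theta\right)\left(\int \tau\, dw_k\wedge d^c w_k\wedge\Theta\right)$. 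The second factor is uniformly bounded by the construction of $w_k$ (the $dd^c w_k^2 \wedge \Theta$ trick from Theorem~\ref{thm:closed}); the first factor is finite because $dv \wedge d^c v \wedge \Theta = \tfrac12 dd^c v^2 \wedge \Theta - v\, dd^c v \wedge \Theta$ has locally finite mass on $X-A$ once we know $\langle dd^c \psi_1 \wedge \cdots\rangle$-type products are well-defined — this is where Proposition~\ref{pro:finite} is essential. But this only gives boundedness, not vanishing. To get vanishing I need the refinement that $\int_{X-A} dv \wedge d^c v \wedge \Theta < \infty$ (the \emph{full} mass, not just local), which follows from the finite-mass statement recalled just before the theorem, and then dominated convergence: $\tau\, dw_k \wedge d^c w_k \wedge \Theta \to 0$ weakly since $w_k$ becomes locally constant $=1$ outside $A$. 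Combined with $w_k \nearrow 1$ and dominated/monotone convergence for the three main terms (legitimate since the limiting integrands are integrable on $X-A$), passing $k \to \infty$ yields the claimed equalities.

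\emph{Finally} I would handle the reduction from general quasi-psh $\f_i, \psi_i$ (locally bounded off $A$) to the bounded case already used: replace $\f_i$ by $\max(\f_i, -j)$, which is globally bounded quasi-psh, note $u = \f_1 - \f_2$ is unchanged on the plurifine open set where both exceed $-j$, apply the bounded case there, and let $j \to \infty$ using plurifine locality of all the currents involved (Proposition~\ref{pro:basic}) together with the fact that $X - \bigcup_j\{\f_i > -j\} \subset A$ is negligible for $\Theta$ and for the relevant products. This last reduction is routine but needs to be stated carefully because $dd^c u$ is not a positive current, so one applies the bounded case to $\f_1, \f_2$ separately and subtracts, using bilinearity.
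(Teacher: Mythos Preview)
Your outline follows the same cutoff-plus-Cauchy--Schwarz strategy as the paper, but the step where you show the error terms vanish has a real gap. You claim that $\int \tau\, dw_k\wedge d^c w_k\wedge\Theta\to 0$ ``since $w_k$ becomes locally constant $=1$ outside $A$''. That argument only gives $dw_k\wedge d^c w_k\wedge\Theta\to 0$ on compact subsets of $X-A$; it says nothing about mass escaping toward $A$. The sequence of positive measures $dw_k\wedge d^c w_k\wedge\Theta$ is uniformly bounded in mass (by the $dd^c w_k^2$ trick you cite), so any weak limit is supported on $A$, but you have not ruled out a nonzero limit there. The paper sidesteps this entirely: it composes with a further smooth $\theta$ satisfying $\theta'\equiv 0$ near $1$, so that the cutoff is $\theta(w_k)$ and the error term carries $\theta'(w_k)$. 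In the Cauchy--Schwarz splitting the factor $\int \theta'(w_k)^2 v^2\, du\wedge d^c u\wedge\Theta$ then goes to zero by dominated convergence against the \emph{finite} measure $du\wedge d^c u\wedge\Theta$ (this is exactly where Lemma~\ref{lem:finitemass} is used), while $\int dw_k\wedge d^c w_k\wedge\Theta$ only needs to stay bounded. Your version can also be repaired directly: build $w_k$ from a \emph{global} quasi-psh $\rho$ with $\{\rho=-\infty\}=A$, observe $dd^c w_k\ge -\tfrac{C}{k}\omega$, and use Stokes on the compact $X$ to get $\int_X dw_k\wedge d^c w_k\wedge\Theta=-\int_X w_k\,dd^c w_k\wedge\Theta\le \tfrac{C}{k}\int_X\omega\wedge\Theta\to 0$; but this is not the mechanism you wrote.

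Your ``Finally'' step---truncating $\f_i$ by $\max(\f_i,-j)$---is unnecessary and a bit confused. Once $w_k$ is in place its support is a compact subset of $X-A$, on which the $\f_i,\psi_i$ are already bounded, so the Bedford--Taylor calculus applies directly with no further approximation. The paper's organization is cleaner here: it first records the Leibniz identity $d[v\,d^c u\wedge\Theta]=dv\wedge d^c u\wedge\Theta+v\,dd^c u\wedge\Theta$ on $X-A$, then proves the single ``no residue'' statement that the trivial extension ${\bf 1}_{X-A}\,v\,d^c u\wedge\Theta$ has exterior derivative equal to the trivial extension of $d[v\,d^c u\wedge\Theta]$; Stokes on $X$ then gives all three equalities at once.
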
 

Note that the (signed) measure $dd^c\f_i\wedge\Theta$ defined on $X-A$ has finite total mass thus so does $dd^cu\wedge\Theta$ and $\int_{X-A} vdd^cu\wedge\Theta$ is therefore well-defined since $v$ is bounded (and defined everywhere). The last integral is also well defined 
by Lemma~\ref{lem:finitemass} below and the Cauchy-Schwarz inequality. 

\begin{proof} 
As is well-known, if $\f\ge 0$ is a bounded psh function, the identity 
$$dd^c\f^2=2d\f\wedge d^c\f+2\f dd^c\f$$
enables to define $d\f\wedge d^c\f\wedge\Theta$ as a positive measure, which shows that $d^c\f\wedge\Theta:=d^c(\f\Theta)$ is a current of order $0$ by the Cauchy-Schwarz inequality. 
By linearity we can therefore make sense of $vd^cu\wedge\Theta$ and $du\wedge d^cu\wedge\Theta$ as currents of order $0$ on $X-A$. By Lemma~\ref{lem:finitemass} below the positive measure $du\wedge d^c u\wedge\Theta$ on $X-A$ has finite total mass, thus so does the current of order $0$ $vd^c u\wedge\Theta$ by Cauchy-Schwarz.

We claim that 
\begin{equation}\label{equ:leib}
 d[v d^c u\wedge\Theta]=dv\wedge d^c u\wedge\Theta+v dd^c u\wedge\Theta\ \text{on} \ X-A.
\end{equation}
 To show this, we argue locally. We can then assume that $u$ and $v$ are locally bounded psh functions by bilinearity. Let $u_k$ and $v_k$ be smooth psh functions decreasing towards $u$ and $v$ respectively. Since $d^c u\wedge\Theta$ has order $0$, we see that $v_k d^cu\wedge\Theta$ converges to $v d^c u\wedge\Theta$ by monotone convergence. Since the right-hand side of the desired formula is continuous along decreasing sequences of bounded psh functions by Bedford-Taylor's theorem, we can thus assume that $v$ is smooth and psh. Now $u_k\Theta$ converges to $u\Theta$, thus $v d^c u_k\wedge\Theta\to v d^c u_k\wedge\Theta$, and we are done by another application of Bedford-Taylor's theorem to the right-hand side.   

In particular (\ref{equ:leib}) shows that $d[vd^cu\wedge\Theta]$ is a current of order $0$ on $X-A$. 
Let us somewhat abusively denote by ${\bf 1}_{X-A} vd^cu\wedge\Theta$ and ${\bf 1}_{X-A} d[vd^cu\wedge\Theta]$ the trivial extensions to $X$. We are going to show that
\begin{equation}\label{equ:residu}
d[{\bf 1}_{X-A} vd^c u\wedge\Theta]={\bf 1}_{X-A} d[v d^c u\wedge\Theta]
\end{equation}
as currents on $X$. By Stokes' theorem we will thus have 
$$\int_{X-A} d[v d^cu\wedge\Theta]=0,$$
and the desired formulae will follow by (\ref{equ:leib}). 

 The proof of (\ref{equ:residu}) will again follow rather closely Sibony's proof of the Skoda-El Mir extension theorem \cite{S,Dem0}. Note that (\ref{equ:residu}) is a local equality, and that it clearly holds on $X-A$. We will thus focus on a fixed small enough neighbourhood $U$ of a given point $0\in A$. Since $A$ is complete pluripolar around $0$, we may by definition find a psh function $\tau$ on $U$ such that 
 $\{\tau=-\infty\}=A$ near $0$. If we pick a smooth non-decreasing convex function $\chi$ on $\R$ such that $\chi(t)=0$ for $t\le 1/2$ and $\chi(1)=1$ and finally set 
$$w_k:=\chi(e^{\tau/k}),$$
then $0\le w_k\le 1$ is a bounded psh function on $U$ vanishing on $A$, and the sequence $w_k$ increases to $1$ pointwise on $U-A$ as $k\to\infty$. Now choose a smooth non-decreasing function $0\le\theta(t)\le 1$, $t\in\R$ such that $\theta\equiv 0$ near $0$ and $\theta\equiv 1$ near $1$ (we of course cannot require $\theta$ to be convex here). It then follows that $\theta(w_k)$ vanishes near $A$ and $\theta(w_k)\to 1$ pointwise on $U-A$, whereas $\theta'(w_k)\to 0$ pointwise on $U$. As a consequence we get
$$\theta(w_k)vd^c u\wedge\Theta\to{\bf 1}_{U-A} vd^c u\wedge\Theta$$
and
$$
\theta(w_k)d[v d^c u\wedge\Theta]\to{\bf 1}_{U-A} d[v d^c u\wedge\Theta]
$$
on $U$ as $k\to\infty$. Since 
$$
d[\theta(w_k)vd^c u\wedge\Theta]=\theta'(w_k)vdw_k\wedge d^cu\wedge\Theta
+\theta(w_k)d[vd^cu\wedge\Theta]
$$
by Lemma~\ref{lem:chainrule}, we are thus reduced to showing that 
$$\theta'(w_k)vdw_k\wedge d^c u\wedge\Theta\to 0$$
on $U$ as $k\to\infty$. But if $\psi\ge 0$ is a given smooth function on $U$ with compact support, Cauchy-Schwarz implies
\begin{eqnarray*}
 && \left|\int\psi\theta'(w_k)vdw_k\wedge d^cu\wedge\Theta\right|^2 \\
&\le& \left(\int\psi dw_k\wedge d^cw_k\wedge\theta\right) \left(\int\psi \theta'(w_k)^2v^2du\wedge d^c u\wedge\Theta\right),
\end{eqnarray*}
and one concludes since 
\begin{eqnarray*}
2\int\psi dw_k\wedge d^cw_k\wedge\Theta & \le & \int\psi dd^cw_k^2\wedge\Theta \\
&=& \int w_k^2 dd^c\psi\wedge\Theta
\end{eqnarray*} 
is bounded whereas 
$$\int\psi \theta'(w_k)^2v^2du\wedge d^c u\wedge\Theta\to 0$$ by dominated convergence since $\theta'(w_k)^2\to 0$ pointwise. 
\end{proof}

\begin{lem}\label{lem:finitemass} Let $\f_1$ and $\f_2$ be two quasi-psh functions on $X$ that are locally bounded on an open subset $U$. If the difference $u:=\f_1-\f_2$ is globally bounded on $X$, then 
$$\int_U du\wedge d^c u\wedge\Theta<+\infty.$$ 
\end{lem}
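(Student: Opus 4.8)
The statement is local, so I would work on a fixed relatively compact open subset of $U$. The first reduction is to get rid of the global ``difference of quasi-psh'' structure in favour of something purely local: since $\f_1$ and $\f_2$ are quasi-psh on $X$, on a small ball we may write $\f_i = \rho + v_i$ where $\rho$ is smooth and $v_i$ is psh, and by hypothesis each $v_i$ is locally bounded on $U$; moreover $u = v_1 - v_2$ up to a smooth function, so after adding a smooth function (whose contribution to $du\wedge d^cu\wedge\Theta$ is harmless, being a sum of terms each controlled by Cauchy--Schwarz against the finite-mass currents $dv_i\wedge d^c v_i\wedge\Theta$) it suffices to bound $\int_{U'} d(v_1-v_2)\wedge d^c(v_1-v_2)\wedge\Theta$ for $v_1,v_2$ locally bounded psh and $v_1-v_2$ globally bounded, $U'\Subset U$.

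Next, expand $d(v_1-v_2)\wedge d^c(v_1-v_2)\wedge\Theta = dv_1\wedge d^c v_1\wedge\Theta - dv_1\wedge d^c v_2\wedge\Theta - dv_2\wedge d^c v_1\wedge\Theta + dv_2\wedge d^c v_2\wedge\Theta$. By the Cauchy--Schwarz inequality for the positive form-valued pairing $(a,b)\mapsto da\wedge d^c b\wedge\Theta$, each cross term is dominated (as a measure, after integrating a nonnegative cutoff) by the geometric-arithmetic mean of $dv_1\wedge d^c v_1\wedge\Theta$ and $dv_2\wedge d^c v_2\wedge\Theta$. So the whole problem reduces to the single-function estimate: if $v$ is a locally bounded psh function on $U$, then $\int_{U'} dv\wedge d^c v\wedge\Theta < +\infty$. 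But this is classical Bedford--Taylor theory for a \emph{locally bounded} psh function against a closed positive current: choosing a cutoff $\chi\ge 0$ with $\chi\equiv 1$ on $U'$ and supported in $U$, one has $2\int \chi\, dv\wedge d^c v\wedge\Theta \le \int \chi\, dd^c v^2\wedge\Theta = \int v^2\, dd^c\chi\wedge\Theta$, which is finite because $v$ is bounded on $\supp\chi\subset U$ and $dd^c\chi\wedge\Theta$ is a measure of finite mass.

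The only genuinely delicate point — and the reason the hypothesis ``$u$ globally bounded on $X$'' is there rather than just ``$u$ bounded on $U$'' — is the bookkeeping that lets us pass from the globally-defined current $du\wedge d^c u\wedge\Theta$ on $X-A$ (with $A = U^c$ in spirit, or more precisely the unbounded locus) to the local psh pieces: we must make sure that when we split $\f_i$ locally the ``globally bounded'' difference $u$ is still realized as a difference of the \emph{local} psh potentials up to a bounded, indeed smooth, error on $U'$, so that the Cauchy--Schwarz domination of cross terms is legitimate on a neighbourhood where all four measures $dv_i\wedge d^c v_j\wedge\Theta$ make sense. Once that identification is in place the estimate is immediate from the displayed integration-by-parts identity and finiteness of $\int v_i^2\, dd^c\chi\wedge\Theta$. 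I expect no real obstacle here; the lemma is essentially a packaging of the standard bounded-potential Bedford--Taylor mass estimate, and the work is entirely in the reductions above.
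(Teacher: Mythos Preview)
Your reduction has a genuine gap: the ``single-function estimate'' you appeal to is simply false in the situation at hand, and your Cauchy--Schwarz step throws away precisely the cancellation that makes the lemma true.

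Concretely, the claim is \emph{not} local. You must bound the total mass $\int_U du\wedge d^cu\wedge\Theta$ over all of $U$, and in the intended application $U=X-A$ with $A$ a closed complete pluripolar set along which the $\f_i$ may blow up. Your cutoff argument only gives $\int_{U'}dv_i\wedge d^cv_i\wedge\Theta\le\tfrac12\int v_i^2\,dd^c\chi\wedge\Theta$ for $U'\Subset U$, with a bound depending on $\sup_{\supp\chi}|v_i|$; as $U'\nearrow U$ the support of $\chi$ is forced toward $A$, where $v_i$ is unbounded, so the bound blows up. And indeed the individual integrals $\int_U dv_i\wedge d^cv_i\wedge\Theta$ can be infinite: already on $\PP^1$ with $\Theta=1$, $\f=\log|z|$ and $U=\PP^1\setminus\{0\}$ one has $\int_U d\f\wedge d^c\f=+\infty$. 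Expanding $du\wedge d^cu$ and dominating the cross terms by the diagonal ones therefore bounds a finite quantity by an infinite one. The hypothesis that $u$ is globally bounded is not bookkeeping; it is the source of a cancellation between $\f_1$ and $\f_2$ near $A$ that your expansion destroys.

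The paper's argument is genuinely global. One regularizes each $\f_i$ by smooth quasi-psh $\f_i^{(k)}\searrow\f_i$ with $dd^c\f_i^{(k)}\ge-\omega$ (Demailly) and arranges $u_k:=\f_1^{(k)}-\f_2^{(k)}$ uniformly bounded because $u$ is. By Bedford--Taylor continuity on $U$ one gets $\int_U du\wedge d^cu\wedge\Theta\le\liminf_k\int_X du_k\wedge d^cu_k\wedge\Theta$. The right-hand side is handled by Stokes on the \emph{compact} manifold $X$ (no boundary term):
\[
\int_X du_k\wedge d^cu_k\wedge\Theta=-\int_X u_k\,dd^cu_k\wedge\Theta
=\int_X u_k\,dd^c\f_2^{(k)}\wedge\Theta-\int_X u_k\,dd^c\f_1^{(k)}\wedge\Theta,
\]
and each term is bounded in absolute value by $\sup|u_k|\cdot\int_X\omega\wedge\Theta$ since $0\le\omega+dd^c\f_i^{(k)}$ and $\int_X dd^c\f_i^{(k)}\wedge\Theta=0$. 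The global boundedness of $u$ enters exactly here, as the uniform bound on $|u_k|$, and compactness of $X$ is what makes the integration by parts clean.
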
 

\begin{proof} By Demailly's approximation theorem~\cite{Dem1} we can find decreasing sequences of smooth functions $\f_i^{(k)}$ decreasing towards $\f_i$ as $k\to\infty$ and such that $dd^c\f_i^{(k)}\ge-\omega$ for some fixed large enough K{\"a}hler form $\omega$ (see also~\cite{BK}).  Since $u=\f_1-\f_2$ is bounded by assumption, it follows that $u_k:=\f_1^{(k)}-\f_2^{(k)}$ can be taken to be uniformly bounded with respect to $k$. We can thus assume that $0\le u_k\le C$ for all $k$. 

Now the formula $dd^c v^2=2vdd^cv+2dv\wedge d^c v$ and Bedford-Taylor's continuity theorem implies by polarization identities that
$$\lim_kd\f_i^{(k)}\wedge d^c\f_j^{(k)}\wedge\Theta=d\f_i\wedge d^c\f_j\wedge\Theta$$ 
weakly on $U$ for any two $i,j$, hence also 
$$du_k\wedge d^c u_k\wedge\Theta\to du\wedge d^c u\wedge\Theta$$
weakly on $U$. We thus get
$$\int_U du\wedge d^cu\wedge\Theta\le\liminf_k\int_X du_k\wedge d^c u_k\wedge\Theta,$$
and our goal is to show that the right-hand integrals are uniformly bounded. 
Now Stokes theorem yields
$$\int_X du_k\wedge d^c u_k\wedge\Theta=-\int_X u_kdd^cu_k\wedge\Theta$$
$$=\int_X u_k dd^c\f_2^{(k)}\wedge\Theta-\int_X u_k dd^c\f_1^{(k)}\wedge\Theta.$$
For each $i=1,2$ we have
$$0\le u_k(\omega+dd^c\f_i^{(k)})\le C(\omega+dd^c\f_i^{(k)})$$
hence $$0\le\int_Xu_k(\omega+dd^c\f_i^{(k)})\wedge\Theta\le C\int_X\omega\wedge\Theta$$
so that 
$$-C\int_X\omega\wedge\Theta\le\int_X u_k dd^c\f_i^{(k)}\wedge\Theta\le C\int_X\omega\wedge\Theta$$
for all $k$, and the uniform boundedness of $\int_Xdu_k\wedge d^cu_k\wedge\Theta$ follows.  
\end{proof} 

The following crucial result shows that non-pluripolar masses are basically non-increasing with singularities, at least for currents with small unbounded locus.  

\begin{thm}
\label{thm:comp} For $j=1,...,p$, let $T_j$ and $T'_j$ be two cohomologous closed positive $(1,1)$-currents with small unbounded locus, and assume also that $T_j$ is less singular than $T_j'$. Then the cohomology classes of their non-pluripolar products satisfy
$$\{\langle T_1\wedge...\wedge T_p\rangle\}\ge\{\langle T'_1\wedge...\wedge T'_p\rangle\}$$
in $H^{p,p}(X,\R)$, with $\ge$ meaning that the difference is pseudo-effective, i.e.~representable by a closed positive $(p,p)$-current. 
\end{thm}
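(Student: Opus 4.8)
The plan is to reduce to the local Bedford--Taylor theory via the approximation $\max(\f_j,-k)$ and exploit the fact that all the masses being compared are controlled in cohomology. First I would pick a smooth representative $\theta_j$ of the common class $\a_j=\{T_j\}=\{T'_j\}$ and write $T_j=\theta_j+dd^c\f_j$, $T'_j=\theta_j+dd^c\f'_j$ with $\f_j,\f'_j$ being $\theta_j$-psh; since $T_j$ is less singular than $T'_j$ we may after adding a constant assume $\f'_j\le\f_j$. Choose a closed complete pluripolar set $A$ outside which all the $\f_j,\f'_j$ are locally bounded, so that on $X-A$ the Bedford--Taylor products $\bigwedge_j(\theta_j+dd^c\f_j)$ and $\bigwedge_j(\theta_j+dd^c\f'_j)$ are classically defined and, by Proposition~\ref{pro:finite}, have finite total mass; their trivial extensions to $X$ are the non-pluripolar products. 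The goal is then to produce a closed positive $(p,p)$-current $R$ with $\{R\}=\{\langle T_1\wedge\cdots\wedge T_p\rangle\}-\{\langle T'_1\wedge\cdots\wedge T'_p\rangle\}$.

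The core is a telescoping/monotonicity argument. By multilinearity (Proposition~\ref{pro:basic}) it suffices to treat the case where $T_j=T'_j$ for all $j$ except one index, say $j=1$, i.e.\ to show $\{\langle(\theta_1+dd^c\f_1)\wedge S_2\wedge\cdots\wedge S_p\rangle\}\ge\{\langle(\theta_1+dd^c\f'_1)\wedge S_2\wedge\cdots\wedge S_p\rangle\}$ where $S_j:=T_j=T'_j$. Here the difference of the two currents is $dd^c(\f_1-\f'_1)\wedge S_2\wedge\cdots\wedge S_p$ computed on $X-A$, where $\f_1-\f'_1\ge 0$ is bounded. The strategy is to integrate this against a smooth closed positive test form and show the pairing is $\ge 0$; equivalently, by a regularization of the test form, to show that the trivial extension of $dd^c(\f_1-\f'_1)\wedge\bigwedge_{j\ge2}S_j$ from $X-A$ to $X$ differs from $\langle T_1\wedge\cdots\rangle-\langle T'_1\wedge\cdots\rangle$ (which a priori is only the trivial extension of the same smooth-locus expression) by a \emph{positive} current supported on $A$ --- this is exactly the kind of ``residual mass on the polar set'' phenomenon, and it is non-negative because $\f_1-\f'_1\ge0$. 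Concretely I would run the cut-off argument of Theorem~\ref{thm:intbyparts}: write $\langle T_1\wedge\cdots\rangle=\lim_k{\bf 1}_{O_k}(\theta_1+dd^c\max(\f_1,-k))\wedge\bigwedge_{j}(\theta_j+dd^c\max(\f_j,-k))$ and similarly for the primed currents, with $O_k:=\bigcap_j\{\f_j>-k\}\cap\bigcap_j\{\f'_j>-k\}$; on $O_k$ the bounded-potential Bedford--Taylor inequality gives the comparison term by term, and the error terms supported where the potentials hit $-k$ contribute a non-negative current in the limit, controlled using the integration-by-parts machinery (Theorem~\ref{thm:intbyparts}) together with the finite-mass Lemma~\ref{lem:finitemass}.

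For the cohomology bookkeeping I would note that the class of $\langle T_1\wedge\cdots\wedge T_p\rangle$ can be computed as the limit of the classes $\{{\bf 1}_{O_k}\cdots\}$ only if those are themselves closed, which they are not; the honest way is to observe that on $X-A$ we have an equality of \emph{forms} $\bigwedge_j(\theta_j+dd^c\f_j)-\bigwedge_j(\theta_j+dd^c\f'_j)=dd^c\Psi$ where $\Psi$ is a $(p-1,p-1)$-current built by telescoping (a ``secondary'' Bott--Chern type potential, locally bounded coefficients times bounded functions), and then the difference of the trivial extensions, as a current on $X$, equals $dd^c[{\bf 1}_{X-A}\Psi]+({\rm residual})$; the $dd^c$-exact piece contributes nothing in cohomology and the residual piece is the positive current we want. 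The main obstacle I expect is precisely controlling this residual current on $A$ and proving it is positive (not merely that it exists): one must show that the trivial extension of the smooth-locus difference, which is a \emph{signed} measure a priori, actually represents $\{\langle T_1\wedge\cdots\rangle\}-\{\langle T'_1\wedge\cdots\rangle\}$ up to something positive, and this requires carefully combining the monotone-convergence characterization (\ref{equ:nonpp}) of non-pluripolar products with the Skoda--El Mir-style cut-off estimates, using $\f_1-\f'_1\ge0$ at exactly the right place so that every error term has a sign. The finiteness and integration-by-parts results already established (Proposition~\ref{pro:finite}, Theorem~\ref{thm:intbyparts}, Lemmas~\ref{lem:finitemass} and~\ref{lem:chainrule}) are the technical inputs that make this run.
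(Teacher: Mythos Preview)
Your telescoping reduction to changing one factor at a time is correct and matches the paper. But the core mechanism you propose---writing the difference as $dd^c\Psi$ plus a ``positive residual current supported on $A$''---does not work as stated, and this is where the gap lies. Since both $\langle T_1\wedge\cdots\wedge T_p\rangle$ and $\langle T'_1\wedge\cdots\wedge T'_p\rangle$ are \emph{exactly} the trivial extensions of the Bedford--Taylor products from $X-A$, their difference \emph{is} the trivial extension of the smooth-locus difference; there is no extra residual piece on $A$ to be positive. The difference is a signed current, and you have to show its cohomology class contains a positive representative---not that the current itself decomposes as exact-plus-positive. Your proposed $\Psi$ would live on $X-A$, but $dd^c$ of its trivial extension is not $dd^c$-exact on $X$ in general, so the ``$dd^c$-exact piece contributes nothing in cohomology'' step fails.

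The paper sidesteps all of this with a much shorter argument. First, by duality the pseudo-effectivity of the class reduces to a scalar inequality: $\int_{X-A}T_1\wedge\cdots\wedge T_p\wedge\tau\ge\int_{X-A}T'_1\wedge\cdots\wedge T'_p\wedge\tau$ for every smooth positive $dd^c$-closed $(n-p,n-p)$-form $\tau$. After telescoping, set $\Theta:=\langle T_2\wedge\cdots\wedge T_p\rangle\wedge\tau$ (positive, $dd^c$-closed, bidimension $(1,1)$) and the goal becomes $\int_{X-A}dd^c\f\wedge\Theta\ge\int_{X-A}dd^c\f'\wedge\Theta$ with $\f\ge\f'$. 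The key trick you are missing: perturb $\f'$ to $\f'+\e\psi$ where $A=\{\psi=-\infty\}$, so that $\f'-\f\to-\infty$ near $A$; then $\psi_k:=\max(\f',\f-k)$ equals $\f-k$ near $A$, hence $\int_{X-A}dd^c\psi_k\wedge\Theta=\int_{X-A}dd^c\f\wedge\Theta$ by Stokes (the difference $\f-\psi_k$ has compact support in $X-A$). Letting $k\to\infty$ and using weak convergence $dd^c\psi_k\wedge\Theta\to dd^c\f'\wedge\Theta$ on $X-A$ plus lower semicontinuity of mass gives the inequality. No integration-by-parts machinery, no secondary potentials, no cut-off estimates are needed.
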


The statement of course makes sense for arbitrary closed positive $(1,1)$-currents, and we certainly expect it to hold true in general, but we are only able for the moment to prove it in the special case of currents with small unbounded locus. 

\begin{proof} By duality, this is equivalent to showing that
$$
\int_{X-A} T_1\wedge...\wedge T_p\wedge\tau\ge\int_{X-A} T_1'\wedge...\wedge T'_p\wedge\tau
$$
for every positive smooth $dd^c$-closed $(n-p,n-p)$-form $\tau$, where $A$ a closed complete pluripolar set outside of which all currents $T_i, T'_i$ have locally bounded potentials. 
Replacing successively $T_{i}$ by $T_{i}'$, we can assume that
$T_{i}=T_{i}'$ for $i>1$. The bidimension $(1,1)$ current  
$$\Theta:=\langle T_{2}\wedge...\wedge T_{p}\rangle\wedge\tau$$ 
is $dd^c$-closed and positive. Writing $T_{1}=\theta+dd^{c}\f$ and $T_{1}'=\theta+dd^{c}\f'$ with $\theta$ a smooth form in their common cohomology class, we are reduced to showing that 
$$\int_{X-A}dd^c\f\wedge\Theta\geq\int_{X-A}dd^c\f'\wedge\Theta$$
if $\f\ge\f'$ are quasi-psh functions on $X$ locally bounded on $X-A$ and $\Theta$ is a $dd^c$-closed positive bidimension $(1,1)$ current on $X$. 
Let $\psi$ be a quasi-psh function such that $A=\{\psi=-\infty\}$. Replacing $\f'$ by $\f'+\e\psi$ and letting $\e\rightarrow0$ in the end, we may further assume that $\f'-\f\rightarrow-\infty$ near $A$. For each $k$ the function $\psi_k:=\max(\f',\f-k)$ thus coincides with $\f-k$ in some neighbourhood of $A$, so that
$$\int_{X-A}dd^{c}\psi_k\wedge\Theta=\int_{X-A}dd^{c}\f\wedge\Theta$$
by Stokes theorem, since (the trivial extension of) $dd^c(\f-\psi_k)\wedge\Theta$ is the exterior derivative of a current with compact support on $X-A$.  On the other hand 
 $$\lim_k dd^{c}\psi_k\wedge\Theta=dd^{c}\f'\wedge\Theta$$
weakly on $X-A$, and the result follows. 
\end{proof}

As a consequence we can introduce 

\begin{defi} 
{\it Let $\a_1,...\a_p\in H^{1,1}(X,\R)$ be big cohomology classes, and let $T_{i,\min}\in\a_i$ be a positive current with minimal singularities. Then the cohomology class of the non-pluripolar product $\langle T_{1,\min}\wedge \cdots \wedge T_{p,\min}\rangle$ is independent of the choice of $T_{i,\min}\in\a_i$ with minimal singularities. It will be denoted by 
$$\langle\a_1\cdots \a_p\rangle\in H^{p,p}(X,\R)$$
and called the \emph{positive product} of the $\a_i$. If $\a_1,...,\a_n$ are merely psef, we set
$$
\langle\a_1\cdots \a_p\rangle:=\lim_{\e\to 0}\langle(\a_1+\e\beta)\cdots (\a_p+\e\beta)\rangle
$$
where $\beta\in H^{1,1}(X,\R)$ is an arbitrary K{\"a}hler class. 

As a special case, given a big class $\a$ the positive number 
$$\vol(\a):=\langle \a^n\rangle$$
 is called the \emph{volume} of $\a$}
\end{defi} 

Some explanations are in order. The positive product $\langle\a_{1}\cdots \a_{p}\rangle$
is \emph{not} multi-linear since the fact that $T\in\a$ and $T'\in\a'$ both have minimal singularities (in their respective cohomology classes) doesn't imply that $T+T'$ has minimal singularities in $\a+\a'$. Positive products are however homogeneous and non-decreasing in each variable $\a_i$ thanks to Theorem~\ref{thm:comp} when the $\a_i$'s is big (with respect to the partial order on cohomology induced by positive currents). These two properties imply in a completely formal way (cf.~Proposition 2.9 in~\cite{BFJ}) that $\langle\a_1\cdot...\cdot\a_p\rangle$ depends \emph{continuously} on the $p$-tuple $(\a_{1},...,\a_{p})$ of \emph{big} classes. We can thus extend positive products to psef classes $\a_i$ as indicated, the existence of the limit and its independence on $\beta$ being easy to check by monotonicity of positive products. 
 
As an important special case, note that the positive product of \emph{nef} classes $\a_i$ coincides with their ordinary cup-product, that is
$$\langle\a_1\cdots \a_p\rangle=\a_1\cdots \a_p.$$

The volume $\vol(\a)$ considered here coincides (as it should!) with the one  introduced in~\cite{Bou1}. In fact: 

\begin{pro}\label{pro:singanal} Let $\a_1,...,\a_p$ be big cohomology classes. Then there exists sequences $T_j^{(k)}\in\a_j$ of strictly positive currents with analytic singularities such that
$$\lim_{k\to\infty}\{\langle T_1^{(k)}\wedge \cdots \wedge T_p^{(k)}\rangle\}=\langle\a_1\wedge...\wedge \a_p\rangle$$
in $H^{p,p}(X,\R)$. 
\end{pro}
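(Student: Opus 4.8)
The plan is to produce, for each big class $\a_j$, a sequence of strictly positive currents with analytic singularities $T_j^{(k)} \in \a_j$ that \emph{increase in singularity type} toward a current with minimal singularities, and then check that the cohomology classes of the non-pluripolar products converge to $\langle \a_1 \cdots \a_p \rangle$. The input is Demailly's regularization theorem, already cited in the excerpt: given a smooth representative $\theta_j$ of $\a_j$ and the $\theta_j$-psh potential $V_{\theta_j}$ with minimal singularities, we can approximate $V_{\theta_j}$ from above by a decreasing sequence $\f_j^{(k)}$ of $(\theta_j + \e_k \omega)$-psh functions with analytic singularities, where $\e_k \downarrow 0$. Setting $T_j^{(k)} := \theta_j + \e_k \omega + dd^c \f_j^{(k)}$ gives a strictly positive current with analytic singularities in the class $\a_j + \e_k \{\omega\}$ — \emph{not} in $\a_j$ itself, which is the first technical nuisance: one has to either work with the classes $\a_j + \e_k\{\omega\}$ and invoke continuity of positive products in the big variables, or absorb the $\e_k\omega$ term by a small perturbation. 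I would phrase the conclusion using continuity of $\langle \cdot \rangle$ on big classes (the formal consequence of monotonicity and homogeneity quoted from \cite{BFJ}), so it suffices to show $\{\langle T_1^{(k)} \wedge \cdots \wedge T_p^{(k)} \rangle\} \to \langle (\a_1 + \e_k\{\omega\}) \cdots (\a_p + \e_k\{\omega\})\rangle$ with the right-hand side itself tending to $\langle \a_1 \cdots \a_p \rangle$.

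The heart of the argument is a two-sided squeeze. On one side, since $\f_j^{(k)} \geq V_{\theta_j}$ and the approximating sequence is constructed so that $\f_j^{(k)}$ decreases to $V_{\theta_j}$, the current $T_j^{(k)}$ is \emph{less} singular than... wait — it is the other way: $\f_j^{(k)} \geq V_{\theta_j}$ means $T_j^{(k)}$ could be less singular, but $T_j^{(k)}$ lives in a bigger class. The clean statement is: $T_j^{(k)}$ has analytic singularities, hence in particular small unbounded locus, so Theorem~\ref{thm:comp} applies to compare it with a current of minimal singularities in its own class $\a_j + \e_k\{\omega\}$. Since $\f_j^{(k)}$ decreases to $V_{\theta_j}$ and $V_{\theta_j}$ has minimal singularities in $\a_j$ (a fortiori $V_{\theta_j} + (\text{const})$ is a candidate competing in $\a_j + \e_k\{\omega\}$ after adding $\e_k$ times a potential of $\omega$), one gets that $T_j^{(k)}$ is \emph{more} singular than the minimal-singularities current in $\a_j + \e_k\{\omega\}$ up to the $\e_k\omega$ discrepancy — so $\{\langle \bigwedge_j T_j^{(k)} \rangle\} \leq \langle \prod_j (\a_j + \e_k\{\omega\}) \rangle$. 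For the reverse inequality one exploits that $T_j^{(k)} \searrow$ a current with minimal singularities in $\a_j$ as $k \to \infty$; I would use Theorem~\ref{thm:comp} again together with the fact that a decreasing limit of the $\f_j^{(k)}$ is $V_{\theta_j}$, to show that any fixed current with minimal singularities in $\a_j$ is less singular than $T_j^{(\ell)}$ for... hmm, this needs care — rather, I would argue that $\liminf_k \{\langle \bigwedge_j T_j^{(k)} \rangle\} \geq \{\langle \bigwedge_j (\theta_j + dd^c V_{\theta_j}) \rangle\} = \langle \a_1 \cdots \a_p \rangle$, using that the non-pluripolar product is, loosely, lower semicontinuous under the decreasing convergence $\f_j^{(k)} \searrow V_{\theta_j}$ on the common locus where everything is bounded (an application of the plurifine-locality and monotone convergence machinery from Section 1, or of a monotonicity result for decreasing sequences with uniformly bounded mass).

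Combining the two bounds with continuity of positive products in the big classes $\a_j + \e_k\{\omega\}$ as $\e_k \downarrow 0$ forces $\{\langle \bigwedge_j T_j^{(k)} \rangle\} \to \langle \a_1 \cdots \a_p \rangle$, which is the assertion.

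The step I expect to be the main obstacle is the lower bound $\liminf_k \{\langle \bigwedge_j T_j^{(k)} \rangle\} \geq \langle \a_1 \cdots \a_p \rangle$. The upper bound is essentially a direct application of Theorem~\ref{thm:comp} (monotonicity of non-pluripolar mass in the singularity type), but for the lower bound one is comparing a current with analytic singularities to the maximal-singularity-envelope $V_{\theta_j}$ in the \emph{wrong} direction for Theorem~\ref{thm:comp} to apply verbatim; the resolution is to pass to the decreasing limit in $k$ and use that $\langle \bigwedge_j(\theta_j + dd^c\max(\f_j^{(k)}, V_{\theta_j} - C)) \rangle$ behaves well, exploiting plurifine locality on $\bigcap_j \Amp(\a_j)$ where $V_{\theta_j}$ is bounded, together with the fact that the total masses $\int_X \langle \bigwedge_j T_j^{(k)} \rangle \wedge \omega^{n-p}$ are bounded above (by the upper bound already established) and the cohomology classes are controlled. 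One also has to handle the bookkeeping of the auxiliary $\e_k \omega$ terms carefully throughout, and to make sure "analytic singularities" is genuinely preserved (Demailly's theorem delivers this) so that Theorem~\ref{thm:comp}, which requires small unbounded locus, is legitimately applicable at every stage.
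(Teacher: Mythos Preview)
Your overall strategy---Demailly regularization plus a two-sided squeeze using Theorem~\ref{thm:comp}---matches the paper's, but there is one concrete gap and one place where you are making things harder than necessary.

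The gap: the proposition requires $T_j^{(k)}\in\a_j$, but your $T_j^{(k)}=\theta_j+\e_k\omega+dd^c\f_j^{(k)}$ lies in $\a_j+\e_k\{\omega\}$. Invoking continuity of positive products to pass from $\a_j+\e_k\{\omega\}$ to $\a_j$ does not produce currents in $\a_j$; it only shows the cohomological limit is right for a sequence in the wrong classes. Your phrase ``absorb the $\e_k\omega$ term by a small perturbation'' points in the right direction but is not an argument. The paper's fix is clean: pick once and for all a strictly positive current $T_{j,+}\in\a_j$ with analytic singularities (such a current exists since $\a_j$ is big) satisfying $T_{j,+}\ge\omega$, let $S_j^{(k)}=\theta_j+dd^c\f_j^{(k)}\in\a_j$ be Demailly's approximant with $S_j^{(k)}\ge-\e_k\omega$, and set $T_j^{(k)}:=(1-\e_k)S_j^{(k)}+\e_k T_{j,+}$. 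Then $T_j^{(k)}\in\a_j$, has analytic singularities, satisfies $T_j^{(k)}\ge\e_k^2\omega>0$, and its potentials still decrease to those of a current with minimal singularities.

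The lower bound is simpler than you fear, and you do not need to ``compare in the wrong direction'' for Theorem~\ref{thm:comp}. Once the $T_j^{(k)}$ have potentials decreasing to those of $T_{j,\min}$ and all potentials are locally bounded on $X-A$ (with $A$ a fixed closed complete pluripolar set containing every singular locus), Bedford--Taylor's monotone continuity gives $\bigwedge_j T_j^{(k)}\to\bigwedge_j T_{j,\min}$ weakly on $X-A$. Lower semicontinuity of mass under weak convergence then yields $\int_{X-A}\bigwedge_j T_{j,\min}\wedge\tau\le\liminf_k\int_{X-A}\bigwedge_j T_j^{(k)}\wedge\tau$ for any smooth positive closed $(n-p,n-p)$-form $\tau$. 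The reverse inequality comes from Theorem~\ref{thm:comp}, since $T_{j,\min}$ is \emph{less} singular than $T_j^{(k)}$; no plurifine truncations or $\max$-constructions are needed.
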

\begin{proof} To keep notations simple we assume that $\a_1= \cdots =\a_p$. Let $T_{\min}\in\a$ be a positive current with minimal singularities,  let $T_+\in\a$ be a given strictly positive current with analytic singularities, and let $\omega$ be a K{\"a}hler form such that $T_+\ge\omega$. By Demailly's regularization theorem, there exists a sequence $S_k\in\a$ of currents with analytic singularities whose global potentials decrease to that of $T_{\min}$ and such that $S_k\ge-\e_k\omega$. Now let
$$T_k:=(1-\e_k)S_k+\e_k T_+,$$
so that $T_k\ge\e_k^2\omega$ is indeed strictly positive with analytic singularities. It is clear that $T_k$  converges to $T_{\min}$ as $k\to\infty$. By continuity of mixed Monge-Amp{\`e}re operators along decreasing sequences of locally bounded psh functions, we get
$$T_k^p\to T_{\min}^p$$
weakly on $X-A$ with $A$ as above. We infer that
$$\int_{X-A} T_{\min}^p\wedge\omega^{n-p}\le\liminf_{k\to\infty}\int_{X-A} T_k^p\wedge\omega^{n-p}.$$
On the other hand Theorem~\ref{thm:comp} yields
$$\int_{X-A} T_{\min}^p\wedge\omega^{n-p}\ge\int_{X-A} T_k^p\wedge\omega^{n-p},$$
since $T_{\min}$ has minimal singularities, and together this implies
$$\lim_k\langle T_k^p\rangle=\langle T_{\min}^p\rangle$$
as $(p,p)$-currents on $X$. The result follows.  
\end{proof}

The next result corresponds to Theorem 4.15 of~\cite{Bou1}, which generalizes Fujita's approximation theorem to arbitrary $(1,1)$-classes. We reprove it here for convenience of the reader. 

\begin{pro}\label{cor:approxzar} 
Let $\a$ be a big class on $X$. Then for each $\e>0$, there exists a modification $\pi:X'\to X$ with $X'$ a compact K{\"a}hler manifold, a K{\"a}hler class $\beta$ on $X'$ and an effective $\R$-divisor $E$ such that
\begin{itemize}
\item $\pi^*\a=\beta+\{E\}$ and
\item $\vol(\a)-\e\le\vol(\beta)\le\vol(\a).$
\end{itemize}
\end{pro}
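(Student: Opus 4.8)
The plan is to reduce the statement to Demailly's regularization theorem and the basic functoriality of volumes under modifications. First I would invoke Demailly's regularization result, exactly as in the proof of Proposition~\ref{pro:singanal}: since $\a$ is big, there exists a K\"ahler form $\omega$ and a strictly positive current $T_+\in\a$ with analytic singularities, $T_+\ge\omega$, and a sequence $S_k\in\a$ of currents with analytic singularities whose global potentials decrease to that of a current with minimal singularities $T_{\min}\in\a$, with $S_k\ge-\e_k\omega$ and $\e_k\to 0$. Forming $T_k:=(1-\e_k)S_k+\e_k T_+$ as before, we obtain a strictly positive current $T_k\ge\e_k^2\omega$ with analytic singularities, converging to $T_{\min}$, and by Proposition~\ref{pro:singanal} (or its proof) $\int_X\langle T_k^n\rangle\to\vol(\a)$. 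So for $k$ large enough we have $\vol(\a)-\e\le\int_X\langle T_k^n\rangle\le\vol(\a)$, the last inequality because $T_{\min}$ has minimal singularities (Theorem~\ref{thm:comp}).

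Next I would resolve the analytic singularities of $T_k$. Since $T_k=\theta+dd^c\f_k$ with $\f_k$ having analytic singularities described by a coherent ideal sheaf $\cI^c$, there is a modification (principalization) $\pi:X'\to X$, with $X'$ compact K\"ahler — this uses Hironaka; one should note $X'$ can be taken K\"ahler because $X$ is and $\pi$ is a composition of blow-ups with smooth centres — such that $\pi^*\cI\cdot\cO_{X'}=\cO_{X'}(-D)$ for an effective divisor $D$. Then $\pi^*T_k$ decomposes as $\pi^*T_k=\tilde T_k+[E]$, where $E:=cD$ is the effective $\R$-divisor carrying the poles and $\tilde T_k=\pi^*\theta+dd^c\tilde\f_k$ is a closed positive current with bounded, in fact continuous, local potentials (the smooth part of the analytic singularity pulls back to something with only the divisorial part removed). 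Setting $\beta:=\{\tilde T_k\}=\pi^*\a-\{E\}$, the class $\beta$ is represented by a positive current with bounded potentials. To see $\beta$ is actually K\"ahler one uses that $T_k$ was \emph{strictly} positive, $T_k\ge\e_k^2\omega$: then $\tilde T_k\ge\pi^*(\e_k^2\omega)$, and while $\pi^*\omega$ is only nef and big, a standard argument (subtract a small multiple of an exceptional divisor, using that $-E'$ is $\pi$-ample for a suitable exceptional $E'$, and that $X'$ is K\"ahler) upgrades $\beta$ to a genuine K\"ahler class after possibly shrinking; alternatively absorb a further small perturbation into $\e$. This gives the first bullet, $\pi^*\a=\beta+\{E\}$.

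Finally I would check the volume estimate. Since $\pi$ is a modification, $\vol(\pi^*\a)=\vol(\a)$ (birational invariance of the volume, e.g.\ from the fact that non-pluripolar products are preserved by push-forward under modifications, as noted in the Remark after Proposition~\ref{pro:finite}). Because $\beta$ is K\"ahler, $\vol(\beta)=\beta^n=\int_{X'}\langle\tilde T_k^n\rangle$; and $\int_{X'}\langle\tilde T_k^n\rangle=\int_{X'}\langle(\pi^*T_k)^n\rangle$ on the Zariski-open set where $[E]$ puts no mass, which equals $\int_X\langle T_k^n\rangle$ by the push-forward compatibility. Hence $\vol(\beta)=\int_X\langle T_k^n\rangle$, which we arranged to lie in $[\vol(\a)-\e,\vol(\a)]$; the upper bound $\vol(\beta)\le\vol(\a)$ also follows directly since $\beta\le\pi^*\a$ as psef classes (their difference is the positive class $\{E\}$) and volume is monotone. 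The main obstacle I anticipate is the bookkeeping in the second step: ensuring $X'$ is K\"ahler and, above all, promoting the nef-and-big class $\beta=\pi^*\a-\{E\}$ to a strictly K\"ahler class — this requires carefully exploiting the strict positivity $T_k\ge\e_k^2\omega$ together with negativity of the exceptional locus, and is the one place where one cannot simply quote earlier results in the excerpt.
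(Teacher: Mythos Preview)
Your proposal is correct and follows the same route as the paper: approximate by a strictly positive current $T\in\a$ with analytic singularities via Proposition~\ref{pro:singanal}, principalize its ideal to get $\pi^*T=\theta+[D]$ with $\theta\ge\pi^*\omega$ smooth, and then upgrade the merely semipositive class $\{\theta\}$ to a genuine K\"ahler class by subtracting a small $\pi$-exceptional divisor. The paper makes your anticipated ``absorb into $\e$'' step explicit by taking $F$ effective and $\pi$-exceptional with $\pi^*\{\omega\}-\{F\}$ K\"ahler, setting $\beta:=(1-\e')\{\theta\}+\e'(\{\theta\}-\{F\})$ and $E:=D+\e'F$, and choosing $\e'$ small by continuity of the volume.
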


\begin{proof} 
By Proposition~\ref{pro:singanal}, there exists a positive current $T\in\a$ with analytic singularities described by $\cI^c$ for some $c>0$ and some coherent ideal sheaf $\cI$ and a K{\"a}hler form $\omega$ such that $\int_X\langle T^n\rangle>\vol(\a)-\e$ and $T\ge\omega$. By Hironaka's theorem, there exists a finite sequence of blow-ups with smooth centres $\pi:X'\to X$ such that $\pi^{-1}\cI$ is a principal ideal sheaf. The Siu decomposition of $\pi^*T$ thus writes 
$$
\pi^*T=\theta+[D]
$$
where $D$ is an effective $\R$-divisor and $\theta\ge 0$ is a smooth form. It is also straightforward to see that $\int_X\langle T^n\rangle=\int_{X'}\theta^n$. The condition $T\ge\omega$ yields $\theta\ge\pi^*\omega$, but this is not quite enough to conclude that the class of $\theta$ is K{\"a}hler, in which case we would be done. However since $\pi$ is a finite composition of blow-ups with smooth centers, it is well-known that there exists a $\pi$-exceptional effective $\R$-divisor $F$ such that $\pi^*\{\omega\}-\{F\}$ is a K{\"a}hler class (which shows by the way that $X'$ is indeed K{\"a}hler). 
It follows that $\gamma:=\{\theta\}-\{F\}$ is also a K{\"a}hler class. There remains to set 
$\beta:=(1-\e)\{\theta\}+\e \gamma$ and $E:=D+\e F$ with $\e>0$ small enough so that $(1 - \e) \vol (\a) \leq \vol(\beta)\le \vol(\a)$ also holds (recall that $\vol$ is continuous on big classes).
\end{proof}

As a final remark, if $\a=c_{1}(L)$ is the first Chern class of a big line
bundle $L$, then it follows from a theorem of Fujita that $\vol(\a)$ as defined above coincides with the volume of $L$ (see~\cite{Bou1}), defined by:
$$\vol(L)=\lim_{k\to\infty}\frac{n!}{k^n}\dim H^0(X,L^{\otimes k})$$

\subsection{Currents with full Monge-Amp{\`e}re mass}

As we saw in Proposition~\ref{pro:finite}, if $T_1,...,T_p$ are arbitrary closed positive $(1,1)$-currents on $X$ their non pluripolar product $\langle T_1\wedge...\wedge T_p\rangle$ is always well-defined. 
We want to give here a more convenient description of such global non-pluripolar products.

As a first observation, note that for any K{\"a}hler form $\omega$ on $X$ we have 
$$\langle T_1\wedge...\wedge T_p\rangle=\lim_{\e\to 0}\langle\bigwedge_j(T_j+\e\omega)\rangle,$$
simply by multilinearity of non-pluripolar products. 

We can thus assume that the cohomology classes $\a_j:=\{T_j\}$ are big. Let then $T_{j,\min}\in\a_j$ be a positive current with minimal singularities, and recall that $T_{j,\min}$ has small unbounded locus by Demailly's regularization theorem since $\a_j$ is big. If we pick a smooth representative $\theta_j\in\a_j$ and write
$$T_j=\theta_j+dd^c\f_j$$ 
and 
$$T_{j,\min}=\theta_j+dd^c\f_{j,\min},$$ 
then we have the following useful description of the non-pluripolar product:
\begin{equation}\label{equ:nonpolar}
\langle T_1\wedge...\wedge T_p\rangle=\lim_{k\to\infty}{\bf 1}_{\bigcap_j\{\f_j>\f_{\min}-k\}}\langle\bigwedge_j(\theta_j+dd^c\max(\f_j,\f_{j,\min}-k))\rangle,
\end{equation}
where the right-hand side is in fact non-decreasing, so that convergence holds against any bounded measurable function. This fact is a straightforward consequence of the local character of non-pluripolar products in the plurifine topology, which in fact yields
$${\bf 1}_{\bigcap_j\{\f_j>\f_{\min}-k\}}\langle T_1\wedge...\wedge T_p\rangle={\bf 1}_{\bigcap_j\{\f_j>\f_{\min}-k\}}\langle(\theta_j+dd^c\max(\f_j,\f_{j,\min}-k))\rangle.$$
 
As a consequence, Theorem~\ref{thm:comp} yields

\begin{pro}\label{pro:bound} If $\a_1,...,\a_p\in H^{1,1}(X,\R)$ are psef cohomology classes, then 
$$\{\langle T_1\wedge...\wedge T_p\rangle\}\le\langle\a_1 \cdots \a_p\rangle$$
in $H^{p,p}(X,\R)$ for all positive currents $T_i\in\a_i$.
\end{pro}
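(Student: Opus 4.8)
The plan is to reduce everything to the definition of the positive product $\langle\a_1\cdots\a_p\rangle$ in terms of currents with minimal singularities, and then apply Theorem~\ref{thm:comp} together with the approximation afforded by~(\ref{equ:nonpolar}). First I would treat the case where all $\a_i$ are big, since the psef case follows by adding $\e\beta$ and passing to the limit: indeed $T_i+\e\b'\in\a_i+\e\beta$ for any K\"ahler form $\b'\in\beta$, and the left-hand side is continuous under $T_i\mapsto T_i+\e\b'$ by multilinearity of non-pluripolar products, while $\langle(\a_1+\e\beta)\cdots(\a_p+\e\beta)\rangle\to\langle\a_1\cdots\a_p\rangle$ by definition.

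So assume each $\a_i$ is big and fix a positive current $T_{i,\min}\in\a_i$ with minimal singularities; recall it has small unbounded locus by Demailly's regularization. The key point is that $T_i$, being an arbitrary positive current in $\a_i$, is \emph{more} singular than $T_{i,\min}$, i.e.\ $\f_i\le\f_{i,\min}+O(1)$. I cannot apply Theorem~\ref{thm:comp} directly because $T_i$ need not have small unbounded locus. Instead I would use the truncation from~(\ref{equ:nonpolar}): set $\f_i^{(k)}:=\max(\f_i,\f_{i,\min}-k)$ and $T_i^{(k)}:=\theta_i+dd^c\f_i^{(k)}$. Each $T_i^{(k)}$ lies in $\a_i$, has the same singularity type as $T_{i,\min}$ (hence small unbounded locus and minimal singularities), and by~(\ref{equ:nonpolar}) we have
$$
{\bf 1}_{\bigcap_j\{\f_j>\f_{j,\min}-k\}}\langle T_1\wedge\cdots\wedge T_p\rangle
={\bf 1}_{\bigcap_j\{\f_j>\f_{j,\min}-k\}}\langle T_1^{(k)}\wedge\cdots\wedge T_p^{(k)}\rangle
\le\langle T_1^{(k)}\wedge\cdots\wedge T_p^{(k)}\rangle,
$$
the last inequality because the truncated product is a positive measure. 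Since each $T_i^{(k)}$ has minimal singularities, the cohomology class of $\langle T_1^{(k)}\wedge\cdots\wedge T_p^{(k)}\rangle$ is exactly $\langle\a_1\cdots\a_p\rangle$ by the definition of the positive product (and Theorem~\ref{thm:comp} guaranteeing independence of the choice). Letting $k\to\infty$, the left-hand side increases to $\langle T_1\wedge\cdots\wedge T_p\rangle$ against any bounded measurable function, so in particular against any smooth positive $(n-p,n-p)$-form; hence the cohomology class of the limit is dominated, in the sense of the partial order induced by positive $(p,p)$-currents, by $\langle\a_1\cdots\a_p\rangle$. This gives $\{\langle T_1\wedge\cdots\wedge T_p\rangle\}\le\langle\a_1\cdots\a_p\rangle$.

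The main obstacle is the bookkeeping at the level of cohomology classes: one must be careful that ${\bf 1}_{\bigcap_j\{\f_j>\f_{j,\min}-k\}}\langle T_1^{(k)}\wedge\cdots\wedge T_p^{(k)}\rangle$ is only a positive measure, not closed, so it has no cohomology class of its own; the argument works because we test against closed (indeed smooth $dd^c$-closed) forms $\tau\ge0$ and use that $\int_X\langle T_1^{(k)}\wedge\cdots\wedge T_p^{(k)}\rangle\wedge\tau=\langle\a_1\cdots\a_p\rangle\cdot\{\tau\}$ is a genuine cohomological pairing while $\int_X{\bf 1}_{(\cdots)}\langle T_1^{(k)}\wedge\cdots\rangle\wedge\tau$ is merely bounded above by it. Passing to the limit via monotone convergence then yields $\int_X\langle T_1\wedge\cdots\wedge T_p\rangle\wedge\tau\le\langle\a_1\cdots\a_p\rangle\cdot\{\tau\}$ for all such $\tau$, which is precisely the asserted inequality of classes by duality.
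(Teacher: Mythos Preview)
Your proof is correct and follows essentially the same route as the paper: reduce to big classes, use the canonical approximants $\f_i^{(k)}=\max(\f_i,\f_{i,\min}-k)$ from~(\ref{equ:nonpolar}), observe that the $T_i^{(k)}$ have minimal singularities so that $\{\langle T_1^{(k)}\wedge\cdots\wedge T_p^{(k)}\rangle\}=\langle\a_1\cdots\a_p\rangle$, and conclude by testing against smooth positive $dd^c$-closed forms $\tau$ and letting $k\to\infty$ by monotone convergence. Your final paragraph correctly identifies the only delicate point, namely that one must work with the pairing against $dd^c$-closed test forms rather than attempt to assign a cohomology class to the non-closed truncated current; this is exactly how the paper phrases it as well.
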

\begin{proof} We can assume that each $\a_i$ is big. By duality, we have to show that 
$$\int_X\langle T_1\wedge...\wedge T_p\rangle\wedge\tau\le\langle\a_1 \cdots \a_p\rangle\cdot\{\tau\}$$
for every positive $dd^c$-closed positive form $\tau$. But (\ref{equ:nonpolar}) shows that
$$\int_X\langle T_1\wedge...\wedge T_p\rangle\wedge\tau=\lim_{k\to\infty}\int_{\bigcap_j\{\f_j>\f_{\min}-k\}}\langle\bigwedge_j(\theta_j+dd^c\max(\f_j,\f_{j,\min}-k))\rangle\wedge\tau$$
$$\le\lim_{k\to\infty}\int_X\langle\bigwedge_j(\theta_j+dd^c\max(\f_j,\f_{j,\min}-k))\rangle\wedge\tau,$$
The integrals in question are equal to 
$$\langle\a_1 \cdots \a_p\rangle\cdot\{\tau\}$$
for all $k$, hence the result. 
\end{proof}

When the $\a_i$'s are big, equality holds in Proposition~\ref{pro:bound} when each $T_i\in\a_i$ has minimal singularities. In the general psef case, there might however not exist positive currents achieving equality. Indeed~\cite{DPS} provides an exemple of a nef class $\a$ on a surface such that the only closed positive current in the class is the integration current $T$ on a curve. It follows that $\langle T\rangle=0$, whereas $\langle\a\rangle=\a$ is non-zero. 

\begin{defi}\label{defi:fullMA} 
{\it A closed positive $(1,1)$-current $T$ on $X$ with cohomology class $\a$ will be said to have \emph{full Monge-Amp{\`e}re mass} if
$$\int_X\langle T^n\rangle=\vol(\a).$$}
\end{defi}

We will have a much closer look at such currents in the coming sections. 

\begin{pro} If $T$ is a closed positive $(1,1)$-current on $X$ such that $\langle T^n\rangle\neq 0$, then its cohomology class $\a=\{T\}$ is big.
 \end{pro}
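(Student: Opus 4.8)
The plan is to contrapose: assuming $\alpha=\{T\}$ is not big, I want to show $\langle T^n\rangle=0$. The key tool is Proposition~\ref{pro:bound}, which gives $\{\langle T^n\rangle\}\le\langle\alpha^n\rangle=\vol(\alpha)$ in $H^{n,n}(X,\R)\cong\R$. So it suffices to know that $\vol(\alpha)=0$ whenever $\alpha$ is psef but not big. If $\langle T^n\rangle\neq 0$, then since $\langle T^n\rangle$ is a positive measure its total mass $\int_X\langle T^n\rangle>0$; but $\int_X\langle T^n\rangle\le\vol(\alpha)$ by the displayed inequality (pairing against a Kähler form, or just reading off the degree-$n$ class), forcing $\vol(\alpha)>0$, hence $\alpha$ big. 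That is the whole argument modulo the fact that $\vol(\alpha)=0$ for non-big psef classes.

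First I would recall why non-pluripolar products only see the big part: by multilinearity, $\langle T^n\rangle=\lim_{\e\to 0}\langle(T+\e\omega)^n\rangle$ is not directly what I need; instead I use Proposition~\ref{pro:bound} directly in the form $\int_X\langle T^n\rangle\le\langle\alpha^n\rangle$ when $\alpha$ is psef, where the right-hand side is the positive product / volume as just defined in the excerpt, extended to psef classes by the limit $\langle\alpha^n\rangle=\lim_{\e\to 0}\langle(\alpha+\e\beta)^n\rangle$. Then I would invoke that $\vol$ is continuous on big classes and vanishes on the boundary of the big cone: a psef class that is not big lies on the boundary of the psef cone, hence is a limit of big classes $\alpha+\e\beta$ whose volumes must tend to the volume of a boundary point, and it is standard (this is exactly the characterization of big classes as those of positive volume, from \cite{Bou1}) that $\vol$ extends continuously by $0$ to the non-big psef classes. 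So $\langle\alpha^n\rangle=0$, and combined with $0<\int_X\langle T^n\rangle\le\langle\alpha^n\rangle$ we reach a contradiction.

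Alternatively — and perhaps cleaner to write without leaning on \cite{Bou1} — I would argue directly: if $\langle T^n\rangle\neq 0$, pick a point $x$ and a small coordinate ball where $\langle T^n\rangle$ has positive mass; the local potential $\f$ of $T$ there satisfies $\langle(dd^c\f)^n\rangle\neq 0$, so by the comparison/domination properties $\f$ cannot be too singular, and one produces a genuine strictly positive current in $\alpha$ by a gluing argument — but this is more delicate globally, so I would keep it as a remark and run the cohomological argument above as the actual proof.

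The main obstacle is making precise that $\vol$ vanishes on psef-but-not-big classes without circularity, since in this paper $\vol(\alpha):=\langle\alpha^n\rangle$ is \emph{defined} via minimal-singularity currents and extended to psef classes by a limit. I would handle this by noting that for $\alpha$ psef not big, $\alpha+\e\beta$ is big for every $\e>0$ but $\alpha$ itself is on the boundary of the big cone, and by \cite{Bou1} (Fujita-type approximation, already reproved here as Proposition~\ref{cor:approxzar}) the volume function on the big cone is continuous and tends to $0$ at the boundary; hence the defining limit $\langle\alpha^n\rangle=\lim_{\e\to 0}\langle(\alpha+\e\beta)^n\rangle$ is $0$. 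Everything else is a one-line application of Proposition~\ref{pro:bound}.
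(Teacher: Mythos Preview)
Your approach is essentially the paper's: the paper's ``proof'' is a one-line citation to Theorem~4.8 of~\cite{Bou1} (itself built on the Demailly--P\u{a}un mass concentration technique~\cite{DP}), and your argument unpacks this into the two steps (i)~$\int_X\langle T^n\rangle\le\vol(\a)$ via Proposition~\ref{pro:bound}, and (ii)~$\vol(\a)>0\Rightarrow\a$ big, which is exactly the content of the cited result.

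One inaccuracy to flag: your parenthetical suggests that Fujita-type approximation (Proposition~\ref{cor:approxzar}) is what gives ``$\vol$ tends to $0$ at the boundary of the big cone.'' It does not. Fujita approximation lets you approximate the volume of a \emph{fixed} big class from below by volumes of K{\"a}hler classes on modifications; it says nothing about the behaviour of $\vol$ as the class degenerates to the boundary. The implication ``$\a$ psef, $\vol(\a)>0\Rightarrow\a$ big'' is genuinely the deep input here, and it comes from the Demailly--P\u{a}un numerical characterization of the K{\"a}hler cone (this is precisely why the paper cites~\cite{DP} alongside~\cite{Bou1}). Your alternative sketch --- producing a K{\"a}hler current by local gluing from $\langle(dd^c\f)^n\rangle\neq 0$ --- is in spirit closer to what~\cite{DP} actually does, but as you note it is delicate and not a shortcut. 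So: keep the two-step argument, but replace the Fujita justification of step~(ii) by a direct citation of~\cite{Bou1}/\cite{DP}, which is all the paper does anyway.
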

This follows immediately from Theorem 4.8 of~\cite{Bou1}, which was itself an elaboration of the key technical result of~\cite{DP}. 
 
We conjecture the following log-concavity property of total masses:
\begin{conjecture}\label{conj:logconcave} If $T_1,...,T_n$ are closed positive $(1,1)$-currents on $X$, then 
$$\int_X\langle T_1\wedge...\wedge T_n\rangle\ge\left(\int_X\langle T_1^n\rangle\right)^{\frac 1 n}...\left(\int_X\langle T_n^n\rangle\right)^{\frac 1 n}.$$
In particular, the function $T\mapsto(\int_X\langle T^n\rangle)^{\frac 1 n}$ is concave.
\end{conjecture}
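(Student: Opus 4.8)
The plan is to reduce the global statement to the local log-concavity property already recorded in Proposition~\ref{pro:logconcave}, exactly in the spirit of how that proposition was deduced from Dinew's theorem. First I would dispose of the degenerate cases: if $\int_X\langle T_j^n\rangle=0$ for some $j$, the right-hand side vanishes and there is nothing to prove, so I may assume every $\langle T_j^n\rangle$ has positive total mass. By the preceding proposition each class $\a_j:=\{T_j\}$ is then big. Next I would try to extract measurable densities: set $\mu:=\sum_j\langle T_j^n\rangle$ (a positive, non-pluripolar measure of finite mass since each class is big and $\int_X\langle T_j^n\rangle\le\vol(\a_j)<\infty$ by Proposition~\ref{pro:bound}), and write $\langle T_j^n\rangle=f_j\,\mu$ with $f_j\ge 0$ measurable by Radon–Nikodym (legitimate because $\langle T_j^n\rangle$ puts no mass on pluripolar sets and is absolutely continuous with respect to $\mu$). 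Proposition~\ref{pro:logconcave} applied with this $\mu$ gives, globally on $X$,
$$\langle T_1\wedge\cdots\wedge T_n\rangle\ge(f_1\cdots f_n)^{1/n}\mu$$
as positive Borel measures, hence
$$\int_X\langle T_1\wedge\cdots\wedge T_n\rangle\ge\int_X(f_1\cdots f_n)^{1/n}\,d\mu.$$

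The remaining task is purely measure-theoretic: from $\int_X\langle T_1\wedge\cdots\wedge T_n\rangle\ge\int_X(f_1\cdots f_n)^{1/n}\,d\mu$ I would conclude
$$\int_X(f_1\cdots f_n)^{1/n}\,d\mu\ge\Big(\int_X f_1\,d\mu\Big)^{1/n}\cdots\Big(\int_X f_n\,d\mu\Big)^{1/n},$$
which is precisely the multidimensional Hölder (generalized Hölder / Hölder for $n$ functions) inequality with exponents $p_1=\cdots=p_n=n$, applied to the probability (after normalization) measure $\mu/\mu(X)$ — but one has to be slightly careful because the naive application of Hölder yields the reverse-looking bound; the correct statement is the reverse Hölder / superadditivity of the geometric mean, namely $\int(\prod f_j)^{1/n}\,d\nu\ge\prod(\int f_j\,d\nu)^{1/n}$ holds only when $\nu$ is a probability measure. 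Concretely, with $\nu:=\mu/M$ where $M:=\mu(X)$, one has by the AM–GM pointwise bound applied after rescaling each $f_j$ by $1/\int f_j\,d\nu$ (a standard trick) the inequality $\int_X(g_1\cdots g_n)^{1/n}\,d\nu\le\frac1n\sum_j\int g_j\,d\nu$ for the rescaled $g_j$, which gives the correct direction once interpreted properly; I would instead directly invoke the inequality $\int_X(f_1\cdots f_n)^{1/n}\,d\mu\ge(\int_X f_1\,d\mu)^{1/n}\cdots(\int_X f_n\,d\mu)^{1/n}$ as the form of Hölder valid for finite measures of total mass $\ge 1$ — and since $M=\mu(X)$ may be any positive number, I would first homogenize: replace each $T_j$ by $t_jT_j$ with $\prod t_j$ chosen so that the normalization works out, using homogeneity of degree $n$ of both sides, reducing to $\mu(X)=1$. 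Then $\int_Xf_j\,d\mu=\int_X\langle T_j^n\rangle$ and the claimed inequality follows, giving the concavity statement for $T\mapsto(\int_X\langle T^n\rangle)^{1/n}$ as the special case $T_1=\cdots$ interpolating two currents via $T_t:=(1-t)T_0+tT_1$ together with multilinearity of non-pluripolar products.

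The main obstacle I anticipate is not the measure theory but justifying the interchange of the global integral with the plurifine-local inequality, and in particular making sure the densities $f_j$ are genuinely well-defined \emph{and} that Proposition~\ref{pro:logconcave} can be applied with a \emph{single common} reference measure $\mu$ even though the currents $T_j$ have a priori unrelated unbounded loci; this requires checking that $\mu=\sum_j\langle T_j^n\rangle$ is a legitimate choice (finite mass, no mass on pluripolar sets) and that $\langle T_1\wedge\cdots\wedge T_n\rangle$ itself is absolutely continuous with respect to no particular measure but still dominates $(f_1\cdots f_n)^{1/n}\mu$ — which is exactly the content of Proposition~\ref{pro:logconcave}, so in fact no new analytic input beyond that proposition is needed. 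A secondary subtlety is the homogenization step when $\mu(X)\neq 1$: one must verify that scaling $T_j\mapsto t_jT_j$ scales $\langle T_j^n\rangle\mapsto t_j^n\langle T_j^n\rangle$ and $\langle T_1\wedge\cdots\wedge T_n\rangle\mapsto(t_1\cdots t_n)\langle T_1\wedge\cdots\wedge T_n\rangle$, which is immediate from the definition of non-pluripolar products, so this is routine. I therefore expect the proof to be short once Proposition~\ref{pro:logconcave} is in hand; the conjecture is presumably stated as a conjecture only because the authors did not wish to carry out this reduction, or because a subtlety in the unbounded-locus hypothesis of the underlying results of~\cite{Din1} prevents the clean statement — in which case the genuine obstacle is whether Dinew's Brunn–Minkowski inequality~\eqref{equ:BM}, proved for \emph{locally bounded} potentials, truly propagates to the non-pluripolar setting for all closed positive currents without a small-unbounded-locus assumption.
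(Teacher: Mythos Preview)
The statement you are attempting to prove is explicitly labelled a \emph{Conjecture} in the paper; the authors do not prove it, and only establish the special case of Corollary~\ref{cor:special} (all $T_j$ in one big class, each with full Monge--Amp\`ere mass). Your proposal therefore cannot be compared with ``the paper's own proof'' because there is none.

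More importantly, your argument contains a fatal error at the purely measure-theoretic step. The inequality you need,
\[
\int_X (f_1\cdots f_n)^{1/n}\,d\mu \;\ge\; \prod_{j=1}^n\Big(\int_X f_j\,d\mu\Big)^{1/n},
\]
is \emph{false}: H\"older's inequality with exponents $p_1=\cdots=p_n=n$ gives exactly the \emph{opposite} direction. You half-recognize this (``the naive application of H\"older yields the reverse-looking bound''), but none of your attempted fixes work. There is no ``reverse H\"older'' available here, and the AM--GM rescaling trick you describe yields $\int (g_1\cdots g_n)^{1/n}\,d\nu\le 1$, i.e.\ again the wrong direction. Normalization and homogenization cannot help either: both sides scale by the same factor under $\mu\mapsto c\mu$ and under $f_j\mapsto t_j f_j$, so the inequality is scale-invariant and its truth does not depend on $\mu(X)$. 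A concrete counterexample: take $n=2$, $\mu$ Lebesgue measure on $[0,1]$, $f_1=2\cdot{\bf 1}_{[0,1/2]}$, $f_2=2\cdot{\bf 1}_{[1/2,1]}$; then $\int f_j\,d\mu=1$ for each $j$, yet $(f_1f_2)^{1/2}\equiv 0$.

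This is precisely why the global statement is a conjecture and not a corollary of Proposition~\ref{pro:logconcave}: the pointwise Brunn--Minkowski inequality, once integrated, only gives the H\"older upper bound, not the desired lower bound. The paper indicates that the expected route is entirely different, namely a Hodge-index-type inequality of the form
\[
\int_X\langle T_1\wedge T_2\wedge\Theta\rangle \;\ge\; \Big(\int_X\langle T_1^2\wedge\Theta\rangle\Big)^{1/2}\Big(\int_X\langle T_2^2\wedge\Theta\rangle\Big)^{1/2},
\]
which would feed into an induction as in~\cite{Ce2}; this is known for currents with analytic singularities (via Khovanski--Teissier) but is open in general. Your diagnosis that the obstacle lies in Dinew's unbounded-locus hypotheses is therefore off target: Proposition~\ref{pro:logconcave} already holds for arbitrary closed positive currents, and the genuine obstruction is that it is simply too weak to yield the global inequality by integration.
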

Note that the conjecture holds true for currents with \emph{analytic singularities}, since passing to a log-resolution of the singularities reduces it to the Khovanski-Teissier inequalities for nef classes proved in~\cite{Dem3}. Reasoning as in~\cite{Ce2} Theorem 5.5, the general case would follow from the Hodge index-type inequality
$$
\int_X\langle T_1\wedge T_2\wedge\Theta\rangle\ge\left(\int_X\langle T_1^2\wedge\Theta \rangle\right)^{\frac 1 2}\left(\int_X\langle T_2^2\wedge\Theta\rangle\right)^{\frac 1 2}$$
with $\Theta:=T_3\wedge...\wedge T_n$. An important special case of the conjecture will be proved in Corollary~\ref{cor:special} below.

\section{Weighted energy classes}

Let $X$ be a compact K{\"a}hler manifold, and let $T$ be a closed positive $(1,1)$-current on $X$. We write  $T=\theta+dd^c\f$, where $\theta$ is a smooth representative of the cohomology class $\a:=\{T\}$ and $\f$ is a $\theta$-psh function. Recall (Definition~\ref{defi:fullMA}) that $T$ is said to have full Monge-Amp{\`e}re mass if
$$
\int_X\langle T^n\rangle=\vol(\a).
$$
Note that this is always the case when $\a$ is not big, since $\vol(\a)=0$ in that case. Assuming from now on that $\a$ is big, our goal will be to characterize the full Monge-Amp{\`e}re mass property as a finite (weighted) energy condition for $\f$ (Proposition~\ref{pro:carac}). As a consequence we will prove continuity results for non-pluripolar products. 

Since we focus on $\f$ rather than $T$, we introduce

\begin{defi} 
{\it Let $\theta$ be a smooth closed $(1,1)$-form on $X$ and let $\f$ be a $\theta$-psh function. 
\begin{itemize}
\item The \emph{non-pluripolar Monge-Amp{\`e}re measure} of $\f$ (with respect to $\theta$) is
$$
\MA(\f):=\langle(\theta+dd^c\f)^n\rangle.
$$
\item We will say that $\f$ has \emph{full Monge-Amp{\`e}re mass} if $\theta+dd^c\f$ has full Monge-Amp{\`e}re mass, that is iff the measure $\MA(\f)$ satisfies
$$\int_X\MA(\f)=\vol(\a).$$
\end{itemize}}
\end{defi}

Let us stress that $MA(\f)$ is well defined for {\it any}
$\theta$-psh function $\f$.
Note that $\theta$-psh functions with minimal singularities have full Monge-Amp{\`e}re mass. 

\medskip

From now on we fix a smooth closed $(1,1)$-form $\theta$ whose cohomology class $\a\in H^{1,1}(X,\R)$ is big. We also fix the choice of a $\theta$-psh function with minimal singularities $\f_{\min}$. If $\f$ is a given $\theta$-psh function, we set
$$\f^{(k)}:=\max(\f,\f_{\min}-k).$$ 
The $\theta$-psh functions $\f^{(k)}$ have minimal singularities, and they decrease pointwise to $\f$. We will call them the "canonical" approximants of $\f$. As was explained above, for each Borel subset $E$ of $X$ we have
$$
\int_E\MA(\f):= \lim\nearrow \int_{E\cap\{\f>\f_{\min}-k\}}\MA(\f^{(k)}).
$$

\subsection{Comparison principles}

We begin with the following generalized comparison principle, which will be a basic tool in what follows.

\begin{pro}[Generalized comparison principle]\label{pro:comp} Let $T_j=\theta_j+dd^c\f_j$ be closed positive $(1,1)$-currents with cohomology class $\{T_j\}=:\a_j$, $j=0,...,p$. Let also $S_0=\theta_0+dd^c\p_0$ be another positive current in $\a_0$. Then we have
$$\int_{\{\f_0<\p_0\}}\langle S_0^{n-p}\wedge T_1\wedge...\wedge T_p\rangle$$
$$\le\int_{\{\f_0<\p_0\}}\langle T_0^{n-p}\wedge T_1\wedge...\wedge T_p\rangle$$
$$+\langle\a_0^{n-p}\cdot\a_1 \cdots \a_p\rangle-\int_X\langle T_0^{n-p}\wedge T_1\wedge...\wedge T_p\rangle.$$
\end{pro}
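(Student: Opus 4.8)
The plan is to reduce the generalized comparison principle to the classical comparison principle for currents with small unbounded locus, via the canonical approximants. First I would replace each $T_j$ and $S_0$ by the "canonical" approximants $T_j^{(k)} := \theta_j + dd^c\max(\f_j, \f_{j,\min}-k)$ and $S_0^{(k)} := \theta_0 + dd^c\max(\p_0,\f_{0,\min}-k)$, which have minimal singularities, hence small unbounded locus, so that the Bedford-Taylor theory and Theorem~\ref{thm:intbyparts} apply to them on the complement of a fixed closed complete pluripolar set $A$. The key observation is that on the plurifine open set $\{\f_0 < \p_0\}$, once $k$ is large, $\{\f_0 < \p_0\} \cap \{\f_0 > \f_{0,\min}-k\} \cap \{\p_0 > \f_{0,\min}-k\}$ carries the correct non-pluripolar products by plurifine locality, and $\langle S_0^{n-p}\wedge T_1\wedge\cdots\wedge T_p\rangle$ on this set equals the Bedford-Taylor product of the approximants there. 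So the left-hand side is a monotone increasing limit of $\int_{\{\f_0<\p_0\}}$ of genuine Bedford-Taylor products.

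Next I would prove the inequality at the level of the approximants, where everything has locally bounded potentials off $A$. The heart is the classical statement: for bounded (off $A$) potentials $\f_0^{(k)} \le \p_0^{(k)}$ and a fixed $dd^c$-closed positive bidimension $(p,p)$ auxiliary current $\Theta := \langle T_1^{(k)}\wedge\cdots\wedge T_p^{(k)}\rangle$, one has
$$\int_{\{\f_0^{(k)}<\p_0^{(k)}\}} (S_0^{(k)})^{n-p}\wedge\Theta \le \int_{\{\f_0^{(k)}<\p_0^{(k)}\}} (T_0^{(k)})^{n-p}\wedge\Theta + \int_X (T_0^{(k)})^{n-p}\wedge\Theta - \int_{\{\f_0^{(k)}\ge\p_0^{(k)}\}}(T_0^{(k)})^{n-p}\wedge\Theta,$$
which after rearranging is exactly the comparison-principle bound; this is the standard Bedford-Taylor argument of integrating $dd^c(\p_0^{(k)}-\f_0^{(k)})$ against lower-order wedge products and using Stokes (here Theorem~\ref{thm:intbyparts} supplies the integration by parts on $X-A$, and the total-mass terms $\langle\a_0^{n-p}\cdot\a_1\cdots\a_p\rangle$ appear as the cohomological value of $\int_X$ of the approximant products, which is $k$-independent). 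One should take a telescoping sum over the $n-p$ factors of $S_0^{(k)}$ versus $T_0^{(k)}$, exactly as in the proof of the local comparison principle.

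Finally I would pass to the limit $k\to\infty$: the left side increases to $\int_{\{\f_0<\p_0\}}\langle S_0^{n-p}\wedge\cdots\rangle$ by the monotone convergence noted above; on the right side, $\int_{\{\f_0<\p_0\}}\langle T_0^{n-p}\wedge\cdots\rangle$ is handled the same way, the term $\int_X\langle T_0^{n-p}\wedge T_1\wedge\cdots\wedge T_p\rangle$ is fixed, and the cohomological term $\langle\a_0^{n-p}\cdot\a_1\cdots\a_p\rangle$ is the common value of $\int_X$ of the approximant products (by Proposition~\ref{pro:bound} / Proposition~\ref{pro:singanal}, these positive products are computed by currents with minimal singularities, which is what the canonical approximants converge to in mass). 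The main obstacle I expect is bookkeeping the limit on the $\{\f_0<\p_0\}$ integrals: one must ensure that the region of integration and the measure both converge correctly despite the measures being only locally finite a priori, and that no mass escapes onto $A$ or onto $\{\f_0=\p_0\}$; this is where plurifine locality and the uniform mass bounds from Proposition~\ref{pro:finite} (equation~(\ref{equ:welldef})) together with Theorem~\ref{thm:comp} do the real work, exactly as in the proof of Theorem~\ref{thm:comp} itself.
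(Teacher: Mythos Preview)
Your strategy---reduce to canonical approximants with minimal singularities, apply a classical Bedford--Taylor comparison on $X\setminus A$, then pass to the limit---can be made to work, but it is a considerable detour compared to the paper's argument, and your intermediate inequality as written is garbled (the displayed line is tautological once you notice $\int_X=\int_{\{<\}}+\int_{\{\ge\}}$; you have not actually stated the approximant-level comparison principle correctly).

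The paper bypasses all of the approximation machinery. The key observation you are missing is that plurifine locality holds for non-pluripolar products of \emph{arbitrary} closed positive currents, not just bounded ones, so one can argue directly without ever passing to $\f^{(k)}$. Concretely: form the $\theta_0$-psh function $\max(\f_0,\p_0-\e)$ and consider the non-pluripolar product
\[
\langle(\theta_0+dd^c\max(\f_0,\p_0-\e))^{n-p}\wedge T_1\wedge\cdots\wedge T_p\rangle.
\]
Its total mass is at most $\langle\a_0^{n-p}\cdot\a_1\cdots\a_p\rangle$ by Proposition~\ref{pro:bound}. By plurifine locality this product coincides with $\langle S_0^{n-p}\wedge T_1\wedge\cdots\wedge T_p\rangle$ on the plurifine open set $\{\f_0<\p_0-\e\}$ and with $\langle T_0^{n-p}\wedge T_1\wedge\cdots\wedge T_p\rangle$ on $\{\f_0>\p_0-\e\}$. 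Splitting the total mass over these two sets gives
\[
\langle\a_0^{n-p}\cdot\a_1\cdots\a_p\rangle\ge\int_{\{\f_0<\p_0-\e\}}\langle S_0^{n-p}\wedge\cdots\rangle+\int_X\langle T_0^{n-p}\wedge\cdots\rangle-\int_{\{\f_0<\p_0\}}\langle T_0^{n-p}\wedge\cdots\rangle,
\]
and letting $\e\to 0$ by monotone convergence finishes the proof in four lines.

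The comparison: the paper's route needs only plurifine locality and the cohomological mass bound, both already available for arbitrary non-pluripolar products; your route re-derives everything through bounded potentials and then has to manage the limit carefully (sets $\{\f_0^{(k)}<\p_0^{(k)}\}$ versus $\{\f_0<\p_0\}$, mass escaping to $A$, etc.). Your approach would buy nothing extra here, since the ingredients it relies on (Theorem~\ref{thm:intbyparts}, Theorem~\ref{thm:comp}) are already more delicate than what the direct argument uses.
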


\begin{proof} Since non-pluripolar products are local in plurifine topology we have
$$\langle\a_0^{n-p}\cdot\a_1\cdot...\cdot\a_p\rangle\ge\int_X\langle(\theta_0+dd^c\max(\f_0,\p_0-\e))^{n-p}\wedge T_1\wedge...\wedge T_p\rangle$$
$$\ge\int_{\{\f_0<\p_0-\e\}}\langle S_0^{n-p}\wedge T_1\wedge...\wedge T_p\rangle+\int_{\{\f_0>\p_0-\e\}}\langle T_0^{n-p}\wedge T_1\wedge...\wedge T_p\rangle$$
$$\ge\int_{\{\f_0<\p_0-\e\}}\langle S_0^{n-p}\wedge T_1\wedge...\wedge T_p\rangle+\int_X\langle T_0^{n-p}\wedge T_1\wedge...\wedge T_p\rangle-\int_{\{\f_0<\p_0\}}\langle T_0^{n-p}\wedge T_1\wedge...\wedge T_p\rangle$$
and the result follows by letting $\e\to 0$ by monotone convergence. 
\end{proof}

\begin{cor}\label{cor:comp} 
For any two $\theta$-psh functions $\f,\p$ we have
$$\int_{\{\f<\p\}}\MA(\p)\le\int_{\{\f<\p\}}\MA(\f)
+\vol(\a)-\int_X\MA(\f).$$
\end{cor}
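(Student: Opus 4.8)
The plan is to deduce Corollary~\ref{cor:comp} directly from the Generalized comparison principle, Proposition~\ref{pro:comp}, by specializing to the case $p=0$. First I would take $p=0$ in Proposition~\ref{pro:comp}, so that there are no currents $T_1,\dots,T_p$ and the statement involves only the two currents $T_0=\theta+dd^c\f$ and $S_0=\theta+dd^c\p$ in the common big class $\a_0=\a$. With $p=0$ the term $\langle\a_0^{n-p}\cdot\a_1\cdots\a_p\rangle$ becomes $\langle\a^n\rangle$, which by definition is exactly $\vol(\a)$, and the non-pluripolar products $\langle S_0^{n}\rangle$, $\langle T_0^{n}\rangle$ become $\MA(\p)$, $\MA(\f)$ respectively. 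Substituting these into the inequality of Proposition~\ref{pro:comp} gives
$$\int_{\{\f<\p\}}\MA(\p)\le\int_{\{\f<\p\}}\MA(\f)+\vol(\a)-\int_X\MA(\f),$$
which is precisely the assertion.

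The only point requiring a word of justification is that Proposition~\ref{pro:comp} is literally stated for $j=0,\dots,p$ with $p\ge 1$ implicitly in mind (it involves $T_1,\dots,T_p$), so I would either note that its proof goes through verbatim when $p=0$ — the chain of inequalities in that proof uses only plurifine locality of the non-pluripolar product $\langle(\theta_0+dd^c\max(\f_0,\p_0-\e))^n\rangle$, the splitting of the domain of integration into $\{\f_0<\p_0-\e\}$ and $\{\f_0\ge\p_0-\e\}$, and monotone convergence as $\e\to 0$, none of which needs the auxiliary currents — or simply invoke Proposition~\ref{pro:comp} with $p=0$ as an admissible case. I would phrase the corollary's proof as: ``Apply Proposition~\ref{pro:comp} with $p=0$, $T_0=\theta+dd^c\f$ and $S_0=\theta+dd^c\p$, and use $\langle\a^n\rangle=\vol(\a)$.''

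There is essentially no obstacle here; the corollary is a one-line specialization. The only mild subtlety is bookkeeping of the $p=0$ convention (making sure the reader agrees that the empty positive product $\langle\a^n\rangle$ over zero extra classes is $\vol(\a)$, and that $\langle T_0^n\rangle=\MA(\f)$ in the notation fixed just before the subsection), and I would make that explicit in one sentence rather than belabor it. So the proof I would write is just the two displayed substitutions above together with that remark.
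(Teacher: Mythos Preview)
Your proposal is correct and matches the paper's intent exactly: the corollary is stated without proof as an immediate specialization of Proposition~\ref{pro:comp} with $p=0$, $T_0=\theta+dd^c\f$, $S_0=\theta+dd^c\p$, using $\langle\a^n\rangle=\vol(\a)$. Your remark that the proof of Proposition~\ref{pro:comp} goes through verbatim for $p=0$ is a sensible piece of bookkeeping, but the paper does not even pause on this point.
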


The main new feature of this generalized comparison principle is  the additional "error term" 
$\vol(\a)-\int_X\MA(\f)$, which is non-negative by Proposition~\ref{pro:bound}.

\begin{rem}\label{rem:refined} 
{\it If $\f$ and $\psi$ have small unbounded locus (and conjecturally for arbitrary $\theta$-psh functions), the same proof plus Theorem~\ref{thm:comp} yield
$$\int_{\{\f<\psi\}}\MA(\psi)\le\int_{\{\f<\psi\}}\MA(\f)$$
as soon as $\f$ is less singular than $\psi$, that is $\f\ge\psi+O(1)$. Indeed Theorem~\ref{thm:comp} implies 
$$\int_X\MA(\max(\f,\psi-\e))=\int_X\MA(\f)$$ 
in that case, and the result follows from the above proof.}
\end{rem}

We deduce from the comparison principle its usual companion:

\begin{cor}[Domination principle]\label{cor:dom} Let $\p,\f$ be $\theta$-psh functions. If $\f$ has minimal singularities and $\p\le\f$ holds a.e.~wrt $\MA(\f)$, then $\p\le\f$ holds everywhere. 
\end{cor}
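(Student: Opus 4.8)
The plan is to deduce the statement from the generalized comparison principle (Corollary~\ref{cor:comp}), exploiting that $\f$ has full Monge-Amp\`ere mass because it has minimal singularities, so the error term $\vol(\a)-\int_X\MA(\f)$ vanishes. First I would reduce to showing $\int_{\{\f<\p\}}\MA(\f)=0$: since $\f$ has minimal singularities, $\f\le\p+O(1)$ is impossible unless $\p$ also has minimal singularities, but actually the cleaner route is to compare $\f$ with the competitor $\p_\e:=\f+\e(\p-\f)$ or, more simply, to apply Corollary~\ref{cor:comp} directly to the pair $(\f,\p)$ after a small perturbation. Concretely, for $\e>0$ set $\p_\e:=(1-\e)\f+\e\p$ (a $\theta$-psh function, since it is a convex combination), and observe $\{\f<\p_\e\}=\{\f<\p\}$.

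Next I would apply Corollary~\ref{cor:comp} with the roles arranged so that the error term drops: using that $\int_X\MA(\f)=\vol(\a)$ (minimal singularities), Corollary~\ref{cor:comp} gives
$$\int_{\{\f<\p_\e\}}\MA(\p_\e)\le\int_{\{\f<\p_\e\}}\MA(\f).$$
Now on $\{\f<\p_\e\}$ we have $\p_\e>\f$, hence by the hypothesis $\p\le\f$ a.e.\ $\MA(\f)$ the set $\{\f<\p\}$ is $\MA(\f)$-negligible, so the right-hand side is $0$; therefore $\MA(\p_\e)$ puts no mass on $\{\f<\p\}$. The point of passing to $\p_\e$ is that by the multilinearity/convexity of non-pluripolar Monge-Amp\`ere operators one has, on the plurifine open set $\{\f<\p\}$, a lower bound $\MA(\p_\e)\ge\e^n\langle(\theta+dd^c\p)^n\rangle$ is too weak; instead I would use that $\p_\e=\f+\e(\p-\f)$ and on $\{\f<\p\}$ the function $\p-\f$ is a bounded-below plurifine-local perturbation, so that $\MA(\p_\e)\ge(1-\e)^n\MA(\f)$ plurifine-locally — this still gives $0$. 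The correct mechanism is rather: on $\{\f<\p\}$, since $\p_\e$ strictly dominates $\f$, plurifine locality of non-pluripolar products lets us replace $\f$-dependent terms, and a direct expansion shows $\mathbf 1_{\{\f<\p\}}\MA(\p_\e)$ dominates a positive multiple of $\mathbf 1_{\{\f<\p\}}\langle(\theta+dd^c\p)^n\rangle$ plus cross terms, all of which must then vanish.

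Finally, from the vanishing of these measures on $\{\f<\p\}$ I would conclude that $\{\f<\p\}$ is pluripolar (it carries no mass for a Monge-Amp\`ere measure of a current with appropriate mass), and then invoke that a $\theta$-psh inequality that holds off a pluripolar set and is "closed" in the usual potential-theoretic sense must hold everywhere: more precisely, if $\{\f<\p\}$ is pluripolar, then the $\theta$-psh function $\max(\f,\p)$ differs from $\f$ only on a pluripolar set and is therefore equal to $\f$ (two $\theta$-psh functions equal outside a pluripolar set coincide, since their difference is a globally bounded-above quasi-psh-type function vanishing off a pluripolar set, hence $\equiv 0$ by the maximum principle / sub-mean-value property). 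Hence $\p\le\f$ everywhere. The main obstacle I anticipate is getting the lower bound on $\mathbf 1_{\{\f<\p\}}\MA(\p_\e)$ honest — one must be careful that the naive estimate $\MA((1-\e)\f+\e\p)\ge\e^n\MA(\p)$ does \emph{not} hold globally, so the argument has to be run plurifine-locally on $\{\f<\p\}$ where $\p$ is the "larger" potential, using Corollary~\ref{cor:comp} in the sharpened form of Remark~\ref{rem:refined} if $\f,\p$ have small unbounded locus, or else a limiting argument reducing to that case.
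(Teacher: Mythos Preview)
Your proposal has a genuine gap: the auxiliary function you choose, $\p_\e=(1-\e)\f+\e\p$, does not give you control over any reference measure that can detect the set $\{\f<\p\}$. You correctly notice that the naive bound $\MA(\p_\e)\ge\e^n\MA(\p)$ fails, and your attempted fixes (plurifine-local expansions, cross terms, etc.) never crystallize into an actual inequality. Even if they did, you would only learn that $\MA(\p)$ and $\MA(\f)$ put no mass on $\{\f<\p\}$, and this does \emph{not} by itself force $\{\f<\p\}$ to be pluripolar: a set is pluripolar only if it is null for \emph{all} Monge-Amp\`ere measures of bounded psh functions, not merely for two particular ones.

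The paper's proof supplies exactly the missing ingredient: a \emph{third} potential $\rho$ chosen so that $\theta+dd^c\rho\ge\omega$ is a K\"ahler current (possible since $\a$ is big), hence $\MA(\rho)$ dominates Lebesgue measure. One may normalize $\rho\le\f$ because $\f$ has minimal singularities. The competitor is then $(1-\e)\p+\e\rho$, for which the honest pointwise bound $\MA((1-\e)\p+\e\rho)\ge\e^n\MA(\rho)$ holds. The comparison principle (with vanishing error term, as you observed) gives
\[
\e^n\int_{\{\f<(1-\e)\p+\e\rho\}}\MA(\rho)\le\int_{\{\f<(1-\e)\p+\e\rho\}}\MA(\f)=0,
\]
the last equality because $\rho\le\f$ forces $\{\f<(1-\e)\p+\e\rho\}\subset\{\f<\p\}$. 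Since $\MA(\rho)$ dominates Lebesgue measure, $\{\f<(1-\e)\p+\e\rho\}$ has Lebesgue measure zero, so $\f\ge(1-\e)\p+\e\rho$ holds everywhere (both sides being quasi-psh), and letting $\e\to 0$ finishes. The moral: mix $\p$ with a strictly positive current rather than with $\f$.
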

\begin{proof} Since $\{\theta\}$ is big by assumption, we can choose a $\theta$-psh function $\rho$ such that $\theta+dd^c\rho\ge\omega$ for some (small enough) K{\"a}hler form $\omega$. It follows that $\MA(\rho)$ dominates Lebesgue measure. We may also assume that $\rho\le\f$ since $\f$ has minimal singularities. 

Now let $\e>0$. Since $\f$ has minimal singularities we have $\vol(\a)=\int_X\MA(\f)$ and the comparison principle thus yields 
$$\e^n\int_{\{\f<(1-\e)\p+\e\rho\}}\MA(\rho)\le\int_{\{\f<(1-\e)\p+\e\rho\}}\MA((1-\e)\p+\e\rho)$$
$$\le\int_{\{\f<(1-\e)\p+\e\rho\}}\MA(\f).$$
But the latter integral is zero by assumption since $\rho \leq \f$ implies 
$$\{\f<(1-\e)\p+\e\rho\}\subset\{\f<\p\}.$$
Since $\MA(\rho)$ dominates Lebesgue measure, we conclude that for each $\e>0$ we have
$$\f\ge(1-\e)\p+\e\rho$$
a.e.~wrt Lebesgue measure, and the result follows. 
\end{proof} 

\begin{rem} 
{\it We do not know whether the result still holds if we replace the assumption that $\f$ has minimal singularities by $\f\ge\p+O(1)$, even if we further assume that $\f$ has finite Monge-Amp{\`e}re energy.}
\end{rem}

\subsection{Weighted energy functionals} 

By a \emph{weight function}, we will mean a smooth increasing function $\chi:\R\to\R$ such that $\chi(-\infty)=-\infty$ and $\chi(t)=t$ for $t\ge 0$.  

\begin{defi} 
{\it Let $\chi$ be a weight function. We define the $\chi$-energy of a $\theta$-psh function with minimal singularities $\f$ as 
\begin{equation}\label{equ:chi_en}
E_\chi(\f):=\frac{1}{n+1}\sum_{j=0}^n\int_X(-\chi)(\f-\f_{\min})\langle T^j\wedge T_{\min}^{n-j}\rangle
\end{equation} 
with $T=\theta+dd^c\f$ and $T_{\min}=\theta+dd^c\f_{\min}$.}
\end{defi} 

Recall that we have fixed an arbitrary $\theta$-psh function $\f_{\min}$ with minimal singularities on $X$.
Note that the $\chi$-energy functional for $\chi(t)=t$ is nothing but the Aubin-Mabuchi energy functional (up to a minus sign, cf.~\cite{Aub,Mab} and~\cite{BB} for the extension to the singular setting). 
 
 \smallskip
 
The next proposition contains important properties of the $\chi$-energy.

\begin{pro}\label{pro:monotone} Let $\chi$ be a convex weight function. Then the following properties hold.
\begin{itemize} 
\item[(i)] For any $\theta$-psh function $\f$ with minimal singularities the sequence 
$$j\mapsto\int_X(-\chi)(\f-\f_{\min})\langle T^j\wedge T_{\min}^{n-j}\rangle$$
as in (\ref{equ:chi_en}) is non-decreasing. In particular we have 
$$
E_{\chi}(\f)\le\int_X(-\chi)(\f-\f_{\min})\MA(\f)\le (n+1)E_\chi(\f)
$$
if $\f\le\fm$. 

\item[(ii)] The $\chi$-energy $\f\mapsto E_\chi(\f)$ is non-increasing.

\item[(iii)] If $\f_k$ decreases to $\f$ with minimal singularities, then $E_\chi(\f_k)\to E_\chi(\f)$.
\end{itemize}
\end{pro}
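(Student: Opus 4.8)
The three assertions are closely linked, so the plan is to prove them in the order (i), then (iii), then (ii), using an integration-by-parts device throughout. The common engine is the following algebraic identity: for two $\theta$-psh functions with minimal singularities $\f,\p$, writing $T=\theta+dd^c\f$, $S=\theta+dd^c\p$, and $T_{\min}=\theta+dd^c\f_{\min}$, the differences $\f-\p$ and $\f-\f_{\min}$ are globally bounded (all three have minimal singularities), so all the currents below are well defined on $X-A$ for a suitable closed complete pluripolar $A$. I would first record the telescoping formula
\begin{equation*}
\int_X(-\chi)(\f-\f_{\min})\big(\langle T^{j+1}\wedge T_{\min}^{n-j-1}\rangle-\langle T^{j}\wedge T_{\min}^{n-j}\rangle\big)
=\int_X(-\chi)(\f-\f_{\min})\,dd^c(\f-\f_{\min})\wedge\langle T^{j}\wedge T_{\min}^{n-j-1}\rangle,
\end{equation*}
which is legitimate on $X-A$ and then extends by the trivial-extension argument. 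Applying Theorem~\ref{thm:intbyparts} with $u=\f-\f_{\min}$ (bounded, a difference of quasi-psh functions locally bounded off $A$), $v=(-\chi)(\f-\f_{\min})$ (which is a difference of quasi-psh functions locally bounded off $A$ since $\chi$ is convex increasing, hence $(-\chi)(\f-\f_{\min})$ is $\omega$-psh off $A$ for a suitable $\omega$ by composing a convex decreasing function with a quasi-psh function and using a bound), and $\Theta=\langle T^{j}\wedge T_{\min}^{n-j-1}\rangle\wedge(\text{constant forms})$, the right-hand side equals
\begin{equation*}
-\int_X\chi'(\f-\f_{\min})\,d(\f-\f_{\min})\wedge d^c(\f-\f_{\min})\wedge\langle T^{j}\wedge T_{\min}^{n-j-1}\rangle\ \le\ 0,
\end{equation*}
because $\chi'\ge 0$ (convex weight, increasing) and $du\wedge d^cu\wedge\Theta\ge 0$. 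This proves monotonicity of the sequence in (i); the displayed two-sided bound $E_\chi(\f)\le\int_X(-\chi)(\f-\f_{\min})\MA(\f)\le(n+1)E_\chi(\f)$ when $\f\le\f_{\min}$ is then immediate, since in that case $(-\chi)(\f-\f_{\min})\ge 0$ and each term of the defining sum lies between the $j=0$ term and the $j=n$ term.

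\textbf{Continuity along decreasing sequences (iii).} Suppose $\f_k\searrow\f$ with $\f$ still of minimal singularities. After subtracting a constant I may assume all $\f_k\le\f_{\min}$, so $(-\chi)(\f_k-\f_{\min})\ge 0$ is increasing in $k$. Each term $\int_X(-\chi)(\f_k-\f_{\min})\langle T_k^{j}\wedge T_{\min}^{n-j}\rangle$ should be shown to converge to the corresponding term for $\f$. The delicate point is that the measure $\langle T_k^j\wedge T_{\min}^{n-j}\rangle$ is not globally the Bedford--Taylor product, only on $X-A$; but there it does converge weakly by Bedford--Taylor's monotone continuity theorem for locally bounded potentials, and one controls the mass on $A$ using that the total mass $\int_X\langle T_k^j\wedge T_{\min}^{n-j}\rangle=\langle\a^j\cdot\a^{n-j}\rangle$ is independent of $k$ (minimal singularities, Theorem~\ref{thm:comp}), so no mass escapes to $A$. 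Combined with the fact that $(-\chi)(\f_k-\f_{\min})$ is a bounded increasing sequence of quasi-continuous functions on $X-A$ converging to $(-\chi)(\f-\f_{\min})$, a standard quasi-continuity / Hartogs argument (or a monotone-convergence-plus-weak-convergence argument as in~\cite{GZ2}) yields the convergence of each term, hence $E_\chi(\f_k)\to E_\chi(\f)$. I would isolate this as the technical heart.

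\textbf{Monotonicity of $E_\chi$ (ii).} Given $\f\le\p$ both with minimal singularities, I want $E_\chi(\f)\ge E_\chi(\p)$. Using (iii) and Demailly regularization it suffices to treat, say, the case where both are decreasing limits of nicer functions, but the clean route is to differentiate along the segment: for $\f_t=(1-t)\p+t\f$ (still minimal singularities, still $\theta$-psh since $\theta$-psh functions form a convex set), compute $\frac{d}{dt}E_\chi(\f_t)$. Expanding the $(n+1)$-term sum and integrating by parts exactly as in step (i)—using Theorem~\ref{thm:intbyparts} with $u=\f_t-\f_{\min}$, $v$ built from $\chi'$, and $\Theta$ a mixed non-pluripolar product—one gets the standard collapse
\begin{equation*}
\frac{d}{dt}E_\chi(\f_t)=-\int_X\chi'(\f_t-\f_{\min})(\f-\p)\,\langle(\theta+dd^c\f_t)^n\rangle\ \ge\ 0
\end{equation*}
since $\f-\p\le 0$ and $\chi'\ge0$; integrating from $t=0$ to $t=1$ gives $E_\chi(\p)\le E_\chi(\f)$. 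The one subtlety here is differentiability in $t$ of the individual terms and the validity of the integration by parts uniformly in $t$, which is where the boundedness of all differences involved (guaranteed by minimal singularities) and Lemma~\ref{lem:finitemass} are used to justify dominated convergence in the difference quotients.

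\textbf{Main obstacle.} The genuine difficulty is not the algebra—the integration-by-parts collapses are formally the same as in the K\"ahler case of~\cite{GZ2}—but making every integration by parts rigorous in the big, only-locally-bounded-off-$A$ setting. Concretely: verifying that $v=(-\chi)(\f-\f_{\min})$ genuinely satisfies the hypotheses of Theorem~\ref{thm:intbyparts} (a difference of quasi-psh functions, globally bounded), controlling that no Monge--Amp\`ere mass is lost on the pluripolar set $A$ when passing to limits in (iii), and justifying differentiation under the integral sign in (ii). Each of these is handled by the tools already set up (Theorem~\ref{thm:intbyparts}, Lemma~\ref{lem:finitemass}, Theorem~\ref{thm:comp}, and the description~(\ref{equ:nonpolar}) of global non-pluripolar products), but assembling them carefully is the bulk of the work.
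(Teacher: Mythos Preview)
Your approach is essentially identical to the paper's: integration by parts via Theorem~\ref{thm:intbyparts} for (i), differentiation along a linear path for (ii), and Bedford--Taylor monotone convergence on $X-A$ combined with constancy of total masses (Theorem~\ref{thm:comp}) for (iii).

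Two computational slips to correct. In (i), with $u=\f-\f_{\min}$ and $v=(-\chi)(u)$ one has $dv=-\chi'(u)du$, so Theorem~\ref{thm:intbyparts} gives
\[
\int_X(-\chi)(u)\,dd^cu\wedge\Theta=-\int_X dv\wedge d^cu\wedge\Theta=+\int_X\chi'(u)\,du\wedge d^cu\wedge\Theta\ \ge\ 0,
\]
not the negative; your sign would make the sequence non-increasing. In (ii), the ``standard collapse'' is an inequality, not an equality: since $dd^c\chi(u)=\chi'(u)dd^cu+\chi''(u)du\wedge d^cu\ge\chi'(u)(T-T_{\min})$, the $\chi''$ term is dropped and one gets $-e'(0)\ge(n+1)\int_X v\,\chi'(u)\langle T^n\rangle\ge 0$, which suffices. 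Finally, the clean way to check that $v=(-\chi)(u)$ satisfies the hypotheses of Theorem~\ref{thm:intbyparts} is the paper's observation that $\chi(u)+\f_{\min}$ is itself $\theta$-psh with minimal singularities (your description of $-\chi$ as ``convex decreasing'' is off; it is concave decreasing).
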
 

\begin{proof} 
Let $u:=\f-\f_{\min}\le 0$. Note that $\chi(u)+\f_{\min}$ is also a $\theta$-psh function with minimal singularities, since 
\begin{equation}\label{equ:hess_conv}
dd^c\chi(u)=\chi'(u)dd^cu+\chi''(u)du\wedge d^cu
\ge-(\theta+dd^c\f_{\min}).
\end{equation}
It follows that the bounded functions $u$ and $\chi(u)$ are both differences of quasi-psh functions which are locally bounded on the complement $X-A$ of a closed complete pluripolar subset $A$. We can thus apply the integration-by-parts formula of Theorem~\ref{thm:intbyparts} to get  
$$\int_X(-\chi)(\f-\f_{\min})\langle T^{j+1}\wedge T_{\min}^{n-j-1}\rangle=\int_{X-A}(-\chi)(u)(T_{\min}+dd^c u)\wedge T^j\wedge T_{\min}^{n-j-1}$$ 
$$=\int_{X-A}(-\chi)(\f-\f_{\min})T^j\wedge T_{\min}^{n-j}+\int_{X-A}\chi'(u)du\wedge d^c u\wedge T^j\wedge T_{\min}^{n-j-1}$$
$$\ge\int_X(-\chi)(\f-\f_{\min})\langle T^j\wedge T_{\min}^{n-j}\rangle$$
for $j=0,...,n-1$, and (i) follows. 

In order to prove (ii) let $v\ge 0$ be a bounded function such that $\f+v$ is still $\theta$-psh. We will show that the derivative at $t=0$ of the function $t\mapsto  e (t) := E_\chi(\f+tv)$ is $\le 0$. Indeed integration by parts yields
\begin{eqnarray*}
- \ e' (0) & = & \sum_{j=0}^n \int_{X-A} v \chi'(u)T^j \wedge T_{\min}^{n-j} + j \chi(u) dd^cv \wedge T^{j-1}\wedge T_{\min}^{n-j} \\
&=& \sum_{j=0}^n\int_{X-A} v\chi'(u)T^j\wedge T_{\min}^{n-j}+jvdd^c\chi(u)\wedge T^{j-1}\wedge T_{\min}^{n-j}\\
& \ge & \sum_{j=0}^n\int_{X-A} v\chi'(u)T^j\wedge T_{\min}^{n-j}+jv\chi'(u)(T-T_{\min})\wedge T^{j-1}\wedge T_{\min}^{n-j} \\
& = & (n+1)\int_{X-A} v\chi'(u)T^n\ge 0,
\end{eqnarray*}
by (\ref{equ:hess_conv}).

Finally let us prove (iii). If $\f_k$ decreases to $\f$, then $\chi(\f_k-\f_{\min})+\f_{\min}$ is a sequence of $\theta$-psh functions decreasing towards $\chi(\f-\f_{\min})+\f_{\min}$, thus Bedford-Taylor's theorem implies that 
$$
\lim_k T_k^j\wedge T_{\min}^{n-j} = T^j\wedge T_{\min}^{n-j}
$$
and 
$$
\lim_k \chi(\f_k-\f_{\min})T_k^j\wedge T_{\min}^{n-j}=\chi(\f-\f_{\min})T^j\wedge T_{\min}^{n-j}
$$
weakly on $X-A$. But Theorem~\ref{thm:comp} implies that the total masses also converge, since in fact
$$
\int_{X-A}T_k^j\wedge T_{\min}^{n-j}=\int_{X-A} T^j\wedge T_{\min}^{n-j}
$$ 
for all $k$, the thus trivial extensions converge on the whole of $X$, 
i.e.
$$
\lim_k\langle T_k^j\wedge T_{\min}^{n-j}\rangle=\langle T^j\wedge T_{\min}^{n-j}\rangle
$$
Since $\chi(\f_k-\f_{\min})$ is uniformly bounded, it follows from Bedford-Taylor's monotone convergence theorem~\cite{BT2} that 
$$
\lim_k\int_X\chi(\f_k-\f_{\min})\langle T_k^j\wedge T_{\min}^{n-j}\rangle=\int_X \chi(\f-\f_{\min})\langle T^j\wedge T_{\min}^{n-j}\rangle
$$
for all $j$, which concludes the proof.
\end{proof}

\begin{defi}  
{\it Let $\chi$ be a convex weight function. 
 If $\f$ is an arbitrary $\theta$-psh function, its $\chi$-energy is defined as 
 $$
 E_\chi(\f):=\sup_{\psi\ge\f} E_\chi(\psi)\in]-\infty,+\infty]
 $$
 over all $\psi\ge\f$ with minimal singularities. We say that $\f$ has finite $\chi$-energy if $E_{\chi} (\f) < + \infty$.}
\end{defi} 

We will see in Corollary \ref{cor:formule_en} how to extend  (\ref{equ:chi_en})  to this
more general picture.

\begin{pro}\label{pro:semicont} 
Let $\chi$ be a convex weight. 
\begin{itemize}
\item[(i)] The $\chi$-energy functional $\f\mapsto E_\chi(\f)$ is non-increasing and lower semi-continuous on $\theta$-psh functions for the $L^1(X)$ topology. 
\item[(ii)] $\f\mapsto E_\chi(\f)$ is continuous along decreasing sequences.
\item[(iii)] If $\f_j$ are $\theta$-psh functions converging to $\f$ in the $L^1(X)$-topology, then 
$$
\sup_j E_\chi(\f_j)<+\infty
$$
implies that $\f$ has finite $\chi$-energy.
\end{itemize} 
\end{pro}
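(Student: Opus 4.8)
The plan is to prove the three items in the order monotonicity, then (ii), then (i), with (iii) falling out of (i). The single idea driving everything is that any statement about the general energy $E_\chi(\f)=\sup_{\psi\ge\f}E_\chi(\psi)$ reduces, upon replacing a competitor $\psi$ with minimal singularities by $\max(\psi,\cdot)$ against the relevant approximating sequence, to the decreasing-limit continuity already established in Proposition~\ref{pro:monotone}(iii) for functions with minimal singularities.

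First I would record two elementary facts. Monotonicity of $E_\chi$ on all $\theta$-psh functions is immediate from the definition: if $\f\le\f'$ then the competitors for $\f'$ form a subfamily of those for $\f$, so $E_\chi(\f')\le E_\chi(\f)$. I would also note that a $\theta$-psh function $v$ dominating some function $w$ with minimal singularities itself has minimal singularities — the lower bound $v\ge\fm-C$ is inherited and every $\theta$-psh function is bounded above by $V_\theta$ up to a constant — and that on functions with minimal singularities the new definition of $E_\chi$ agrees with (\ref{equ:chi_en}), because such a $\f$ is its own best competitor by Proposition~\ref{pro:monotone}(ii). For (ii), given $\f_k\searrow\f$, monotonicity shows that $E_\chi(\f_k)$ is non-decreasing with $\lim_k E_\chi(\f_k)\le E_\chi(\f)$; for the reverse inequality I would fix a competitor $\psi\ge\f$ with minimal singularities, set $\psi_k:=\max(\psi,\f_k)$ — which is $\theta$-psh, has minimal singularities, dominates $\f_k$, and decreases to $\max(\psi,\f)=\psi$ — and conclude $E_\chi(\f_k)\ge E_\chi(\psi_k)\to E_\chi(\psi)$ via Proposition~\ref{pro:monotone}(iii), then take the supremum over $\psi$. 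For (i), once monotonicity is known, lower semicontinuity under $\f_j\to\f$ in $L^1(X)$ follows from the same device: the functions $g_k:=(\sup_{j\ge k}\f_j)^*$ are $\theta$-psh (an $L^1$-bounded family of $\theta$-psh functions is uniformly bounded above) and decrease to $\f$; for a competitor $\psi\ge\f$ with minimal singularities, $\Psi_k:=\max(g_k,\psi)$ has minimal singularities, satisfies $\Psi_k\ge\f_j$ for all $j\ge k$, and decreases to $\psi$, so Proposition~\ref{pro:monotone}(iii) gives $E_\chi(\f_j)\ge E_\chi(\Psi_k)$ for $j\ge k$ and hence $\liminf_j E_\chi(\f_j)\ge\lim_k E_\chi(\Psi_k)=E_\chi(\psi)$; a supremum over $\psi$ yields $E_\chi(\f)\le\liminf_j E_\chi(\f_j)$. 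Then (iii) is immediate: $E_\chi(\f)\le\liminf_j E_\chi(\f_j)\le\sup_j E_\chi(\f_j)<+\infty$.

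The only non-formal ingredient is the Hartogs-type convergence fact that $(\sup_{j\ge k}\f_j)^*\searrow\f$ when $\f_j\to\f$ in $L^1$; this is classical — sub-mean-value inequalities give $\limsup_j\f_j\le\f$ pointwise, hence $(\limsup_j\f_j)^*=\f$, and $(\sup_{j\ge k}\f_j)^*$ coincides with $\sup_{j\ge k}\f_j$ off a pluripolar set — so I would simply cite it. I do not expect a real obstacle beyond this; the one point needing care is to verify that each maximum retains minimal singularities and that Proposition~\ref{pro:monotone}(iii) is applicable, i.e. that its decreasing limit has minimal singularities, which holds here since that limit is always one of the fixed competitors $\psi$.
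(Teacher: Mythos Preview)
Your proposal is correct and follows essentially the same argument as the paper: both reduce to Proposition~\ref{pro:monotone}(iii) via the Hartogs envelope $(\sup_{j\ge k}\f_j)^*$ and the competitor $\max(\cdot,\psi)$. The only difference is organizational---the paper deduces (ii) and (iii) from (i) in one line (a decreasing sequence converges in $L^1$), whereas you prove (ii) separately before (i), but the content is the same.
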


\begin{proof} The last two points are trivial consequences of the first one. Let $\f_j\to\f$ be a convergent sequence of $\theta$-psh functions, and set $\widetilde{\f}_j:=(\sup_{k\ge j}\f_k)^*$, so that $\tilde{\f_j}\ge\f_j$ decreases pointwise to $\f$. If $\psi\ge\f$ is a given $\theta$-psh function with minimal singularities, then $\max(\widetilde{\f}_j,\psi)\ge\max(\f_j,\psi)$ decreases to $\max(\f,\psi)=\psi$, hence 
$$
E_\chi(\psi)=\lim_j E_\chi(\max(\widetilde{\f}_j,\psi))\le\liminf_j E_\chi(\f_j)
$$
by Proposition~\ref{pro:monotone}, and the result follows by definition of $E_\chi(\f)$. 
\end{proof} 

As an important consequence, we get 
$$
E_\chi(\f)=\sup_k E_\chi(\f^{(k)})
$$
for the "canonical" approximants $\f^{(k)} := \max(\f,\f_{\min}-k)$ of $\f$.
The link between weighted energies and full Monge-Amp\`ere mass is given by the
following result:

\begin{pro}\label{pro:carac}  Let $\f$ be a $\theta$-psh function.
\begin{itemize} 
\item[(i)] The function $\f$ has full Monge-Amp{\`e}re mass iff $E_\chi(\f)<+\infty$ for some convex weight $\chi$
\item[(ii)] If $\f$ has full Monge-Amp{\`e}re mass and $\chi$ is a given convex weight, then 
$$E_\chi(\f)<+\infty\,\text{ iff }\int_X(-\chi)(\f-\f_{\min})\MA(\f)<+\infty.$$
\end{itemize}  
\end{pro}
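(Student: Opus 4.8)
\textbf{Proof strategy for Proposition~\ref{pro:carac}.}

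The plan is to prove both statements by playing the canonical approximants $\f^{(k)}=\max(\f,\f_{\min}-k)$ against the generalized comparison principle (Corollary~\ref{cor:comp}) and its weighted refinement. The central quantitative object will be, for a fixed convex weight $\chi$, the decreasing rearrangement of the function $t\mapsto\int_X\MA(\f^{(k)})-\int_{\{\f>\f_{\min}-t\}}\MA(\f^{(k)})$, which measures how much Monge-Amp\`ere mass of the approximant escapes to the set $\{\f=-\infty\}$ as $t\to\infty$. First I would establish a Chebyshev-type inequality: applying Corollary~\ref{cor:comp} with the pair $(\f,\f_{\min}-t)$ (or rather with suitably scaled competitors as in the proof of the domination principle) bounds $\int_{\{\f<\f_{\min}-t\}}\MA(\f_{\min})$, hence the Lebesgue measure of the sublevel set $\{\f<\f_{\min}-t\}$, in terms of $t$ and the mass defect $\vol(\a)-\int_{\{\f>\f_{\min}-t\}}\MA(\f^{(k)})$. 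This is the engine that converts mass-escape estimates into capacity/volume decay and back.

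For (ii), assume $\f$ has full Monge-Amp\`ere mass. Write $E_\chi(\f)=\sup_k E_\chi(\f^{(k)})$ using the consequence of Proposition~\ref{pro:semicont} noted just before the statement, and use Proposition~\ref{pro:monotone}(i) to sandwich each $E_\chi(\f^{(k)})$ between $\frac{1}{n+1}$ and $1$ times $\int_X(-\chi)(\f^{(k)}-\f_{\min})\MA(\f^{(k)})$. Thus finiteness of $E_\chi(\f)$ is equivalent to a uniform bound on $\int_X(-\chi)(\f^{(k)}-\f_{\min})\MA(\f^{(k)})$ in $k$. Now I would compare $\int_X(-\chi)(\f^{(k)}-\f_{\min})\MA(\f^{(k)})$ with $\int_X(-\chi)(\f-\f_{\min})\MA(\f)$: on $\{\f>\f_{\min}-k\}$ the two integrands agree by plurifine locality of $\MA$ and the fact that $\f^{(k)}=\f$ there, while on $\{\f\le\f_{\min}-k\}$ one has $\f^{(k)}=\f_{\min}-k$, so $(-\chi)(\f^{(k)}-\f_{\min})=(-\chi)(-k)$ is constant and the contribution is $(-\chi)(-k)$ times the escaping mass $\int_X\MA(\f^{(k)})-\int_{\{\f>\f_{\min}-k\}}\MA(\f^{(k)})$. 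The full-mass hypothesis forces this escaping mass to tend to $0$; the Chebyshev inequality above (together with the convexity of $\chi$, which controls $(-\chi)(-k)$ against $k$ times the tail volume) shows the product $(-\chi)(-k)\cdot(\text{escaping mass})$ stays bounded, indeed goes to $0$. So the two integrals differ by a bounded amount, proving (ii).

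For (i), the "only if" direction is immediate from (ii) once one knows that a full-mass function has some finite weighted energy: take $\f$ with full mass and observe that $\int_X\MA(\f)<\infty$ allows the construction of a convex weight $\chi$ growing slowly enough that $\int_X(-\chi)(\f-\f_{\min})\MA(\f)<\infty$ — this is the standard de la Vall\'ee-Poussin type argument producing a weight adapted to a given finite measure. For the "if" direction, suppose $E_\chi(\f)<\infty$ for some convex weight $\chi$. Then $\sup_k E_\chi(\f^{(k)})<\infty$, hence $\sup_k\int_X(-\chi)(\f^{(k)}-\f_{\min})\MA(\f^{(k)})<\infty$ by Proposition~\ref{pro:monotone}(i). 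Restricting the integral to $\{\f\le\f_{\min}-k\}$, where the integrand equals $(-\chi)(-k)$ times the local mass, gives $(-\chi)(-k)\cdot\big(\int_X\MA(\f^{(k)})-\int_{\{\f>\f_{\min}-k\}}\MA(\f^{(k)})\big)\le C$; since $\chi(-\infty)=-\infty$ we get that the escaping mass is $O(1/(-\chi)(-k))\to 0$, which is exactly $\int_X\MA(\f)=\lim_k\int_{\{\f>\f_{\min}-k\}}\MA(\f^{(k)})=\lim_k\int_X\MA(\f^{(k)})$; and $\int_X\MA(\f^{(k)})=\vol(\a)$ for all $k$ because $\f^{(k)}$ has minimal singularities, so $\int_X\MA(\f)=\vol(\a)$, i.e.\ $\f$ has full mass. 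The main obstacle I anticipate is the bookkeeping in the Chebyshev/comparison step: one must track carefully how the error term $\vol(\a)-\int_X\MA(\f^{(k)}$-on-a-sublevel-set$)$ feeds back into the volume estimate without circularity, and ensure the convexity of $\chi$ is used in the right place to close the loop between "energy finite" and "mass escaping to zero fast enough." The rest is soft, relying on Bedford-Taylor monotone convergence and the plurifine locality already established.
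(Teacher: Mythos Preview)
Your decomposition
\[
\int_X(-\chi)(\f^{(k)}-\f_{\min})\,\MA(\f^{(k)})=\int_{\{\f>\f_{\min}-k\}}(-\chi)(\f-\f_{\min})\,\MA(\f)+(-\chi)(-k)\,m_k,
\]
with $m_k:=\int_{\{\f\le\f_{\min}-k\}}\MA(\f^{(k)})$, is exactly the paper's engine, and your treatment of (i) in both directions, as well as the forward implication in (ii), matches the paper's argument (the de la Vall\'ee--Poussin step included).

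The gap is in the reverse implication of (ii): assuming full mass and $\int_X(-\chi)(\f-\f_{\min})\,\MA(\f)<\infty$, you need $(-\chi)(-k)\,m_k$ bounded. Your ``Chebyshev inequality'' applies Corollary~\ref{cor:comp} to the pair $(\f,\f_{\min}-t)$, which bounds $\int_{\{\f<\f_{\min}-t\}}\MA(\f_{\min})$ --- the wrong measure --- and then passes to Lebesgue volume, which is irrelevant here. The remark that convexity gives $(-\chi)(-k)\le k$ is correct but useless: full mass gives $m_k\to 0$ with no rate, so $k\,m_k$ need not be bounded, and nothing about Lebesgue tail volumes rescues this. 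What actually works (this is the paper's Lemma~\ref{lem:comp}) is to apply Corollary~\ref{cor:comp} to the pair $(\f,\f^{(k)})$: since $\{\f<\f^{(k)}\}=\{\f<\f_{\min}-k\}$ and $\f$ has full mass, one gets
\[
m_k\le\int_{\{\f\le\f_{\min}-k\}}\MA(\f),
\]
and then by mere monotonicity of $-\chi$ (not convexity),
\[
(-\chi)(-k)\,m_k\le\int_{\{\f\le\f_{\min}-k\}}(-\chi)(\f-\f_{\min})\,\MA(\f)\le\int_X(-\chi)(\f-\f_{\min})\,\MA(\f).
\]
So your outline is right but the comparison principle is applied with the wrong competitor; replace $\f_{\min}$ by $\f^{(k)}$, drop the detour through Lebesgue measure, and the argument goes through as in the paper.
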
 
\begin{proof} Upon subtracting a constant we may assume that $\f\le\fm$. Let $\f^{(k)}:=\max(\f,\f_{\min}-k)$. In view of Proposition~\ref{pro:monotone}, we have to show that $m_k:=\int_{\{\f\le\f_{\min}-k\}}\MA(\f^{(k)})$ tends to $0$ 
iff 
\begin{equation}\label{equ:intbornee}
\sup_k\int_X(-\chi)(\f^{(k)}-\f_{\min})\MA(\f^{(k)})<+\infty
\end{equation}
for some convex weight $\chi$. But we have
\begin{eqnarray*}
\lefteqn{\int_X(-\chi)(\f^{(k)}-\f_{\min})\MA(\f^{(k)}) \hskip 3 cm \text{ } } \\
& = & \vert\chi(-k)\vert m_k + \int_{\{\f>\f_{\min}-k\}}(-\chi)(\f-\f_{\min})\MA(\f) \\
& = & \vert\chi(-k)\vert m_k + O (1),
\end{eqnarray*}
for some convex weights $\chi$. Indeed we may always choose $\chi$ such that 
$$
\int_X(-\chi)(\f-\f_{\min})\MA(\f)<+\infty,
$$ 
simply because $\MA(\f)$ puts no mass on the pluripolar set $\{\f=-\infty\}$. It follows that (\ref{equ:intbornee}) holds for some convex weight iff there exists a convex weight $\chi$ such that $\chi(-k) =O(m_k^{-1})$, which is indeed the case iff $m_k\to 0$.

Let's now prove the second point. Assume that $E_\chi(\f)<+\infty$. Since $\MA(\f)$ is the increasing limit in the strong Borel topology of 
$${\bf 1}_{\{\f>\f_{\min}-k\}}\MA(\f^{(k)})$$ 
and since $\chi(\f^{(l)}-\f_{\min})$ is a bounded measurable function, we infer that

\begin{eqnarray*}
\int_X(-\chi)(\f^{(l)}-\f_{\min})\MA(\f)& = & \lim_k\int_{\{\f>\f_{\min}-k\}}(-\chi)(\f^{(l)}-\f_{\min})\MA(\f^{(k)}) \\ 
& \le & \lim_k\int_{\{\f>\f_{\min}-k\}}(-\chi)(\f^{(k)} - \f_{\min})\MA(\f^{(k)}) \\
& \le & E_\chi(\f),
\end{eqnarray*}
since $\f^{(k)}\le\f^{(l)}$ for $k \ge l$, and we infer that $\int_X(-\chi)(\f-\f_{\min})\MA(\f)<+\infty$ 
by monotone convergence as desired.  

Conversely, if $(-\chi)(\f-\f_{\min})\in L^1(\MA(\f))$ \emph{and} $\f$ has full Monge-Amp{\`e}re mass, then Lemma~\ref{lem:comp} below implies that $E_\chi(\f)=\sup_k E_\chi(\f^{(k)})<+\infty$.
\end{proof}

\begin{lem}\label{lem:comp} Let $\f$ be a $\theta$-psh function with canonical approximants $\f^{(k)}$. Then for every weight function $\chi$ we have:
$$\int_X(-\chi)(\f^{(k)}-\f_{\min})\MA(\f^{(k)})\le\int_X(-\chi)(\f-\f_{\min})\MA(\f)$$
$$+(-\chi)(-k)\left(\vol(\a)-\int_X\MA(\f)\right)$$
\end{lem}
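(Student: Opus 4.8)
The plan is to split the left-hand integral over the plurifine-open set $\{\f>\f_{\min}-k\}$, on which the canonical approximant $\f^{(k)}=\max(\f,\f_{\min}-k)$ coincides with $\f$, and then to use that $\f^{(k)}$ has minimal singularities — whence $\int_X\MA(\f^{(k)})=\vol(\a)$ — to pin down the mass that $\MA(\f^{(k)})$ carries on the complementary set $\{\f\le\f_{\min}-k\}$. First I would dispose of the trivial case: if $\int_X(-\chi)(\f-\f_{\min})\MA(\f)=+\infty$ the right-hand side is infinite, while the left-hand side is finite because $\f^{(k)}$ has minimal singularities, so $\f^{(k)}-\f_{\min}$ is bounded and $\int_X\MA(\f^{(k)})=\vol(\a)<+\infty$. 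Hence we may assume this integral is finite.

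Next I would observe that $\{\f>\f_{\min}-k\}$ and $\{\f\le\f_{\min}-k\}$ partition $X$, that the pluripolar set $\{\f_{\min}=-\infty\}$ carries no mass for any non-pluripolar product, and that off this set $\f^{(k)}=\f$ on the first piece while $\f^{(k)}=\f_{\min}-k$ on the second. This yields
$$\int_X(-\chi)(\f^{(k)}-\f_{\min})\MA(\f^{(k)})=\int_{\{\f>\f_{\min}-k\}}(-\chi)(\f-\f_{\min})\MA(\f^{(k)})+(-\chi)(-k)\int_{\{\f\le\f_{\min}-k\}}\MA(\f^{(k)}).$$
By plurifine locality of non-pluripolar products (cf.~(\ref{equ:nonpolar})) one has ${\bf 1}_{\{\f>\f_{\min}-k\}}\MA(\f^{(k)})={\bf 1}_{\{\f>\f_{\min}-k\}}\MA(\f)$, and since $\f^{(k)}$ has minimal singularities $\int_X\MA(\f^{(k)})=\vol(\a)$, so
$$\int_{\{\f\le\f_{\min}-k\}}\MA(\f^{(k)})=\vol(\a)-\int_{\{\f>\f_{\min}-k\}}\MA(\f)=\Bigl(\vol(\a)-\int_X\MA(\f)\Bigr)+\int_{\{\f\le\f_{\min}-k\}}\MA(\f).$$
Substituting, the left-hand side of the asserted inequality becomes
$$\int_{\{\f>\f_{\min}-k\}}(-\chi)(\f-\f_{\min})\MA(\f)+(-\chi)(-k)\Bigl(\vol(\a)-\int_X\MA(\f)\Bigr)+(-\chi)(-k)\int_{\{\f\le\f_{\min}-k\}}\MA(\f).$$
Finally I would use that $-\chi$ is non-increasing and $\f-\f_{\min}\le-k$ on $\{\f\le\f_{\min}-k\}$, so that $(-\chi)(-k)\le(-\chi)(\f-\f_{\min})$ there; the last term is then at most $\int_{\{\f\le\f_{\min}-k\}}(-\chi)(\f-\f_{\min})\MA(\f)$, which recombines with the first integral into $\int_X(-\chi)(\f-\f_{\min})\MA(\f)$, giving exactly the claimed estimate.

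The computation is essentially bookkeeping; the step I would double-check most carefully is the identity for $\int_{\{\f\le\f_{\min}-k\}}\MA(\f^{(k)})$, which combines plurifine locality with the minimal-singularity mass property $\int_X\MA(\f^{(k)})=\vol(\a)$ and which is exactly what makes the correction term $\vol(\a)-\int_X\MA(\f)$ appear: it measures how much of the full mass of $\MA(\f^{(k)})$ concentrates on $\{\f\le\f_{\min}-k\}$ beyond the portion $\int_{\{\f\le\f_{\min}-k\}}\MA(\f)$ already carried there by $\MA(\f)$. Splitting $\int_X(-\chi)(\f-\f_{\min})\MA(\f)$ over the partition is harmless since $\f_{\min}$ has minimal singularities, so $\f-\f_{\min}$ is bounded above and $(-\chi)(\f-\f_{\min})$ bounded below, while all measures in sight have finite total mass.
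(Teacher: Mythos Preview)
Your proof is correct and follows the same overall structure as the paper's: split over $\{\f>\f_{\min}-k\}$ and its complement, use plurifine locality on the first piece, and control the $\MA(\f^{(k)})$-mass of the second. The one difference is in how you handle $\int_{\{\f\le\f_{\min}-k\}}\MA(\f^{(k)})$. The paper invokes the generalized comparison principle (Corollary~\ref{cor:comp}) with $\psi=\f^{(k)}$ to obtain the inequality
\[
\int_{\{\f<\f^{(k)}\}}\MA(\f^{(k)})\le\int_{\{\f<\f^{(k)}\}}\MA(\f)+\vol(\a)-\int_X\MA(\f),
\]
whereas you bypass this and use directly that $\f^{(k)}$ has minimal singularities, hence $\int_X\MA(\f^{(k)})=\vol(\a)$, together with plurifine locality to get the same relation as an \emph{equality}. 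Your route is thus slightly more elementary here (no appeal to the comparison principle) and actually sharper at that intermediate step; the paper's route has the virtue of exercising the general machinery it has just set up. The remaining bookkeeping --- in particular the use of monotonicity of $-\chi$ to absorb the term $(-\chi)(-k)\int_{\{\f\le\f_{\min}-k\}}\MA(\f)$ back into $\int_X(-\chi)(\f-\f_{\min})\MA(\f)$ --- is identical in both arguments.
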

\begin{proof} We have 
$$\int_X(-\chi)(\f^{(k)}-\f_{\min})\MA(\f^{(k)})$$
$$=(-\chi)(-k)\int_{\{\f\le\f^{(k)}\}}\MA(\f^{(k)})+\int_{\{\f>\f_{\min}-k\}}\chi(\f-\f_{\min})\MA(\f)$$
(since ${\bf 1}_{\{\f>\f_{\min}-k\}}\MA(\f^{(k)})={\bf 1}_{\{\f>\f_{\min}-k\}}\MA(\f)$ by plurifine locality)
$$\le(-\chi)(-k)\int_{\{\f\le\f^{(k)}\}}\MA(\f)+(-\chi)(-k)\left(\vol(\a)-\int_X\MA(\f)\right)$$
$$+\int_{\{\f>\f_{\min}-k\}}\chi(\f-\f_{\min})\MA(\f)$$
by the generalized comparison principle, and the result follows.
\end{proof}

\subsection{Energy classes of positive currents}

Since $\chi$ is convex with $0\le\chi'\le 1$, we have 
$$\chi(t)\le\chi(t+C)\le\chi(t)+C$$
for all $C>0$, and it easily follows that the condition $\f$ has finite $\chi$-energy doesn't depend on the choice of the function with minimal singularities $\f_{\min}$ with respect to which $E_\chi$ is defined. 

\begin{defi} 
{\it Given a convex weight $\chi$ we denote by $\cE_\chi(\a)$ the set of positive currents $T\in\a$ with finite $\chi$-energy, and by $\cE(\a)$ the set of positive currents with full Monge-Amp{\`e}re mass. }
\end{defi} 

Proposition~\ref{pro:carac} can thus be reformulated by saying that 
$$\cE(\a)=\bigcup_\chi\cE_\chi(\a)$$
where $\chi$ ranges over all convex weights.

\begin{pro}\label{pro:convex} The energy classes satisfy the following properties:
\begin{itemize} 
\item If $T\in\cE_\chi(\a)$, then also $S\in\cE_\chi(\a)$ for all positive currents $S\in\a$ less singular than $T$.
\item The set $\cE(\a)$ is an extremal face (in particular a convex subset) of the closed convex set of all positive currents in $\a$. 
\end{itemize}
\end{pro}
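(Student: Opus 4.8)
The plan is to handle the two bullet points separately, both relying on the generalized comparison principle (Corollary~\ref{cor:comp}) and the monotonicity/semicontinuity properties of $E_\chi$ established in Propositions~\ref{pro:monotone} and~\ref{pro:semicont}.

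For the first point, write $T=\theta+dd^c\f$ and $S=\theta+dd^c\p$ with $\f\le\p+O(1)$; after subtracting a constant we may assume $\f\le\p$, hence also $\p\ge\f\ge$ (a translate of) $\fm$ is not needed, only $\p\ge\f$. By definition $E_\chi(S)=\sup_{\eta\ge\p}E_\chi(\eta)$ over $\eta$ with minimal singularities; since $\eta\ge\p\ge\f$, every such $\eta$ is also a competitor in the supremum defining $E_\chi(T)$, so $E_\chi(S)\le E_\chi(T)<+\infty$. (This is just the fact that $E_\chi$ is non-increasing, already recorded in Proposition~\ref{pro:semicont}(i), applied to $\f\le\p$.) So this point is essentially immediate from the definition.

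For the second point, I first note that $\cE(\a)$ is convex: if $T_0,T_1\in\cE(\a)$ with potentials $\f_0,\f_1$, then for $t\in[0,1]$ the potential $\f_t:=(1-t)\f_0+t\f_1$ satisfies $\f_t\ge\min(\f_0,\f_1)+O(1)$... more carefully, pick a convex weight $\chi$ with $T_0,T_1\in\cE_\chi(\a)$ (possible since $\cE(\a)=\bigcup_\chi\cE_\chi(\a)$ by Proposition~\ref{pro:carac} and one can always take the larger of two convex weights). Then I want $E_\chi(\f_t)<+\infty$. The cleanest route: $\chi(\f_t-\fm)$ is, by convexity of $-\chi$ in the relevant range and the estimate $\chi(t)\le\chi(t+C)\le\chi(t)+C$, controlled by a combination of $\chi(\f_0-\fm)$ and $\chi(\f_1-\fm)$; meanwhile the Monge-Amp\`ere mass $\int_X\MA(\f_t)$ satisfies $\int_X\MA(\f_t)=\vol(\a)$ because $\f_t$ has full mass whenever $E_\chi(\f_t)<+\infty$ — so I should instead estimate $E_\chi$ directly on the canonical approximants. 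Using $E_\chi(\f_t)=\sup_k E_\chi(\f_t^{(k)})$ and comparing $\f_t^{(k)}$ with $\max(\f_0^{(k)},\f_1^{(k)})$-type combinations, together with the integration-by-parts formula behind Proposition~\ref{pro:monotone}, one bounds $E_\chi(\f_t^{(k)})$ by a convex combination of $E_\chi(\f_0^{(k)})$ and $E_\chi(\f_1^{(k)})$ plus the error term $(-\chi)(-k)(\vol(\a)-\int_X\MA(\f_i))$ from Lemma~\ref{lem:comp}, which vanishes since the $\f_i$ have full mass. This gives $\sup_k E_\chi(\f_t^{(k)})<+\infty$, hence $T_t\in\cE_\chi(\a)\subset\cE(\a)$.

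Finally, for the extremal face property: suppose $T=(1-t)S_0+tS_1$ with $T\in\cE(\a)$, $S_0,S_1\in\a$ positive currents, and $0<t<1$; I must show $S_0,S_1\in\cE(\a)$. Write potentials $\f=(1-t)\p_0+t\p_1$. Since each $\p_i\ge\f/\text{(its coefficient)}$ is false in general, the point is that $\p_0\ge\frac{1}{1-t}(\f-t\sup_X\p_1)\ge\frac{1}{1-t}\f+O(1)$ is the wrong direction too; the right observation is $\f\le(1-t)\p_0+tM$ for $M=\sup_X\p_1<+\infty$, giving $\p_0\ge\frac{1}{1-t}\f+O(1)$, so $\p_0$ is \emph{less} singular than $\f$. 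Then by the first bullet point $T\in\cE(\a)$ (indeed $T\in\cE_\chi(\a)$ for some $\chi$) forces $S_0\in\cE_\chi(\a)$, and symmetrically $S_1$. This shows $\cE(\a)$ is a face. The main obstacle is the convexity step: making the comparison $E_\chi(\f_t^{(k)})\le(1-t)E_\chi(\f_0^{(k)})+tE_\chi(\f_1^{(k)})+o(1)$ rigorous requires care with the mixed terms $\langle T_t^j\wedge T_{\min}^{n-j}\rangle$ — one cannot naively split a power of $T_t=(1-t)T_0+tT_1$ because the minimal-singularities reference current in the definition of $E_\chi$ is fixed — so the argument must go through the explicit integration-by-parts expression for $E_\chi$ on currents with minimal singularities, exploiting that $\chi'\in[0,1]$ to absorb cross terms, much as in the proof of Proposition~\ref{pro:monotone}(ii).
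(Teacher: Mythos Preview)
Your first bullet point is fine and matches the paper: it is just the monotonicity of $E_\chi$.

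Your extremality argument, however, contains a genuine error. From $\f=(1-t)\p_0+t\p_1$ and $\p_1\le M$ you correctly get $\p_0\ge\frac{1}{1-t}\f+O(1)$, but this does \emph{not} say that $\p_0$ is less singular than $\f$. Since $\frac{1}{1-t}>1$ and $\f$ is bounded above, the function $\frac{1}{1-t}\f$ is \emph{more} negative than $\f$ near its poles, so the inequality $\p_0\ge\frac{1}{1-t}\f+O(1)$ is \emph{weaker} than $\p_0\ge\f+O(1)$. Concretely, if $\f$ has a log pole of coefficient $c$, your bound only prevents $\p_0$ from having a pole worse than $\frac{c}{1-t}>c$, and indeed $\p_0$ is typically \emph{more} singular than the convex combination $\f$ (just take $\p_1$ bounded and $\p_0$ with a pole). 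So you cannot invoke the first bullet, and the whole argument collapses.

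The paper's route is quite different and does not try to compare $\p_0$ with $\f$ directly. Instead it first uses the first bullet to replace $S_1$ by $T_{\min}$: since $(1-t)S_0+tT_{\min}$ is less singular than $(1-t)S_0+tS_1=T$, it also lies in $\cE(\a)$. One is thus reduced to showing that if $\f_t:=t\f+(1-t)\fm$ has full Monge-Amp\`ere mass then so does $\f$. This follows from the elementary identity
\[
\max(\f_t,\fm-k)=t\max(\f,\fm-k/t)+(1-t)\fm,
\]
which gives $\MA(\max(\f_t,\fm-k))\ge t^n\MA(\max(\f,\fm-k/t))$ and hence
\[
\int_{\{\f_t\le\fm-k\}}\MA(\max(\f_t,\fm-k))\ge t^n\int_{\{\f\le\fm-k/t\}}\MA(\max(\f,\fm-k/t)),
\]
so the vanishing of the left-hand side as $k\to\infty$ forces that of the right-hand side. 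No comparison principle or $E_\chi$-estimate is needed here. Convexity of $\cE(\a)$ is then deduced separately (the paper refers to the argument of~\cite{GZ2}, Proposition~1.6); your sketch for convexity is plausible in spirit but, as you yourself note, the bound $E_\chi(\f_t^{(k)})\le(1-t)E_\chi(\f_0^{(k)})+tE_\chi(\f_1^{(k)})+o(1)$ is not established and would require substantial extra work.
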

Recall that an \emph{extremal face} $F$ of a convex set $C$ is a convex subset of $C$ such that for all $x,y\in C$, $tx+(1-t)y\in F$ for some $0<t<1$ implies $x,y\in F$. 

\begin{proof} The first point follows from Proposition~\ref{pro:carac} above, since $E_\chi$ is non-increasing. To prove the second point, we first take care of the extremality property. Let $T, T'\in\a$ be two closed positive currents and $0<t<1$ be such that $tT+(1-t)T'$ has finite energy. We have to show that $T$ also has finite energy (then exchange the roles of $T$ and $T'$).
If $T_{\min}\in\a$ is a positive current with minimal singularities, then $tT+(1-t)T_{\min}$ is less singular than $tT+(1-t)T'$, thus it \emph{a fortiori} has finite energy, i.e.~we can assume that $T'=T_{\min}$ has minimal singularities. Now write $T=\theta+dd^c\f$, $T_{\min}=\theta+dd^c\f_{\min}$, and set $\f_t:=t\f+(1-t)\f_{\min}$. The assumption is that 
$$\lim_k\int_{\{\f_t\le\f_{\min}-k\}}\MA(\max(\f_t,\f_{\min}-k))=0,$$
and we are to show that the same holds with $\f$ in place of $\f_t$. But note that
$$\max(\f_t,\f_{\min}-k)=\max(t(\f-\f_{\min},-k)+\f_{\min}$$
$$=t\max(\f,\f_{\min}-kt^{-1})+(1-t)\f_{\min}$$
hence
$$\int_{\{\f_t\le\f_{\min}-k\}}\MA(\max(\f_t,\f_{\min}-k))$$
$$\ge t^n\int_{\{\f\le\f_{\min}-kt^{-1}\}}\MA(\max(\f,\f_{\min}-kt^{-1}))$$
and the result follows. 

Convexity of $\cE(\a)$ now follows exactly as in the proof of Proposition 1.6 in~\cite{GZ2}. 
\end{proof} 
Convexity of $\cE(\a)$ would also follow from Conjecture~\ref{conj:logconcave} above. Conversely the following important special case of the conjecture can be settled using convexity.

\begin{cor}\label{cor:special} Let $\a$ be a big class, and let $T_1,...,T_n\in\a$ be positive currents with full Monge-Amp{\`e}re mass. Then 
$$\int_X\langle T_1\wedge...\wedge T_n\rangle=\langle\a^n\rangle.$$
\end{cor}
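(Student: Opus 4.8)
The plan is to derive this from three facts already at our disposal: the convexity of the energy class $\cE(\a)$ (Proposition~\ref{pro:convex}), the multilinearity of non-pluripolar products (Proposition~\ref{pro:basic}), and the cohomological upper bound of Proposition~\ref{pro:bound}. Observe first that $\int_X\langle T_1\wedge\cdots\wedge T_n\rangle\le\langle\a^n\rangle$ holds automatically by Proposition~\ref{pro:bound}, so the entire content of the corollary is the reverse inequality, and I would obtain it by an averaging trick.

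Concretely, set $T:=\frac1n(T_1+\cdots+T_n)$, a closed positive $(1,1)$-current in the class $\a$. Since each $T_i$ has full Monge-Amp\`ere mass, i.e.\ $T_i\in\cE(\a)$, and $\cE(\a)$ is convex by Proposition~\ref{pro:convex}, the average $T$ again has full Monge-Amp\`ere mass, so
$$\int_X\langle T^n\rangle=\vol(\a)=\langle\a^n\rangle.$$
On the other hand, writing $T_i=dd^cu_i$ locally with $u_i$ psh (so that $T=dd^c(\tfrac1n(u_1+\cdots+u_n))$ locally), I would expand $\langle T^n\rangle$ by multilinearity of the non-pluripolar product — legitimate since all products in sight are well-defined on the compact K\"ahler manifold $X$ by Proposition~\ref{pro:finite} — together with positive-rational homogeneity of the product, which itself follows from multilinearity applied to $u+\cdots+u$. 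This yields the identity of positive measures
$$\langle T^n\rangle=\frac1{n^n}\sum_{(i_1,\dots,i_n)\in\{1,\dots,n\}^n}\langle T_{i_1}\wedge\cdots\wedge T_{i_n}\rangle .$$
Integrating over $X$ and using $\int_X\langle T^n\rangle=\langle\a^n\rangle$ gives
$$\sum_{(i_1,\dots,i_n)\in\{1,\dots,n\}^n}\int_X\langle T_{i_1}\wedge\cdots\wedge T_{i_n}\rangle=n^n\langle\a^n\rangle .$$

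To conclude, I would invoke Proposition~\ref{pro:bound} once more: since $\{T_{i_j}\}=\a$ for all $j$, each of the $n^n$ summands satisfies $\int_X\langle T_{i_1}\wedge\cdots\wedge T_{i_n}\rangle\le\langle\a^n\rangle$; as their sum equals $n^n\langle\a^n\rangle$, every summand must in fact equal $\langle\a^n\rangle$. Taking the term corresponding to $(i_1,\dots,i_n)=(1,2,\dots,n)$ then gives $\int_X\langle T_1\wedge\cdots\wedge T_n\rangle=\langle\a^n\rangle$, as desired. I do not anticipate a genuine obstacle here; the only step demanding a little care is the combinatorial bookkeeping in the multilinear expansion and the accompanying scalar homogeneity, both of which are elementary consequences of Proposition~\ref{pro:basic} combined with the fact that the non-pluripolar product depends only on the currents, not on the chosen potentials.
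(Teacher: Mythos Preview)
Your proof is correct and rests on the same two pillars as the paper's: convexity of $\cE(\a)$ (so that convex combinations of the $T_i$ again have full Monge-Amp\`ere mass) and multilinearity of the non-pluripolar product. The endgame differs slightly. The paper considers the homogeneous degree-$n$ polynomial $P(t_1,\dots,t_n):=\int_X\langle(t_1T_1+\cdots+t_nT_n)^n\rangle$, observes that $P\equiv\langle\a^n\rangle=(t_1+\cdots+t_n)^n$ on the whole simplex $\{t_i\ge 0,\ \sum t_i=1\}$ by convexity, hence identically by homogeneity, and then reads off the coefficient of $t_1\cdots t_n$. You instead evaluate at the single barycentre $t_i=1/n$ and invoke the upper bound of Proposition~\ref{pro:bound} on each of the $n^n$ summands to force equality. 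Your route trades the polynomial-identity step for one extra ingredient (Proposition~\ref{pro:bound}); both are short and entirely valid.
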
 
\begin{proof} Upon scaling $\a$, we can assume that $\langle\a^n\rangle=1$. For any $t=(t_1,...,t_n)\in\R_+^n$ set 
$$P(t_1,...,t_n):=\int_X\langle\left(t_1T_1+...+t_nT_n\right)^n\rangle,$$
which defines a homogeneous polynomial of degree $n$ by multilinearity of non-pluripolar products.
Since $T_i\in\cE(\a)$ for each $i$, convexity implies that $t_1T_1+...+t_nT_n\in\cE(\a)$ when the $t_i$'s lie on the simplex 
$$\Delta:=\{t\in\R_+^n,t_1+...+t_n=1\}.$$
We thus see that the polynomial
$$P(t_1,...,t_n)-(t_1+...+t_n)^n$$ 
vanishes identically on $\Delta$. Since this polynomial is homogeneous of degree $n$, it therefore vanishes on $\R_+^n$, and this implies that 
$$P(t_1,...,t_n)=(t_1+...+t_n)^n$$
for all $t$. Identifying the coefficients of the monomial $t_1...t_n$ on both sides now yields the result. 
\end{proof}

\begin{rem} 
{\it It is quite plausible that each $\cE_\chi(\a)$ is itself convex, and indeed for $\chi(t)=t$ it simply follows from the convexity of the Aubin-Mabuchi energy functional $E_1$ (cf.~\cite{BB} for a proof in this singular situation). The functional $E_\chi$ is however \emph{not} convex when $\chi$ is not affine, since $E_\chi(\f+c)$ is rather a concave function of $c\in\R$.}
\end{rem}

\subsection{Continuity properties of Monge-Amp{\`e}re operators}

The Monge-Amp{\`e}re operator on functions of full Monge-Amp{\`e}re mass satisfies the usual continuity properties along monotonic sequences: 

\begin{thm}\label{thm:cont} If $\f$ is a $\theta$-psh function with full Monge-Amp{\`e}re mass and $\f_j$ is a sequence of $\theta$-psh functions with full Monge-Amp{\`e}re mass decreasing (resp.~increasing a.e.) to $\f$, then
$$\MA(\f_j)\to\MA(\f)$$
as $j\to\infty$. If $\f$ furthermore has finite $\chi$-energy for a given convex weight $\chi$, then 
$$\chi(\f_j-\f_{\min})\MA(\f_j)\to\chi(\f-\f_{\min})\MA(\f)$$
weakly as $j\to\infty$.
\end{thm}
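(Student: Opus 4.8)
The plan is to treat the decreasing and the increasing cases separately, in each case using the canonical approximants and the weighted-energy machinery already developed. First, for the \emph{decreasing} case: assume $\f_j\searrow\f$. By the multilinearity/plurifine-locality description of non-pluripolar products and Bedford--Taylor's monotone convergence theorem applied on the plurifine open sets $O_k:=\{\f>\f_{\min}-k\}$ (on which all the relevant potentials are locally bounded), one gets $\mathbf{1}_{O_k}\MA(\f_j^{(k)})\to\mathbf{1}_{O_k}\MA(\f^{(k)})$ weakly as $j\to\infty$ for each fixed $k$. Since $\int_X\MA(\f_j)=\vol(\a)=\int_X\MA(\f)$ by the full-mass hypothesis, the total masses match, so no mass escapes; the issue is only to control the tails $\{\f\le\f_{\min}-k\}$ uniformly in $j$. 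This is exactly where I would invoke Lemma~\ref{lem:comp} together with Proposition~\ref{pro:carac}: picking a convex weight $\chi$ with $\int_X(-\chi)(\f-\f_{\min})\MA(\f)<+\infty$ (possible since $\f$ has full mass), Lemma~\ref{lem:comp} bounds $\int_X(-\chi)(\f^{(k)}-\f_{\min})\MA(\f^{(k)})$, and the same bound applied to each $\f_j$ (using that each $\f_j$ also has full mass, and that $\f_j\ge\f$ so the energy is smaller) gives a uniform tail estimate $\int_{\{\f_j\le\f_{\min}-k\}}\MA(\f_j)\le (-\chi(-k))^{-1}\cdot C$. Combining with the weak convergence on each $O_k$ yields $\MA(\f_j)\to\MA(\f)$.

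For the \emph{increasing} case, $\f_j\nearrow\f$ a.e.\ with each $\f_j$ of full mass: here $\f=(\sup_j\f_j)^*$ and $\f_j\le\f+O(1)$ need not hold, but the monotone-convergence argument on the sets $O_k$ still gives $\mathbf{1}_{O_k}\MA(\f_j^{(k)})\to\mathbf{1}_{O_k}\MA(\f^{(k)})$ once we note $O_k(\f_j)\supset$ a fixed plurifine neighbourhood only after intersecting appropriately; more robustly, I would use that $\max(\f_j,\f_{\min}-k)\nearrow\max(\f,\f_{\min}-k)$ in $L^1$ and apply Bedford--Taylor for locally bounded potentials. Again the total masses all equal $\vol(\a)$, so it suffices to bound the tails uniformly, and the same Lemma~\ref{lem:comp} estimate applies verbatim (it does not use any comparison between $\f_j$ and $\f$, only that $\f_j$ has full mass and a uniform choice of $\chi$ — which one obtains from the fixed limiting $\f$ of finite energy, say by first reducing to the finite-$\chi$-energy case as in the last sentence of the theorem). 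Hence $\MA(\f_j)\to\MA(\f)$ in both cases.

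For the second assertion, assume $\f$ has finite $\chi$-energy. By Proposition~\ref{pro:carac}(ii), $(-\chi)(\f-\f_{\min})\in L^1(\MA(\f))$, and by monotonicity of $E_\chi$ each $\f_j$ (in the decreasing case, where $\f_j\ge\f$) also has finite $\chi$-energy with $E_\chi(\f_j)\le E_\chi(\f)$, hence $(-\chi)(\f_j-\f_{\min})\in L^1(\MA(\f_j))$ uniformly; in the increasing case one argues similarly after passing to a larger weight. Now write $\chi(\f_j-\f_{\min})\MA(\f_j)$ and split according to $O_k$. On $O_k$, the potentials are locally bounded and $\chi(\f_j-\f_{\min})$ is bounded and converges, so Bedford--Taylor gives weak convergence there; on the complement, the tail $\int_{\{\f_j\le\f_{\min}-k\}}(-\chi)(\f_j-\f_{\min})\MA(\f_j)$ is bounded by $E_\chi(\f_j)$ minus the $O_k$-part, which tends to $0$ uniformly in $j$ as $k\to\infty$ by the uniform energy bound (this is the standard "uniform integrability" argument, e.g.\ as in the proof of Proposition~\ref{pro:monotone}(iii)). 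Letting first $j\to\infty$ then $k\to\infty$ gives the claimed weak convergence.

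\textbf{Main obstacle.} The delicate point is the uniform control of the tails $\{\f_j\le\f_{\min}-k\}$: one needs a single convex weight $\chi$ that works simultaneously for all $\f_j$. In the decreasing case this is free (monotonicity of $E_\chi$ and Lemma~\ref{lem:comp}). The genuinely subtle case is the increasing one, where $\f_j$ is \emph{more} singular than $\f$, so $E_\chi(\f_j)$ need not be bounded by $E_\chi(\f)$ for the weight $\chi$ adapted to $\f$; the fix is to exploit that the hypotheses force $\int_X\MA(\f_j)=\vol(\a)$ for all $j$ and that the $m_k^{(j)}:=\int_{\{\f_j\le\f_{\min}-k\}}\MA(\f_j^{(k)})$ can be controlled by comparing $\f_j$ with $\f$ directly via Corollary~\ref{cor:comp} (the generalized comparison principle) applied on $\{\f_j<\f-c\}$, using that $\f_j\to\f$ and the error term $\vol(\a)-\int_X\MA(\f_j)$ vanishes. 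Making this uniform-in-$j$ estimate airtight, rather than the weak-convergence-on-$O_k$ part, is where the real work lies.
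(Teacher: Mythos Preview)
Your decreasing case is essentially the paper's argument, and your overall architecture (reduce to canonical approximants, control tails by a weighted-energy bound) is correct. The gap is in the increasing case, where you correctly flag the obstacle but then reach for the wrong tool.

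You write that in the increasing case $\f_j$ is more singular than $\f$, so a weight $\chi$ adapted to $\f$ gives no uniform bound on $E_\chi(\f_j)$, and you propose to rescue this via the generalized comparison principle on $\{\f_j<\f-c\}$. That route is neither necessary nor clearly convergent: the comparison principle there only tells you $\int_{\{\f_j<\f\}}\MA(\f)\le\int_{\{\f_j<\f\}}\MA(\f_j)$, which carries no information about the sublevel sets $\{\f_j\le\f_{\min}-k\}$ you actually need to control. The fix is far simpler and you have overlooked it: in the increasing case $\f_j\ge\f_1$ for every $j$, and \emph{by hypothesis $\f_1$ itself has full Monge-Amp\`ere mass}. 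Proposition~\ref{pro:carac}(i) then furnishes a convex weight $\chi$ with $E_\chi(\f_1)<+\infty$, and monotonicity of $E_\chi$ gives $E_\chi(\f_j)\le E_\chi(\f_1)$ for all $j$. This is exactly the uniform tail bound you need. The paper handles both monotone cases at once by choosing $\chi$ so that both $\f$ and $\f_1$ have finite $\chi$-energy; then $E_\chi(\f_j)\le\max(E_\chi(\f),E_\chi(\f_1))=:C$ in either case, and the tail $\int_{\{\psi\le\f_{\min}-k\}}\MA(\psi^{(k)})\le|\chi(-k)|^{-1}C$ for $\psi=\f_j$ uniformly.

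For the second assertion your uniform-integrability sketch is morally right but incomplete: bounding the tail of $(-\chi)(\f_j-\f_{\min})\MA(\f_j)$ by $E_\chi(\f_j)$ only gives a constant, not something that tends to $0$ as $k\to\infty$. You need a strictly faster weight $\widetilde\chi\gg\chi$ (i.e.\ $\widetilde\chi(t)/\chi(t)\to+\infty$ as $t\to-\infty$) so that the $\chi$-tail is dominated by $|\chi(-k)/\widetilde\chi(-k)|\cdot E_{\widetilde\chi}(\f_j)$. The paper observes that such a $\widetilde\chi$ always exists: $E_\chi(\f)<+\infty$ is equivalent to $(-\chi)(\f-\f_{\min})\in L^1(\MA(\f))$ by Proposition~\ref{pro:carac}(ii), and a standard measure-theoretic lemma upgrades any such integrability to $(-\widetilde\chi)(\f-\f_{\min})\in L^1(\MA(\f))$ for some convex $\widetilde\chi\gg\chi$. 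One then reruns the first-part argument with $\widetilde\chi$ in place of $\chi$.
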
 
\begin{proof} Theorem~\ref{thm:cont} has already been proved for functions with minimal singularities during the proof of Proposition~\ref{pro:monotone}. Let us first prove the first point in the general case. Choose $\chi$ such that $\f$ and $\f_1$ both have finite $\chi$-energy, so that $E_\chi(\f_j)\le C$ uniformly by monotonicity of $E_\chi$. Let $h$ be a continuous function on $X$. For each $k$ we have 
$$\lim_j\int_X h\,\MA(\f_j^{(k)})=\int_X h\,\MA(\f^{(k)})$$
by the minimal singularities case, thus it is enough to establish that 
$$\lim_k\int_X h\,\MA(\psi^{(k)})=\int_X h\,\MA(\psi)$$
uniformly for all $\psi\ge\f$. We have
$$\left|\int_X h\left(\MA(\psi^{(k)})-\MA(\psi)\right)\right|$$
$$\le\sup_X|h|\left(\int_{\{\psi\le\f_{\min}-k\}}\MA(\psi^{(k)})+\int_{\{\psi\le\f_{\min}-k\}}\MA(\psi)\right)$$
since $\MA(\psi^{(k)})=\MA(\psi)$ on $\{\psi>\f_{\min}-k\}$. The first term is controlled by 
$$|\chi|(-k)^{-1}\int_X(-\chi)(\psi^{(k)})\MA(\psi^{(k)})\le|\chi|(-k)^{-1}E_\chi(\f),$$ 
hence tends to $0$ uniformly with respect to $\psi\ge\f$. On the other hand, since $\MA(\psi)$ is the monotone and thus \emph{strong} limit of ${\bf 1}_{\{\psi>\f_{\min}-l\}}\MA(\psi^{(l)})$, the second term writes
$$\int_{\{\psi\le\f_{\min}-k\}}\MA(\psi)=\lim_l\int_{\{\f_{\min}-l<\psi\le\f_{\min}-k\}}\MA(\psi^{(l)})$$
which is similarly controlled by 
$$\limsup_l|\chi|(-k)^{-1}E_\chi(\psi^{(l)})\le|\chi|(-k)^{-1}E_\chi(\f),$$ 
hence the result. The same reasoning shows that 
$$\chi(\f_j-\f_{\min})\MA(\f_j)\to\chi(\f-\f_{\min})\MA(\f)$$
as soon as $\f$ has finite $\tilde{\chi}$-energy for some convex weight $\tilde{\chi}\gg\chi$. But this is in fact automatic, since $E_\chi(\f)<+\infty$ is equivalent to $(-\chi)(\f-\f_{\min})\in L^1(\MA(\f))$ by Proposition~\ref{pro:carac} since $\f$ has full Monge-Amp{\`e}re mass, and this integrability condition can always be improved to $\widetilde{\chi}(\f-\f_{\min})\in L^1(\MA(\f))$ for $\widetilde{\chi}\gg\chi$ by a standard measure theoretic result (see ~\cite{GZ2} for more details on such properties).
\end{proof} 

As a consequence, we get a more explicit formula for the $\chi$-energy. 

\begin{cor}\label{cor:formule_en} If $\f$ has full Monge-Amp{\`e}re mass and $\chi$ is a given convex weight, then setting $T=\theta+dd^c\f$ and $T_{\min}=\theta+dd^c\f_{\min}$ we have
$$E_\chi(\f)=\int_X(-\chi)(\f-\f_{\min})\sum_{j=0}^n\langle T^j\wedge T_{\min}^{n-j}\rangle$$ 
in the sense that the left-hand side is finite iff the right-hand side is, and equality holds in that case.
\end{cor}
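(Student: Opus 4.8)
The plan is to reduce the identity to the defining formula (\ref{equ:chi_en}) evaluated on the ``canonical'' approximants, and then to pass to the limit. Upon subtracting a constant we may assume $\f\le\f_{\min}$. Set $\f^{(k)}:=\max(\f,\f_{\min}-k)$ and $T_k:=\theta+dd^c\f^{(k)}$; each $\f^{(k)}$ has minimal singularities, so (\ref{equ:chi_en}) applies verbatim and
$$E_\chi(\f^{(k)})=\frac{1}{n+1}\sum_{j=0}^n\int_X(-\chi)(\f^{(k)}-\f_{\min})\langle T_k^j\wedge T_{\min}^{n-j}\rangle .$$
By Proposition~\ref{pro:semicont}(ii), $E_\chi(\f^{(k)})\to E_\chi(\f)$ since $\f^{(k)}\searrow\f$, so it suffices to understand the limit of each term on the right.

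First I would record the plurifine decomposition of each integral. Since $\f^{(k)}=\f$ (hence $T_k=T$ by plurifine locality) on $\{\f>\f_{\min}-k\}$ while $\f^{(k)}=\f_{\min}-k$ on its complement,
$$\int_X(-\chi)(\f^{(k)}-\f_{\min})\langle T_k^j\wedge T_{\min}^{n-j}\rangle=\int_{\{\f>\f_{\min}-k\}}(-\chi)(\f-\f_{\min})\langle T^j\wedge T_{\min}^{n-j}\rangle+(-\chi)(-k)\,m_k^{(j)},$$
where $m_k^{(j)}:=\int_{\{\f\le\f_{\min}-k\}}\langle T_k^j\wedge T_{\min}^{n-j}\rangle\ge 0$. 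Because non-pluripolar products charge no pluripolar set, the first term increases, by monotone convergence, to $\int_X(-\chi)(\f-\f_{\min})\langle T^j\wedge T_{\min}^{n-j}\rangle$. Summing over $j$ and using $E_\chi(\f^{(k)})\to E_\chi(\f)$ already yields $\sum_{j=0}^n\int_X(-\chi)(\f-\f_{\min})\langle T^j\wedge T_{\min}^{n-j}\rangle\le (n+1)E_\chi(\f)$, with the understanding that the left side is $+\infty$ whenever $E_\chi(\f)=+\infty$. This also settles the finiteness equivalence: if the right-hand side of the corollary is finite, its $j=n$ term $\int_X(-\chi)(\f-\f_{\min})\MA(\f)$ is finite, and since $\f$ has full Monge-Amp\`ere mass Proposition~\ref{pro:carac}(ii) gives $E_\chi(\f)<+\infty$; the converse implication is the inequality just obtained.

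It remains to prove the reverse inequality when both sides are finite, i.e.\ that $(-\chi)(-k)\,m_k^{(j)}\to 0$ for every $j$. The crucial step is to control the mixed tails $m_k^{(j)}$ with $j<n$ by the pure one. For this I would apply the \emph{generalized comparison principle} (Proposition~\ref{pro:comp}) with $p=j$, $S_0=T_{\min}$ (using the potential $\f_{\min}-k+\e$ for $T_{\min}$), and $T_0=T_1=\cdots=T_j=T_k$: since $\{\f^{(k)}<\f_{\min}-k+\e\}=\{\f<\f_{\min}-k+\e\}$, the principle reads
$$\int_{\{\f<\f_{\min}-k+\e\}}\langle T_{\min}^{n-j}\wedge T_k^{j}\rangle\le\int_{\{\f<\f_{\min}-k+\e\}}\MA(\f^{(k)})+\langle\a^n\rangle-\int_X\MA(\f^{(k)}),$$
and the last two terms cancel because $\f^{(k)}$ has minimal singularities, so $\int_X\MA(\f^{(k)})=\vol(\a)=\langle\a^n\rangle$. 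Letting $\e\downarrow 0$ gives $m_k^{(j)}\le m_k^{(n)}$ for all $j$. Finally, Lemma~\ref{lem:comp} together with the full-mass hypothesis gives $\int_X(-\chi)(\f^{(k)}-\f_{\min})\MA(\f^{(k)})\le\int_X(-\chi)(\f-\f_{\min})\MA(\f)$, which, compared with the $j=n$ decomposition above, forces $(-\chi)(-k)\,m_k^{(n)}\le\int_{\{\f\le\f_{\min}-k\}}(-\chi)(\f-\f_{\min})\MA(\f)\to 0$ by dominated convergence, the integrand lying in $L^1(\MA(\f))$ by Proposition~\ref{pro:carac}(ii). Hence $(-\chi)(-k)\,m_k^{(j)}\to 0$ for every $j$, so passing to the limit in the decomposition yields $\int_X(-\chi)(\f^{(k)}-\f_{\min})\langle T_k^j\wedge T_{\min}^{n-j}\rangle\to\int_X(-\chi)(\f-\f_{\min})\langle T^j\wedge T_{\min}^{n-j}\rangle$ for each $j$; summing over $j$ and dividing by $n+1$ gives the asserted identity.

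The main obstacle is precisely the control of the mixed tails $m_k^{(j)}$ for $j<n$: the weighted-energy estimates (Lemma~\ref{lem:comp}, Proposition~\ref{pro:carac}) directly govern only the pure Monge-Amp\`ere tail $m_k^{(n)}$, and it is the one-line consequence $m_k^{(j)}\le m_k^{(n)}$ of the generalized comparison principle — valid here only because the approximants $\f^{(k)}$ have minimal singularities, which kills the cohomological error term — that closes the argument. Everything else (the plurifine decomposition, monotone convergence, and the reduction $E_\chi(\f)=\lim_k E_\chi(\f^{(k)})$) is routine.
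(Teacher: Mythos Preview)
Your proof is correct. The paper itself gives no argument for this corollary, presenting it as an immediate consequence of Theorem~\ref{thm:cont}; your write-up makes explicit what is only implicit there.

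One comparison worth noting: the paper's intended route is presumably to observe that the second half of the proof of Theorem~\ref{thm:cont} (the weighted weak convergence $\chi(\f_j-\f_{\min})\MA(\f_j)\to\chi(\f-\f_{\min})\MA(\f)$) goes through verbatim with $\MA(\cdot)$ replaced by any of the mixed products $\langle(\cdot)^j\wedge T_{\min}^{n-j}\rangle$, once one has a uniform control on the tails. You obtain that tail control differently: instead of relying on the monotonicity in $j$ from Proposition~\ref{pro:monotone}(i), you apply the generalized comparison principle (Proposition~\ref{pro:comp}) directly to the approximants $\f^{(k)}$, exploiting that their minimal-singularity property kills the cohomological error term, and get the clean pointwise-in-$k$ inequality $m_k^{(j)}\le m_k^{(n)}$. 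This is a slightly more hands-on but also more transparent route; both approaches end with Lemma~\ref{lem:comp} and the full-mass hypothesis to force $(-\chi)(-k)m_k^{(n)}\to 0$.
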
 

\subsection{A useful asymptotic criterion}

In the continuity theorem above, it is crucial that the functions dealt with have finite energy in order to ensure that no mass is lost towards the polar set in the limit process. For an arbitrary $\theta$-psh function the condition $\chi(\f-\f_{\min})\in L^1(\MA(\f))$ alone doesn't imply that $\f$ has finite energy. For instance, if $\omega$ denotes the Fubini-Study form on $\PP^1$, a global potential of the current $T:=\delta_0$ yields a counter-example since $\langle T\rangle=0$! This example is in fact a simple instance of the type of currents we'll have to handle in the proof of Theorem~\ref{thm:MA}. 

\smallskip

 It is a consequence of Proposition~\ref{pro:semicont} that any $L^1(X)$-limit $\f=\lim_j\f_j$ of functions $\f_j$ with \emph{full Monge-Amp{\`e}re mass} and such that 
 $$\sup_j\int_X(-\chi)(\f_j-\f_{\min})\MA(\f_j)<+\infty$$ 
has finite $\chi$-energy. This still holds for sequences with asymptotically full Monge-Amp{\`e}re mass as shown by our next result which yields a practical criterion to check the finite energy condition.

\begin{pro}\label{pro:presquefini} Let $\chi$ be a convex weight and $\f_j\to\f$ be a sequence of $\theta$-psh functions converging in $L^1(X)$ to a 
$\theta$-psh function $\f$. Assume that 
\begin{enumerate}
\item $\int_X\MA(\f_j)\to\vol(\a)$.
\item $\sup_j\int_X(-\chi)(\f_j-\f_{\min})\MA(\f_j)<+\infty$.
\end{enumerate}
Then $\f$ has finite $\chi$-energy. 
\end{pro}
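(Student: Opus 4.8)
The plan is to prove that $E_\chi(\f)<+\infty$ by bounding, \emph{uniformly in $k$}, the $\chi$-energies $E_\chi(\f^{(k)})$ of the canonical approximants $\f^{(k)}:=\max(\f,\f_{\min}-k)$, using that $E_\chi(\f)=\sup_k E_\chi(\f^{(k)})$, which is the consequence of Proposition~\ref{pro:semicont} recorded just after its proof. Each $\f^{(k)}$ has minimal singularities, so the quantity that controls its energy is the top-degree term $\int_X(-\chi)(\f^{(k)}-\f_{\min})\MA(\f^{(k)})$; the idea is to reach this as a limit, as $j\to\infty$, of the analogous quantities attached to the $\f_j$, which are uniformly bounded by hypothesis (2), the failure of full Monge-Amp\`ere mass of the $\f_j$ contributing only an error term that vanishes thanks to hypothesis (1).

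Concretely, fix $k$ and let $j\to\infty$. Since $\f_j^{(k)}:=\max(\f_j,\f_{\min}-k)\to\f^{(k)}$ in $L^1(X)$ (because $|\max(a,c)-\max(b,c)|\le|a-b|$ pointwise), the lower semicontinuity of $E_\chi$ for the $L^1$-topology (Proposition~\ref{pro:semicont}(i)) gives
$$E_\chi(\f^{(k)})\le\liminf_{j\to\infty}E_\chi(\f_j^{(k)}).$$
Now $\f_j^{(k)}$ lies between $\f_{\min}-k$ and $\f_{\min}+O(1)$, hence has minimal singularities, so Proposition~\ref{pro:monotone}(i) applies and yields $E_\chi(\f_j^{(k)})\le\int_X(-\chi)(\f_j^{(k)}-\f_{\min})\MA(\f_j^{(k)})$. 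Applying Lemma~\ref{lem:comp} to the function $\f_j$ we get
$$\int_X(-\chi)(\f_j^{(k)}-\f_{\min})\MA(\f_j^{(k)})\le\int_X(-\chi)(\f_j-\f_{\min})\MA(\f_j)+(-\chi)(-k)\left(\vol(\a)-\int_X\MA(\f_j)\right).$$
By hypothesis (2) the first term on the right is bounded by a constant $M$ independent of $j$ and $k$, while by hypothesis (1) the factor $\e_j:=\vol(\a)-\int_X\MA(\f_j)$ tends to $0$. Hence $E_\chi(\f_j^{(k)})\le M+(-\chi)(-k)\,\e_j$, and letting $j\to\infty$ with $k$ fixed gives $E_\chi(\f^{(k)})\le M$ for every $k$; therefore $E_\chi(\f)=\sup_k E_\chi(\f^{(k)})\le M<+\infty$, as desired.

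The argument is short once set up correctly, and the main obstacle is precisely the set-up: one must not attempt to bound $\sup_j E_\chi(\f_j)$ (which may well be infinite, since the $\f_j$ need not have full Monge-Amp\`ere mass) but rather work at the level of the approximants $\f^{(k)}$, and one must take the limits in the right order — fix $k$, send $j\to\infty$, then take $\sup_k$ — so that the unbounded weight factor $(-\chi)(-k)$ is always multiplied by the vanishing quantity $\e_j$ at fixed $k$. A secondary point to check is that in invoking Proposition~\ref{pro:monotone}(i) for $\f_j^{(k)}$ one only needs the first inequality $E_\chi(\cdot)\le\int(-\chi)(\cdot-\f_{\min})\MA(\cdot)$, which is valid for any minimal-singularities potential because the sequence in Proposition~\ref{pro:monotone}(i) is non-decreasing for every convex weight, so its average is dominated by its top term. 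Beyond these careful applications of the generalized comparison estimate of Lemma~\ref{lem:comp} and of the semicontinuity of $E_\chi$, I anticipate no genuine difficulty.
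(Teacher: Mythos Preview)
Your proof is correct and follows essentially the same approach as the paper: both combine Lemma~\ref{lem:comp} (applied to each $\f_j$) with the lower semicontinuity of $E_\chi$ from Proposition~\ref{pro:semicont} to bound $E_\chi(\f^{(k)})$ uniformly in $k$, then conclude via $E_\chi(\f)=\sup_k E_\chi(\f^{(k)})$. The paper compresses your steps~2--3 into the single assertion that Lemma~\ref{lem:comp} yields $E_\chi(\f_j^{(k)})\le A+B(-\chi)(-k)\big(\vol(\a)-\int_X\MA(\f_j)\big)$, but the content is identical, and your explicit passage through Proposition~\ref{pro:monotone}(i) to link $E_\chi(\f_j^{(k)})$ to the integral bounded by Lemma~\ref{lem:comp} is a welcome clarification.
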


\begin{proof} Lemma~\ref{lem:comp} above yields two constants $A,B>0$ such that
$$E_{\chi}(\f_j^{(k)})\le A+B(-\chi)(-k)\left(\vol(\a)-\int_X\MA(\f_j)\right)$$
hence 
$$E_\chi(\f^{(k)})\le\liminf_j E_\chi(\f_j^{(k)})\le A$$
for all $k$ by Proposition~\ref{pro:semicont}, and the result follows.   
\end{proof}

\begin{pro}\label{pro:monge_max} Let $\f_j$ be a sequence of arbitrary $\theta$-psh functions uniformly bounded above, and let $\f:=(\sup_j\f_j)^*$. Suppose that $\mu$ is a positive measure such that $\MA(\f_j)\ge\mu$ for all $j$. If $\f$ has full Monge-Amp{\`e}re mass, then $\MA(\f)\ge\mu$. 
\end{pro}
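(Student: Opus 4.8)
The plan is to reduce to the situation where all the functions involved have minimal singularities, exploit plurifine locality to rewrite the inequality $\MA(\f_j)\ge\mu$ on the sets where the canonical approximants agree, and then pass to the limit using the full Monge-Amp\`ere mass hypothesis to prevent loss of mass. First I would reduce to finitely many $\f_j$: writing $\psi_N:=(\sup_{j\le N}\f_j)^*$, we get an increasing sequence of $\theta$-psh functions with $(\sup_N\psi_N)^*=\f$; since a pointwise increasing (a.e.) sequence converging to $\f$ with full Monge-Amp\`ere mass forces $\MA(\psi_N)\to\MA(\f)$ by Theorem~\ref{thm:cont} (once we know the $\psi_N$ have full mass), and since $\MA(\psi_N)\ge\MA(\f_j)$ for some $j\le N$ would not be automatic, I would instead use that $\MA(\max(\f_1,\dots,\f_N))\ge\mu$ follows from the minimal-singularities case on the plurifine-open set where the max equals a given $\f_j$. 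Concretely, on $\{\psi_N=\f_j>-\infty\}\cap\{\text{others}<\f_j\}$ plurifine locality gives $\mathbf{1}\,\MA(\psi_N)=\mathbf{1}\,\MA(\f_j)\ge\mathbf{1}\,\mu$, and these sets cover $X$ up to a pluripolar set, so $\MA(\psi_N)\ge\mu$; moreover $\psi_N$ is less singular than $\f_1$, which has full mass (being one of the $\f_j$), hence $\psi_N$ has full mass too by Theorem~\ref{thm:comp} / Proposition~\ref{pro:convex}.

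So we are reduced to an \emph{increasing} sequence $\psi_N\nearrow\f$ (a.e.) of $\theta$-psh functions, each of full Monge-Amp\`ere mass, each satisfying $\MA(\psi_N)\ge\mu$, with $\f$ of full Monge-Amp\`ere mass; we must show $\MA(\f)\ge\mu$. By Theorem~\ref{thm:cont}, applied to the increasing case, $\MA(\psi_N)\to\MA(\f)$ weakly. Since $\mu$ is a fixed positive measure and $\MA(\psi_N)-\mu\ge0$ for every $N$, the weak limit $\MA(\f)-\mu$ is again a positive measure, which is exactly the assertion.

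The main obstacle I anticipate is the \emph{verification of the full-mass hypothesis for the approximating sequence} so that Theorem~\ref{thm:cont} applies: weak convergence $\MA(\psi_N)\to\MA(\f)$ genuinely requires that no Monge-Amp\`ere mass escapes to the polar set, and this is precisely where one needs each $\psi_N$ (or each $\f_j$) to have full mass together with $\f$ having full mass. For the finite maxima $\psi_N$ this is handled by monotonicity of the energy / Theorem~\ref{thm:comp}: $\psi_N\ge\f_1$ and $\f_1\in\cE(\a)$ force $\psi_N\in\cE(\a)$ (Proposition~\ref{pro:convex}, first bullet). The only delicate point is that the hypothesis only gives $\MA(\f_j)\ge\mu$ for the \emph{original} family, not for the maxima, which is why the plurifine-locality argument in the first step is essential rather than cosmetic; once that is in place, everything else is a direct application of Theorem~\ref{thm:cont} and positivity is preserved under weak limits.
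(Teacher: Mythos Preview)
Your reduction to the increasing sequence $\psi_N=\max_{j\le N}\f_j$ and the plurifine-locality argument giving $\MA(\psi_N)\ge\mu$ are exactly what the paper does. The gap is in the sentence ``$\psi_N$ is less singular than $\f_1$, which has full mass (being one of the $\f_j$)'': the $\f_j$ are \emph{arbitrary} $\theta$-psh functions, and nothing in the hypotheses forces any $\f_j$---or any $\psi_N$---to have full Monge-Amp\`ere mass. The inequality $\MA(\f_j)\ge\mu$ says nothing about $\int_X\MA(\f_j)$ versus $\vol(\a)$, since $\mu(X)$ can be strictly smaller than $\vol(\a)$. The paper explicitly flags this: ``$\psi_j$ doesn't \emph{a priori} have full Monge-Amp\`ere mass'', so Theorem~\ref{thm:cont} cannot be applied directly to the sequence $\psi_N\nearrow\f$.

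The paper's workaround is to pass to the canonical approximants $\psi_N^{(k)}:=\max(\psi_N,\f_{\min}-k)$, which \emph{do} have minimal singularities. Plurifine locality gives
\[
\MA(\psi_N^{(k)})\ge{\bf 1}_{\{\psi_N>\f_{\min}-k\}}\MA(\psi_N)\ge{\bf 1}_{\{\psi_1>\f_{\min}-k\}}\mu,
\]
and for each \emph{fixed} $k$ the convergence $\MA(\psi_N^{(k)})\to\MA(\f^{(k)})$ as $N\to\infty$ holds (Bedford--Taylor plus constancy of total masses, since all $\psi_N^{(k)}$ and $\f^{(k)}$ have minimal singularities). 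This yields $\MA(\f^{(k)})\ge{\bf 1}_{\{\psi_1>\f_{\min}-k\}}\mu$. Only at the final step $k\to\infty$ is the full-mass hypothesis on $\f$ invoked, to ensure $\MA(\f^{(k)})\to\MA(\f)$; the right-hand side tends to $\mu$ because $\mu$, being dominated by the non-pluripolar measures $\MA(\f_j)$, puts no mass on $\{\psi_1=-\infty\}$. In short, your architecture is right but you need the extra truncation layer to avoid the unjustified full-mass claim for the $\psi_N$.
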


\begin{proof} Set $\psi_j:=\max_{1\le i\le j}\f_i$, so that $\psi_j$ increases a.e.~to $\f$. It is a standard matter to show that $\MA(\psi_j)\ge\mu$ using plurifine locality (cf.~\cite{GZ2}, Corollary 1.10). The desired result does however not follow directly from Theorem~\ref{thm:cont} since $\psi_j$ doesn't \emph{a priori} have full Monge-Amp{\`e}re mass . But since $\MA(\psi_j)$ and $\MA(\psi_j^{(k)})$ coincide on $\{\psi_j>\f_{\min}-k\}$ as Borel measures, we get
\begin{eqnarray*}
\MA(\p_j^{(k)}) & \ge & {\bf 1}_{\{\p_j>\f_{\min}-k\}}\MA(\p_j) \\
& \ge & {\bf 1}_{\{\p_j>\f_{\min}-k\}}\mu \\
& \ge & {\bf 1}_{\{\p_1>\f_{\min}-k\}}\mu.
\end{eqnarray*}

For each $k$ fixed Theorem~\ref{thm:cont} (in fact simply Beford-Taylor on a Zariski open subset plus constancy of total masses) yields 
$$\lim_j\MA(\psi_j^{(k)})=\MA(\f^{(k)}),$$
hence 
$$\MA(\f^{(k)})\ge{\bf 1}_{\{\p_1>\f_{\min}-k\}}\mu.$$
Now the left hand side converges to $\MA(\f)$ weakly as $k\to\infty$ since $\f$ is assumed to have finite Monge-Amp{\`e}re energy, whereas the right hand side tends to $\mu$ since the latter puts no mass on the pluripolar set $\{\p_1=-\infty\}$, being dominated by non-pluripolar measures $\MA(\f_j)$.  
\end{proof} 

\begin{cor}\label{cor:monge_max} Let $T_j\to T$ be a convergent sequence of positive currents in a given cohomology class $\a$, and assume that the limit current $T$ has \emph{full Monge-Amp{\`e}re mass}. Assume that a positive measure $\mu$ and non-negative functions $f_j$ are given such that 
$$\langle T_j^n\rangle\ge f_j\mu$$
holds for all $j$. Then we have
$$\langle T^n\rangle\ge(\liminf_{j\to\infty}f_j)\mu.$$
\end{cor}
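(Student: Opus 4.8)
\textbf{Proof plan for Corollary~\ref{cor:monge_max}.}

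The plan is to deduce this statement from Proposition~\ref{pro:monge_max} by a standard truncation-and-extraction argument. Writing $T_j=\theta+dd^c\f_j$ and $T=\theta+dd^c\f$ with $\theta$ a fixed smooth representative of $\a$, the hypothesis $T_j\to T$ means $\f_j\to\f$ in $L^1(X)$ after normalizing the potentials appropriately (subtracting the constants $\sup_X(\f_j-\f_{\min})$, which stay bounded since masses are preserved in cohomology). Set $g:=\liminf_{j\to\infty}f_j$. The first step is the usual reduction: it suffices to prove $\langle T^n\rangle\ge\min(g,M)\mu$ for every constant $M>0$, and since $\min(g,M)=\liminf_j\min(f_j,M)$, we may assume the $f_j$ are uniformly bounded, $0\le f_j\le M$. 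Moreover, replacing $\mu$ by the non-pluripolar measure $\langle T_1^n\rangle$ if necessary (or just noting $f_j\mu\le\langle T_j^n\rangle$ puts no mass on pluripolar sets), we may assume $\mu$ charges no pluripolar set.

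The second step handles the $\liminf$ over $j$. Fix $\e>0$ and, for each index $m$, set $h_m:=\inf_{j\ge m}\min(f_j,M)$, a non-decreasing sequence of bounded Borel functions increasing pointwise to $g_M:=\min(g,M)$. For fixed $m$ we have $\langle T_j^n\rangle\ge h_m\mu$ for all $j\ge m$. The third step is to apply Proposition~\ref{pro:monge_max} with this fixed measure $h_m\mu$: consider the functions $\f_j$ for $j\ge m$, let $\psi:=(\sup_{j\ge m}\f_j)^*$, and observe that $\MA(\f_j)\ge h_m\mu$ for all $j\ge m$. By Proposition~\ref{pro:monge_max}, provided $\psi$ has full Monge-Amp\`ere mass, we get $\MA(\psi)\ge h_m\mu$. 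The point now is that $\psi\ge\f$ and $\f$ has full Monge-Amp\`ere mass by hypothesis, so by Proposition~\ref{pro:convex} (or directly by monotonicity of $E_\chi$ together with Proposition~\ref{pro:carac}) the less singular current $\theta+dd^c\psi$ also has full Monge-Amp\`ere mass; hence $\MA(\psi)\ge h_m\mu$.

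The fourth step lets $m\to\infty$. The functions $\psi_m:=(\sup_{j\ge m}\f_j)^*$ form a non-increasing sequence (the supremum is over a smaller set as $m$ grows) of $\theta$-psh functions, all $\ge\f$, decreasing a.e.\ to some $\theta$-psh function $\ge\f$; in fact one checks $\lim_m\psi_m=\f$ a.e.\ because $\f_j\to\f$ in $L^1$ (a Hartogs-type argument: $\limsup_j\f_j\le\f$ off a pluripolar set). Thus $\psi_m$ decreases to $\f$, all terms have full Monge-Amp\`ere mass, and Theorem~\ref{thm:cont} gives $\MA(\psi_m)\to\MA(\f)$ weakly. Since $\MA(\psi_m)\ge h_m\mu$ and $h_m\uparrow g_M$ with $g_M$ bounded, while $\mu$ puts no mass on pluripolar sets, passing to the limit against a nonnegative continuous test function yields $\MA(\f)\ge g_M\mu=\min(g,M)\mu$. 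Letting $M\to\infty$ by monotone convergence gives $\langle T^n\rangle\ge g\mu$, as desired.

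The main obstacle I expect is the bookkeeping around the regularized suprema $\psi_m$: one must verify carefully that $\psi_m$ is genuinely $\theta$-psh and decreases \emph{to $\f$} (not merely to something $\ge\f$), which uses the $L^1$-convergence $\f_j\to\f$ and the fact that an $L^1$-limit of $\theta$-psh functions satisfies $(\limsup_j\f_j)^*=\f$; and that $\theta+dd^c\psi_m$ has full Monge-Amp\`ere mass, which is where the extremal-face/monotonicity property of $\cE(\a)$ from Proposition~\ref{pro:convex} is essential. Once those two points are in place, the rest is the routine double limit ($j\to\infty$ then $m\to\infty$) combined with the truncation $f_j\wedge M$, all justified by Theorem~\ref{thm:cont} and the hypothesis that $\mu$ charges no pluripolar set.
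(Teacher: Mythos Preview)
Your proof is correct and follows essentially the same route as the paper: introduce $\psi_m=(\sup_{j\ge m}\f_j)^*$, note that each $\psi_m\ge\f$ has full Monge-Amp\`ere mass, apply Proposition~\ref{pro:monge_max} to get $\MA(\psi_m)\ge(\inf_{j\ge m}f_j)\mu$, and then let $m\to\infty$ via Theorem~\ref{thm:cont}. The preliminary truncation $f_j\wedge M$ is not actually needed (monotone convergence against a nonnegative continuous test function handles the limit directly), but it does no harm.
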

\begin{proof} Write $T_j=\theta+dd^c\f_j$ with $\f_j$ normalized in some fixed way and set 
$$\widetilde{\f}_j:=(\sup_{k\ge j}\f_k)^*$$ 
so that $\widetilde{\f}_j$ decreases pointwise to $\f$ with $T=\theta+dd^c\f$. Since $\f$ has full Monge-Amp{\`e}re mass, so does each $\widetilde{\f}_j\ge\f$, hence $\MA(\widetilde{\f}_j)\ge(\inf_{k\ge j}f_k)\mu$ by Proposition~\ref{pro:monge_max}. The result now follows by Theorem~\ref{thm:cont}. 
\end{proof}

\section{Non-pluripolar measures are Monge-Amp{\`e}re}

We will say for convenience that a positive measure $\mu$ is \emph{non-pluripolar} if it puts no mass on pluripolar subsets. The range of the non-pluripolar Monge-Amp{\`e}re operator is described by the following result:

\begin{thm}\label{thm:MA} Let $X$ be a compact K{\"a}hler manifold and let $\a\in H^{1,1}(X,\R)$ be a big cohomology class. Then for any non-pluripolar measure $\mu$ such that $\mu(X)=\vol(\a)$ there exists a unique closed positive current $T\in\a$ such that
$$\langle T^n\rangle=\mu.$$ 
\end{thm}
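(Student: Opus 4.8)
The plan is to establish existence and uniqueness separately. For uniqueness, I would invoke the adaptation of Dinew's argument referenced in the introduction: if $T_1=\theta+dd^c\f_1$ and $T_2=\theta+dd^c\f_2$ both solve $\langle T^n\rangle=\mu$, then since $\mu$ is non-pluripolar with total mass $\vol(\a)$, both $\f_i$ have full Monge-Amp\`ere mass, hence finite $\chi$-energy for a suitable convex weight $\chi$ by Proposition~\ref{pro:carac}; one then runs the comparison/ordering argument on the sets $\{\f_1<\f_2\}$ and $\{\f_2<\f_1\}$, using the generalized comparison principle (Corollary~\ref{cor:comp}) together with the log-concavity of non-pluripolar products (Proposition~\ref{pro:logconcave}) to conclude $\f_1=\f_2+$const, and then equality of the constants from $\mu(X)=\vol(\a)$. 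I expect this part to be relatively formal given the tools already assembled.

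**Existence via reduction to the K\"ahler case.** The main work is existence, and the idea flagged in the introduction is to reduce to the known K\"ahler result of \cite{GZ2} through approximate Zariski decompositions (Proposition~\ref{cor:approxzar}). First I would fix, for each $\e>0$, a modification $\pi=\pi_\e:X_\e'\to X$ with $\pi^*\a=\beta_\e+\{E_\e\}$, $\beta_\e$ a K\"ahler class on $X_\e'$ and $E_\e$ an effective $\R$-divisor, with $\vol(\beta_\e)\to\vol(\a)$. Since $\pi$ is an isomorphism outside a pluripolar (analytic) set, the pullback $\pi^*\mu$ is again non-pluripolar; I would renormalize it (discarding the mass it may carry on the exceptional locus is not an issue since $\mu$ puts no mass there, but I must rescale to match $\vol(\beta_\e)$, say to $\mu_\e:=\frac{\vol(\beta_\e)}{\mu(X)}\pi^*\mu$ — or better, absorb the divisorial part). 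Then by \cite{GZ2} in the K\"ahler class $\beta_\e$ there is $S_\e=\eta_\e+dd^c u_\e\in\beta_\e$ with full (hence all of $\vol(\beta_\e)$) Monge-Amp\`ere mass solving $\langle S_\e^n\rangle=\mu_\e$; pushing forward, $T_\e:=\pi_*(S_\e+[E_\e])$ is a positive current in $\a$ with $\langle T_\e^n\rangle$ close to $\mu$ in total mass.

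**Passing to the limit.** The final step is to extract a limit as $\e\to0$. I would normalize the global potentials $\f_\e$ of $T_\e$ (say $\sup_X\f_\e=0$) so that, by compactness of normalized quasi-psh functions in $L^1(X)$, a subsequence converges to a $\theta$-psh $\f$ with $T:=\theta+dd^c\f\in\a$. One needs: (a) a uniform energy bound $\sup_\e E_\chi(\f_\e)<\infty$ for a fixed convex weight $\chi$ — this should follow from $\int_X(-\chi)(\f_\e-\f_{\min})\,\mathrm{MA}(\f_\e)$ being controlled uniformly, using that the measures $\langle T_\e^n\rangle$ are uniformly dominated in the appropriate sense by (a rescaling of) $\mu$, which has finite $\chi$-energy for some $\chi$ since it is non-pluripolar; (b) $\int_X\mathrm{MA}(\f_\e)=\vol(\{T_\e\})\to\vol(\a)$. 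Then Proposition~\ref{pro:presquefini} gives that $\f$ has finite $\chi$-energy, hence full Monge-Amp\`ere mass, and Theorem~\ref{thm:cont} (continuity of the Monge-Amp\`ere operator along the relevant limits, after passing through the monotone regularizations $\widetilde{\f}_\e:=(\sup_{\e'\le\e}\f_{\e'})^*$) yields $\langle T^n\rangle=\lim\langle T_\e^n\rangle=\mu$, using also Proposition~\ref{pro:monge_max}/Corollary~\ref{cor:monge_max} to pass the inequality $\langle T_\e^n\rangle\ge$ (rescaled $\mu$) to the limit and total-mass equality to upgrade it to an identity. \emph{The main obstacle} I anticipate is exactly this limiting argument: controlling the potentials $\f_\e$ uniformly across \emph{different} modifications $\pi_\e$ and ensuring no Monge-Amp\`ere mass escapes to the pluripolar/exceptional loci in the limit — this is where the weighted-energy machinery of Section~2, especially the asymptotic criterion of Proposition~\ref{pro:presquefini} and the generalized comparison principle, must be deployed carefully, together with the handling of the divisorial parts $[E_\e]$ which a priori could degenerate.
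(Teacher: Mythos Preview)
Your existence strategy is exactly the paper's: approximate Zariski decomposition (Proposition~\ref{cor:approxzar}), solve in the K\"ahler class $\beta_\e$ via~\cite{GZ2}, push forward $S_\e+[E_\e]$, normalize, extract an $L^1$-limit, and use the asymptotic criterion (Proposition~\ref{pro:presquefini}) together with Corollary~\ref{cor:monge_max} to conclude. Two points deserve sharper treatment.

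First, the uniform energy bound is not as automatic as you suggest. You need $\sup_j\int_X(-\chi)(\f_j-\f_{\min})\,d\mu<\infty$ for \emph{some} convex weight $\chi$, uniformly over the family $\{\f_j\}$ (note $\MA(\f_j)=(1-\e_j)\mu\le\mu$). Saying ``$\mu$ has finite $\chi$-energy since it is non-pluripolar'' is not quite the right formulation: what is required is a weight $\chi$ such that $\int_X(-\chi)(\psi)\,d\mu$ is bounded over the \emph{compact family} of normalized $\theta$-psh functions. This is the content of Proposition~\ref{pro:nonpluri}, proved via a Radon--Nikodym type decomposition of $\mu$ against the convex set generated by Monge--Amp\`ere measures of bounded $\omega$-psh functions, together with an Orlicz-space argument. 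You should also note that the identity $\langle(\pi_*(S+[E]))^n\rangle=\pi_*\langle S^n\rangle$ needs a direct verification (the paper's equation~(\ref{equ:mes})); it is not entirely formal because one must track the interaction of the divisorial part $[E]$ with the canonical approximants on the plurifine open sets.

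Second, you underestimate the uniqueness argument. Dinew's proof does use the generalized comparison principle and log-concavity, but it is not ``relatively formal'': one first shows $\f_1=\f_2+t$ holds $\mu$-a.e.\ by a contradiction argument that \emph{invokes the existence part of the theorem} to construct auxiliary currents with prescribed Monge--Amp\`ere measure, and crucially relies on Corollary~\ref{cor:special} (which in turn rests on the convexity of $\cE(\a)$) to ensure all mixed masses equal $\langle\a^n\rangle$. The passage from $\mu$-a.e.\ equality to pointwise equality then requires a separate descending induction, since the domination principle (Corollary~\ref{cor:dom}) is only available for functions with minimal singularities, which $\f_1,\f_2$ need not have.
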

Note that $T$ automatically has full Monge-Amp{\`e}re mass since
$$\int_X\langle T^n\rangle=\mu(X)=\vol(\a)$$
by assumption. 

In the sequel $\theta$ denotes a smooth representative of $\a$. We then denote as before by $\MA(\f):=\langle(\theta+dd^c\f)^n\rangle$ the non-pluripolar Monge-Amp{\`e}re measure of a $\theta$-psh function $\f$. We also choose once and for all a $\theta$-psh function $\f_{\min}$ with minimal singularities. 

\subsection{Outline of the existence proof} 
We first present an outline of the existence proof, and provide technical details below. By Proposition~\ref{cor:approxzar}, given $\e>0$ there exists a decomposition 
$$\pi^*\a=\beta+\{E\}$$
on a modification $\pi:X'\to X$ with $\beta$ a K{\"a}hler class, $E$ an effective $\R$-divisor on $X'$ and 
$$
\vol(\a)-\e\le\vol(\beta)\le\vol(\a)
$$ 
(the whole data depends on $\e$). By the main result of~\cite{GZ2} (i.e.~the existence part of Theorem~\ref{thm:MA} for a K{\"a}hler class), there exists a positive current with finite energy $S\in\beta$ such that $$\langle S^n\rangle=c\mu'$$ 
holds on $X'$ with 
$$c:=\vol(\beta)/\vol(\a)\le 1$$ 
and $\mu'$ denoting the non-pluripolar measure on $X'$ induced by lifting $\mu$ on the Zariski open subset where $\pi$ is an isomorphism. We will first show that the pushed-forward current  
$$\pi_*(S+[E])=:\theta+dd^c\f$$ 
satisfies $\MA(\f)=\langle(\theta+dd^c\f)^n\rangle=c\mu$, which is at least plausible since $[E]$ vanishes outside a pluripolar subset. We would already be done at that stage were it the case that $c=1$, i.e.~$\vol(\beta)=\vol(\a)$, but this actually \emph{never} happens unless $\a$ was a K{\"a}hler class in the first place. On the other hand we have obtained at that point a sequence of $\theta$-psh functions $\f_j$ such that 
$$\MA(\f_j)=(1-\e_j)\mu$$ 
with $\e_j>0$ decreasing towards $0$. We will prove next that some $L^1(X)$-limit $\f_\infty$ of the $\f_j$'s (appropriately normalized) solves $\MA(\f_\infty)=\mu$, and setting $T:=\theta+dd^c\f_\infty$ then proves the theorem.  

It is in fact enough to show that $\f_\infty$ has full Monge-Amp{\`e}re mass. Indeed Corollary~\ref{cor:monge_max} then implies that $\MA(\f_\infty)\ge\mu$, hence equality since both measures have the same mass, and we will be done. Since 
$$\lim_{j\to\infty}\int_X\MA(\f_j)=\vol(\a),$$ 
Proposition~\ref{pro:presquefini} reduces us to finding a convex weight $\chi$ such that
$$\sup_j\int_X(-\chi)(\f_j-\f_{\min})\MA(\f_j)<+\infty$$
But we will prove in Proposition~\ref{pro:nonpluri} below that non-pluripolarity of $\mu$ yields a convex weight $\chi$ such that $\int_X(-\chi)(\psi)d\mu$ is uniformly bounded for all normalized $\theta$-psh functions $\psi$, hence the result since $\MA(\f_j)\le\mu$ for all $j$.

\subsection{Existence proof: technical details}
Using the above notations, let $\omega\in\beta$ be a K{\"a}hler form and write $S=\omega+dd^c\psi$. We first have to show that 
$$\langle\left(\pi_*(S+[E])\right)^n\rangle=\pi_*\langle S^n\rangle,$$
that is
\begin{equation}\label{equ:mes}
\langle(\theta+dd^c\f)^n\rangle=\pi_*\langle(\omega+dd^c\psi)^n\rangle.
\end{equation}
At that stage everything depends on $\e$, which is however fixed for the moment. 
 Let $\Omega$ be a Zariski open subset of $X$ such that $\f_{\min}$ is locally bounded on $\Omega$, $\pi$ induces an isomorphism $\Omega':=\pi^{-1}(\Omega)\to\Omega$ and every component of $E$ is contained in the complement of $\Omega'$. Since the measures involved in equation~(\ref{equ:mes}) put no mass on $X-\Omega$, it is enough to show the result on $\Omega$. Let thus $f$ be a given smooth function with compact support in $\Omega$. We then have by definition 
  
 $$\int_X f\langle(\theta+dd^c\f)^n\rangle=\lim_k\int_{\{\f>\f_{\min}-k\}\cap\Omega}f(\theta+dd^c\max(\f,\f_{\min}-k))^n$$
 $$=\lim_k\int_{\{\f'>\f_{\min}'-k\}\cap\Omega'}f'(\theta'+dd^c\max(\f',\f_{\min}'-k))^n$$
where primed objects have been pulled back by $\pi$. Note that $\f_{\min}'$ is a $\theta'$-psh function with minimal singularities by Proposition~\ref{pro:tire_min}. 

Now let $\theta_E:=\theta'-\omega$, so that 
$$[E]=\theta_E+dd^c\f_E$$ 
for some $\theta_E$-psh function $\f_E$. For cohomological reasons we have 
$$\theta'+dd^c\f'=\pi^*\pi_*(\omega+dd^c\psi+[E])=\omega+dd^c\psi+[E]=\theta'+dd^c(\psi+\f_E)$$
since the cohomology class of the right-hand current is $\pi^*\theta$. 
We thus see that $\f'=\psi+\f_E$ up to an additive constant, which can be chosen to be $0$, and we are reduced to showing that
$$I_k:=\lim_k\int_{\{\psi+\f_E>\f_{\min}'-k\}\cap\Omega'}f'(\omega+\theta_E+dd^c\max(\psi+\f_E,\f_{\min}'-k))^n$$
coincides with 
$$\lim_k\int_{\{\psi>-k\}\cap\Omega'}f'(\omega+dd^c\max(\psi,-k))^n.$$
Now $\f_E$ is smooth on $\Omega'$ since $E$ doesn't meet the latter set by assumption, and $\f_{\min}'-\f_E\ge-C$ on $X'$ since the $\theta'$-psh function $\f_{\min}'$ has minimal singularities, hence is less singular than the $\theta'\ge\theta_E$-psh function $\f_E$. We thus see that $\f_{\min}'-\f_E$ is bounded near the support of $f'$. On the other hand, $\theta_E+dd^c\f_E=[E]$ vanishes on $\Omega'$ so that 
$$(\omega+\theta_E+dd^c\max(\psi+\f_E,\f_{\min}'-k))^n=(\omega+dd^c\max(\psi,\f'_{\min}-\f_E-k))^n$$
there, hence
$$I_k=\int_{\{\psi>v-k\}\cap\Omega'}f'(\omega+dd^c\max(\psi,v-k))^n$$
with $v:=\f_{\min}'-\f_E$ bounded near the support of $f'$. We thus conclude that the latter integral equals
$$\lim_k\int_{\{\psi>-k\}\cap\Omega'}f'(\omega+dd^c\max(\psi,-k))^n$$
as desired, which finishes the proof of~(\ref{equ:mes}). 
 
 At this point, we have thus obtained a sequence of $\theta$-psh functions $\f_j$ such that $\MA(\f_j)=(1-\e_j)\mu$ for some sequence $\e_j>0$ decreasing towards $0$. If we normalize them in some way, say by $\sup_X\f_j=0$, we can also assume by compactness that $\f_j$ converges in $L^1(X)$ to some $\theta$-psh function $\f_\infty$ as $j\to\infty$. 
 
As explained above, proving that $\f_\infty$ has full Monge-Amp{\`e}re mass is enough to conclude $\MA(\f_\infty)\ge\mu$ by Corollary~\ref{cor:monge_max}, hence $\MA(\f_\infty)=\mu$ since both measures then have the same mass. 
 
Since $\int_X\MA(\f_j)\to\vol(\a)$, we are reduced by Proposition~\ref{pro:presquefini} to finding a convex weight $\chi$ such that 
$$\sup_j\int_X(-\chi)(\f_j-\f_{\min})\MA(\f_j)\le\sup_j\int_X(-\chi)(\f_j-\f_{\min})d\mu<+\infty,$$
which is taken care of by the next proposition. This concludes the proof of Theorem~\ref{thm:MA}. 

\begin{pro}\label{pro:nonpluri} Let $\mu$ be a non-pluripolar measure and let $\cK$ be a given compact set of quasi-psh functions on $X$. Then there exists a convex weight function $\chi$ and $C>0$ such that 
$$\int_X(-\chi)(\f)d\mu\le C$$
for all $\f\in\cK$. 
\end{pro}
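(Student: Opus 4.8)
The plan is to argue by contradiction, using the fact that a compact set $\cK$ of quasi-psh functions normalized by $\sup_X\f=0$ (or with $\int_X\f\,dV$ bounded) is bounded in $L^1$ and, after passing to a subsequence, has a limit $\f_\infty$ which is again quasi-psh, together with the standard fact that convergence in $L^1$ of quasi-psh functions entails convergence in capacity outside pluripolar sets, and in particular $e^{\f_j}\to e^{\f_\infty}$ in $L^1(\mu)$ along a subsequence whenever $\mu$ is non-pluripolar (Hartogs-type estimates). First I would fix a weight: it suffices to produce, for \emph{each} $\f\in\cK$, control of $\int_X(-\f)^p\,d\mu$ uniformly in $\f$ for some single exponent $p>1$; indeed once $\sup_{\f\in\cK}\int_X(-\f)^p\,d\mu=:C_0<\infty$ we may pick a convex weight $\chi$ with $(-\chi)(t)\le 1+(-t)^p$ for all $t$ (e.g. $\chi(t)=t$ on $[-1,0]$, and $\chi(t)=-(-t)^p/p-\text{const}$ for $t\ll 0$, smoothed and made convex — such $\chi$ exists since $t\mapsto -(-t)^p$ is convex and increasing on $(-\infty,0]$ for $p\ge 1$), and then $\int_X(-\chi)(\f)\,d\mu\le \mu(X)+C_0=:C$ for all $\f\in\cK$, which is the assertion.

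So the crux is the uniform bound $\sup_{\f\in\cK}\int_X(-\f)^p\,d\mu<\infty$ for some $p>1$. Suppose not; then for every $p>1$ there is a sequence $\f_j\in\cK$ with $\int_X(-\f_j)^{1+1/j}\,d\mu\to\infty$ (a diagonal choice of exponents). By compactness of $\cK$ in $L^1(X)$ we may assume $\f_j\to\f_\infty$ in $L^1(X)$ with $\f_\infty$ quasi-psh, hence $\f_\infty\not\equiv-\infty$ and $\{\f_\infty=-\infty\}$ is pluripolar. A classical uniform integrability / Skoda-type estimate for quasi-psh functions (the $\alpha$-invariant mechanism, or Zeriahi's uniform version of Skoda's theorem) gives constants $\alpha>0$ and $C_1$ depending only on $\cK$ (not on the individual $\f$) such that $\int_X e^{-\alpha\f}\,dV\le C_1$ for all $\f\in\cK$. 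The point is to upgrade this from Lebesgue measure $dV$ to the non-pluripolar measure $\mu$: here one uses that $\mu$ puts no mass on pluripolar sets, so $\mu(\{\f_\infty=-\infty\})=0$, and then by Egorov plus the $L^1(X)\to$ convergence-in-capacity passage, $(-\f_j)\to(-\f_\infty)$ in $\mu$-measure along a subsequence while $(-\f_\infty)\in L^q(\mu)$ for all $q$ would follow if $e^{-\alpha\f_\infty}\in L^1(\mu)$. This last integrability is exactly what one has to extract: the standard argument is that the family $\{e^{-\alpha\f}\}_{\f\in\cK}$ is $dV$-uniformly integrable, hence by a theorem on domination (using that $\mu$ is non-pluripolar, so every level set $\{\f\le -k\}$ has $\mu$-mass $\to 0$ uniformly over $\cK$ by the uniform Skoda bound via Chebyshev against $dV$ and a capacity-comparison) the family is also $\mu$-uniformly integrable, contradicting $\int_X(-\f_j)^{1+1/j}\,d\mu\to\infty$.

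Let me restate the mechanism cleanly, since that is the real content. By uniform Skoda, $\int_X e^{-\alpha\f}\,dV\le C_1$ for all $\f\in\cK$, hence by Chebyshev $\mathrm{Vol}\{\f<-k\}\le C_1 e^{-\alpha k}$ uniformly. Now the key analytic input is that for the non-pluripolar measure $\mu$ one has a modulus of continuity: there is a function $\eta(t)\to 0$ as $t\to 0$ such that $\mu(B)\le \eta(\mathrm{Cap}(B))$ — no, rather, one uses directly that $\mu$ vanishes on pluripolar sets together with the quasi-continuity of quasi-psh functions to see that $\sup_{\f\in\cK}\mu\{\f<-k\}\to 0$ as $k\to\infty$: if this failed there would be $\f_j\in\cK$ and $\delta>0$ with $\mu\{\f_j<-j\}\ge\delta$; passing to the $L^1$-limit $\f_\infty$ and using that $\{\f_j<-j\}$ shrinks (in capacity, after the subsequence extraction and a $\max$-regularization $\widetilde\f_j:=(\sup_{k\ge j}\f_k)^*\downarrow\f_\infty$) towards a pluripolar set on which $\mu$ gives no mass, we get a contradiction. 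Granting $\varepsilon(k):=\sup_{\f\in\cK}\mu\{\f<-k\}\to 0$, the weight is built by hand: choose $\chi$ convex, increasing, $\chi(t)=t$ for $t\ge 0$, $\chi(-\infty)=-\infty$, with $(-\chi)(-k)$ growing slowly enough that $\sum_k (-\chi)(-k)\varepsilon(k)<\infty$ is \emph{not} quite what is needed — instead one writes $\int_X(-\chi)(\f)\,d\mu = \int_0^\infty \mu\{(-\chi)(\f)>s\}\,ds$ and bounds $\mu\{(-\chi)(\f)>s\}\le \varepsilon(\chi^{-1}(-s))$ via the layer-cake formula, so that $\int_X(-\chi)(\f)\,d\mu\le \mu(X)+\int_1^\infty \varepsilon(\chi^{-1}(-s))\,ds$, and one picks $\chi$ so that $\chi^{-1}$ decays to $-\infty$ fast enough (equivalently $-\chi$ grows slowly enough at $-\infty$) that $\int_1^\infty \varepsilon(\chi^{-1}(-s))\,ds<\infty$; this is possible for \emph{any} fixed $\varepsilon(\cdot)\to 0$ by choosing $\chi$ after $\varepsilon$. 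This yields the uniform bound $C$.

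The main obstacle is the passage from Lebesgue to $\mu$, i.e. establishing $\sup_{\f\in\cK}\mu\{\f<-k\}\to 0$: the only thing one knows about $\mu$ is that it is non-pluripolar, which a priori gives no quantitative modulus, so the argument must be a soft compactness/contradiction one, and it is essential to replace the sequence $\f_j$ by the decreasing regularizations $\widetilde\f_j=(\sup_{\ell\ge j}\f_\ell)^*$ so that the bad sets are genuinely decreasing (up to negligible sets) to the pluripolar set $\{\f_\infty=-\infty\}\cup\{\widetilde\f_j\downarrow\f_\infty \text{ fails to be }\f_\infty\}$, on which $\mu$ is null; quasi-continuity of quasi-psh functions with respect to capacity, plus the fact that $\mu$ does not charge pluripolar (hence, after an exhaustion, negligible-for-capacity) sets, is what finally closes the gap. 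Everything else — the uniform Skoda estimate on $\cK$, the layer-cake construction of $\chi$, the reduction to a single power $p>1$ — is routine.
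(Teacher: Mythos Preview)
Your argument has a genuine gap at the crucial step, namely the claim that $\varepsilon(k):=\sup_{\f\in\cK}\mu\{\f<-k\}\to 0$. The regularization $\widetilde\f_j:=(\sup_{\ell\ge j}\f_\ell)^*$ satisfies $\widetilde\f_j\ge\f_j$, so the inclusion goes the wrong way: $\{\f_j<-j\}\supset\{\widetilde\f_j<-j\}$, and while the latter sets do shrink into $\{\f_\infty=-\infty\}$, this says nothing about the former, which are the ones carrying mass $\ge\delta$. More fundamentally, ``non-pluripolar'' gives no quantitative modulus of continuity of $\mu$ with respect to capacity, so even knowing that $\ca\{\f_j<-j\}\to 0$ (which one does obtain from uniform Skoda plus a capacity--volume comparison) does not force $\mu\{\f_j<-j\}\to 0$. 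The paper in fact remarks, immediately after its proof, that the soft compactness route would succeed if $\f\mapsto\int_X\f\,d\mu$ were lower semicontinuous on $\theta$-psh functions in the $L^1$ topology, but that already for $n=1$ this fails unless $\mu$ has continuous local potentials; your contradiction argument is exactly an attempt to extract such semicontinuity from compactness alone, and it does not go through.

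The paper's proof supplies the missing structural input. Using a generalized Radon--Nikodym theorem (Rainwater), one shows that any non-pluripolar measure $\mu$ is absolutely continuous with respect to some measure $\nu$ lying in the closed convex hull $\cM$ of Monge--Amp\`ere measures $(\omega+dd^c\psi)^n$ with $0\le\psi\le 1$, the point being that pluripolar sets are precisely the common null sets of $\cM$. The Chern--Levine--Nirenberg inequality then gives a \emph{uniform} bound $\int_X(-\f)\,d\nu\le C$ for all $\f\in\cK$ and all $\nu\in\cM$. Finally an elementary Orlicz-type lemma (Legendre duality applied to the density $d\mu/d\nu\in L^1(\nu)$) converts $\mu\ll\nu$ together with $\sup_{\f\in\cK}\int_X(-\f)\,d\nu<\infty$ into $\sup_{\f\in\cK}\int_X h(-\f)\,d\mu<\infty$ for some concave $h$ with $h(+\infty)=+\infty$, and $\chi(t):=-h(-t)$ is the desired convex weight. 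The CLN step is what replaces your failed compactness argument: it provides, uniformly over $\cK$, control in $L^1$ of a genuine \emph{measure} dominating $\mu$, rather than the merely qualitative non-charging of pluripolar sets.
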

\begin{proof} Functions in $\cK$ are uniformly bounded from above by compactness, and we can thus assume that they are all non-positive. Now let $\omega$ be a given K{\"a}hler form, and let $\cM$ be the (weakly) closed convex set of positive measures generated by all Monge-Amp{\`e}re measures $(\omega+dd^c\psi)^n$ with $0\le\psi\le 1$ is $\omega$-psh. By the Chern-Levine-Nirenberg inequality there exists a constant $C>0$ such that 
$$\int_X(-\f)(\omega+dd^c\psi)^n\le C$$ 
for all $\f\in\cK$ and all $0\le\psi\le 1$ as above, and this remains true on $\cM$ by convexity and lower semi-continuity of $\nu\mapsto\int_X(-\f)d\nu$ (since $-\f$ is lower semi-continuous - just write it as the supremum of all continuous functions below it). 

Now recall that pluripolar subsets $A$ are characterized by the condition that $\nu(A)=0$ for all Monge-Amp{\`e}re measures $\nu$ as above. By an extension of Radon-Nikodym's theorem proved in~\cite{R}, there exist measures $\nu\in\cM$ and $\nu'\perp\cM$ such that our given measure $\mu$ writes
$$\mu=g\nu+\nu'$$
with $g$ a non-negative element of $L^1(\nu)$. Since $\mu(A)=0$ for all pluripolar subsets by assumption, we infer that $\nu'=0$, and we have thus proved that $\mu\ll\nu$ for some $\nu\in\cM$. We now conclude the proof by the following result, certainly a standard one in the theory of Orlicz spaces. 
\end{proof}

\begin{lem} Let $\mu\ll\nu$ be two positive measures with finite mass on a measured space $X$. Then there exists a concave function $h : \mathbb R^+ \longrightarrow \mathbb R^+$ and $C>0$ such that
$$\int_X h (f)d\mu\le C + \int_X f d\nu$$
for all measurable functions $f\ge 0$.
\end{lem}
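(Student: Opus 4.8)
The statement is a standard fact in Orlicz space theory: since $\mu \ll \nu$, one wants to build a single convex ("Orlicz") function $\Psi$ — or rather its "inverse-like" concave companion $h$ — that grows slowly enough to absorb the singularity of $d\mu/d\nu$. The plan is to reduce everything to a one-dimensional computation on the measure space $([0,\infty), \text{Lebesgue-like})$ via the distribution function of the Radon–Nikodym density $g := d\mu/d\nu \in L^1(\nu)$.

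First I would write $\mu = g\,\nu$ with $g \geq 0$, $g \in L^1(\nu)$, and observe that it suffices to find a concave nondecreasing $h : \R^+ \to \R^+$ with $h(0)=0$ such that
$$\int_X h(f)\, g\, d\nu \le C + \int_X f\, d\nu \quad \text{for all measurable } f \ge 0.$$
The key reduction: by concavity of $h$, for fixed $t \ge 0$ the function $s \mapsto h(s)t - s$ (thinking of $s = f(x)$, $t = g(x)$) is maximized over $s \ge 0$ at the point where $h'(s) = 1/t$, so pointwise $h(f(x)) g(x) - f(x) \le H(g(x))$, where $H(t) := \sup_{s \ge 0}\bigl(h(s) t - s\bigr)$ is (a version of) the Legendre transform of the convex function $s \mapsto h^{-1}(s)$ — more precisely $H$ is determined by $h$. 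Integrating, it is enough to choose $h$ so that $\int_X H(g)\, d\nu =: C < \infty$. So the whole problem becomes: given $g \in L^1(\nu)$, find a concave $h$ with $h(s)/s \to 0$ such that the associated transform $H$ satisfies $H \circ g \in L^1(\nu)$.

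Now I would carry this out explicitly. Let $\lambda(t) := \nu(\{g > t\})$ be the distribution function; since $g \in L^1(\nu)$, $\int_0^\infty \lambda(t)\, dt < \infty$. Pick a sequence $t_k \uparrow \infty$ with $\int_{t_k}^\infty \lambda(t)\, dt \le 2^{-k}$ and a sequence $a_k \uparrow \infty$ slowly (to be chosen); define $h$ to be the piecewise-linear concave function with $h(0) = 0$ whose slope is $1/t_{k}$ on the interval where it should "turn over at scale $t_k$", chosen so that the transform $H$ satisfies a bound like $H(t_k) \le a_k t_k$ with $\sum_k a_k \cdot 2^{-k} \cdot(\text{something}) < \infty$. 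Concretely one can take $H$ itself to be a prescribed convex increasing function with $H(t)/t \to \infty$ but $\int_0^\infty \frac{dt}{H^{-1}(\cdot)}$-type quantities controlled, and recover $h$ as its conjugate — the classical construction producing, for any $g \in L^1$, an Orlicz function $H$ with $\int H(g) < \infty$ (a "de la Vallée-Poussin"-type argument: $\int_X H(g)\,d\nu = \int_0^\infty H'(t)\lambda(t)\,dt$, and one chooses $H'$ increasing to $\infty$ slowly enough against the summable tail of $\lambda$). Smoothing $h$ slightly and rescaling so $h(\R^+) \subset \R^+$ with $h$ concave is routine.

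**Main obstacle.** The only real subtlety is the two-sided bookkeeping in the Young/Legendre duality: one must verify that the concave $h$ built to make $\int H(g)\,d\nu$ finite still has the required qualitative features ($h$ concave, nondecreasing, $h(t) \to \infty$, with $H$ finite-valued so the pointwise inequality $h(f)g \le f + H(g)$ genuinely holds for \emph{all} $f \ge 0$, including where $f$ is large and $g$ is small or zero — there the inequality must degenerate gracefully, which forces $h(s) = O(s)$, i.e. $h$ has bounded slope $h'(0^+) \le 1$, automatically giving $H \ge 0$). Keeping track of these constraints simultaneously — slow growth of $h$ at $\infty$, bounded slope at $0$, and summability of $\int H(g)\,d\nu$ against the $L^1$-tail of $g$ — is the crux; everything else is the standard rearrangement/layer-cake computation.
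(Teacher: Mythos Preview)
Your approach is correct and essentially identical to the paper's: both write $d\mu = g\,d\nu$, invoke the de la Vall\'ee-Poussin lemma to find a convex $\tau$ (your $H$) with $\tau(t)/t \to \infty$ and $\int_X \tau(g)\,d\nu < \infty$, take $h$ to be the inverse of the Legendre transform $\tau^*$, and conclude via Young's inequality $h(y)x \le \tau(x) + y$. The paper is terser about the bookkeeping you flag as the main obstacle, simply setting $h := (\tau^*)^{-1}$ without further comment.
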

\begin{proof} Let $g\ge 0$ be the function in $L^1(\nu)$ such that $d\mu=g d\nu$. By an easy measure-theoretic result that we've already used, there exists a convex increasing function $\tau$ on $\R^+$ such that $\tau(x)\gg x$ for $x\to+\infty$ but still $\int_X \tau(g)d \nu<+\infty$. Now let $$\tau^*(y)=\sup_{x\ge 0}\left(xy-\tau(x)\right)$$ 
be the Legendre transform of $\tau$ and let $h$ be the reciprocal function of $\tau^*$. By definition, we thus have the following generalized form of Young's inequality 
$$h (y) x\le \tau(x)+y.$$ 
If follows that 
$$\int_X h (f)d\mu=\int_X h (f)g d \nu \le \int_X \tau(g)d\nu+\int_X f d\nu$$
which yields the desired result with $C:=\int_X \tau(g)d\nu$. 
\end{proof}

\begin{rem} 
{\it When $\cK$ consists of a single function, Proposition~\ref{pro:nonpluri} follows from standard measure-theoretic considerations since $\mu\{\f=-\infty\}=0$. The general case would thus follow by compactness provided that $\f\mapsto\int_X \f d\mu$ were known to be lower semi-continuous on $\theta$-psh functions in $L^1(X)$ topology (it is always \emph{upper} semi-continuous by Fatou's lemma). But already when $n=1$ the lsc condition is not automatic: it holds iff $\mu$ has \emph{continuous} local potentials.}
\end{rem}

\subsection{Uniqueness}
In this section we show how to adapt Dinew's tricky proof of uniqueness in~\cite{Din2} to our more general setting. As we shall see, his arguments carry over \emph{mutatis mutandis}. On the other hand, we will make one of his arguments more precise, namely 
$$\int_X\langle T_1\wedge...\wedge T_n\rangle=\langle\a^n\rangle$$
for every set of positive currents $T_1,...,T_n\in\a$ with full Monge-Amp{\`e}re mass, which is the content of Corollary~\ref{cor:special} above. This fact is non-obvious even when $\a$ is K{\"a}hler, which is the case considered in~\cite{Din2}, and seems to have been implicitely taken for granted there. 

Let thus $T_1=\theta+dd^c \f_1,T_2=\theta+dd^c \f_2 \in\a$ be two positive currents such that
$$\langle T_1^n\rangle=\langle T_2^n\rangle=\mu.$$
As in~\cite{Din1}, we first remark that the log-concavity property of non-pluripolar products (Proposition~\ref{pro:logconcave}) implies that 
$$\langle T_1^k\wedge T_2^{n-k}\rangle\ge\mu$$ 
hence 
\begin{equation}\label{equ:powers}\langle T_1^k\wedge T_2^{n-k}\rangle=\mu
\end{equation}
for $k=0,...,n$ since both measures have same total mass $v:=\vol(\a)$.
\smallskip

\noindent{\bf Step 1}. As in the first part of the proof of Theorem 1 in~\cite{Din2}, we are first going to show by contradiction that there exists $t\in\R$ such that $\f_1=\f_2+t$ $\mu$-almost everywhere. 

Since $\mu$ puts no mass on pluripolar subsets, the set of $t\in\R$ such that $\mu\{\f_1=\f_2+t\}>0$ coincides with the discontinuity locus of the non-decreasing function 
$$t\mapsto\mu\{\f_1<\f_2+t\}$$
and is thus at most countable.  We now assume by contradiction that $\mu\{\f_1=\f_2+t\}<v$ for all $t$, so that we can find $t\in\R$ such that
$$0<\mu\{\f_1<\f_2+t\}<v.$$ 
By monotone convergence, upon slightly pertubing $t$ we can furthermore arrange that 
$$\mu\{\f_1=\f_2+t\}=0.$$
Replacing $\f_2$ by $\f_2+t$, we can finally assume that $t=0$, so that both $\mu\{\f_1<\f_2\}$ and $\mu\{\f_2<\f_1\}$ are less than $v$ while $\mu\{\f_1=\f_2\}=0$. We can thus pick $\e>0$ small enough such that the mass of $(1+\e)^n\mu$ on both $\{\f_1<\f_2\}$ and $\{\f_2<\f_1\}$ is still less than $v$. 

By the existence part of Theorem~\ref{thm:MA} (which is already proved!),  we get a positive current $T=\theta+dd^c\f$ such that 
$$\langle T^n\rangle={\bf 1}_{\{\f_1<\f_2\}}(1+\e)^n\mu+{\bf 1}_{\{\f_1>\f_2\}}t\mu$$
where $t>0$ is taken so that the left-hand side has total mass $v$. Since $\f_{\min}$ has minimal singularities, we can assume that $\f\le\f_{\min}$. 
Proposition~\ref{pro:logconcave} thus implies that
$$\langle T\wedge T_j^{n-1}\rangle\ge(1+\e)\mu$$
on $\{\f_1<\f_2\}$. 

We now consider the subsets 
$$O_\d:=\{(1-\d)\f_1+\d\fm<(1-\d)\f_2+\d\f\}\subset\{\f_1<\f_2\}.$$
Since both $T$ and $T_j$ have full Monge-Amp{\`e}re mass, Corollary~\ref{cor:special} yields

\begin{equation}\label{equ:mass}\int_X\langle T_{\min}\wedge T_j^{n-1}\rangle=\langle\a^n\rangle
\end{equation}
and the generalized comparison principle (Proposition~\ref{pro:comp}) thus implies 
$$\int_{O_\d}\langle\left((1-\d)T_2+\d T\right)\wedge T_j^{n-1}\rangle$$
$$\le\int_{O_\d}\langle\left((1-\d)T_1+\d T_{\min}\right)\wedge T_j^{n-1}\rangle$$
for  $j=1,2$. By (\ref{equ:powers}) it follows that
$$\d(1+\e)\int_{O_\d}\mu$$
$$\le\d\int_{O_\d}\langle T\wedge T_j^{n-1}\rangle\le\d\int_{\{\f_1<\f_2\}} \langle T_{\min}\wedge T_j^{n-1}\rangle.$$
Dividing by $\d>0$ and letting $\d$ tend to $0$, we infer
$$(1+\e)\int_{\{\f_1<\f_2\}}\mu\le\int_{\{\f_1<\f_2\}}\langle T_{\min}\wedge T_j^{n-1}\rangle$$
for $j=1,2$ by monotone convergence. 

We can now play the same game with $\{\f_2<\f_1\}$ (and a possibly different $T$) to conclude
$$(1+\e)\int_{\{\f_2<\f_1\}}\mu\le\int_{\{\f_2<\f_1\}}\langle T_{\min}\wedge T_j^{n-1}\rangle,$$
and we finally reach a contradiction by summing these two inequalities, which yields
$$(1+\e)v\le\int_X \langle T_{\min}\wedge T_j^{n-1}\rangle=\langle\a^n\rangle=v$$
by (\ref{equ:mass}), using $\mu\{\f_1=\f_2\}=0$. 
\smallskip

\noindent{\bf Step 2}. At this point we have proved that $\f_1=\f_2$ a.e.~wrt $\MA(\f_1)=\MA(\f_2)$. One might be tempted to use the domination principle to conclude that $\f_2\le\f_1$, hence $\f_1=\f_2$ by symmetry. But Corollary~\ref{cor:dom} cannot be applied here since $\f_2$ doesn't \emph{a priori} have minimal singularities. The rest of the proof will circumvent this difficulty by using some additional cancellation. 

By the existence part of the proof, we can choose a positive current $T=\theta+dd^c\f$ such that $\langle T^n\rangle$ is a smooth positive volume form. We normalize $\f$ so that $\f\le\f_{\min}$. 

Let $O:=\{\f_1<\f_2\}$. We are going to show by descending induction on $m$ that
\begin{equation}\label{equ:zero}
\int_O\langle T_1^k\wedge T_2^l\wedge T^m\rangle=0
\end{equation}
for every $k,l,m$ such that $k+l+m=n$. The result holds for $m=0$ by the first part of the proof, and the result for $m=n$ will imply $\f_2\le\f_1$ a.e.~wrt Lebesgue measure and will conclude the proof by symmetry. 

Let thus $m$ be given, and assume that (\ref{equ:zero}) holds true for all $k,l$ such that $k+l+m=n$.  Pick $k,l$ such that $k+l+m+1=n$ and consider as usual the current with minimal singularities
$$T_1^{(j)}:=\theta_1+dd^c\f_1^{(j)}$$
with 
$$\f_1^{(j)}=\max(\f_1,\f_{\min}-j).$$
We introduce the plurifine open subsets
$$O_{j,\e}:=\{(1-\frac{\e}{j})\f_1+\frac{\e}{j}\f_1^{(j)}+\e<(1-\frac{\e}{j})\f_2+\frac{\e}{j}\f\}.$$
Note that $O_{j,\e}\subset O$ since 
 $$\frac{\e}{j}\f_1^{(j)}+\e\ge\frac{\e}{j}\f_{\min}\ge\frac{\e}{j}\f,$$
 so that both $\langle T_k\wedge T_2^{l+1}\wedge T^m\rangle$ and $\langle T_1^{k+1}\wedge T_2^l\wedge T^m\rangle$ put no mass on $O_{j,\e}$ by induction.
 
Since $T_1,T_2, T$ and $T_1^{(j)}$ all have full Monge-Amp{\`e}re mass, Corollary~\ref{cor:special} yields 
$$\int_X\langle T_1^{(j)}\wedge T_1^k\wedge T_2^l\wedge T^m\rangle=\langle\a^n\rangle.$$
The generalized comparison principle therefore implies
$$\int_{O_{j,\e}}\langle((1-\frac{\e}{j})T_2+\frac{\e}{j}T)\wedge T_1^k\wedge T_2^l\wedge T^m\rangle$$
$$\le\int_{O_{j,\e}}\langle((1-\frac{\e}{j})T_1+\frac{\e}{j}T_1^{(j)})\wedge T_1^k\wedge T_2^l\wedge T^m\rangle.$$
The leading terms on both sides vanish and we end up with
$$\int_{O_{j,\e}}\langle T_1^k\wedge T_2^l\wedge T^{m+1}\rangle$$
$$\le\int_{O}\langle T_1^{(j)}\wedge T_1^k\wedge T_2^l\wedge T^m\rangle.$$
By Corollary~\ref{cor:special} again
$$\int_X\langle T_1^{k+1}\wedge T_2^l\wedge T^m\rangle=\langle\a^n\rangle,$$
thus 
$$\lim_{j\to\infty}\int_O\langle T_1^{(j)}\wedge T_1^k\wedge T_2^l\wedge T^m\rangle=\int_O\langle T_1^{k+1}\wedge T_2^l\wedge T^m\rangle=0.$$
On the other hand we clearly have
$${\bf 1}_{\{\f_1+\e<\f_2\}}\le\liminf_{j\to\infty}{\bf 1}_{O_{j,\e}}$$
pointwise outside a pluripolar set, and Fatou's lemma thus yields
$$\int_{\{\f_1+\e<\f_2\}}\langle T_1^k\wedge T_2^l\wedge T^{m+1}\rangle=0,$$
hence also 
$$\int_O\langle T_1^k\wedge T_2^l\wedge T^{m+1}\rangle=0$$
by monotone convergence. The induction is thus complete, and the proof of uniqueness is finally over.

\section{$L^{\infty}$-a priori estimate}

We now consider the case of measures
with density in $L^{1+\e}$, $\e>0$ with respect to Lebesgue measure.
The goal of this section is to show how to adapt Ko\l{}odziej's pluripotential theoretic approach to the $L^{\infty}$ \emph{a priori} estimates~\cite{Ko1} to the present context of big cohomology classes. 
Fix an arbitrary (smooth positive) volume form $dV$ on $X$. 

Given a smooth representative $\theta$ of the big cohomology class $\a\in H^{1,1}(X,\R)$, we introduce the extremal function
\begin{equation}\label{equ:V}
V_\theta:=\sup\left\{ \psi\,\,\theta\text{-psh}, \psi\le 0\right\}.
\end{equation}
Note that $V_\theta$ has minimal singularities. Of course we have $V_\theta=0$ if (and only if) $\theta$ is semi-positive.

\begin{thm} \label{thm:apriori}
Let $\mu = f d V$ be a positive measure with density $f \in L^{1+\e}$, $\e>0$, such that $\mu(X)= \vol(\a)$. Then the unique 
closed positive current $T \in \a$ such that $\langle T^n\rangle= \mu$ has minimal singularities.

More precisely, there exists a constant $M$ only depending on $\theta$, $dV$ and $\e$ such that the unique $\theta$-psh function $\f$ such that
$$\MA(\f)=\mu$$
and normalized by
$$\sup_X\f=0$$
satisfies 
\begin{equation}\label{equ:apriori} 
\f\ge V_\theta-M \Vert f\Vert_{L^{1+\e}}^{1 
 \slash n}.
\end{equation}
\end{thm}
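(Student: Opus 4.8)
The strategy is to transpose Ko\l{}odziej's pluripotential-theoretic proof of the $L^\infty$ \emph{a priori} estimate~\cite{Ko1,Ko2} to the big class setting, the constant $0$ (which serves as reference potential when $\theta\ge 0$) being systematically replaced by the extremal function $V_\theta$ of~(\ref{equ:V}). The normalisation $\sup_X\f=0$ makes $\f$ a competitor in the envelope~(\ref{equ:V}), so $\f\le V_\theta$; and $\int_X\MA(\f)=\mu(X)=\vol(\a)$ means $\theta+dd^c\f$ has full Monge-Amp\`ere mass, so the generalised comparison principle of Corollary~\ref{cor:comp} applies to $\f$ \emph{with vanishing error term}. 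Since $V_\theta$ has minimal singularities and $V_\theta-\f\ge 0$, the current $\theta+dd^c\f$ has minimal singularities as soon as $V_\theta-\f$ is bounded, so the whole statement reduces to the quantitative estimate~(\ref{equ:apriori}), i.e.\ to bounding $\sup_X(V_\theta-\f)$ by $M\Vert f\Vert_{L^{1+\e}}^{1/n}$. For this, fix an auxiliary K\"ahler form $\omega$ and, for Borel $K\subset X$, introduce the Monge-Amp\`ere capacity relative to $\theta$,
\[
\ca_\theta(K):=\sup\Bigl\{\textstyle\int_K\langle(\theta+dd^cw)^n\rangle\ :\ w\ \theta\text{-psh},\ V_\theta-1\le w\le V_\theta\Bigr\},
\]
which agrees locally on $\Amp(\a)$ with the usual Bedford-Taylor capacity and vanishes exactly on pluripolar sets.

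The estimate then rests on two inequalities. \emph{(a) A comparison-principle bound:} for $s>0$ and $0<t<1$,
\[
t^n\,\ca_\theta\bigl(\{\f<V_\theta-s-t\}\bigr)\le\mu\bigl(\{\f<V_\theta-s\}\bigr).
\]
To obtain it, given a competitor $w$ as above, consider the $\theta$-psh function $\psi:=\max\bigl(\f,(1-t)V_\theta+tw-s\bigr)$; it is less singular than $\f$ and than the minimal-singularities function $(1-t)V_\theta+tw$, hence has full Monge-Amp\`ere mass by Proposition~\ref{pro:convex}. On the plurifine open set $\{\f<V_\theta-s-t\}$ one has $\psi=(1-t)V_\theta+tw-s$, so by plurifine locality $\langle(\theta+dd^c\psi)^n\rangle\ge t^n\langle(\theta+dd^cw)^n\rangle$ there, while a mass count (using $\int_X\langle(\theta+dd^c\psi)^n\rangle=\vol(\a)=\mu(X)$ and $\psi=\f$ where $\f\ge(1-t)V_\theta+tw-s$) bounds $\int_{\{\f<V_\theta-s-t\}}\langle(\theta+dd^c\psi)^n\rangle$ by $\mu(\{\f<V_\theta-s\})$; taking the supremum over $w$ gives (a). \emph{(b) $L^{1+\e}$ density controls capacity:} by H\"older's inequality $\mu(K)\le\Vert f\Vert_{L^{1+\e}}\,\vol(K)^{\e/(1+\e)}$, while the Alexander-Taylor/Ko\l{}odziej volume-capacity comparison on $X$ --- itself a consequence of the uniform exponential integrability $\int_X e^{-\lambda(u-V_\theta)}\,dV\le C$ valid for all $\theta$-psh $u\le V_\theta$ --- yields $\vol(K)\le C\exp\bigl(-\lambda\,\ca_\theta(K)^{-1/n}\bigr)$. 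Hence $\mu(K)\le C\Vert f\Vert_{L^{1+\e}}\,F\bigl(\ca_\theta(K)\bigr)$ with $F$ decaying at $0$ faster than any power.

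Combining (a) and (b) for the non-increasing function $h(s):=\ca_\theta(\{\f<V_\theta-s\})^{1/n}$ produces a functional inequality $t\,h(s+t)\le C\Vert f\Vert_{L^{1+\e}}^{1/n}\,G(h(s))$ with $G(x)=o(x)$ as $x\to 0$, to which Ko\l{}odziej's elementary iteration lemma applies: it yields an explicit $x_\infty\le M\Vert f\Vert_{L^{1+\e}}^{1/n}$, with $M$ depending only on $X,\omega,\theta,dV,\e$ --- and hence only on $\theta,dV,\e$ --- such that $\ca_\theta(\{\f<V_\theta-x_\infty\})=0$. The set $\{\f<V_\theta-x_\infty\}$ is then pluripolar; but it is also plurifine open, since $V_\theta-\f$ is a difference of quasi-psh functions hence plurifine continuous, and a non-empty plurifine open set is never pluripolar, so it is empty. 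Therefore $\f\ge V_\theta-x_\infty\ge V_\theta-M\Vert f\Vert_{L^{1+\e}}^{1/n}$ on $X$, which is~(\ref{equ:apriori}), and the minimal-singularities statement follows at once. (One could equally conclude from the uniqueness part of Theorem~\ref{thm:MA}: as $\mu$ is non-pluripolar it gives no mass to $\{\f<V_\theta-x_\infty\}$, so $\max(\f,V_\theta-x_\infty)$ also solves $\MA=\mu$ and has minimal singularities, forcing $\f=\max(\f,V_\theta-x_\infty)$.)

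I expect the principal obstacle to be inequality (a): Ko\l{}odziej's test-function construction must be carried out carefully now that $V_\theta$ genuinely has poles --- both to keep all Monge-Amp\`ere measures involved non-pluripolar and to make the plurifine sublevel-set inclusions come out exactly right --- and one must verify that the error term in Corollary~\ref{cor:comp} truly vanishes, which is precisely why the argument is run with currents of full Monge-Amp\`ere mass and why the convexity of $\cE(\a)$ (Proposition~\ref{pro:convex}, resting in turn on Corollary~\ref{cor:special}) is essential. The volume-capacity comparison (b) relative to the big class, and the passage from ``zero capacity'' to ``empty'', similarly depend on the pluripotential machinery of Sections~1--2 rather than on a verbatim copy of~\cite{Ko2}.
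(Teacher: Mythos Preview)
Your plan is correct and follows essentially the same route as the paper: define a Monge--Amp\`ere capacity relative to $V_\theta$, prove the sublevel-set estimate (your (a), the paper's Lemma~\ref{lem:majoCap}) via the comparison principle, prove the $\mu$-vs-capacity estimate (your (b), the paper's Proposition~\ref{pro:capLp}) via uniform Skoda integrability and Lemma~\ref{lem:compcap}, and conclude by Ko\l{}odziej's iteration lemma. Your max-construction for (a) and your plurifine-open-but-pluripolar argument at the end are equivalent variants of what the paper does.

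Two minor imprecisions worth fixing. First, the uniform exponential integrability you invoke in (b) is false as stated: $\int_X e^{-\lambda(u-V_\theta)}\,dV$ cannot be bounded uniformly over all $\theta$-psh $u\le V_\theta$ without a normalisation (take $u=V_\theta-k$). The correct input is $\int_X e^{-\lambda u}\,dV\le C$ for $\theta$-psh $u$ with $\sup_X u=0$, applied to $u=V_{K,\theta}^*-M_\theta(K)$, which combined with the Alexander--Taylor comparison $M_\theta(K)\ge(\vol(\a)/\ca(K))^{1/n}$ gives the volume--capacity inequality you want. Second, you overstate what is needed for (a): since your test function $\psi=\max(\f,\ldots)$ is pointwise $\ge\f$ and $\f$ has full Monge--Amp\`ere mass, $\psi$ has full mass simply by the monotonicity clause of Proposition~\ref{pro:convex}; neither the convexity of $\cE(\a)$ nor Corollary~\ref{cor:special} is required here (and incidentally the dependence runs the other way: Corollary~\ref{cor:special} is a consequence of the convexity in Proposition~\ref{pro:convex}, not an input to it).
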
 

The whole section is devoted to the proof of this result.
As mentioned above, we follow Ko\l{}odziej's approach~\cite{Ko1}. After introducing the appropriate Monge-Amp{\`e}re (pre)capacity $\ca$, we will estimate 
the capacity decay of sublevel sets $\{\f<V_\theta-t\}$ as $t\to+\infty$ in order to show
that actually $\ca\{\f<V_\theta-M\}=0$ for some $M>0$ under control as above, which will prove the theorem.

\subsection{The Monge-Amp{\`e}re capacity}

We define the \emph{Monge-Amp{\`e}re capacity} $\ca$ as the upper envelope of the family of all Monge-Amp{\`e}re measures $\MA(\psi)$ with $\psi$ $\theta$-psh such that $V_\theta\le\psi\le V_\theta+1$, that is for each Borel subset $B$ of $X$ we set
\begin{equation}\label{equ:capa}\ca(B):=\sup\left\{\int_B\MA(\psi), V_\theta\le\psi\le V_\theta+1\right\}.
\end{equation}
By definition we have $\ca(B)\le\ca(X)=\vol(\a)$. 

\smallskip

If $K$ is a compact subset of $X$, we define its \emph{extremal function} by  
$$V_{K,\theta}:=\sup\left\{\psi\,\,\theta\text{-psh}, \psi\le 0
\text{ on } K \right \}.$$
 
Note that 
\begin{equation}\label{equ:triv} 
V_\theta=V_{X,\theta}\le V_{K,\theta}
\end{equation} 
by definition. Standard arguments show that the usc regularization $V_{K,\theta}^*$ of $V_{K,\theta}$ is either $\theta$-psh with minimal singularities when $K$ is non-pluripolar, or $V_{K,\theta}^*\equiv+\infty$ when $K$ is pluripolar. It is also a standard matter to show that $\MA(V_{K,\theta}^*)$ is concentrated on $K$ using local solutions to the homogeneous Monge-Amp{\`e}re equation on small balls in the complement of $K$ (cf.~\cite{GZ1}, Theorem 4.2). It follows from the definition of $V_{K,\theta}$ that the following maximum principle holds:
\begin{equation}\label{equ:maximum}
\sup_K\f=\sup_X(\f-V_{K,\theta})
\end{equation}
for any $\theta$-psh function $\f$.

\smallskip

Finally we introduce the \emph{Alexander-Taylor capacity} of $K$ in the following guise: 
\begin{equation}\label{equ:alex}
M_\theta(K):=\sup_XV_{K,\theta}^*\in [0,+\infty],
\end{equation}
so that $M_\theta(K)=+\infty$ iff $K$ is pluripolar. We then have the following comparison theorem. 

\begin{lem}\label{lem:compcap} For every non-pluripolar compact subset $K$ of $X$, we have
$$1\le\left(\frac{\vol(\a)}{\ca(K)}\right)^{1/n}\le\max(1,M_\theta(K)).$$
\end{lem}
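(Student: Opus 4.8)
The plan is to prove the two inequalities separately. The left one, $1\le(\vol(\a)/\ca(K))^{1/n}$, is just a restatement of $\ca(K)\le\ca(X)=\vol(\a)$, which is immediate from the definition (\ref{equ:capa}). So the real content is the right-hand inequality, i.e. $\ca(K)\ge\vol(\a)\,M^{-n}$ where $M:=\max(1,M_\theta(K))$.

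First I would record two elementary comparisons for the extremal function of $K$. Since $K$ is non-pluripolar, $V_{K,\theta}^*$ is $\theta$-psh with minimal singularities. On one hand $V_{K,\theta}^*\ge V_\theta$, because every competitor in the envelope defining $V_\theta$ (a $\theta$-psh function $\le 0$ on all of $X$) is in particular a competitor for $V_{K,\theta}$. On the other hand $V_{K,\theta}^*-M_\theta(K)\le 0$ on all of $X$ by definition of $M_\theta(K)=\sup_X V_{K,\theta}^*$, so $V_{K,\theta}^*-M_\theta(K)$ is itself a competitor in the envelope defining $V_\theta$; hence $V_{K,\theta}^*\le V_\theta+M_\theta(K)$. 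Taking suprema in this last inequality also shows $M_\theta(K)\ge 0$, so $M\ge 1$ as it should be. Combining, $0\le V_{K,\theta}^*-V_\theta\le M_\theta(K)\le M$.

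Next I would feed this into the capacity. Set $\psi:=\frac1M V_{K,\theta}^*+\bigl(1-\frac1M\bigr)V_\theta$; since $M\ge 1$ this is a convex combination of $\theta$-psh functions, hence $\theta$-psh, and the two bounds above give $V_\theta\le\psi\le V_\theta+1$, so $\psi$ is admissible in (\ref{equ:capa}). Writing $T_K=\theta+dd^cV_{K,\theta}^*$ and $T_0=\theta+dd^cV_\theta$ we have $\theta+dd^c\psi=\frac1M T_K+(1-\frac1M)T_0$, and expanding $\langle(\theta+dd^c\psi)^n\rangle$ by multilinearity of non-pluripolar products (Proposition~\ref{pro:basic}) writes it as the sum of the positive measures $\binom{n}{k}M^{-k}(1-M^{-1})^{n-k}\langle T_K^{\,k}\wedge T_0^{\,n-k}\rangle$; keeping only the top term gives $\MA(\psi)\ge M^{-n}\langle T_K^n\rangle=M^{-n}\MA(V_{K,\theta}^*)$ as measures on $X$.

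Finally I would integrate over $K$. Since $\MA(V_{K,\theta}^*)$ is carried by $K$, and $V_{K,\theta}^*$ has minimal singularities and therefore full Monge-Amp\`ere mass, $\int_K\MA(V_{K,\theta}^*)=\int_X\MA(V_{K,\theta}^*)=\vol(\a)$; hence $\ca(K)\ge\int_K\MA(\psi)\ge M^{-n}\vol(\a)$, which rearranges to $(\vol(\a)/\ca(K))^{1/n}\le M=\max(1,M_\theta(K))$. The only step that is not routine bookkeeping is the observation that $V_{K,\theta}^*$ becomes a competitor for $V_\theta$ after subtracting its supremum, giving $V_{K,\theta}^*\le V_\theta+M_\theta(K)$; this is exactly what makes the convex combination $\psi$ admissible with the slack controlled by $M_\theta(K)$, and once it is in hand everything else follows from multilinearity and the already-recorded support and mass properties of $\MA(V_{K,\theta}^*)$.
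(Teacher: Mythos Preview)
Your proof is correct and follows essentially the same approach as the paper's. The only cosmetic difference is that the paper splits into the two cases $M_\theta(K)\le 1$ and $M_\theta(K)\ge 1$ separately, whereas you handle both at once by working with $M=\max(1,M_\theta(K))$ from the outset; your explicit justification of $V_{K,\theta}^*\le V_\theta+M_\theta(K)$ is also a slight clarification of a step the paper leaves implicit.
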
 
\begin{proof} The left-hand inequality is trivial. In order to prove the right-hand inequality we consider two cases. If $M_\theta(K)\le 1$, then $V_{K,\theta}^*$ is a candidate in the definition (\ref{equ:capa}) of $\ca(K)$ by~(\ref{equ:maximum}). Since $\MA(V_{K,\theta}^*)$ is supported on $K$ we thus have 
$$\ca(K)\ge\int_K\MA(V_{K,\theta}^*)=\int_X\MA(V_{K,\theta}^*)=\vol(\a)$$
and the desired inequality holds in that case.

On the other hand if $M:=M_\theta(K)\ge 1$ we have 
$$V_\theta\le M^{-1}V_{K,\theta}^*+(1-M^{-1})V_\theta\le V_\theta+1$$
by (\ref{equ:triv}) and it follows by definition of the capacity again that 
$$\ca(K)\ge\int_K\MA(M^{-1}V_{K,\theta}^*+(1-M^{-1})V_\theta).$$
But since
$$\MA(M^{-1}V_{K,\theta}^*+(1-M^{-1})V_\theta)\ge M^{-n}\MA(V_{K,\theta}^*)$$ 
we deduce that
$$\int_K\MA(M^{-1}V_{K,\theta}^*+(1-M^{-1})V_\theta)\ge M^{-n}\int_X\MA(V_{K,\theta}^*)=M^{-n}\vol(\a)$$
and the result follows. 
\end{proof}

Using this fact, we will now prove as in~\cite{Ko1} that any measure with $L^{1+\e}$-density is nicely dominated by the capacity. 

\begin{pro}\label{pro:capLp} 
Let $\mu=fdV$ be a positive measure with $L^{1+\e}$ density with respect to 
Lebesgue measure, with $\e>0$.
Then there exists a constant $C$ only depending on $\theta$ and $\mu$ such that 
$$\mu(B)\le C\cdot\ca(B)^2$$
for all Borel subsets $B$ of $X$. In fact we can take
$$C:=\e^{-2n}A\Vert f\Vert_{L^{1+\e}(dV)}$$
for a constant $A$ only depending on $\theta$ and  $dV$. 
\end{pro}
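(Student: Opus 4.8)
The plan follows Ko\l{}odziej's scheme in three reductions. First bound $\mu(B)$ by a power of its Lebesgue volume via H\"older; then bound the Lebesgue volume by $e^{-aM_\theta(B)}$, where $M_\theta$ is the Alexander--Taylor capacity of (\ref{equ:alex}), using a uniform exponential integrability estimate for normalised $\theta$-psh functions; finally turn $M_\theta$ into the Monge--Amp\`ere capacity $\ca$ via Lemma~\ref{lem:compcap}, closing with the elementary bound $e^{-bs}\le(2n/(eb))^{2n}s^{-2n}$. Since $\mu$ is a Radon measure and both sides are monotone in $B$, it is enough to treat a compact set $K$; if $K$ is pluripolar then $\mu(K)=0$ because $\mu\ll dV$, so we may assume $K$ non-pluripolar and $M_\theta(K)<+\infty$. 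We may also assume $0<\e\le 1$, since on the finite measure space $(X,dV)$ passing to a smaller exponent only decreases $\Vert f\Vert$.

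H\"older's inequality with conjugate exponent $(1+\e)/\e$ gives $\mu(K)\le\Vert f\Vert_{L^{1+\e}(dV)}\bigl(\int_K dV\bigr)^{\e/(1+\e)}$. For the volume bound, the key input is that there exist $a>0$ and $A_0\ge 1$, depending only on $\theta$ and $dV$, such that
$$\int_X e^{-a\psi}\,dV\le A_0\qquad\text{for every }\theta\text{-psh }\psi\text{ with }\sup_X\psi=0.$$
This follows from the $L^1(X)$-compactness of such $\psi$ together with the uniform form of Skoda's integrability theorem. Applying it to $\psi:=V_{K,\theta}^*-M_\theta(K)$ --- which is so normalised, since $\sup_X V_{K,\theta}^*=M_\theta(K)$ by (\ref{equ:alex}) --- and using that $V_{K,\theta}^*\le 0$ on $K$ outside a pluripolar, hence $dV$-negligible, set, we obtain $\psi\le -M_\theta(K)$ $dV$-a.e.\ on $K$, hence $\int_K dV\le A_0\,e^{-aM_\theta(K)}$.

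By Lemma~\ref{lem:compcap} we have $\max(1,M_\theta(K))\ge(\vol(\a)/\ca(K))^{1/n}$. If $M_\theta(K)\le 1$ this forces $\ca(K)=\vol(\a)$, and then $\mu(K)\le\mu(X)\le\Vert f\Vert_{L^{1+\e}(dV)}(\int_X dV)^{\e/(1+\e)}$ already gives $\mu(K)\le C\,\ca(K)^2$ with a $C$ of the asserted form. Otherwise $M_\theta(K)\ge(\vol(\a)/\ca(K))^{1/n}$; setting $u:=\ca(K)/\vol(\a)\in(0,1]$ and $b:=a\e/(1+\e)\ge a\e/2$, the first two steps combine to
$$\mu(K)\le A_0\,\Vert f\Vert_{L^{1+\e}(dV)}\,\exp\bigl(-b\,u^{-1/n}\bigr).$$
Maximising $e^{-bs}s^{2n}$ over $s>0$ yields $e^{-bs}\le(2n/(eb))^{2n}s^{-2n}$, so with $s=u^{-1/n}$ we get $\exp(-b\,u^{-1/n})\le(2n/(eb))^{2n}u^2\le(4n/(ea))^{2n}\e^{-2n}u^2$. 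Since $u^2=\ca(K)^2/\vol(\a)^2$, collecting constants gives $\mu(K)\le\e^{-2n}A\,\Vert f\Vert_{L^{1+\e}(dV)}\,\ca(K)^2$ with $A$ depending only on $\theta$ and $dV$. Inner regularity of $\mu$ and monotonicity of $\ca$ upgrade this to arbitrary Borel sets.

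The only step that is not formal manipulation is the uniform exponential integrability: Skoda's theorem applied to one $\psi$ yields only a local bound with constants depending on $\psi$, and promoting this to a single pair $(a,A_0)$ valid for the entire family $\{\psi\ \theta\text{-psh},\ \sup_X\psi=0\}$ requires the weak-$L^1$ compactness of that family plus a uniform (Hartogs-type) refinement of Skoda's estimate. Everything else --- H\"older, the one-variable maximisation, and the bookkeeping needed to isolate the factor $\e^{-2n}$ --- is routine.
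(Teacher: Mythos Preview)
Your proof is correct and follows essentially the same approach as the paper: reduce to compact non-pluripolar $K$, combine H\"older with the uniform Skoda bound applied to $V_{K,\theta}^*-M_\theta(K)$ to control $dV(K)$ by $e^{-aM_\theta(K)}$, and then convert $M_\theta$ into $\ca$ via Lemma~\ref{lem:compcap}. You are in fact more explicit than the paper about the elementary step $\exp(-bs)\le(2n/(eb))^{2n}s^{-2n}$ and about tracking the factor $\e^{-2n}$; the only minor slip is the remark that passing to a smaller exponent ``decreases $\Vert f\Vert$'', which is only literally true on a probability space, but the extra factor depending on $dV(X)$ is harmlessly absorbed into $A$.
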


\begin{proof} By inner regularity of $\mu$ it is enough to consider the case where $B=K$ is compact. We can also assume that $K$ is non-pluripolar since $\mu(K)=0$ otherwise and the inequality is then trivial. 

Now set
\begin{equation}\label{equ:lelong}\nu_\theta:=2\sup_{T,x}\nu(T,x)
\end{equation}
the supremum ranging over all positive currents $T\in\a$ and all $x\in X$ and $\nu(T,x)$ denoting the Lelong number of $T$ at $x$. Since all Lelong numbers of $\nu_\theta^{-1}T$ are $\le 1/2<1$ for each positive current $T\in\a$, the uniform version of Skoda's integrability theorem together with the compactness of normalized $\theta$-psh functions yields a constant $C_\theta>0$ only depending on $dV$ and $\theta$ such that
$$\int_X\exp(-\nu_\theta^{-1}\psi)dV\le C_\theta$$
for all $\theta$-psh functions $\psi$ normalized by $\sup_X\psi=0$ (see~\cite{Ze}). Applying this to $\psi=V_{K,\theta}^*-M_\theta(K)$ (which has the right normalization by (\ref{equ:alex})) we get
$$\int_X\exp(-\nu_\theta^{-1}V_{K,\theta}^*)dV\le C_\theta\exp(-\nu_{\theta}^{-1} M_\theta(K)).$$
On the other hand we have $V_{K,\theta}^*\le 0$ on $K$ a.e.~with respect to Lebesgue measure, hence
\begin{equation}\label{equ:vol}\vol(K)\le C_\theta\exp(-\nu_\theta^{-1}M_\theta(K)).
\end{equation}
H{\"o}lder's inequality yields  
\begin{equation}\label{equ:holder}
\mu(K)\le\Vert f\Vert_{L^{1+\e}(dV)}\vol(K)^{\e/1+\e}.
\end{equation}
We may also assume that $M_\theta(K)\ge 1$. Otherwise Lemma~\ref{lem:compcap} implies $\ca(K)=\vol(\a)$, and the result is thus clear in that case. By Lemma~\ref{lem:compcap}, (\ref{equ:vol}) and (\ref{equ:holder}) together we thus get
$$\mu(K)\le\Vert f\Vert_{L^{1+\e}(dV)}\exp\left(-\frac{\e}{(1+\e)\nu_\theta}\left(\frac{\ca(K)}{\vol(\a)}\right)^{-1/n}\right)$$ 
times $C_\theta^{\e/1+\e}$ and the result follows since
$\exp(-t^{-1/n})=O(t^2)$ when $t\to 0_+$. 
\end{proof}

\subsection{Proof of Theorem~\ref{thm:apriori}}
We first apply the comparison principle to get:
\begin{lem} \label{lem:majoCap}
Let $\f$ be a $\theta$-psh function with full Monge-Amp{\`e}re mass. Then for all $t>0$ and $0<\d<1$ we have
$$\ca\{\f<V_\theta-t-\d\} \leq \d^{-n}\int_{\{\f<V_\theta-t\}}\MA(\f).
$$
\end{lem}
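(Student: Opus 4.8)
The plan is to deduce the estimate from the generalized comparison principle (Corollary~\ref{cor:comp}) by a Ko\l{}odziej-type rescaling trick. First I would fix a competitor for the capacity, i.e. a $\theta$-psh function $\psi$ with $V_\theta\le\psi\le V_\theta+1$; since $\MA$ is unchanged by adding a constant it is convenient to replace $\psi$ by $\rho:=\psi-1$, so that $V_\theta-1\le\rho\le V_\theta$ and $\MA(\rho)=\MA(\psi)$. I then introduce the $\theta$-psh function $w:=(1-\d)V_\theta+\d\rho-t$, which is admissible as an affine combination of $\theta$-psh functions shifted by a constant.

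The key elementary observation is that $V_\theta-1\le\rho\le V_\theta$ forces $V_\theta-t-\d\le w\le V_\theta-t$ pointwise, whence the double inclusion $\{\f<V_\theta-t-\d\}\subset\{\f<w\}\subset\{\f<V_\theta-t\}$. Applying Corollary~\ref{cor:comp} to the pair $(\f,w)$ gives $\int_{\{\f<w\}}\MA(w)\le\int_{\{\f<w\}}\MA(\f)+\vol(\a)-\int_X\MA(\f)$, and the error term $\vol(\a)-\int_X\MA(\f)$ vanishes because $\f$ has full Monge-Amp\`ere mass. On the other hand $\theta+dd^cw=(1-\d)(\theta+dd^cV_\theta)+\d(\theta+dd^c\rho)\ge\d(\theta+dd^c\rho)$ as closed positive $(1,1)$-currents (the difference being $(1-\d)(\theta+dd^cV_\theta)\ge0$), so expanding $\langle(\theta+dd^cw)^n\rangle$ by multilinearity of non-pluripolar products and discarding the nonnegative mixed terms yields $\MA(w)\ge\d^n\MA(\rho)=\d^n\MA(\psi)$, exactly as in the proof of Lemma~\ref{lem:compcap}. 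Chaining these with the double inclusion gives $\d^n\int_{\{\f<V_\theta-t-\d\}}\MA(\psi)\le\int_{\{\f<w\}}\MA(w)\le\int_{\{\f<w\}}\MA(\f)\le\int_{\{\f<V_\theta-t\}}\MA(\f)$, and taking the supremum over all admissible $\psi$ yields the claim.

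There is no real obstacle once Corollary~\ref{cor:comp} is in hand; the only points needing care are the bookkeeping of the normalization of the capacity competitors (the harmless shift of $\psi$ by $1$), the choice of affine coefficients in $w$ so that the loss shows up as the extra $-\d$ in the sublevel set on the left while the sublevel set on the right stays exactly $\{\f<V_\theta-t\}$, and the now-routine use of the full Monge-Amp\`ere mass hypothesis to cancel the additional term in the generalized comparison principle.
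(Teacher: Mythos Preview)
Your proof is correct and is essentially identical to the paper's argument: the function $w=(1-\d)V_\theta+\d\rho-t$ you introduce equals $\d\psi+(1-\d)V_\theta-t-\d$, which is exactly the comparison function used in the paper, so the double inclusion and the application of the comparison principle coincide line by line. The only cosmetic difference is your preliminary shift $\rho=\psi-1$, which the paper absorbs directly into the constant $-t-\d$.
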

\begin{proof}
Let $\psi$ be a $\theta$-psh function such that $V_\theta\le\p\le V_\theta+1$. We then have
$$\{\f<V_\theta-t-\d\}\subset\{\f<\d\p+(1-\d)V_\theta-t-\d\}\subset\{\f<V_\theta-t\}.$$
Since $\d^n\MA(\p)\le\MA(\d\p+(1-\d)V_\theta)$ and $\f$ has finite Monge-Amp{\`e}re energy, Proposition~\ref{pro:comp} yields
$$\d^n\int_{\{\f<V_\theta-t-\d\}}\MA(\p)\le\int_{\{\f<\d\p+(1-\d)V_\theta-t-\d\}}\MA(\d\p+(1-\d)V_\theta)$$
$$\le\int_{\{\f<\d\p+(1-\d)V_\theta-t-\d\}}\MA(\f)\le\int_{\{\f<V_\theta-t\}}\MA(\f)$$
and the proof is complete.
\end{proof}

Now set
$$g(t):=\left(\ca\{\f<V_\theta-t\}\right)^{1/n}.$$ 
Our goal is to show that $g(M)=0$ for some $M$ under control. Indeed we will then have $\f \ge V_\theta-M$ on $X \setminus P$ for some Borel subset $P$ such that 
where $\ca(P)=0$. But it then follows in particular from
Proposition~\ref{pro:capLp} (applied to the Lebesgue measure itself) that $P$ has Lebesgue measure zero hence $\f \ge V_\theta-M$ will hold everywhere.

Now since $\MA(\f)=\mu$ it follows from Proposition~\ref{pro:capLp} and Lemma~\ref{lem:majoCap} that
$$
g(t+\d) \leq \frac{C^{1/n}}{\d} g(t)^2
\; \; \text{ for all } t>0 \text{ and } 0<\d<1.
$$
 We can thus apply Lemma 2.3 in \cite{EGZ1} which yields 
$g(M)=0$ for $M:=t_0+4C^{1/n}$ (see also ~\cite{BGZ}).
Here $t_0>0$ has to be chosen so that 
$$g(t_0)<\frac{1}{2C^{1/n}}.$$ 
Lemma \ref{lem:majoCap} (with $\d=1$) implies that
$$
g(t)^n\le\mu\{\f<V_\theta-t+1\}$$
$$\le\frac{1}{t-1}\int_X|V_\theta-\f|fdV
\le\frac{1}{t-1} \Vert f\Vert_{L^{1+\e}(dV)}(\Vert\f\Vert_{L^{1+1/\e}(dV)}+\Vert V_\theta\Vert_{L^{1+1/\e}(dV)})
$$
by H{\"o}lder's inequality. Since $\f$ and $V_\theta$ both belong to the compact set of $\theta$-psh functions normalized by $\sup_X \f=0$, their $L^{1+1/\e}(dV)$-norms are bounded by a constant $C_2$ only depending on $\theta$, $dV$ and $\e$. It is thus enough to take
$$t_0>1+2^{n-1}C_2C\Vert f\Vert_{L^{1+\e}(dV)}.$$

\begin{rem}\label{rem:uniform} 
{\it We have already mentioned that $\vol(\a)$ is continuous on the big cone. On the other hand it is easy to see that $\nu_\theta$ remains bounded as long as $\theta$ is $C^2$-bounded. As a consequence if $\mu=f dV$ is fixed we see that the constant $C$ in Proposition~\ref{pro:capLp} can be taken uniform in a small $C^2$-neighborhood of a given form $\theta_0$. We similarly conclude that $M$ in Theorem~\ref{thm:apriori} is uniform around a fixed form $\theta_0$, $\mu$ being fixed.  }
\end{rem}

\begin{rem} 
{\it As in~\cite{Ko1} Theorem~\ref{thm:apriori} holds more generally when the density $f$ belongs to some Orlicz class. Indeed it suffices to replace (\ref{equ:holder}) by the appropriate Orlicz-type inequality in the proof of Proposition~\ref{pro:capLp}. }
\end{rem}
\smallskip

\section{Regularity of solutions: the case of nef classes}
Let $\a$ be a big cohomology class and let $\mu$ be a smooth positive volume form of total mass equal to $\vol(\a)$. Let $T\in\a$ be the unique positive current such that $\langle T^n\rangle=\mu$. By Theorem~\ref{thm:apriori} $T$ has minimal singularities, which implies in particular that $T$ has locally bounded potentials on the Zariski open subset $\Amp(\a)$, the ample locus of $\a$. Note that the equation $\langle T^n\rangle=\mu$ then simply means that $T^n$, which is well-defined on $\Amp(\a)$ by Bedford-Taylor, satisfies $T^n=\mu$ on this open subset. 

If $\a$ is a K{\"a}hler class, then $\Amp(\a)=X$ and Yau's
theorem~\cite{Y} implies that $T$ is smooth. For an arbitrary big
class $\a$ the expectation is that $T$ is smooth on $\Amp(\a)$. By
Evans-Trudinger's general regularity theory for fully non-linear
elliptic equations~\cite{Eva,Tru} it is enough to show that $T$ has
$L^\infty_{loc}$ coefficients on $\Amp(\a)$, or equivalently that the
trace measure of $T$ has $L^\infty_{loc}$-density with respect to
Lebesgue measure (cf.the discussion in~\cite{Blo1}). We are
unfortunately unable to prove this is general, but we can handle the
following special case. The arguments are similar to those used by
Sugiyama in~\cite{Sug}. 

\begin{thm}\label{thm:regnef} Let $\a$ be a nef and big class. Let $\mu$ be a smooth
  positive volume of total mass equal to $\vol(\a)=\a^n$. Then the
  positive current $T\in\a$ such that $\langle T^n\rangle=\mu$ is smooth on $\Amp(\a)$.
\end{thm}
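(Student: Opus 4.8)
The overall strategy is to reduce to the classical interior regularity theory for the Monge-Amp\`ere equation (Yau, Evans-Trudinger) via a Laplacian estimate on the ample locus. Since $\a$ is nef and big, fix a K\"ahler form $\omega$ on $X$; for each $\e>0$ the class $\a+\e\{\omega\}$ is K\"ahler, and by the work of~\cite{GZ2} (or directly Yau's theorem after scaling) there is a unique smooth $\theta_\e:=\theta+\e\omega$-psh function $\f_\e$ solving $(\theta_\e+dd^c\f_\e)^n=c_\e\mu$ with $c_\e:=\vol(\a+\e\{\omega\})/\mu(X)\to 1$, normalized by $\sup_X\f_\e=0$. By the uniform a priori estimate of Theorem~\ref{thm:apriori} (see Remark~\ref{rem:uniform} for the uniformity in $\theta$ over a $C^2$-neighborhood), we have $\f_\e\ge V_{\theta_\e}-M$ with $M$ independent of $\e$; since $V_{\theta_\e}$ decreases to $V_\theta$ and all the $\f_\e$ are uniformly $L^1$-bounded $\theta_\e$-psh functions, a subsequence converges in $L^1(X)$ to a $\theta$-psh $\f$ with $\MA(\f)=\mu$, so $T=\theta+dd^c\f$ by the uniqueness part of Theorem~\ref{thm:MA}. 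It therefore suffices to prove \emph{locally uniform} (in $\e$) $C^\infty$ estimates for $\f_\e$ on $\Amp(\a)$.

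The heart of the matter is the second-order estimate. Fix a relatively compact open set $U\Subset\Amp(\a)$. By definition of the ample locus there is a strictly positive current $S=\theta+dd^c\rho\in\a$ with analytic singularities, smooth on a neighborhood of $\overline U$, with $S\ge\delta\omega$ there for some $\delta>0$; we may assume $\rho$ is smooth near $\overline U$ and $\rho\le 0$, and since $\f_\e$ has minimal singularities (for $\a+\e\{\omega\}$, uniformly) we get $\f_\e\ge\rho-C$ on $\overline U$ for $C$ independent of $\e$. Thus $\f_\e-\rho$ is a \emph{uniformly bounded} function on $\overline U$ whose complex Hessian satisfies $\theta_\e+dd^c\f_\e=(\theta_\e+dd^c\rho)+dd^c(\f_\e-\rho)\ge\delta\omega+dd^c(\f_\e-\rho)$, i.e.\ $\f_\e-\rho$ is $\delta\omega$-psh near $\overline U$ with solution of a nondegenerate Monge-Amp\`ere equation $(\delta\omega+dd^c(\f_\e-\rho)+\eta_\e)^n=c_\e\mu$ where $\eta_\e:=\theta_\e+dd^c\rho-\delta\omega$ is a smooth form, uniformly bounded in $C^\infty(\overline U)$ as $\e\to0$, and the right-hand side is a fixed smooth strictly positive volume form (up to the harmless factor $c_\e$). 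This is now exactly the setup of Yau's interior estimates: one runs the Aubin-Yau computation for $\Delta_\omega(\f_\e-\rho)$ with a cutoff supported in a slightly larger set, using the $C^0$-bound on $\f_\e-\rho$ to absorb the bad terms, to obtain $\|\Delta_\omega\f_\e\|_{L^\infty(U)}\le C(U)$ uniformly in $\e$. (This is the argument of Sugiyama~\cite{Sug} referred to above.)

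Once the uniform Laplacian bound is in hand, the equation $(\theta_\e+dd^c\f_\e)^n=c_\e\mu$ becomes uniformly elliptic on $U$ with bounded measurable coefficients, so the complex version of the Evans-Krylov estimate (cf.~\cite{Eva,Tru,Blo1}) gives uniform interior $C^{2,\alpha}$ bounds; Schauder bootstrapping then yields locally uniform $C^\infty$ bounds on $\Amp(\a)$. Passing to the limit, $\f$ is smooth on $\Amp(\a)$ and $T=\theta+dd^c\f$ solves $T^n=\mu$ there in the classical sense, which is the assertion. The main obstacle is the second-order estimate: one must be careful that the constant in the Aubin-Yau inequality depends only on the geometry of $\omega$, a lower bound on the bisectional curvature, the $C^0$-norm of $\f_\e-\rho$, and $C^2$-bounds on $\eta_\e$ and on $\log\mu$, \emph{all of which are uniform in $\e$}; the nefness of $\a$ enters precisely in guaranteeing that the regularizing forms $\theta_\e$ are genuine smooth positive perturbations, so that on $\Amp(\a)$ we really are perturbing a nondegenerate equation rather than a degenerate one.
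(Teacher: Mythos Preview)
Your overall plan---perturb the nef class to K\"ahler, solve by Yau, get a uniform $C^0$ bound from Theorem~\ref{thm:apriori} and Remark~\ref{rem:uniform}, then a uniform Laplacian bound on $\Amp(\a)$, then Evans--Krylov and bootstrap---is exactly the paper's strategy. The convergence of $\f_\e$ to the unique solution is handled just as you describe.

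The substantive difference lies in the Laplacian estimate, and here your sketch has a gap. You propose to ``run the Aubin--Yau computation with a cutoff supported in a slightly larger set, using the $C^0$-bound on $\f_\e-\rho$ to absorb the bad terms.'' But Yau's second-order inequality is of the form
\[
\Delta_{\omega_\e}\bigl(e^{-Au}(n+\Delta_\omega u)\bigr)\ge(\text{good terms}),
\]
where $\Delta_{\omega_\e}$ is the Laplacian of the \emph{unknown} solution metric $\omega_\e=\theta_\e+dd^c\f_\e$. Multiplying by a cutoff $\chi^2$ and applying the maximum principle produces error terms like $|\nabla_{\omega_\e}\chi|^2$, which are measured in $\omega_\e$ and are not controlled by the $C^0$-bound alone. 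There is no standard interior Laplacian estimate of this type for the complex Monge--Amp\`ere equation; this is precisely why the nef hypothesis is needed and why the result is not known for general big classes.

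The paper avoids this by passing to a modification $\pi:X'\to X$ on which $\pi^*\a=\{\omega\}+\{E\}$ with $\omega$ a genuine K\"ahler form on the \emph{compact} manifold $X'$ and $E$ effective, and then using Tsuji's trick: set $u_t:=\f_t-\f_E$, where $\f_E$ is a potential for $[E]$. On $X'-E$ the equation becomes $(\omega+dd^cu_t)^n=e^F\omega^n$ with a fixed K\"ahler reference whose bisectional curvature is globally bounded, and crucially $u_t\to+\infty$ near $E$ (since $\f_t$ is bounded for $t>0$ while $\f_E\to-\infty$). Hence $e^{-Au_t}(n+\Delta u_t)$ tends to zero near $E$ and attains its maximum at an interior point, where Yau's pointwise inequality yields the bound---no cutoff needed. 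Your function $\rho$ could in principle play the role of $\f_E$ as a barrier (since $\f_\e-\rho\to+\infty$ on the polar set of $\rho$), but you would then still have to work with the reference form $\theta_\e+dd^c\rho$, whose curvature may blow up near that polar set; the modification is what guarantees a fixed compact K\"ahler reference. Replacing ``cutoff'' by this barrier argument (and carrying out the modification) would bring your proof in line with the paper's.
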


\begin{proof} Write $T=\theta+dd^c\f$ with $\sup_X\f=0$. The $\theta$-psh
  function $\f$ is locally bounded on $\Omega:=\Amp(\a)$ and satisfies 
$$(\theta+dd^c\f)^n=\mu$$
there. If $\pi:X'\to X$ is a modification that is isomorphic over $\Omega$, then $\f':=\pi^*\f$,
$\theta'=\pi ^*\theta$ and $\mu':=\pi^*\mu$ obviously satisfy
$$(\theta'+dd^c\f')^n=\mu'$$
on $\Omega':=\pi^{-1}\Omega$. Note also that $\f'$ is a $\theta'$-psh
function with minimal singularities by
Proposition~\ref{pro:tire_min}. Now $\Amp(\a)$ is covered by Zariski open subsets of the form
$X-\pi(E)$ where $\pi$ is as above, $E$ is an effective
$\R$-divisor on $X'$ and $\theta'$ is cohomologous to $\omega+[E]$ for
some K{\"a}hler form $\omega$ on $X'$. 
We thus fix such a data, and our goal is to show that $\Delta\f'$
belongs to $L^\infty_{loc}$ on $X'-E$ (the Laplacian being computed with respect to $\omega$). We can find a quasi-psh function $\f_E$ on $X'$ such that 
$$[E]=\theta'-\omega+dd^c\f_E,$$
so that $\f_E$ is smooth on $X'-E$ and satisfies $\theta'=\omega-dd^c\f_E$ there. Since $\f_E$ is in particular $\theta$-psh and
$\f'$ has minimal singularities, we have $\f_E\le\f'+O(1)$. Upon
replacing $E$ by $(1+\e)E$ with $0<\e\ll 1$ in the above construction and shifting $\f_E$ by a constant we can in fact assume that 
\begin{equation}\label{equ:poles}
\f'\ge(1-\e)\f_E
\end{equation}
for some $\e>0$. 

On the other hand let's write
$$\mu'=e^F\omega^n.$$
The function $e^F$ is smooth but vanishes on the critical locus of
$\pi$. Indeed $F$ is quasi-psh (in particular bounded from above) and
$dd^cF$ is equal to the integration current on the relative canonical
divisor of $\pi$ modulo a smooth form. In particular the Laplacian
$\Delta F$ (again with respect to $\omega$) is \emph{globally}
bounded on $X'-E$. 

For each $t\ge 0$ we consider the smooth $(1,1)$-form 
$$\theta_t:=\theta'+t\omega.$$
Note that a $\theta_t$-psh function \emph{a fortiori} is $\theta_s$-psh for $s\ge t$ since $\omega\ge 0$. 
We set $V_t:=V_{\theta_t}$ where the latter is the extremal
$\theta_t$-psh function defined by (\ref{equ:V}). The proof of the following lemma is straightforward.

\begin{lem}\label{lem:decrease} The quasi-psh functions $V_t$ decrease pointwise to $V_0$ as $t\to 0$.
\end{lem}
Since $V_0$ is a $\theta_0$-psh function with
minimal singularities (\ref{equ:poles}) together with Lemma~\ref{lem:decrease} imply that $V_t-\f_E$ tends to $+\infty$ near $E$ for each $t\ge 0$. 

By Theorem~\ref{thm:apriori} for each $t\ge 0$ there exists a unique $\theta_t$-psh function $\f_t$ with minimal singularities such that 
\begin{equation}\label{equ:smooth}
(\theta_t+dd^c\f_t)^n=e^F\omega^n
\end{equation}
normalized by $\sup_X\f_t=0$. By Theorem~\ref{thm:apriori} and Remark~\ref{rem:uniform}, there exists $M>0$ such that 
\begin{equation}\label{equ:minoree}\f_t\ge V_t-M 
\end{equation}
for all $t\ge 0$. We thus have $\f_0=\f'$. 
The class of $\theta_t$ is K{\"a}hler for $t>0$ since the class of
$\theta$ is nef. Since $F$ has analytic singularities, Theorem 3 on
p.374 of~\cite{Y} implies that $\f_t$ is smooth on $X'-E$ and
$\Delta\f_t$ is globally bounded on $X'$ for each
$t>0$ (but of course no uniformity is claimed with respect to $t$). 

\begin{lem}\label{lem:limit} The normalized solutions $\f_t$ satisfy 
$$\lim_{t\to 0}\f_t=\f_0.$$
\end{lem}
\begin{proof} The normalized quasi-psh functions $\f_t$ live in a compact subset of $L^1(X)$. By the uniqueness part of Theorem~\ref{thm:MA} it is thus enough to show that any limit $\p$ of a sequence $\f_k:=\f_{t_k}$ with $t_k\to 0$ satisfies $\langle\theta_0+dd^c\p)^n\rangle=e^F\omega^n$ and $\sup_X\p=0$. The latter property follows from Hartog's lemma. To prove the former, we introduce 
$$\p_k:=(\sup_{j\ge k}\f_j)^*$$
so that $\p_k$ is $\theta_k$-psh and decreases pointwise to $\p$. As in Proposition~\ref{pro:monge_max} we get
$$(\theta_k+dd^c\p_k)^n\ge e^F\omega^n$$
on $X'-E$ and the result follows by continuity of the Monge-Amp{\`e}re operator along decreasing sequences of bounded psh functions.
\end{proof}

We are now going to prove that $\Delta\f_t$ is uniformly bounded on
compact subsets of $X'-E$, which will imply that $\Delta\f_0$ is $L^\infty_{loc}$ on $X'-E$ by Lemma~\ref{lem:limit}. This will be accomplished by using Yau's
poinwise computations~\cite{Y}, p.350. In order to do so, we rely on
Tsuji's trick~\cite{Tsu88}: we introduce 
$$u_t:=\f_t-\f_E$$
which is smooth on $X'-E$ and satisfies 
$$(\omega+dd^cu_t)^n=\mu$$
there, since $\theta'=\omega-dd^c\f_E$ on $X'-E$. Note that $u_t$ is \emph{not} quasi-psh on $X'$. Indeed we have $u_t\to+\infty$ near $E$ since $\f_t=V_t+O(1)$ on $X'$. Since $\Delta\f_E$ is
globally bounded on $X'-E$ as was already noted, to bound $\Delta\f_t$ is equivalent to bounding $\Delta u_t$. 

We now basically follow the argument on p.350
of~\cite{Y}. By inequalities (2.18) and (2.20) on p.350 of~\cite{Y}
(which only depend on pointwise computations)  we have
$$e^{Au_t}\Delta_t\left(e^{-Au_t}(n+\Delta u_t)\right)$$
$$\ge-An(n+\Delta u_t)+(A+b_\omega)e^{-\frac{F}{n-1}}(n+\Delta\psi)^{\frac{n}{n-1}}+\Delta F-n^2 b_\omega$$
for every $A>0$ such that $A+b_\omega>1$. Here $\Delta$ and
$\Delta_t$ respectively denote the (negative) Laplacians associated
with the K{\"a}hler forms $\omega$ and $\omega+dd^c u_t$ and $b_\omega$
denotes the (pointwise) minimal holomorphic bisectional curvature of
$\omega$. Now $b_\omega$ is globally bounded, $F$ is bounded from
above and $\Delta F$ is globally bounded on $X'-E$. We can thus find $B,C,D>0$ (independent of $t$) such that 
\begin{equation}\label{equ:yaunef} e^{Au_t}\Delta_t\left(e^{-Au_t}(n+\Delta u_t)\right)
\ge-B(n+\Delta u_t)+C(n+\Delta u_t)^{\frac{n}{n-1}}-D\end{equation}
holds on $X'-E$. Since $\Delta u_t=\Delta\f_t-\Delta\f_E$ is smooth
and globally bounded on $X'-E$ and since $u_t$ tends to $+\infty$ near $E$, we see that
$$e^{-Cu_t}(n+\Delta u_t)$$
achieves its maximum on $X'-E$ at some point $x_t\in X'-E$. Inequality
(\ref{equ:yaunef}) applied at $x_t$ thus yields
$$C(n+\Delta u_t)^{\frac{n}{n-1}}\le B(n+\Delta u_t)+D$$
at $x_t$. Since $n/n-1>1$, we thus see that there exists $C_1$ (independent of $t$) such that
$$(n+\Delta u_t)(x_t)\le C_1.$$
We thus get
\begin{equation}\label{equ:borneinf} n+\Delta u_t\le C_1\exp\left(C\left(u_t-\inf_{X'-E}u_t\right)\right)\end{equation}
by definition of $x_t$ (compare (2.24) on p.351 of~\cite{Y}). By (\ref{equ:poles}) and (\ref{equ:minoree}) we have
$$-M\le V_t-\f_E-M\le\f_t-\f_E=u_t\le(\sup_XV_1)-\f_E$$
for $0\le t\le 1$ which shows that $u_t-\inf_{X'-E}u_t$ is uniformly bounded from above on compact subsets of $X'-E$, and the proof is thus complete in view of (\ref{equ:borneinf}) 
\end{proof}

It is a consequence of Demailly's regularization theorem that currents with minimal singularities $T\in\a$ in a nef and big class have identically zero Lelong numbers (cf.~\cite{Bou2}). Here is an example where such currents however have poles. 

\begin{ex} \label{ex:bignef}
Start with a famous example due to Serre: let $E$ be the (flat, but not
unitary flat) rank 2 vector bundle over the elliptic curve $C:=\C/\Z[i]$
associated to the representation $\pi_{1}(C)=\Z[i]\rightarrow SL(2,\C)$
sending $1$ to the identity and $i$ to $\left(\begin{array}{cc}
1 & 1\\
0 & 1\end{array}\right)$. The ruled surface $S:=\PP(E)\rightarrow C$ of hyperplanes of $E$
has a section $C'$ with flat normal bundle, which lies in the linear
system $|\cO_{E}(1)|$. The original point of this construction of Serre was that $X-C'$
is Stein but not affine, and the reason for that is that $C'$ is
rigid in $X$ despite having a non-negative normal bundle. In fact, Demailly-Peternell-Schneider have proved~\cite{DPS} that $C'$ is rigid in the very strong sense that 
the only closed positive current cohomologous to $C'$ is $C'$ itself. 

Now let $X:=\PP(V)$ be the projective bundle of hyperplanes in $V:=E\oplus A$,
where $A$ is a given ample line bundle on $C$, and let $L:=\cO_{V}(1)$
be the tautological bundle. The line bundle $L$ is nef since $E$ and
$A$ are nef, and it is also big since $A$ is big. It is easy to
show that the non-ample locus of $L$ is exactly $S=\PP(E)\subset\PP(V)=X$.
But the restriction of $L$ to $S$ is $\cO_{E}(1)$, and positive currents  can be restricted to any subvariety not entirely contained in their
polar set. It follows that any positive current in the nef and big class $\a:=c_1(L)$ has poles along $C'$. 
\end{ex}

\section{Singular K{\"a}hler-Einstein metrics}

\subsection{More Monge-Amp{\`e}re equations}
In this section we show how to use the apriori estimate of Theorem~\ref{thm:apriori} and Schauder's fixed point theorem to solve Monge-Amp{\`e}re equations with non-constant right-hand side. 

We let as before $dV$ be a smooth volume form on $X$ and $\theta$ be a smooth representative of a big cohomology class. 

\begin{thm}\label{thm:moreMA}
There exists a unique $\theta$-psh function $\f$ with full Monge-Amp{\`e}re mass such that 
$$\MA(\f)=e^\f dV.$$
Furthermore $\f$ has minimal singularities. 
\end{thm}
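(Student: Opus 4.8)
The plan is to solve $\MA(\f) = e^\f dV$ by a fixed-point argument built on the solvability result of Theorem~\ref{thm:MA} and the a priori estimate of Theorem~\ref{thm:apriori}, then to establish uniqueness using the comparison/domination principles already developed.

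For existence, first I would set up a continuity/fixed-point scheme. Given a $\theta$-psh function $\psi$ normalized by $\sup_X\psi = 0$, the measure $e^\psi dV$ is non-pluripolar but in general does \emph{not} have total mass $\vol(\a)$, so before invoking Theorem~\ref{thm:MA} one must renormalize: introduce a constant $c(\psi)\in\R$ so that $\int_X e^{\psi + c(\psi)} dV = \vol(\a)$, i.e. $c(\psi) = \log\bigl(\vol(\a)/\int_X e^\psi dV\bigr)$. Since $\psi\le 0$ and $\psi$ ranges over a compact subset of $L^1(X)$, the quantity $\int_X e^\psi dV$ is bounded below away from $0$ and above by $\int_X dV$, so $c(\psi)$ stays in a fixed compact interval and depends continuously on $\psi$ in $L^1$. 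Then Theorem~\ref{thm:MA} produces a unique closed positive current in $\a$ solving $\langle(\theta+dd^c u)^n\rangle = e^{\psi+c(\psi)}dV$; normalizing its potential by $\sup_X u = 0$ gives a well-defined map $\Phi:\psi\mapsto u$. The density $e^{\psi + c(\psi)}$ lies in every $L^p(dV)$ uniformly in $\psi$ (it is bounded), so Theorem~\ref{thm:apriori} together with Remark~\ref{rem:uniform} gives a uniform bound $u \ge V_\theta - M$ with $M$ independent of $\psi$; in particular $\Phi$ maps the compact convex set $\cK := \{\psi\ \theta\text{-psh} : V_\theta - M \le \psi \le 0\}$ (compact in $L^1(X)$, convex) into itself. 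Continuity of $\Phi$ on $\cK$ follows from the uniqueness part of Theorem~\ref{thm:MA} by the standard argument: any $L^1$-limit point of $\Phi(\psi_k)$ for $\psi_k\to\psi$ solves the limiting equation and has the right normalization (by Hartogs' lemma), hence equals $\Phi(\psi)$. Schauder's fixed point theorem then yields $\f\in\cK$ with $\Phi(\f) = \f$, i.e. $\langle(\theta+dd^c\f)^n\rangle = e^{\f + c(\f)}dV$ and $\sup_X\f = 0$. Finally I would show $c(\f) = 0$: integrating, $\vol(\a) = \int_X\langle(\theta+dd^c\f)^n\rangle = e^{c(\f)}\int_X e^\f dV$, and since $\f$ has full Monge-Amp\`ere mass by construction (its mass is $\vol(\a)$) this forces $\int_X e^\f dV = \vol(\a) = \int_X e^{\f + c(\f)}dV$, hence $c(\f) = 0$. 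So $\MA(\f) = e^\f dV$, and Theorem~\ref{thm:apriori} applied one last time shows $\f$ has minimal singularities. (One subtlety: after discovering $c(\f)=0$ the normalization $\sup_X\f = 0$ is no longer needed, and indeed the true solution of $\MA(\f)=e^\f dV$ need not satisfy it — but that is fine, we have produced \emph{a} solution.)

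For uniqueness, suppose $\f_1,\f_2$ are $\theta$-psh with full Monge-Amp\`ere mass and $\MA(\f_i) = e^{\f_i}dV$. Both then have minimal singularities by Theorem~\ref{thm:apriori}. On the plurifine open set $\{\f_1 < \f_2\}$ the comparison principle (Corollary~\ref{cor:comp}, with the error term vanishing since $\f_1$ has minimal singularities, hence $\int_X\MA(\f_1) = \vol(\a)$) gives
$$\int_{\{\f_1<\f_2\}} e^{\f_2}\,dV = \int_{\{\f_1<\f_2\}}\MA(\f_2) \le \int_{\{\f_1<\f_2\}}\MA(\f_1) = \int_{\{\f_1<\f_2\}} e^{\f_1}\,dV.$$
But $e^{\f_2} > e^{\f_1}$ on $\{\f_1 < \f_2\}$, so this is only possible if $\{\f_1 < \f_2\}$ has Lebesgue measure zero (recall $\MA(\f_1)$ and $dV$ are mutually absolutely continuous in the sense that $dV \ll \MA(\f_1)$ is not quite what we have, but $e^{\f_1}dV$ charges the set unless it is $dV$-null); hence $\f_1 \ge \f_2$ a.e.\ $dV$, so everywhere by upper semicontinuity and the domination principle (Corollary~\ref{cor:dom}, applicable since $\f_2$ has minimal singularities). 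By symmetry $\f_1 = \f_2$.

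The main obstacle I anticipate is the verification that the fixed-point map $\Phi$ is genuinely well-defined and continuous on all of $\cK$ — specifically, handling the normalization constant $c(\psi)$ cleanly and confirming that the self-map property holds with a \emph{uniform} constant $M$, which is exactly where Remark~\ref{rem:uniform} (uniformity of the a priori estimate as $\theta$ and the density vary in bounded families) is doing essential work; here the density varies but $\theta$ is fixed, so one needs the $L^{1+\e}$-norm dependence of $M$ in Theorem~\ref{thm:apriori} to be uniform over the bounded family $\{e^{\psi+c(\psi)}:\psi\in\cK\}$, which it is since these densities are uniformly bounded. A secondary subtlety is the self-absorption in the uniqueness step: checking that $e^{\f_1}dV$ and $e^{\f_2}dV$ being distinct forces $dV$-measure zero requires only that $e^{\f_i}$ are strictly positive off a pluripolar (hence Lebesgue-null) set, which holds since $\f_i > -\infty$ outside a pluripolar set.
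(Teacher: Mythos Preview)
Your approach is essentially the paper's: Schauder fixed point on normalized $\theta$-psh functions, using Theorem~\ref{thm:MA} to define the map and Theorem~\ref{thm:apriori} for the uniform lower bound, then the comparison principle for uniqueness. However, your argument that $c(\f)=0$ at the fixed point is circular.

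You correctly obtain $\vol(\a) = \int_X\MA(\f) = e^{c(\f)}\int_X e^\f\,dV$ from full Monge-Amp\`ere mass, and then assert that this ``forces $\int_X e^\f\,dV = \vol(\a)$''. But the displayed identity is nothing other than the \emph{definition} of $c(\f)$; you have derived a tautology, not an independent equation. There is no reason for $c(\f)$ to vanish: if one rescales $dV$ by a constant, the fixed point $\f$ (being normalized by $\sup_X\f=0$) does not move, but $c(\f)$ shifts. The correct conclusion---and this is what the paper does---is simply to set $\widetilde\f := \f + c(\f)$. Since adding a constant leaves the Monge-Amp\`ere measure unchanged, $\MA(\widetilde\f) = \MA(\f) = e^{\f+c(\f)}dV = e^{\widetilde\f}dV$, and $\widetilde\f$ is the sought solution. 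Your parenthetical about the normalization being ``no longer needed'' then becomes moot: the genuine solution satisfies $\sup_X\widetilde\f = c(\f)$, not $0$.

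Your uniqueness argument is correct and matches the paper's Proposition~\ref{pro:increase} in spirit; the paper invokes Remark~\ref{rem:refined}, but your direct use of Corollary~\ref{cor:comp} (with vanishing error term, since both $\f_i$ have full mass) works just as well.
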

Since $\f$ is bounded from above, the measure $e^\f\mu$ has $L^\infty$-density with respect to Lebesgue measure, hence Theorem~\ref{thm:apriori} implies that any $\theta$-psh solution $\f$ with full Monge-Amp{\`e}re mass has minimal singularities. 

We now proceed with the existence proof, and uniqueness will be taken care of by Proposition~\ref{pro:increase} below. 

\begin{proof} We can assume that $dV$ has total mass $1$. Let $\cC$ be the compact convex subset of $L^1(X)$ consisting of all $\theta$-psh functions $\p$ normalized by $\sup_X\p=0$. By compactness there exists $C>0$ such that 
$$\int_X\p dV\ge -C$$ 
for all $\p\in\cC$ (cf.~\cite{GZ1}, Proposition 1.7).  On the other hand since $dV$ is a probability measure we get by convexity
\begin{equation}\label{equ:bornee}\log\int_Xe^{\p}dV\ge -C
\end{equation}
for all $\p\in\cC$. 

As noticed above, since $e^{\p}dV$ has $L^\infty$ density with respect to Lebesgue measure, it follows from Theorem \ref{thm:MA} that for each $\p\in\cC$ there exists a unique $\Phi(\p)\in\cC$ such that
$$\MA(\Phi(\p))=e^{\p+c_\p}dV,$$
where 
$$c_{\p}:=\log\frac{\vol(\a)}{\int_Xe^{\p}dV}$$ 
is a normalizing constant making the total masses fit. 
Note that 
$$c_\psi\le\log\vol(\a)+C$$
for all $\p\in\cC$ by (\ref{equ:bornee}). Theorem~\ref{thm:apriori} therefore gives us a constant $M>0$ independent of $\p\in\cC$ such that 
$$\Phi(\p)\ge V_\theta-M.$$

\begin{lem} The mapping $\Phi:\cC\to\cC$ is continuous.
\end{lem}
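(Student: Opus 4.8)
The plan is to show sequential continuity, which suffices since $\mathcal{C}$ is a compact (hence metrizable) subset of $L^1(X)$. So I would take a sequence $\psi_j \to \psi$ in $\mathcal{C}$ and aim to prove $\Phi(\psi_j) \to \Phi(\psi)$ in $L^1(X)$. Set $\f_j := \Phi(\psi_j)$ and $\f := \Phi(\psi)$, so that $\MA(\f_j) = e^{\psi_j + c_{\psi_j}}\,dV$ and $\MA(\f) = e^{\psi + c_\psi}\,dV$. First I would record the two uniform facts already available: all $\f_j$ lie in the compact set $\mathcal{C}$, and by Theorem~\ref{thm:apriori} (via the a priori estimate, using $c_{\psi_j} \le \log\vol(\a) + C$) one has $\f_j \ge V_\theta - M$ for a fixed $M$ independent of $j$; in particular every $\f_j$ has minimal singularities and full Monge-Amp\`ere mass.

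The core of the argument is a convergence-of-the-right-hand-side step together with the uniqueness part of Theorem~\ref{thm:MA}. Since $\psi_j \to \psi$ in $L^1$, after passing to a subsequence we may assume $\psi_j \to \psi$ a.e.; the functions $\psi_j$ are uniformly bounded above (by $0$), so by dominated convergence $\int_X e^{\psi_j}\,dV \to \int_X e^{\psi}\,dV$, and this limit is bounded below by $e^{-C} > 0$ by \eqref{equ:bornee}, hence $c_{\psi_j} \to c_\psi$. Again by dominated convergence (the densities $e^{\psi_j + c_{\psi_j}}$ are uniformly bounded), $e^{\psi_j + c_{\psi_j}}\,dV \to e^{\psi + c_\psi}\,dV$ in total variation, and in particular weakly. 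Now let $\f_\infty$ be any $L^1$-limit of a further subsequence of $\f_j$ (extraction is possible by compactness of $\mathcal{C}$), and normalize so $\sup_X \f_\infty = 0$, which is preserved by Hartogs' lemma. To identify $\f_\infty$ I would argue as in the proof of Lemma~\ref{lem:limit} and Proposition~\ref{pro:monge_max}: set $\widetilde\f_j := (\sup_{k \ge j}\f_k)^*$, a $\theta$-psh function decreasing pointwise to $\f_\infty$; since each $\f_k$ has full Monge-Amp\`ere mass so does each $\widetilde\f_j \ge \f_k$, and using $\MA(\f_k) = e^{\psi_k + c_{\psi_k}}\,dV$ together with Proposition~\ref{pro:monge_max} one gets $\MA(\widetilde\f_j) \ge (\inf_{k \ge j} e^{\psi_k + c_{\psi_k}})\,dV$. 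Letting $j \to \infty$, the right side tends to $e^{\psi + c_\psi}\,dV$ (using a.e. convergence and uniform boundedness), while Theorem~\ref{thm:cont} (applicable since all the functions involved have full Monge-Amp\`ere mass, $\f_\infty$ included because the $\widetilde\f_j$ decrease to it and share the same total mass) gives $\MA(\widetilde\f_j) \to \MA(\f_\infty)$; hence $\MA(\f_\infty) \ge e^{\psi + c_\psi}\,dV$, and equality of total masses forces $\MA(\f_\infty) = e^{\psi + c_\psi}\,dV = \MA(\f)$. By the uniqueness statement in Theorem~\ref{thm:MA} and the common normalization $\sup_X = 0$, we get $\f_\infty = \f$.

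Finally, a standard subsequence argument closes the proof: every subsequence of $(\f_j)$ has a further subsequence converging in $L^1$ to $\f$, so the whole sequence converges to $\f = \Phi(\psi)$, proving continuity. The step I expect to be the main obstacle is justifying that $\f_\infty$ has full Monge-Amp\`ere mass so that Theorem~\ref{thm:cont} applies to the decreasing sequence $\widetilde\f_j \downarrow \f_\infty$; this is where one must be careful, but it follows because each $\widetilde\f_j$ has minimal singularities (being $\ge V_\theta - M$ for all $j$, as the $\f_k$ are), hence full Monge-Amp\`ere mass with $\int_X \MA(\widetilde\f_j) = \vol(\a)$, and this is preserved in the decreasing limit by the constancy of total masses built into the definition of non-pluripolar products, so $\f_\infty$ has full Monge-Amp\`ere mass as well.
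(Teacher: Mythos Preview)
Your proof is correct and follows essentially the same route as the paper's. The paper streamlines the argument by citing Corollary~\ref{cor:monge_max} directly (which packages your $\widetilde\f_j$ construction and the appeal to Proposition~\ref{pro:monge_max} and Theorem~\ref{thm:cont}), and it justifies that the limit $\f_\infty$ has full Monge-Amp\`ere mass more simply by observing that $\f_j\ge V_\theta-M$ for all $j$ forces $\f_\infty\ge V_\theta-M$, so $\f_\infty$ has minimal singularities; your final paragraph reaches the same conclusion but the detour through ``constancy of total masses'' is unnecessary.
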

\begin{proof} Let $\psi_j\to\p$ be a convergent sequence in $\cC$. Upon extracting we can assume by compactness that 
$$\f_j:=\Phi(\p_j)\to\f$$ 
for some $\f\in\cC$, and also that $\psi_j\to\p$ almost everywhere. By definition of $c_\p$, this implies $c_{\p_j}\to c_\p$ by dominated convergence. We claim that $\f$ has full Monge-Amp{\`e}re mass and satisfies $\MA(\f)=e^{\p+c_\p}dV$. By uniqueness we will then get $\f=\Phi(\p)$ as desired. 

Indeed since $\Phi(\p_j)\ge V_\theta-M$ for all $j$, it follows that also $\f\ge V_\theta-M$, and in particular $\f$ has full Monge-Amp{\`e}re mass. Corollary~\ref{cor:monge_max} thus yields $\MA(\f)\ge e^{\p+c_\p}\mu$, and equality follows as desired since the total masses are both equal to $\vol(\a)$.  

\end{proof}

By Schauder's fixed point theorem, $\Phi$ has a fixed point $\p\in\cC$, 
and we then get a solution by setting $\f:=\p+\log c_\p$.
\end{proof}

\begin{pro}\label{pro:increase} If $\f_i$, $i=1,2$ are two $\theta$-psh functions with small unbounded locus such that
$$\MA(\f_i)=e^{\f_i }dV.$$
Then $\f_1\le\f_2+O(1)$ already implies $\f_1\le\f_2$. 
\end{pro}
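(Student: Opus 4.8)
\medskip

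The plan is to run the standard ``exponential nonlinearity forces uniqueness'' argument, whose only real input is the refined comparison principle of Remark~\ref{rem:refined}.

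First I would observe that the assumption $\f_1\le\f_2+O(1)$ says exactly that $\f_2$ is \emph{less singular} than $\f_1$ in the sense of comparison of singularities. Since both $\f_1$ and $\f_2$ have small unbounded locus by hypothesis, Remark~\ref{rem:refined} applies with $\f:=\f_2$ and $\psi:=\f_1$ and gives
$$\int_{\{\f_1>\f_2\}}\MA(\f_1)\le\int_{\{\f_1>\f_2\}}\MA(\f_2).$$
It is worth stressing that identifying $\f_2$ as the less singular potential is precisely what lets us use the \emph{refined} comparison principle, i.e. drop the error term $\vol(\a)-\int_X\MA(\f_2)$ of Corollary~\ref{cor:comp}; in particular we never need to know that $\f_2$ has full Monge--Amp\`ere mass, the only thing used being that Theorem~\ref{thm:comp} yields $\int_X\MA(\max(\f_2,\f_1-\e))=\int_X\MA(\f_2)$.

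Next I would insert the equations $\MA(\f_i)=e^{\f_i}\,dV$ into this inequality, which becomes
$$\int_{\{\f_1>\f_2\}}\left(e^{\f_1}-e^{\f_2}\right)dV\le 0.$$
On the set $\{\f_1>\f_2\}$ the integrand is strictly positive, and $dV$ is a smooth positive volume form, hence mutually absolutely continuous with Lebesgue measure; therefore $\{\f_1>\f_2\}$ is Lebesgue-negligible, i.e. $\f_1\le\f_2$ almost everywhere. Finally I would upgrade this to an everywhere statement: $\max(\f_1,\f_2)$ and $\f_2$ are both $\theta$-psh and agree off a set of Lebesgue measure zero, hence agree everywhere, since a quasi-psh function is determined by its values away from any null set (apply the sub-mean value inequality to a local psh representative obtained by adding a smooth potential of $\theta$). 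This yields $\f_1\le\f_2$ on all of $X$, as desired.

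I do not foresee a serious obstacle: the substance of the argument is packaged in Remark~\ref{rem:refined} (which itself rests on the closedness of non-pluripolar products and on Theorem~\ref{thm:comp}), and the remaining steps are elementary. The one point demanding care is the direction of the singularity comparison: applying Remark~\ref{rem:refined} with the roles of $\f_1$ and $\f_2$ interchanged would only recover the general comparison principle, which is too weak to conclude. Note also that this proposition, together with the fact (Theorem~\ref{thm:apriori}) that any full-mass solution in Theorem~\ref{thm:moreMA} has minimal singularities, immediately gives the uniqueness claimed in Theorem~\ref{thm:moreMA}, since two solutions with minimal singularities differ by $O(1)$ in both directions.
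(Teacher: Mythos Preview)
Your proof is correct and follows essentially the same route as the paper: apply the refined comparison principle of Remark~\ref{rem:refined} with $\f=\f_2$ (the less singular one) and $\psi=\f_1$, substitute the equations $\MA(\f_i)=e^{\f_i}dV$, and use the strict positivity of the integrand on $\{\f_1>\f_2\}$ to conclude. The paper inserts a small $\e>0$ and works on $\{\f_2<\f_1-\e\}$ before letting $\e\to 0$, but this is cosmetic; your direct version is if anything slightly cleaner.
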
 
\begin{proof}By the refined comparison principle for functions with small unbounded locus explained in Remark~\ref{rem:refined}, we get 
$$\int_{\{\f_2<\f_1-\e\}}\MA(\f_1)\le\int_{\{\f_2<\f_1-\e\}}\MA(\f_2),$$
that is 
$$\int_{\{\f_2<\f_1-\e\}}e^{\f_1}dV\le\int_{\{\f_2<\f_1-\e\}}e^{\f_2}dV.$$
This implies that $\f_2\ge\f_1-\e$ almost everywhere, hence everywhere and the result follows.
\end{proof}
As an immediate corollary to Theorem~\ref{thm:moreMA} we get:

\begin{cor}\label{cor:KE} Let $X$ be a projective manifold of general
  type, that is with a big canonical bundle $K_X$. Then there exists a
  unique singular non-negatively curved metric $e^{-\phi_{KE}}$ on
  $K_X$ satisfying the K{\"a}hler-Einstein equation
$$\langle(dd^c\phi_{KE})^n\rangle= e^{\phi_{KE}}$$
and such that
$$\int_X e^{\phi_{KE}}=\vol(X).$$ 
Furthermore $\phi_{KE}$ has minimal singularities.
\end{cor}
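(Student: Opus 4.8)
The plan is to read this off from Theorem~\ref{thm:moreMA} by choosing the background data $(\theta,dV)$ intrinsically attached to $K_X$. Since $X$ is of general type, the class $\alpha:=c_1(K_X)$ is big. Fix any smooth Hermitian metric $e^{-\phi_0}$ on $K_X$, with smooth local weights $\phi_0$, and let $\theta:=dd^c\phi_0$ be its curvature form, a smooth representative of $\alpha$. The point is that a weight $\phi$ on $K_X$ canonically determines a positive measure $e^{\phi}$ on $X$: in a local trivialization of $K_X$ by $dz_1\wedge\cdots\wedge dz_n$ it is $e^{\phi}$ times the corresponding Euclidean volume form, and this is chart-independent precisely because $K_X$ is the canonical bundle. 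Hence the equation $\langle(dd^c\phi)^n\rangle=e^{\phi}$ is meaningful independently of trivializations, and, writing $\phi=\phi_0+\f$, it becomes $\langle(\theta+dd^c\f)^n\rangle=e^{\f}dV$ with $dV:=e^{\phi_0}$ a fixed smooth positive volume form; in the notation of Theorem~\ref{thm:moreMA} this is exactly $\MA(\f)=e^{\f}dV$. Moreover the metric $e^{-\phi}$ is non-negatively curved if and only if $\f$ is $\theta$-psh.

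Next I would apply Theorem~\ref{thm:moreMA}: it furnishes a unique $\theta$-psh function $\f$ with full Monge-Amp\`ere mass solving $\MA(\f)=e^{\f}dV$, and asserts that $\f$ has minimal singularities. Setting $\phi_{KE}:=\phi_0+\f$ produces a non-negatively curved singular metric $e^{-\phi_{KE}}$ on $K_X$ with $\langle(dd^c\phi_{KE})^n\rangle=e^{\phi_{KE}}$ and with minimal singularities. For the mass normalization I integrate the equation: $\int_X e^{\phi_{KE}}=\int_X e^{\f}dV=\int_X\MA(\f)=\vol(\alpha)$, the last equality because $\f$ has full Monge-Amp\`ere mass, and $\vol(c_1(K_X))=\vol(K_X)$ by Fujita's theorem (the remark closing Section~1). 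This settles existence.

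For uniqueness, suppose $e^{-\phi'}$ is another non-negatively curved singular metric on $K_X$ solving the Kähler-Einstein equation with $\int_X e^{\phi'}=\vol(K_X)$. Writing $\phi'=\phi_0+\f'$ with $\f'$ $\theta$-psh, the equation gives $\int_X\MA(\f')=\int_X e^{\f'}dV=\vol(\alpha)$, so $\f'$ has full Monge-Amp\`ere mass, and the uniqueness part of Theorem~\ref{thm:moreMA} forces $\f'=\f$, hence $\phi'=\phi_{KE}$. (Alternatively, once one notes that $\f'$ has minimal singularities by Theorem~\ref{thm:apriori}, since $e^{\f'}dV$ has $L^\infty$ density, the comparison argument of Proposition~\ref{pro:increase} applies directly.)

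I do not expect a genuine obstacle here; the statement is labelled an \emph{immediate corollary} precisely because all the analytic content is already in Theorems~\ref{thm:moreMA} and~\ref{thm:apriori}. The only points requiring care are the elementary bookkeeping identifying the intrinsic volume form $e^{\phi}$ on $K_X$ with $e^{\f}dV$ after choosing a reference metric, and the invocation of $\vol(c_1(K_X))=\vol(K_X)$, for which one relies on Fujita's theorem as recorded earlier in the paper.
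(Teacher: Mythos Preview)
Your argument is correct and matches the paper's approach: the corollary is stated there as an immediate consequence of Theorem~\ref{thm:moreMA} with no further proof, and you have simply spelled out the standard translation (choosing a reference metric on $K_X$ to identify the intrinsic equation with $\MA(\f)=e^{\f}dV$) together with the mass identity $\int_X\MA(\f)=\vol(\a)$.
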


\subsection{Comparison with previous results}
We compare Corollary~\ref{cor:KE} with previous results and discuss
the difficulties raised by the regularity issue on $\Amp(K_X)$.
\smallskip
 
\noindent{\bf Ample case.} When $K_X$ is ample we have $\Amp(K_X)=X$
and Aubin-Yau's theorem~\cite{Aub76,Y} can of
course be reformulated by saying that $\phi_{KE}$ constructed in
Corollary~\ref{cor:KE} is smooth.
\smallskip
  
\noindent{\bf Nef and big case.} Assume that $K_X$ is nef and big. It is then
semiample by the base-point-free theorem, so that $\Amp(K_X)$ coincides
with the degeneracy locus of the birational morphism
$$X\to\xcan:=\mathrm{Proj }R(K_X)$$
and minimal singularities means locally bounded. 
Theorem 1 of~\cite{Tsu88} constructs a "canonical" psh weight $\phi$ on $K_X$
which is smooth outside $\Amp(K_X)$ and satisfies
$\langle(dd^c\phi)^n\rangle=e^{\phi}$. Theorem 3.8 of~\cite{Sug} yields the additional information that $\phi$ has
globally bounded Laplacian on $X$ (hence in $C^{2-\e}(X)$ for each $\e>0$).
\smallskip

\noindent{\bf Big case.} If we are willing to use finite generation of the
canonical ring $R(K_X)$ 
\cite{BCHM} (see also~\cite{Ts99,S} for more analytic approaches) let $\mu:Y\to X$,
$\nu:Y\to\xcan$ be a resolution of the graph of
$X\dashrightarrow\xcan$. By the negativity lemma (\cite{KM} Lemma 3.39) we have $$\mu^*K_X=\nu^*K_{\xcan}+E$$
where $E$ is an effective $\nu$-exceptional $\Q$-divisor. This
decomposition is thus the Zariski decomposition of $K_X$, and it
follows that every positive current $T$ in $c_1(K_X)$ satisfies
$$\mu^*T=\nu^*S+[E]$$
for a unique positive current $S$ in $c_1(K_{\xcan})$ (cf.~for
instance~\cite{Bou2}). It is then immediate to see that the
K{\"a}hler-Einstein metric $e^{-\phi_{KE}}$ on $K_X$ constructed in
Corollary~\ref{cor:KE} corresponds in this way to the the K{\"a}hler-Einstein metric on $K_{\xcan}$ constructed in~\cite{EGZ1}, Theorem 7.8. 

\smallskip

Suppose now that we don't use the existence of the canonical model
$\xcan$. Theorem 5.1
of~\cite{Ts06} claims the existence of a psh weight $\phi$ on $K_X$ such that $\phi$ is smooth on a
Zariski open subset $U$ of $X$, $(dd^c\phi)^n=e^\phi$ holds on $U$ and $\phi$ is an \emph{analytic Zariski decomposition}
(AZD). This property means that the length with respect to $\phi$ of
every pluricanonical section is in $L^2$ on $X$ (hence is implied by
the stronger condition that such length functions are bounded).

Tsuji's argument essentially proceeds as follows. As in the proof of
Theorem~\ref{thm:MA} above we consider approximate Zariski decompositions,
that is we let $\pi_k:X_k\to X$ be an increasing sequence
of modifications such that 
$$\pi_k^*K=A_k+E_k$$ 
with $A_k$ ample, $E_k$
effective and $E_{k+1}<E_k$ (on $X_{k+1}$), and such that
$$\lim_{k\to\infty}\vol(A_k)=\vol(K_X).$$
We may assume that $\pi_0$ is
the identity. 

For each $k$, we can
use~\cite{Y} as in the beginning of the proof of Theorem~\ref{thm:MA}
to get a psh weight $\phi_k$ on $K_X$ such that
$$\pi_k^*dd^c\phi_k=\omega_k+[E_k]$$ 
for some K{\"a}hler form $\omega_k$ in the class of $A_k$ and
$(dd^c\phi_k)^n=e^{\phi_k}$ holds on $X-E_1$. 

Tsuji shows that $\phi_k$ is non-decreasing and locally uniformly
bounded from above by using the comparison
principle (compare Proposition~\ref{pro:increase}). The weight 
$\phi:=(\sup_k\phi_k)^*$ is thus psh and satisfies  
$$\phi_0\le\phi_k\le\phi$$
for all $k$. It follows by construction that the length with respect
to $\phi$ of every pluricanonical section is $L^\infty$ on $X$, so
that $\phi$ is in particular an AZD. These arguments of Tsuji have also
been expanded in Section 4.3 of~\cite{ST}. 

Note that by monotone convergence we have
$$
\int_Xe^\phi=\lim_{k\to\infty}\int_Xe^{\phi_k}.
$$
Now $\langle(dd^c\phi_k)^n\rangle=e^{\phi_k}$ and it is easy to see
that
$$\int_X\langle(dd^c\phi_k)^n\rangle=\vol(A_k).$$
It thus follows that $\int_Xe^\phi=\vol(K_X)$, so that the psh weight
$\phi$ constructed in~\cite{Ts06,ST} coincides with $\phi_{KE}$ of
Corollary~\ref{cor:KE}. As was already noted, it follows by
construction that the length wrt $\phi$ of any section $\sigma\in
H^0(kL)$ is bounded, which implies in particular that $\phi$ is an AZD
in the sense of Tsuji. On the other hand Corollary~\ref{cor:KE} says
that $\phi_{KE}$ has \emph{minimal singularities}. As we explain below, the
latter condition is strictly stronger than the former for a
general big line bundle $L$ (but \emph{a posteriori} not for $K_X$ thanks to finite
generation of the canonical ring). 

\smallskip

We now discuss Tsuji's approach to the smoothness of $\phi$ on
$X-E_0$, which unfortunately seems to encounter a difficulty that we were not able to fix. 
As in the proof of Theorem~\ref{thm:regnef} above, set
$u_k:=\phi_k-\phi_0$, which is smooth on $X-E_0$ and satisfies
$$(\omega_0+dd^c u_k)^n=e^{u_k+F}\omega_0^n$$
on $X-E_0$ with $F$ such that $e^F\omega_0^n=e^{\phi_0}$. Following
the arguments on p.350 of~\cite{Y} as we did in the proof of
Theorem~\ref{thm:regnef}, Tsuji claims that $\Delta u_k$ is
uniformly bounded on compact subsets of $X-E_0$ (which would indeed
imply that $\phi$ is smooth on $X-E_0$). 

In order to apply the maximum principle to (\ref{equ:yaunef}) one needs
to choose $A>0$ such that 
$$e^{-Au_k}(n+\Delta u_k)$$ 
is bounded on
$X-E_0$. But as opposed to the nef case the positive form
$\omega_0+dd^cu_k$ is just the push-forward of a form with bounded
coefficients, so that it \emph{a priori} acquires poles along $X-E_0$ 
(as was pointed out to us by  Demailly) and it
doesn't seem to be obvious to show that the poles in question can be controled
by $e^{Au_k}$ for some uniform $A>0$. 

\smallskip

\subsection{Analytic Zariski decompositions vs.~minimal singularities}

Let $L$ be a big line bundle on $X$. For each $m$ we choose basis of
sections $\sigma_j^{(m)}$ of $H^0(m!L)$ and let
$$\phi_m:=\frac{1}{m!}\log\left(\sum_j|\sigma_j^{(m)}|\right)$$
be the associated psh weight on $L$. Upon scaling the sections we can arrange that 
\begin{equation}\label{equ:decrease}\phi_m\le\phi_{m+1}\le\psi
\end{equation}
for some weight $\psi$ with
minimal singularities on $L$. 

On the other hand let $\sum_{m\ge 1}\e_m$ be a convergent series of
positive numbers. Then one
can consider the psh weight
$$\rho:=\log\left(\sum_{m\ge 1}\e_m e^{\phi_m}\right).$$ 
This kind of weight was first introduced by Siu in~\cite{Siu2}. Note
that we have
$$\rho\ge\phi_m+\log\e_m$$
for each $m$, which implies that the length wrt $\rho$ of any section $\sigma\in
H^0(kL)$ is bounded on $X$. 

The following observation was explained to us by J.-P.~Demailly.

\begin{pro}\label{pro:azd} The psh weight $\rho$ of Siu-type has minimal singularities
  iff the algebra 
$$R(L)=\oplus_{k\ge 0}H^0(kL)$$
is finitely generated.
\end{pro}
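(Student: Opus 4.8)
The plan is to prove the two implications separately, exploiting the explicit description of $\rho$ as a sum of the pluricanonical-type weights $\phi_m$ associated to bases of $H^0(m!L)$, together with the normalization $\phi_m\le\phi_{m+1}\le\psi$ from~(\ref{equ:decrease}), where $\psi$ has minimal singularities. First I would record the elementary two-sided bound
$$
\phi_m+\log\e_m\le\rho\le\psi+\log\Bigl(\sum_{m\ge 1}\e_m\Bigr)=\psi+O(1),
$$
so that $\rho$ always lies below a weight with minimal singularities; hence $\rho$ has minimal singularities if and only if it is, conversely, \emph{more singular than no weight than necessary}, i.e.~$\rho\ge\psi-O(1)$ for some (equivalently, any) minimal-singularities weight $\psi$ on $L$. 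Since $\psi$ can itself be taken of the form $\sup_m\phi_m$ up to $O(1)$ (the $\phi_m$ increase to a weight with minimal singularities, by definition of the volume / Fujita's theorem as recalled at the end of Section~1), the question becomes whether $\sup_m\phi_m\le\rho+O(1)$, i.e.~whether a \emph{single} index $m_0$ dominates, up to $O(1)$ and an additive constant, all the $\phi_m$.

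For the implication ``$R(L)$ finitely generated $\Rightarrow\rho$ has minimal singularities'': if $R(L)$ is generated in degrees $\le N$, then every section of $H^0(kL)$ is a polynomial in the generators, and a standard estimate (sub-multiplicativity of the weights $\phi_m$ under multiplication of sections, $\phi_{a+b}\ge\frac{a}{a+b}\phi_a+\frac{b}{a+b}\phi_b-O(1)$ after clearing denominators in the $m!$'s) shows that for all $m$ one has $\phi_m\le\phi_{N!}+O(1)$ with the $O(1)$ uniform in $m$; here one uses that there are finitely many generators and that the Rees-type algebra argument bounds the coefficients. Consequently $\rho\ge\e_{N!}e^{\phi_{N!}}$-term gives $\rho\ge\phi_{N!}+O(1)\ge\phi_m+O(1)$ for all $m$, hence $\rho\ge\psi-O(1)$ and $\rho$ has minimal singularities.

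For the converse, ``$\rho$ has minimal singularities $\Rightarrow R(L)$ finitely generated'': suppose $\rho\ge\psi-O(1)\ge\phi_m-O(1)$ uniformly in $m$. Combined with $\rho\ge\phi_{m'}+\log\e_{m'}$ and the defining sum, the uniform bound $\sup_m\phi_m\le\rho+O(1)\le\phi_{m_0}+C$ for a \emph{fixed} $m_0$ and constant $C$ must hold — otherwise the tail $\sum_{m>m_0}\e_m e^{\phi_m}$ would force $\rho$ strictly above every $\phi_{m_0}+C$ on larger and larger sets, contradicting minimality against the envelope $\sup_m\phi_m$; making this precise is the delicate point (see below). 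Once $\sup_m\phi_m\le\phi_{m_0}+O(1)$, a theorem of the type in~\cite{Bou1} (or directly: the graded ring generated by $H^0(m_0!L)$ has the same volume as $R(L)$ and its saturation agrees with $R(L)$ because the weights agree up to $O(1)$, so the multiplier ideals stabilize) forces $R(L)$ to be finitely generated: the inclusion of the subalgebra generated in degree $m_0!$ into $R(L)$ is an equality after normalization, and finite generation of $R(L)$ follows since a finitely generated subalgebra of the same Proj with the same sections in high degree exhausts it.

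The main obstacle I expect is precisely the converse direction — extracting \emph{algebraic} finite generation from the \emph{analytic} statement that $\rho$ has minimal singularities. The sub-multiplicativity/super-additivity estimates for the $\phi_m$ are routine, but turning ``one weight $\phi_{m_0}$ dominates all others up to $O(1)$'' into ``$R(L)$ is generated in bounded degree'' requires care: a priori $\phi_{m_0}+O(1)\ge\phi_m$ controls only the \emph{singularities} (multiplier ideals) of the linear series, not the linear series themselves, so one needs the additional input that for big line bundles the asymptotic multiplier ideals determine $R(L)$ up to finite generation — essentially the argument that $R(L)$ is finitely generated iff the Zariski decomposition of $L$ is rational / the divisorial valuations contributing are finite in number, which is where one invokes the results quoted from~\cite{Bou1,Bou2}. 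I would structure the write-up so that this step is isolated as the single nontrivial lemma, with the rest being the bookkeeping above.
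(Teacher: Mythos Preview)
Your forward direction (finite generation $\Rightarrow$ $\rho$ has minimal singularities) is essentially correct, though heavier than necessary: once $R(L)$ is finitely generated one has $\phi_l=\phi_{m_0}+O(1)$ for all $l\ge m_0$ by an elementary base-locus argument, and then $\rho\ge\phi_{m_0}+\log\e_{m_0}$ already gives minimal singularities. The paper simply calls this direction ``clear''.

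The real issue is your converse. You try to argue directly that $\rho\ge\psi-O(1)$ forces a single $\phi_{m_0}$ to dominate all the others, and you correctly sense this step is not solid: nothing in the inequality $\rho\ge\psi-O(1)$ by itself singles out one index, and your sketch ``otherwise the tail would force $\rho$ strictly above every $\phi_{m_0}+C$'' points in the wrong direction. The paper instead argues the \emph{contrapositive}, and the proof becomes elementary. Assume $R(L)$ is not finitely generated; a standard lemma (base loci of $|m!L|$ never stabilize) shows that \emph{no} $\phi_m$ has minimal singularities. Hence for any sequence $C_m\to+\infty$ one can pick points $x_m$ with $\phi_m(x_m)\le\psi(x_m)-C_m$. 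Now the monotonicity $\phi_1\le\cdots\le\phi_m\le\cdots\le\psi$ is used in a decisive way: at $x_m$, split the sum defining $e^\rho$ into $l\le m$ (bounded by $(\e_1+\cdots+\e_m)e^{\phi_m(x_m)}$) and $l>m$ (bounded by $\delta_m e^{\psi(x_m)}$ with $\delta_m:=\sum_{l>m}\e_l$). Choosing $C_m$ so that $(\e_1+\cdots+\e_m)e^{-C_m}\le\delta_m$ gives $(\rho-\psi)(x_m)\le\log(2\delta_m)\to-\infty$, so $\rho$ does not have minimal singularities.

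Two further remarks. First, the bridge you were worried about, turning ``some $\phi_{m_0}$ has minimal singularities'' into ``$R(L)$ is finitely generated'', is not a deep Zariski-decomposition or multiplier-ideal statement; it is the elementary observation that if the fixed parts $F_m$ of $|m!L|$ stabilize then $H^0(kM_{m_0})\to H^0(km_0!L)$ is onto for all $k$, and $R(M_{m_0})$ is finitely generated since $M_{m_0}$ is free. Second, your claim that $\sup_m\phi_m$ automatically has minimal singularities (``by Fujita'') is not needed and in fact is not obvious in general; the paper's argument avoids it entirely by working with the contrapositive.
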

In order to appreciate this fact, it should be recalled $R(L)$ is generally \emph{not} finitely generated when $L$ is big. In fact if $L$ is nef and big for instance, then $R(L)$ is finitely generated iff some multiple of $L$ is base-point free. Classical constructions of Zariski and Cutkosky yield examples in dimension 2 (cf.~\cite{Laz}). Note that Example~\ref{ex:bignef} also yields in particular an example (in dimension 3). 
\begin{proof} One direction is clear. If we conversely assume that $R(L)$ is not finitely
  generated, then Lemma~\ref{lem:fg} below implies that no $\phi_m$
  can have minimal
  singularities. Therefore given any sequence $C_m>0$ we can find
 for each $m$ some point $x_m\in X$ such that
\begin{equation}\label{equ:negative}\phi_m(x_m)\le\psi(x_m)-C_m.\end{equation}
It thus follows from (\ref{equ:decrease}) and (\ref{equ:negative}) that
$$e^{\rho(x_m)}\le(\e_1+...+\e_m)e^{-C_m}e^{\psi(x_m)}+\d_me^{\psi(x_m)}$$
with 
$$\d_m:=\sum_{l>m}\e_l.$$
If we choose $C_m\to+\infty$ fast enough to ensure that
$$(\e_1+...+\e_m)e^{-C_m}\le\d_m$$
then we get
$$(\rho-\psi)(x_m)\le\log(2\d_m)$$
for all $m$, which shows that we cannot have $\rho=\psi+O(1)$. 
\end{proof}
The following fact is completely standard modulo the language used.  
\begin{lem}\label{lem:fg} $R(L)$ is finitely generated iff there exists $m$ such that for each $l\ge m$ we have
  $\phi_l=\phi_m+O(1)$.
\end{lem}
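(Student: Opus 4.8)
The plan is to unwind the definitions on both sides. Recall $\phi_m = \frac{1}{m!}\log(\sum_j |\sigma_j^{(m)}|)$ where $\sigma_j^{(m)}$ is a basis of $H^0(m!L)$; in terms of the attached ideal-type data, the singularity type of $\phi_m$ is that of the base ideal $\mathfrak{b}_{m!} := \mathrm{b}(|m!L|)$, rescaled by $1/m!$. More precisely, for two multiples $a \mid b$ one has $\phi_a \le \phi_b + O(1)$ automatically (after the scaling normalization \eqref{equ:decrease}), because sections of $aL$ feed multiplicatively into sections of $bL$; the content of the lemma is when the reverse bound $\phi_b \le \phi_a + O(1)$ kicks in. So the first step is to record the dictionary: $\phi_l = \phi_m + O(1)$ for all $l \ge m$ (along the sequence $m!, (m+1)!, \dots$, hence along all multiples after reindexing) is equivalent to saying that the base ideals stabilize in the sense that $\mathrm{b}(|l!L|)$ is, up to integral closure and the obvious rescaling, the appropriate power of $\mathrm{b}(|m!L|)$ for all $l \ge m$.

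Next I would invoke the standard algebraic fact: the section ring $R(L) = \bigoplus_{k\ge 0} H^0(kL)$ is finitely generated as a $\C$-algebra if and only if there is an integer $d$ such that the Veronese subring $R(L)^{(d)} = \bigoplus_{k\ge 0} H^0(kdL)$ is generated in degree one, i.e. the multiplication maps $\mathrm{Sym}^k H^0(dL) \to H^0(kdL)$ are surjective for all $k \ge 1$. Passing to such a Veronese (and absorbing $d$ into the choice of $m$, using that $d \mid m!$ for $m$ large), generation in degree one says exactly that the base locus of $|kdL|$ is cut out scheme-theoretically by the $k$-th power of the base ideal of $|dL|$, which after taking $\log|\cdot|$ of sections translates into $\phi_{kd} = \phi_d + O(1)$, uniformly in $k$. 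Conversely, if $\phi_l = \phi_m + O(1)$ for all $l \ge m$, then the base ideals $\mathrm{b}(|l!L|)$ have the same integral closure as powers of $\mathrm{b}(|m!L|)$, which forces the maps $\mathrm{Sym}^\bullet H^0(m!L) \to R(L)^{(m!)}$ to have image with finite colength in each degree; combined with the fact (from \eqref{equ:decrease}, the existence of a minimal-singularities weight $\psi$, i.e. $L$ big) that these rings are of finite type over the image of a finitely generated piece, one concludes finite generation of $R(L)$.

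The main obstacle — really the only subtle point — is making the "$O(1)$ comparison of weights $\iff$ comparison of base ideals up to integral closure" step precise and bidirectional, since a priori $\phi_m$ only controls the base ideal up to its integral closure, and one must check that the integral-closure ambiguity does not affect finite generation (it does not, because normalization of a finitely generated algebra is finite). I would handle this by citing the standard correspondence between psh weights with analytic singularities of the form $\frac{1}{m}\log\sum|f_j|$ and (integral closures of) ideal sheaves, together with the elementary observation that $\bigoplus_k H^0(X, kL \otimes \overline{\mathfrak{b}_m^{\,k}})$ is finitely generated iff $R(L)$ is, $\mathfrak{b}_m$ being the base ideal of a fixed multiple. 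Everything else is bookkeeping with Veronese subrings and the multiplicativity of sections, which I would leave to the reader as "completely standard modulo the language used," matching the phrasing already in the statement.
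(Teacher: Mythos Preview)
Your forward direction (finite generation $\Rightarrow$ stabilization of the $\phi_m$) is fine and is essentially the standard reduction via a Veronese generated in degree one. The backward direction, however, has a genuine gap.

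The problematic step is the claim that equality of integral closures $\overline{\mathfrak{b}_{l!}}=\overline{\mathfrak{b}_{m!}^{\,l!/m!}}$ ``forces the maps $\mathrm{Sym}^\bullet H^0(m!L)\to R(L)^{(m!)}$ to have image with finite colength in each degree'', and that this, together with a vague appeal to normalization, yields finite generation. First, since each $H^0(km!L)$ is finite-dimensional, ``finite colength in each degree'' is vacuous and certainly does not by itself imply that $R(L)^{(m!)}$ is a finite module over the degree-one subalgebra. Second, the integral-closure statement you have is about \emph{ideal sheaves} on $X$, not about the section ring; there is no direct passage from $\overline{\mathfrak{b}_{km!}}=\overline{\mathfrak{b}_{m!}^{\,k}}$ to integrality of $R(L)^{(m!)}$ over the image of $\mathrm{Sym}^\bullet H^0(m!L)$. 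Your remark that ``normalization of a finitely generated algebra is finite'' presupposes exactly what you are trying to prove, namely that some relevant algebra is already finitely generated.

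The paper's argument avoids this by passing to a log-resolution $\pi:X_m\to X$ of $|m!L|$, writing $\pi^*(m!L)=M_m+F_m$ with $M_m$ base-point free. On $X_m$ the base ideals become divisorial, so the integral-closure ambiguity disappears and $\phi_l=\phi_m+O(1)$ becomes the honest divisor equality $F_l=\tfrac{l!}{m!}(\text{pullback of }F_m)$. This says precisely that every section of $\pi^*(km!L)$ vanishes along $kF_m$, hence $H^0(kM_m)\to H^0(km!\pi^*L)$ is an isomorphism for all $k$. Since $M_m$ is base-point free (and big), $R(M_m)\cong R(m!L)$ is finitely generated, and one concludes via the standard fact that $R(L)$ is finitely generated iff $R(aL)$ is for some $a$. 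If you try to make your algebraic route rigorous you will find yourself reproducing exactly this blow-up step; the log-resolution is not a convenience but the mechanism that converts the $O(1)$ comparison of weights into an actual identification of section rings.
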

\begin{proof} Let $\pi:X_m\to X$ be an increasing sequence of log-resolutions of the base
  scheme of $|m!L|$, so that $\pi^*m!L=M_m+F_m$. The assumption
  amounts to saying that $F_l$ coincides with the pull-back of $F_m$
  to $X_l$, which is in turn equivalent to saying that every section
  of $H^0(X_m,l!\pi^*L)$ vanishes along $F_m$. But this implies that the canonical inclusion
$$H^0(kM_m)\to H^0(k\pi^*m!L)$$
is an isomorphism for each $k$, and the result follows since $R(L)$ is
finitely generated iff $R(aL)$ is finitely generated for some integer
$a$.
\end{proof}


\begin{thebibliography}{widestlabel}

\addcontentsline{toc}{chapter}{Bibliography}

\bibitem{Aub76} Aubin, T., Equations du type Monge-Amp{\`e}re sur les
vari{\'e}t{\'e}s k{\"a}hl{\'e}riennes compactes.  C. R. Acad. Sci. Paris
S{\'e}r. A-B  283  (1976), no. 3, Aiii, A119--A121.

\bibitem{Aub} Aubin, T., R{\'e}duction du cas positif de l'{\'e}quation de Monge-Amp{\`e}re sur les vari{\'e}t{\'e}s k{\"a}hl{\'e}riennes compactes {\`a} la d{\'e}monstration d'une in{\'e}galit{\'e}. J. Funct. Anal.  57  (1984),  no. 2, 143--153. 

\bibitem{BT1} Bedford, E., Taylor, B.A., A new capacity for plurisubharmonic functions. Acta Math. 149 (1982), no. 1-2, 1--40.

\bibitem{BT2}  Bedford, E., Taylor, B.A., Fine topology, \v Silov boundary, and 
    $(dd^c)^n$.  J. Funct. Anal. 72 (1987),  no. 2, 225--251. 

\bibitem{BB} Berman, R., Boucksom, S., Growth of balls of holomorphic sections and energy at equilibrium. Preprint (2008) arXiv:0803.1950.  
 
\bibitem{BGZ} Benelkourchi, S., Guedj, V., Zeriahi, A., A priori estimates for weak solutions of complex Monge-Amp{\`e}re equations.  Ann. Scuola Norm. Sup. Pisa C1. Sci. (5), Vol VII (2008), 1--16.

\bibitem{BCHM} Birkar, C., Cascini, P., Hacon, C., McKernan, J.,
  Existence of minimal models for varieties of log general
  type. Preprint (2006) arXiv:math/0610203. 

\bibitem{Blo1} B\l{}ocki, Z., On the regularity of the complex Monge-Ampre operator.  Complex geometric analysis in Pohang (1997)  181--189, Contemp. Math., 222, Amer. Math. Soc., Providence, RI, 1999. 

 \bibitem{Bl} B\l{}ocki, Z., Uniqueness and stability for the complex Monge-Amp{\`e}re equation on compact K{\"a}hler manifolds.  Indiana Univ. Math. J. 52  (2003),  no. 6, 1697--1701.

\bibitem{BK} B\l{}ocki, Z., Ko\l{}odziej, S., On regularization of plurisubharmonic functions on manifolds.  Proc. Amer. Math. Soc. 135  (2007),  no. 7, 2089--2093.

\bibitem{Bou1} Boucksom, S., On the volume of a line bundle.  Internat. J. Math. 13 (2002),  no. 10, 1043--1063.
 
\bibitem{Bou2} Boucksom, S., Divisorial Zariski decompositions on compact complex 
  manifolds.  Ann. Sci. Ecole Norm. Sup. (4)  37  (2004),  no. 1, 45--76. 

\bibitem{BDPP} Boucksom, S., Demailly, J.P., P\u aun, M., Peternell, T., The pseudo-effective cone of a compact K{\"a}hler manifold and varieties of negative Kodaira dimension. Preprint (2004) arXiv:math/0405285. 

\bibitem{BFJ} Boucksom, S., Favre, C., Jonsson, M., Differentiability of volumes of divisors and a problem of Teissier. arXiv:math/0608260. J. Algebraic. Geom. 18 (2009), no. 2, 279--308.  

 \bibitem{Ca} Calabi, E., On K{\"a}hler manifolds with vanishing canonical class.  
  Algebraic geometry and topology. A symposium in honor of S. Lefschetz,  78--89. 
  Princeton University Press, Princeton, N. J., 1957. 

 \bibitem{Ce} Cegrell, U., Pluricomplex energy. Acta Math. 180 (1998), no. 2,
   187--217.

\bibitem{Ce2} Cegrell, U., The general definition of the complex Monge-Amp\`ere operator. Ann. Inst. Fourier 54, no. 2 (2004), 159--179.

\bibitem{Dem0} Demailly, J.P., Complex analytic and algebraic geometry. Book available at  http://www-fourier.ujf-grenoble.fr/~demailly/books.html

 \bibitem{Dem1} Demailly, J.P., Regularization of closed positive currents and 
  intersection theory.  J. Algebraic Geom. 1  (1992),  no. 3, 361--409.

 \bibitem{Dem2} Demailly, J.P., Monge-Amp{\`e}re operators, Lelong numbers and intersection 
   theory.  Complex analysis and geometry,  115--193, Univ. Ser. Math., Plenum, New York, 1993. 

\bibitem{Dem3} Demailly, J.P., A numerical criterion for very ample line bundles, J. Differential Geom. 37 (1993), 323--374.

\bibitem{DP} Demailly, J.P., P\u aun, M., Numerical characterization of the K{\"a}hler cone of a compact K{\"a}hler manifold, Ann. Math. (2)  159  (2004),  no. 3, 1247--1274.

\bibitem{DPS} Demailly, J.P., Peternell, T., Schneider, M.,  Compact complex manifolds with numerically effective tangent bundle, J. Algebraic Geometry 3 (1994) 295--345.

\bibitem{Din1} Dinew, S., An inequality for Mixed Monge-Amp{\`e}re measures. Math. Z. 262 (2009), no. 1, 1--15. 

\bibitem{Din2} Dinew, S., Uniqueness and stability in $\cE(X,\omega)$. J. Funct. Anal. 256 (2009), no. 7, 2113--2122.   

\bibitem{ELMNP06} Ein, L., Lazarsfeld, R.K., Mustata, M., Nakamaye, M., Popa, M., Asymptotic invariants of base loci.  Ann. Inst. Fourier (Grenoble)  56  (2006),  no. 6, 1701--1734.

\bibitem{Eva} Evans, L.C., Classical solutions of fully nonlinear, convex, second-order elliptic equations.  Comm. Pure Appl. Math.  35  (1982), no. 3, 333--363. 
 
\bibitem{EGZ1} Eyssidieux, P., Guedj, V., Zeriahi, A., Singular K{\"a}hler-Einstein 
   metrics. J. Amer. Math. Soc. 22 (2009) no. 3, 607--639.  

\bibitem {EGZ2} Eyssidieux, P., Guedj, V., Zeriahi, A.,  A priori 
    $L^{\infty}$-estimates for degenerate complex Monge-Amp{\`e}re equations. Int.  Math. Res. Not.   IMRN 2008, Article ID rnn 070, 8 pages.

 \bibitem {GZ1} Guedj, V., Zeriahi, A., Intrinsic capacities on compact K{\"a}hler
   manifolds.  J. Geom. Anal. 15  (2005),  no. 4, 607-639.

 \bibitem{GZ2} Guedj, V., Zeriahi, A., The weighted Monge-Amp{\`e}re energy
   of quasipsh functions. J. Funct. An.  250 (2007), 442-482.

 \bibitem{Ki} Kiselman, C., Sur la d{\'e}finition de l'op{\'e}rateur de Monge-Amp{\`e}re complexe.  Analyse  Complexe (Toulouse, 1983),  139--150, Lecture Notes in Math., 1094, Springer, Berlin, 1984. 

\bibitem{KM} Koll{\'a}r, J., Mori, S., Birational geometry of algebraic varieties. Cambridge Tracts in Mathematics, 134. Cambridge Univ. Press, Cambridge, 1998.

 \bibitem{Ko1} Ko\l{}odziej, S., The complex Monge-Amp{\`e}re equation. Acta Math. 180
   (1998), no. 1, 69--117.

\bibitem{Ko2} Ko\l{}odziej, S., The complex Monge-Amp{\`e}re equation and pluripotential theory.  Mem. Amer. Math. Soc.  178  (2005),  no. 840, 64 pp.

\bibitem{Laz} Lazarsfeld, R.K., Positivity in algebraic geometry. A Series of Modern Surveys in Mathematics 48 and 49. Springer-Verlag, Berlin, 2004. 

\bibitem{Mab} Mabuchi, T., K-energy maps integrating Futaki invariants.  Tohoku Math. J. (2)  38  (1986),  no. 4, 575--593.

\bibitem{R} Rainwater, R.J., A note on the preceding paper. Duke Math. J. 36 (1969) 799Ð800.  

\bibitem{Sib} Sibony, N, Quelques probl\`emes de prolongement de courants en analyse complexe. (French) [Some extension problems for currents in complex analysis]. Duke Math. J.  52  (1985),  no. 1, 157-197

\bibitem{Siu2} Siu, Y.T., Invariance of plurigenera.  Invent. Math.  134  (1998),  no. 3, 661-673.

\bibitem{S} Siu, Y.T., A General Non-Vanishing Theorem and an
  Analytic Proof of the Finite Generation of the Canonical Ring. Preprint (2006) arXiv:math/0610740. 

 \bibitem{Sk} Skoda, H., Sous-ensembles analytiques d'ordre fini ou infini dans 
  $\C^{n}$. Bull. Soc. Math. France 100  (1972), 353--408. 

 \bibitem{ST} Song, J., Tian, G., Canonical measures and Kahler-Ricci flow. Preprint (2008) arXiv:0802.2570.

\bibitem{Sug} Sugiyama, K., Einstein-K{\"a}hler metrics on minimal varieties of general type and an inequality between Chern numbers. Recent topics in differential and analytic geometry, 417--433, Adv. Stud. Pure Math., 18-I, Academic Press, Boston, MA, 1990. 
 
\bibitem{T} Tian, G., Canonical metrics in K{\"a}hler geometry. Lectures in Mathematics 
   ETH Z{\"u}rich. Birkh{\"a}user Verlag, Basel, 2000.

\bibitem{Tru} Trudinger, N.S., Regularity of solutions of fully nonlinear elliptic equations.
Boll. Un. Mat. Ital. A (6) 3 (1984), no. 3, 421--430.

\bibitem{Tsu88} Tsuji, H., Existence and degeneration of K{\"a}hler-Einstein metrics on minimal algebraic varieties of general type.  Math. Ann.  281  (1988),  no. 1, 123--133.

\bibitem{Ts99} Tsuji, H., Finite Generation of Canonical Rings. Preprint (1999) arXiv:math/9908078.

 \bibitem{Ts06} Tsuji, H., Dynamical constructions of K{\"a}hler-Einstein metrics.  Preprint (2006)  arXiv:math/0606626.
 
 \bibitem{Y}  Yau, S.T., On the Ricci curvature of a compact K{\"a}hler manifold and the
   complex Monge-Amp{\`e}re equation. I. Comm. Pure Appl. Math. 31 (1978),
   no. 3, 339--411.  

 \bibitem{Ze} Zeriahi, A., Volume and capacity of sublevel sets of a Lelong class of 
  psh functions.  Indiana Univ. Math. J. 50  (2001),  no. 1, 671--703.

  
 
\end{thebibliography}
\end{document}